\sloppy\pagestyle{plain}
\newcommand{\DC}{\mathbb{C}}
\newcommand{\DP}{\mathbb{P}}
\newcommand{\DA}{\mathbb{A}}
\newcommand*{\da@rightarrow}{\mathchar"0\hexnumber@\symAMSa 4B }
\newcommand*{\da@leftarrow}{\mathchar"0\hexnumber@\symAMSa 4C }
\newcommand*{\xdashrightarrow}[2][]{%
  \mathrel{%
    \mathpalette{\da@xarrow{#1}{#2}{}\da@rightarrow{\,}{}}{}%
  }%
}
\newcommand{\xdashleftarrow}[2][]{%
  \mathrel{%
    \mathpalette{\da@xarrow{#1}{#2}\da@leftarrow{}{}{\,}}{}%
  }%
}
\newcommand*{\da@xarrow}[7]{%
  \sbox0{$\ifx#7\scriptstyle\scriptscriptstyle\else\scriptstyle\fi#5#1#6\m@th$}%
  \sbox2{$\ifx#7\scriptstyle\scriptscriptstyle\else\scriptstyle\fi#5#2#6\m@th$}%
  \sbox4{$#7\dabar@\m@th$}%
  \dimen@=\wd0 %
  \ifdim\wd2 >\dimen@
    \dimen@=\wd2 %
  \fi
  \count@=2 %
  \def\da@bars{\dabar@\dabar@}%
  \@whiledim\count@\wd4<\dimen@\do{%
    \advance\count@\@ne
    \expandafter\def\expandafter\da@bars\expandafter{%
      \da@bars
      \dabar@ 
    }%
  }%
  \mathrel{#3}%
  \mathrel{%
    \mathop{\da@bars}\limits
    \ifx\\#1\\%
    \else
      _{\copy0}%
    \fi
    \ifx\\#2\\%
    \else
      ^{\copy2}%
    \fi
  }%
  \mathrel{#4}%
}
\newtheorem{theorem}{Theorem}[section]
\newtheorem{lemma}[theorem]{Lemma}
\newtheorem{corollary}[theorem]{Corollary}
\newtheorem*{corollary*}{Corollary}
\newtheorem*{maincorollary*}{Main Corollary}
\newtheorem*{conjecture*}{Conjecture}
\newtheorem*{problem*}{Problem}
\newtheorem*{calabiproblem*}{Calabi Problem}
\newtheorem*{Main Theorem*}{Main Theorem}
\newtheorem*{theorem*}{Theorem}
\newtheorem*{maintheorem*}{Main Theorem}
\theoremstyle{definition}
\newtheorem*{example*}{Example}
\newtheorem{definition}[theorem]{Definition}
\theoremstyle{remark}
\newtheorem{remark}[theorem]{Remark}
\newtheorem*{remark*}{Remark}
\makeatletter\@addtoreset{equation}{subsection} \makeatother
\title{$\delta$-invariants of log Fano planes}
    \author{Elena Denisova}
\begin{document}

\maketitle

\begin{abstract}
We compute the $\delta$-invariant for pairs $(\mathbb{P}^2, \lambda C_d)$, where $C_d$ is a plane curve of degree $d \leq 4$. These computations provide new examples of $K$-stable and $K$-semistable log Fano pairs, and contribute to the study of $K$-stability of log Fano varieties via the Abban--Zhuang method, which reduces higher-dimensional problems to the surface case.
\end{abstract}

\section{Introduction}
\subsection{History and Results}
\noindent The first problems involving the concept of curves emerged very early in human history. These problems were practical in nature, aimed at tasks such as measuring distances for agriculture and construction. Later in antiquity, around 600 BC, Thales made one of the earliest attempts to define these concepts more rigorously. Over time, the theory evolved, and by the beginning of the 18th century, the modern notion of an algebraic plane curve had taken shape:

\begin{definition}
{\it A projective algebraic plane curve $C_d$ of degree $d$} is a curve in the projective plane $\mathbb{P}^2$ defined by a single polynomial equation $F(x_0,x_1,x_2)=0$, where $F$ is a homogeneous polynomial of degree $d$.
\end{definition}

\noindent A projective algebraic plane curve may be smooth or singular, and a natural question is how to classify such curves for each degree. To keep things simple, we first focus on reduced curves. When $d=1$, the only possibility is a line. For $d=2$, there are two possibilities: a smooth conic or a pair of distinct lines. When $d=3$, the classification becomes more intricate. A reduced projective plane cubic curve can be one of the following:
 \begin{itemize}
     \item an irreducible smooth curve
     \item an irreducible curve with a node
     \item an irreducible curve with a cusp
     \item a conic and a general line
    \item a conic and a tangent line
     \item three general lines
     \item a union of three different lines with intersection at one point
 \end{itemize}
 \begin{figure}[h!]
   \label{Cubic Curves}
\includegraphics[width=16cm]{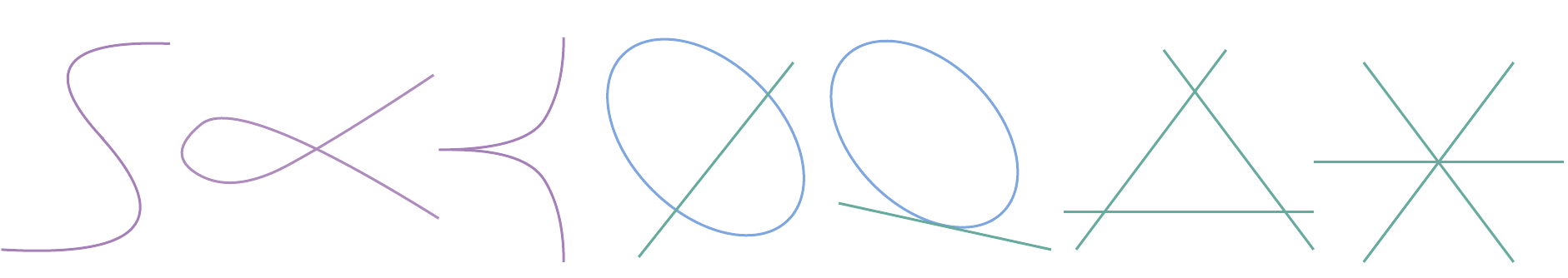}
 \caption{Cubic Curves}
 \end{figure}
 \newpage
\noindent Plane quartic curves were classified in \cite{Hui}, and we observe that the complexity of the classification problem increases dramatically with the degree. Specifically, there are $20$ types of irreducible quartics (see Figure~\ref{Irreducible Quartic Curves}), $13$ types consisting of an irreducible cubic and a line (Figure~\ref{Line and Irreducible cubic}), $5$ types splitting into two distinct conics (Figure~\ref{Two Conics}), $5$ types formed by a conic and two lines (Figure~\ref{Conic and Two Lines}), and $3$ types comprising four lines (Figure~\ref{Four Lines}). In \cite{Namba}, the classification of irreducible quintics was given by Makoto Namba, and in \cite{Yang}, Jin-Gen Yang provided a complete classification of sextics with simple singularities. For higher degrees, the classification problem becomes extremely intricate.

\vspace{1em}
\noindent Another natural question that arises is: \emph{How singular is $C_d$}? To address this, we consider pairs of the form $(\mathbb{P}^2, \lambda C_d)$.

\begin{definition}
Let $(X,D)$ be a pair where $X$ is a normal variety and $D = \sum a_i D_i$ is an effective $\mathbb{Q}$-linear combination of divisors $D_i$ on $X$, with coefficients $a_i > 0$, such that $K_X + D$ is $\mathbb{Q}$-Cartier. Fix a birational morphism $f: Y \to X$, and let $E \subset Y$ be an exceptional divisor over $X$. The \textit{discrepancy of the pair $(X, D)$ along $E$}, denoted $a(E, X, D)$, is defined by:
\begin{align*}
K_Y + f_*^{-1}D - f^*(K_X + D) \equiv \sum_i a(E_i, X, D) E_i,
\end{align*}
where $E_i \subset Y$ are the irreducible components of the exceptional locus of $f$. The \textit{discrepancy of the pair $(X,D)$} is then:
\begin{align*}
\mathrm{discrep}(X,D) = \inf_E \left\{ a(E,X,D) \mid E\ \text{exceptional divisor over } X \right\}.
\end{align*}
If $\dim X \geq 2$, the pair $(X,D)$ is called \textit{terminal} if and only if $\mathrm{discrep}(X,D) > 0$.
\\
Now, suppose $X$ has log terminal singularities and let $P \in X$ be a point. Then the \textit{log canonical threshold} at $P$ is defined by:
\begin{align*}
\mathrm{lct}_P(X,D) = \sup \left\{ \lambda \in \mathbb{Q} \mid (X, \lambda D)\ \text{is log canonical at } P \right\},
\end{align*}
and the global log canonical threshold is:
\begin{align*}
\mathrm{lct}(X,D) = \inf_{P \in X} \mathrm{lct}_P(X,D) = \sup \left\{ \lambda \in \mathbb{Q} \mid (X,\lambda D)\ \text{is log canonical} \right\}.
\end{align*}
\end{definition}

\noindent The values of log canonical thresholds for pairs $(\mathbb{P}^2, C_d)$ with $d \in \{3,4\}$ can be found, for example, in \cite{Cheltsov17}. This notion is closely related to the concept of $K$-stability via the Tian’s criterion:

\begin{theorem}[Tian's criterion]
Let $X$ be a $\mathbb{Q}$-Fano variety. If $\alpha(X) > \frac{n}{n+1}$, then $X$ is $K$-stable, where
$$
\alpha(X) = \inf \left\{ \mathrm{lct}(X, D) \mid D \sim_{\mathbb{Q}} -K_X,\ D\ \text{effective} \right\}.
$$
\end{theorem}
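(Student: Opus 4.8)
The plan is to deduce $K$-stability from the hypothesis $\alpha(X) > \tfrac{n}{n+1}$ (with $n = \dim X$) by routing everything through the stability threshold, the $\delta$-invariant, which is the natural companion of $\alpha$ in the present context. The starting point is the valuative criterion for $K$-stability of Fujita--Li and Blum--Jonsson: a $\mathbb{Q}$-Fano variety $X$ is (uniformly) $K$-stable, hence in particular $K$-stable, exactly when $\delta(X) > 1$, where
\[
\delta(X) = \inf_E \frac{A_X(E)}{S_X(E)}, \qquad S_X(E) = \frac{1}{(-K_X)^n}\int_0^\infty \mathrm{vol}(-K_X - tE)\,dt,
\]
the infimum running over all prime divisors $E$ over $X$ and $A_X(E)$ denoting the log discrepancy. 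Thus it suffices to show that $\alpha(X) > \tfrac{n}{n+1}$ forces $\delta(X) > 1$. The first step is to rewrite $\alpha$ in the same valuative language: unwinding $\mathrm{lct}$ in terms of discrepancies and taking the supremum over effective $D \sim_{\mathbb{Q}} -K_X$ of $\mathrm{ord}_E(D)$ yields $\alpha(X) = \inf_E \tfrac{A_X(E)}{T_X(E)}$, where $T_X(E) = \sup\{t \ge 0 : \mathrm{vol}(-K_X - tE) > 0\}$ is the pseudo-effective threshold. With $\alpha$ and $\delta$ now expressed as infima of $A_X(E)$ against the two quantities $S_X(E) \le T_X(E)$, the entire problem collapses to a single comparison of $S$ and $T$ for each fixed $E$.

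The heart of the argument — and the step I expect to be the main obstacle — is the pointwise inequality $S_X(E) \le \tfrac{n}{n+1}\,T_X(E)$, valid for every $E$. I would prove this through Okounkov bodies. Choosing an admissible flag adapted to $E$, the Okounkov body $\Delta \subset \mathbb{R}^n$ of $-K_X$ has its first coordinate equal to $\mathrm{ord}_E$, with $\min_\Delta x_1 = 0$ (a general anticanonical member does not vanish along $E$) and $\max_\Delta x_1 = T_X(E)$. The slicing identity $\mathrm{vol}(-K_X - tE) = n!\,\mathrm{vol}\big(\Delta \cap \{x_1 \ge t\}\big)$ then identifies
\[
S_X(E) = \frac{1}{\mathrm{vol}(\Delta)}\int_\Delta x_1\,dx,
\]
the first coordinate of the barycenter of $\Delta$. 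The desired bound is now the classical centroid estimate: for a convex body whose width in the $x_1$-direction is $[0, M]$, the barycentre satisfies $\bar{x}_1 \le \tfrac{n}{n+1}M$, with equality precisely for a cone with apex on the facet $\{x_1 = 0\}$. The subtlety I anticipate is not the convex geometry itself but the passage to general divisorial (and then arbitrary) valuations $E$: one must pass to a birational model on which $E$ appears as the leading element of a flag, verify that the slicing/volume dictionary survives, and handle the boundary behaviour at $t = T_X(E)$ by continuity and approximation.

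Granting the lemma, the conclusion is immediate. For every $E$ we have
\[
\frac{A_X(E)}{S_X(E)} \;\ge\; \frac{n+1}{n}\cdot\frac{A_X(E)}{T_X(E)} \;\ge\; \frac{n+1}{n}\,\alpha(X),
\]
and taking the infimum over $E$ gives $\delta(X) \ge \tfrac{n+1}{n}\,\alpha(X)$. The hypothesis $\alpha(X) > \tfrac{n}{n+1}$ therefore yields $\delta(X) > \tfrac{n+1}{n}\cdot\tfrac{n}{n+1} = 1$, whence $X$ is $K$-stable by the valuative criterion. As an alternative to the $\delta$-route, one could instead argue directly in the style of Odaka--Sano, estimating the Donaldson--Futaki invariant of an arbitrary test configuration from below by $\mathrm{lct}$-type quantities and invoking the same numerical threshold $\tfrac{n}{n+1}$; but the argument above fits most naturally with the $\delta$-invariant machinery that is the subject of this paper, and isolates the one genuinely nontrivial input as the centroid inequality for Okounkov bodies.
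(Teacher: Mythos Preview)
The paper does not prove Tian's criterion at all: it is quoted in the introduction as a classical background result (attributed to Tian) to motivate the passage from log canonical thresholds to $\delta$-invariants, and no argument is given. There is therefore nothing in the paper to compare your proposal against.

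That said, your sketch is correct and follows the by-now standard route (Fujita--Odaka, Blum--Jonsson): rewrite both $\alpha$ and $\delta$ valuatively as infima of $A_X(E)/T_X(E)$ and $A_X(E)/S_X(E)$ respectively, prove the key inequality $S_X(E)\le \tfrac{n}{n+1}T_X(E)$ via the barycentre bound for the Okounkov body, and conclude $\delta(X)\ge \tfrac{n+1}{n}\alpha(X)$. Your identification of the centroid inequality as the heart of the matter is exactly right, and the concerns you flag about passing to arbitrary divisorial valuations are handled in the literature by working on a model where $E$ is a prime divisor and building the flag there. The alternative Odaka--Sano route you mention is historically earlier but less aligned with the $\delta$-machinery used throughout this paper, so your choice of approach is the natural one in context.
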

\noindent This shows that \textit{log Fano pairs}, i.e., pairs $(X, \lambda D)$ with $\lambda \in (0, \mathrm{lct}(X,D))$, are natural objects of study. These have been investigated extensively in the context of their $K$-moduli spaces, notably in \cite{ADVL19}. For instance, the authors proved that for sufficiently small $\lambda$, the $K$-moduli space parameterizing $K$-semi(stable) limits of pairs $(\mathbb{P}^n, \lambda V_d)$, where $V_d$ is a degree $d$ hypersurface, coincides with the GIT moduli space. This result was used to analyze $K$-moduli spaces of plane curves of degree up to 6.
\noindent What further properties can we study for such pairs? It is known (see \cite{Fujita}) that for a log Fano variety $(X,\Delta)$, one can define the $\delta$-invariant, which satisfies $\delta(X,\Delta) > 1$ if and only if $(X,\Delta)$ is $K$-stable. In some higher-dimensional cases, this reduces to computing the polarized $\delta$-invariant of log Fano pairs $(\mathbb{P}^2, \lambda C_d)$, where $C_d \subset \mathbb{P}^2$ is a plane curve of degree $d$.
\noindent In this article, we compute the exact value of $\delta(\mathbb{P}^2, \lambda C_d)$ for $d \leq 4$ over a suitable interval. More precisely, we prove that:
\newpage 
\begin{maintheorem*}
Let  $(\DP^2,\lambda C_d)$ be a pair where $C_d\subset \DP^2$ is a curve of degree $d\le 4$. If the worst singularity has type $A_5$ we denote $L$ be the tangent line at this point.  Then 
\begin{align*}
\delta(\DP^2,\lambda C_1)&= \frac{3(1-\lambda)}{3- \lambda}\text{ for }\lambda\in\big[0,1\big].\\
\delta(\DP^2,\lambda C_2)&=
 \begin{cases}
  1\text{ if $C_2$ is smooth  for }\lambda\in\big[0,\frac{3}{4}\big],\\
\frac{3-3\lambda}{3-2\lambda} \text{ if $C_2$ is a union of two different lines for }\lambda\in\big[0,1\big].
\end{cases}
\\\delta(\DP^2,\lambda C_3)&=
 \begin{cases}
  \frac{3-2\lambda}{3- 3\lambda }\text{ if $C_3$ is smooth and there's no 3-tangent to }C_3\text{ for }\lambda\in\big[0,\frac{3}{4}\big],\\
\frac{4-3\lambda}{4- 4\lambda }\text{ if $C_3$ is smooth and there is   a 3-tangent to }C_3\text{ for }\lambda\in\big[0,\frac{8}{9}\big],\\
1 \text{ if the worst singularity of $C_3$ has type $A_1$ for }\lambda\in\big[0,1\big],\\
 \frac{5-6\lambda}{5- 5\lambda }\text{ if the worst singularity of $C_3$ has type $A_2$ for }\lambda\in\big[0,\frac{5}{6}\big],\\
 \frac{3-4\lambda}{3-3\lambda}\text{ if the worst singularity of $C_3$ has type $A_3$ for }\lambda\in\big[0,\frac{3}{4}\big],\\
 \frac{2-3\lambda}{2- 2\lambda }\text{ if the worst singularity of $C_3$ has type $D_4$ for }\lambda\in\big[0,\frac{2}{3}\big].
 \end{cases}
 \\\delta(\DP^2,\lambda C_4)&=
 \begin{cases}
 \frac{3}{4}\cdot\frac{4-3\lambda}{3- 4\lambda }\text{ if $C_4$ is smooth and there is no  4-tangent to }C_4\text{ for }\lambda\in\big[0,\frac{3}{4}\big]\\
 \frac{3}{5}\cdot\frac{5-4\lambda}{3- 4\lambda }\text{ if $C_4$ is smooth and there exists a 4-tangent to }C_4\text{ for }\lambda\in\big[0,\frac{3}{4}\big],\\
 \frac{3}{2}\cdot\frac{2-2\lambda}{3- 4\lambda }\text{ if the worst singularity of $C_4$ has type $A_1$ for }\lambda\in\big[0,\frac{3}{4}\big],\\
 \frac{3}{5}\cdot\frac{5-6\lambda}{3- 4\lambda }\text{ if the worst singularity of $C_4$ has type $A_2$ for }\lambda\in\big[0,\frac{3}{4}\big],\\
 1\text{ if the worst singularity of $C_4$ has type $A_3$ for }\lambda\in\big[0,\frac{3}{4}\big],\\
 \frac{6}{13}\cdot \frac{7-10\lambda}{3- 4\lambda }\text{ if the worst singularity of $C_4$ has type $A_4$ for }\lambda\in \big[\frac{3}{8}, \frac{7}{10}\big],\\
 \frac{6}{7}\cdot \frac{4-6\lambda}{3- 4\lambda }\text{  if the worst singularity of $C_4$ has type $A_5$, and $L\not \subset C_4$ for }\lambda\in \big[\frac{3}{8}, \frac{2}{3}\big],\\
 \frac{3}{4}\cdot \frac{4-6\lambda}{3- 4\lambda }\text{  if the worst singularity of $C_4$ has type $A_5$, and $L\subset C_4$ for }\lambda\in \big[0, \frac{2}{3}\big],\\
 \frac{2}{5}\cdot \frac{9-14\lambda}{3- 4\lambda }\text{ if the worst singularity of $C_4$ has type $A_6$ for }\lambda\in \big[\frac{3}{8}, \frac{7}{14}\big],\\
 \frac{3}{4}\cdot\frac{5-8\lambda}{3- 4\lambda }\text{ if the worst singularity of $C_4$ has type $A_7$ for }\lambda\in\big[\frac{3}{8};\frac{5}{8}\big],\\
 \frac{3}{2}\cdot\frac{2-3\lambda}{3- 4\lambda }\text{ if the worst singularity of $C_4$ has type $D_4$ for }\lambda\in\big[0,\frac{2}{3}\big],\\
 \frac{3}{5}\cdot\frac{5-8\lambda}{3- 4\lambda }\text{ if the worst singularity of $C_4$ has type $D_5$ for }\lambda\in\big[0,\frac{5}{8}\big],\\
 \frac{3-5\lambda}{3- 4\lambda }\text{ if the worst singularity of $C_4$ has type $D_6$ for }\lambda\in\big[0,\frac{3}{5}\big],\\
 \frac{3}{7}\cdot\frac{7-12\lambda}{3- 4\lambda }\text{ if the worst singularity of $C_4$ has type $E_6$ for }\lambda\in\big[0,\frac{7}{12}\big],\\
 \frac{3}{5}\cdot\frac{5-9\lambda}{3- 4\lambda }\text{ if the worst singularity of $C_4$ has type $E_7$ for }\lambda\in \big[0, \frac{5}{9}\big],\\
  \frac{3-6\lambda}{3- 4\lambda }\text{ if  $C_4$ has a ``four-line" singularity for }\lambda\in\big[0,\frac{1}{2}\big].
 \end{cases}
 \end{align*}
If $C_d$ contains a conic component of multiplicity $c=2$ or a line component of multiplicity $l\ge 2$. Then:
 $$\delta(\DP^2,\lambda C_d)=
 \begin{cases}
     \frac{3(1-l\cdot \lambda)}{3-d\lambda}\text{ for }\lambda\in \big[0,\frac{1}{l}\big]\text{ if }l\ge 2,\\
     1\text{ for }\lambda\in\big [0,\frac{3}{8}\big ] \text{ if } c=2.
 \end{cases}$$
  \end{maintheorem*}
  \begin{remark}
  Let $C\subset \DP^2$ be a plane curve of degree $4$ with $A_4$, $A_5(1)$, $A_6$ or $A_7$ singularities at point $P\in C$  on $C$. Then
$$\delta_P(\DP^2,\lambda C)\ge\frac{3}{2(3-d \lambda )}\text{ for }\lambda\in \Big[0,\frac{3}{8}\Big].$$
  \end{remark}
  \noindent {\bf Acknowledgments:} I am grateful to my supervisor Professor Ivan Cheltsov for the introduction to the topic and continuous support.
  \subsection{Applications.} 
   We assume that $\lambda$ belongs to the respective interval from the main theorem unless stated otherwise.
  \subsubsection{Du Val del Pezzo surfaces of degree $2$}
    Let $X$ be a del Pezzo surface of degree $2$ with Du Val singularities.  The system $|-K_{X}|$ gives a double cover
$\iota: X \to \DP^2$ ramified in a curve $C$ of degree $4$ on $\DP^2$. 
We have $\iota^*(K_{\DP^2}+\frac{1}{2}C)=K_X$. Thus, by \cite{Dervan16} $X$ is $K$-stable if and only inf $(\DP^2,\frac{1}{2}C)$ is $K$-stable. In fact, \cite{DenisovadP2} showed that  $\delta(X)=\delta(\DP^2,\frac{1}{2}C)$.
   \subsubsection{Projective Space} Another possible application is the following.  Consider log Fano pairs $(\DP^3,\lambda S_s)$ where $S_s$ is a reduced surface in $\DP^3$ of degree $s$ and $\lambda\in \big(0,\min\big\{ \frac{4}{s}, \mathrm{lct}(\DP^3, S_s)\big\}\big)$.  
   \begin{lemma}
       Let $P\in S_s$ be a smooth point on $S_s$ and $s\ge 3$. If $P$ is a smooth point on $S_s$. Then $\delta_P(\DP^3, \lambda S)\ge 1$.
   \end{lemma}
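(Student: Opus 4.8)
The plan is to reduce the statement to an explicit estimate with monomial valuations at $P$ and to exploit the fact that a surface of degree $s$ curves away from its tangent plane to order at most $s$. First I would fix affine coordinates $(x,y,z)$ on $\DP^3$ centred at $P$ so that $T_PS_s=\{x=0\}$, and write the local equation of $S_s$ as $f=x\,u(x,y,z)+g(y,z)$, where $u$ is a unit at $P$ (because $\partial f/\partial x(P)\neq 0$) and $g(y,z)=f(0,y,z)$ vanishes to order $\ge 2$. Since the component of $S_s$ through $P$ has degree $\ge 2$, the plane $T_PS_s$ is not contained in $S_s$, so $g\not\equiv 0$ and $\deg g\le s$. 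Writing $a:=4-s\lambda$, which is positive because $\lambda<\tfrac{4}{s}$, the polarisation is $-(K_{\DP^3}+\lambda S_s)\sim_{\DQ}\MO_{\DP^3}(a)$.

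Because $P$ is a smooth point of the klt pair, it suffices to bound $A_{\DP^3,\lambda S_s}(v)/S(v)$ from below over monomial valuations $v=v_{(w_1,w_2,w_3)}$ in these coordinates (the $\delta_P$-minimiser being quasi-monomial, and put into monomial form after adapting the flag to the tangent directions; the general valuation follows by the usual Abban--Zhuang refinement). For such a $v$ the Newton--Okounkov body of $\MO_{\DP^3}(a)$ with respect to the flag $\{x=0\}\supset\{x=y=0\}\supset\{P\}$ is the simplex $a\Delta$ with barycentre $\tfrac{a}{4}(1,1,1)$, so that
\[
S(v)=\frac{a}{4}\,(w_1+w_2+w_3).
\]
For the log discrepancy I would use $A_{\DP^3,\lambda S_s}(v)=(w_1+w_2+w_3)-\lambda\,v(f)$, and note that since $v$ is monomial there is no cancellation, giving $v(f)=\min\bigl(w_1,\,v(g)\bigr)$: the monomial $x$ contributes $w_1$, the pure $(y,z)$ monomials contribute $v(g)$, and every mixed monomial contributes at least $w_1$.

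The crux is an upper bound on $v(f)$. Choosing a monomial of $g$ of minimal degree $e\le s$ yields $v(g)\le e\max(w_2,w_3)\le s\max(w_2,w_3)$, and the elementary inequality $\min(A,B)\le\frac{sA+B}{s+1}$ (for $A,B\ge 0$) applied with $A=w_1$, $B=s\max(w_2,w_3)$ gives
\[
v(f)\le\min\bigl(w_1,\,s\max(w_2,w_3)\bigr)\le\frac{s}{s+1}\bigl(w_1+\max(w_2,w_3)\bigr)\le\frac{s}{s+1}(w_1+w_2+w_3).
\]
Substituting this into the two displays above yields
\[
\frac{A_{\DP^3,\lambda S_s}(v)}{S(v)}\ge\frac{\bigl(1-\tfrac{s\lambda}{s+1}\bigr)(w_1+w_2+w_3)}{\tfrac{a}{4}(w_1+w_2+w_3)}=\frac{4\,(s+1-s\lambda)}{(s+1)(4-s\lambda)},
\]
and this last ratio is $\ge 1$ exactly when $s+1\ge 4$, i.e. when $s\ge 3$, which is the hypothesis. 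Hence $\delta_P(\DP^3,\lambda S)\ge 1$.

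The hard part is the bookkeeping of $v(f)$, i.e. realising that the contribution of the boundary to the log discrepancy is governed by the order of contact of $S_s$ with its tangent plane, capped by $\deg g\le s$, and that the extremal valuation is the degenerate one which concentrates its tangent weight on a single direction (the limit $w_2\to0$, $w_1=s\,w_3$). This degenerate configuration is precisely what forces the threshold $s\ge3$, so the hypothesis is sharp: at $s=3$ a smooth point whose tangent-plane section is a triple line gives $\frac{A}{S}\to 1$ and hence $\delta_P=1$. The secondary obstacle is justifying the reduction to monomial valuations (so that the clean identity $v(f)=\min(w_1,v(g))$ holds without cancellation); a naive Abban--Zhuang flag through a general plane is too lossy here, since a general hyperplane section of $S_s$ is smooth and loses exactly the curvature data that makes smooth points of higher-degree surfaces stable.
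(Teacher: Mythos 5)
Your computations for monomial valuations are all correct: the simplex formula $S(\mathrm{wt}_w)=\tfrac{a}{4}(w_1+w_2+w_3)$, the no‑cancellation identity $v(f)=\min(w_1,v(g))$, and the elementary inequality producing the threshold $s\ge 3$ check out. But the argument does not prove the lemma, because the reduction to valuations that are monomial in a \emph{fixed linear} coordinate system at $P$ is not valid. The infimum defining $\delta_P$ runs over all prime divisors $E$ over $\DP^3$ with $P\in C_{\DP^3}(E)$; restricting to a subfamily only yields a quantity that is $\ge\delta_P$, so bounding the restricted infimum below gives no lower bound on $\delta_P$. The parenthetical appeal to quasi‑monomiality of a minimiser does not repair this: quasi‑monomial means monomial on \emph{some} log smooth model (e.g.\ in coordinates adapted to the branches of $S_s\cap T_PS_s$, such as $(x,y,z-y^2)$), not in linear coordinates on $\DP^3$. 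The gap is not cosmetic. Already the divisor $S_s$ itself has $P$ in its centre, is not monomial in your coordinates, and has $A_{(\DP^3,\lambda S_s)}(S_s)=1-\lambda$, $S_{(\DP^3,\lambda S_s)}(S_s)=\tfrac{4-s\lambda}{4s}$, hence ratio $\tfrac{4s(1-\lambda)}{4-s\lambda}$; for $s=3$ this is strictly less than your uniform monomial lower bound $\tfrac{4(s+1-s\lambda)}{(s+1)(4-s\lambda)}\equiv 1$ once $\lambda>\tfrac89$. So non‑monomial valuations genuinely undercut the monomial family, and the estimate you prove is not the statement you need. (This example also shows the lemma requires the standing restriction on $\lambda$ from the main theorem; and your step ``the component of $S_s$ through $P$ has degree $\ge 2$, so $g\not\equiv 0$'' is an extra hypothesis, not a consequence of smoothness of $P$ --- if $S_s$ contains a plane through $P$ the conclusion fails for $s=3$ via that plane.)

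Your closing claim that the Abban--Zhuang flag through a general plane is ``too lossy'' is also the opposite of what happens: that is precisely the paper's proof, and it closes. Taking a general plane $T\ni P$, the paper gets $\delta_P(\DP^3,\lambda S)\ge\min\bigl\{\tfrac{4}{4-s\lambda},\ \tfrac{4(3-s\lambda)}{3(4-s\lambda)}\,\delta_P(T,\lambda C_s)\bigr\}$ with $C_s=S_s\vert_T$ a degree‑$s$ plane curve smooth at $P$; the local $\delta$-invariants of such curves are computed in the body of the paper (if the tangent line has contact of order $k$, one gets $\tfrac{3}{k+1}\cdot\tfrac{k+1-k\lambda}{3-s\lambda}$), and the product is $\ge 1$ exactly when $s\ge\tfrac{4k}{k+1}$, which holds for all $k\le s$ once $s\ge 3$. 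The curvature data you worry about losing is recovered through the plane‑curve computations rather than through the second fundamental form. If you want to salvage your direct approach, you must handle arbitrary divisors over $\DP^3$ centred at (or through) $P$, which in practice means running exactly this kind of inductive flag argument.
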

   \begin{proof}
       Let $T$ be a general plane containing a point $P$. Let $P(u)$ and $N(u)$ be a positive and negative part of the Zariski decomposition of the divisor $-(K_{\DP^3}+\lambda S)-uT$. Then:
  $$P(u)=-(K_{\DP^3}+\lambda S)-uT\text{ and }N(u)=0\text{ for }u\in [0,4-\lambda s].$$
  We have $A_{(\DP^3,\lambda S)}(T)=1$ and 
  $$S_{(\DP^3,\lambda S)}(T)= \frac{1}{(4-\lambda s)^3}\int_0^{4-\lambda s}(4-\lambda s-u)^3du=\frac{4-\lambda s}{4}.$$
  Moreover, for  any divisor $F$ such that $P\in C_{T}(F)$    we get
{\allowdisplaybreaks \begin{align*}
S\big(W^{T}_{\bullet,\bullet}&;F\big)=\\
&=\frac{3}{(-K_{\DP^3}-\lambda S)^3}\Bigg(\int_0^{4-\lambda s}\big(P(u)^{2}\cdot T\big)\cdot\mathrm{ord}_{P}\Big(N(u)\big\vert_{T}\Big)du+\int_0^{4-\lambda s}\int_0^\infty \mathrm{vol}\big(P(u)\big\vert_{T}-vF\big)dvdu\Bigg)=\\
&= \frac{3}{(4-\lambda s)^3}\int_0^\tau\int_0^\infty \mathrm{vol}\big(P(u)\big\vert_{T}-vF\big)dvdu=\\
&= \frac{3}{(4-\lambda s)^3}\Bigg(\int_0^{4-\lambda s}\int_0^\infty \mathrm{vol}\big(-(K_{\DP^3}+\lambda S)|_T-uT|_T-vF\big)dvdu\Bigg)=\\
&= \frac{3}{(4-\lambda s)^3}\Bigg(\int_0^{4-\lambda s}\int_0^\infty \mathrm{vol}\big(-((K_{\DP^3}+T)+\lambda S)|_T-(u-1)T|_T-vF\big)dvdu\Bigg)=\\
&= \frac{3}{(4-\lambda s)^3}\Bigg(\int_0^{4-\lambda s}\int_0^\infty \mathrm{vol}\big(-(K_T+\lambda C_s)-(u-1)T|_T-vF\big)dvdu\Bigg)=\\
&= \frac{3}{(4-\lambda s)^3}\Bigg(\int_0^{4-\lambda s}\int_0^\infty \mathrm{vol}
\big((4-\lambda s -u )T|_T-vF\big)dvdu\Bigg)=\\
&= \frac{3}{(4-\lambda s)^3}\Bigg(\int_0^{4-\lambda s}\int_0^\infty \mathrm{vol}\Big(\frac{4-\lambda s -u }{3-\lambda s} (-K_T-\lambda C_s)-vF\Big)dvdu\Bigg)=\\
&= \frac{3}{(4-\lambda s)^3}\Bigg(\int_0^{4-\lambda s} \frac{(4-\lambda s -u )^3}{(3-\lambda s)^3} \int_0^\infty \mathrm{vol}\big(-(K_T+\lambda C_s)-vF\big)dvdu\Bigg)=\\
&= \frac{3}{(4-\lambda s)^3}\Bigg(\int_0^{4-\lambda s} \frac{(4-\lambda s -u )^3}{3-\lambda s} S_{(T,\lambda C_s)}(F)du\Bigg)= \\
& = \frac{3(4-\lambda s)}{4(3-\lambda s)} S_{(T,\lambda C_s)}(F)\le  \frac{3(4-\lambda s)}{4(3-\lambda s)}\cdot \frac{A_{(T,\lambda C_s)}(F)}{\delta_P(T,\lambda C_s)} =  \frac{3(4-\lambda s)}{4(3-\lambda s)}\cdot \frac{A_{(\DP^2,\lambda C_s)}(F)}{\delta_P(\DP^2,\lambda C_s)} 
\end{align*}}
Then $A_{(\DP^3,\lambda S)}(T)=1$, $S_{(\DP^3,\lambda S)}(T)=\frac{4-\lambda s}{4}$ and:
 {\allowdisplaybreaks 
 \begin{align*}
\delta_P(\DP^3&,\lambda S)
\ge \min\Bigg\{\frac{A_{(\DP^3,\lambda S)}(T)}{S_{(\DP^3,\lambda S)}(T)},\text{ }
\delta_P(T,W^T_{\bullet,\bullet})\Bigg\}\ge\\
& \ge \min \Bigg\{\frac{A_{(\DP^3,\lambda S)}(T)}{S_{(\DP^3,\lambda S)}(T)},\text{ }\inf_{\substack{F/T,\\ P\in C_{\DP^3}(F)}} 
\frac{A_{(T,\lambda C_s)}(F)}{S(W^T; F)}\Bigg\}=\min \Bigg\{\frac{A_{(\DP^3,\lambda S)}(T)}{S_{(\DP^3,\lambda S)}(T)},\text{ }\inf_{\substack{F/T,\\ P\in C_{\DP^3}(F)}} 
\frac{A_{(\DP^2,\lambda C_s)}(F)}{S(W^{\DP^2}; F)}\Bigg\}\ge\\
&\ge \min\Bigg\{\frac{A_{(\DP^3,\lambda S)}(T)}{S_{(\DP^3,\lambda S)}(T)},\text{ } 
 \delta_P(\DP^2,\lambda C_s)\cdot \frac{4(3-\lambda s)}{3(4-\lambda s)}\Bigg\}\ge 1\text{ for }s\ge 3.
    \end{align*}}
   \end{proof}
\noindent If $P$ is a singular point on $S_s$ we consider the  blowup of this point $\pi:\widetilde{\DP}^3\to \DP^3$ at this point with the exceptional divisor $E\cong\DP^2$ and let $m=\mathrm{mult}_PS_s$. Then
  \begin{lemma}
     $$\delta_P(\DP^3,\lambda S_s) \ge \min\Bigg\{\frac{4(3-\lambda m)}{3(4-\lambda s)},\text{ } 
 \delta(\DP^2,\lambda C_m)\cdot \frac{4(3-\lambda m)}{3(4-\lambda s)}\Bigg\}$$
  \end{lemma}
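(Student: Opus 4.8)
The plan is to run the same Abban--Zhuang inductive scheme as in the preceding (smooth point) lemma, but now using the exceptional divisor $E\cong\DP^2$ of the blowup $\pi\colon\widetilde{\DP}^3\to\DP^3$ as the distinguished divisor over $\DP^3$ in place of a general plane. By the Abban--Zhuang inequality one has
$$\delta_P(\DP^3,\lambda S_s)\ge \min\Bigg\{\frac{A_{(\DP^3,\lambda S_s)}(E)}{S_{(\DP^3,\lambda S_s)}(E)},\ \delta_P\big(E,W^E_{\bullet,\bullet}\big)\Bigg\},$$
so the task splits into evaluating the two entries. For the first ratio, since $\pi$ blows up a smooth point of $\DP^3$ and $\mathrm{mult}_PS_s=m$, one has $A_{(\DP^3,\lambda S_s)}(E)=3-\lambda m$. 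Writing $-K_{\DP^3}-\lambda S_s\sim(4-\lambda s)H$ and using the intersection numbers $(\pi^*H)^3=1$, $(\pi^*H)^2\cdot E=(\pi^*H)\cdot E^2=0$ and $E^3=1$, I would compute $\mathrm{vol}\big(\pi^*((4-\lambda s)H)-uE\big)=(4-\lambda s)^3-u^3$ on the range $u\in[0,4-\lambda s]$, whence $S_{(\DP^3,\lambda S_s)}(E)=\tfrac34(4-\lambda s)$ and the first ratio equals $\tfrac{4(3-\lambda m)}{3(4-\lambda s)}$, which is the first entry in the claimed minimum.

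Next I would analyze the refinement $W^E_{\bullet,\bullet}$. The Zariski decomposition of $\pi^*(-K_{\DP^3}-\lambda S_s)-uE$ has trivial negative part on the whole interval $[0,4-\lambda s]$, because $(4-\lambda s)\pi^*H-uE$ is nef exactly for $0\le u\le 4-\lambda s$ (testing against the line in $E$ and the strict transform of a line through $P$), and the pseudo-effective threshold coincides with this bound. Restricting the positive part to $E$ gives $P(u)\vert_E=u\,h$, where $h$ is the line class on $E\cong\DP^2$, and adjunction identifies the different as $\lambda C_m$, where $C_m=\widetilde{S}_s\vert_E$ is the projectivized tangent cone of $S_s$ at $P$, a plane curve of degree $m$. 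Since $-K_E-\lambda C_m=(3-\lambda m)h$, rescaling $u\,h=\tfrac{u}{3-\lambda m}(-K_E-\lambda C_m)$ inside the double integral (exactly as in the previous lemma) yields
$$S\big(W^E_{\bullet,\bullet};F\big)=\frac{3(4-\lambda s)}{4(3-\lambda m)}\,S_{(\DP^2,\lambda C_m)}(F)$$
for every divisor $F$ over $E$. Because $\pi(E)=P$, every such $F$ satisfies $C_{\DP^3}(F)=P$, so the infimum runs over all $F$ over $E$ and returns the global invariant, giving $\delta_P\big(E,W^E_{\bullet,\bullet}\big)=\tfrac{4(3-\lambda m)}{3(4-\lambda s)}\,\delta(\DP^2,\lambda C_m)$. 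Substituting both entries into the Abban--Zhuang inequality produces the stated bound.

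The main obstacle I expect is the step establishing that the negative part vanishes throughout $[0,4-\lambda s]$ and that the pseudo-effective and nef thresholds agree: this requires an explicit description of the two-dimensional nef and pseudo-effective cones of $\widetilde{\DP}^3$ and the verification that $\tau=4-\lambda s$, which is what makes the restriction $P(u)\vert_E=u\,h$ clean and the scaling argument identical to the smooth case. A secondary point needing care is the identification of the different with $\lambda C_m$, together with the fact that $C_m$ is precisely the projectivized tangent cone, so that the surface invariant $\delta(\DP^2,\lambda C_m)$ supplied by the Main Theorem is indeed the quantity that appears.
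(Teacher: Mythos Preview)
Your proposal is correct and follows essentially the same route as the paper: blow up $P$, observe that $\pi^*((4-\lambda s)H)-uE$ has trivial negative part on $[0,4-\lambda s]$, compute $A(E)=3-\lambda m$ and $S(E)=\tfrac{3}{4}(4-\lambda s)$, then restrict to $E\cong\DP^2$ with different $\lambda C_m$ and rescale $u\,h=\tfrac{u}{3-\lambda m}(-K_E-\lambda C_m)$ to obtain $S(W^E_{\bullet,\bullet};F)=\tfrac{3(4-\lambda s)}{4(3-\lambda m)}S_{(\DP^2,\lambda C_m)}(F)$, and conclude via Abban--Zhuang. The paper carries out exactly these steps with the same identifications, so there is no substantive difference.
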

  \begin{proof}
   Let $P(u)$ and $N(u)$ be a positive and negative part of the Zariski decomposition of the divisor $-\pi^*(K_{\DP^3}+\lambda S)-uE$. Then:
  $$P(u)=-\pi^*(K_{\DP^3}+\lambda S)-uE\text{ and }N(u)=0\text{ for }u\in [0,4-\lambda s].$$
  We have: 
  $$S_{(\DP^3,\lambda S)}(E)= \frac{1}{(4-\lambda s)^3}\int_0^{4-\lambda s}\big((4-\lambda s)^3-u^3\big)du=\frac{3(4-\lambda s)}{4}.$$
  Moreover, for any $O\in E$ and for any divisor $F$ such that $O\in C_{E}(F)$    we get
{\allowdisplaybreaks \begin{align*}
S\big(W^{E}_{\bullet,\bullet}&;F\big)=\\
&=\frac{3}{(-K_{\DP^3}-\lambda S)^3}\Bigg(\int_0^{4-\lambda s}\big(P(u)^{2}\cdot E\big)\cdot\mathrm{ord}_{O}\Big(N(u)\big\vert_{E}\Big)du+\int_0^{4-\lambda s}\int_0^\infty \mathrm{vol}\big(P(u)\big\vert_{E}-vF\big)dvdu\Bigg)=\\
&= \frac{3}{(4-\lambda s)^3}\int_0^\tau\int_0^\infty \mathrm{vol}\big(P(u)\big\vert_{E}-vF\big)dvdu=\\
&= \frac{3}{(4-\lambda s)^3}\Bigg(\int_0^{4-\lambda s}\int_0^\infty \mathrm{vol}\big(-(K_{\widetilde{\DP}^3}+\lambda \widetilde{S}-(2-\lambda m)E)|_E-uE|_E-vF\big)dvdu\Bigg)=\\
&= \frac{3}{(4-\lambda s)^3}\Bigg(\int_0^{4-\lambda s}\int_0^\infty \mathrm{vol}\big(-((K_{\widetilde{\DP}^3}+E)+\lambda \widetilde{S})|_E+(3-\lambda m)E|_E-uE|_E-vF\big)dvdu\Bigg)=\\
&= \frac{3}{(4-\lambda s)^3}\Bigg(\int_0^{4-\lambda s}\int_0^\infty \mathrm{vol}\big(-(K_E+\lambda C_m)+(3-\lambda m)E|_E-uE|_E-vF\big)dvdu\Bigg)=\\
&= \frac{3}{(4-\lambda s)^3}\Bigg(\int_0^{4-\lambda s}\int_0^\infty \mathrm{vol}\big(-uE|_E-vF\big)dvdu\Bigg)=\\
&= \frac{3}{(4-\lambda s)^3}\Bigg(\int_0^{4-\lambda s}\int_0^\infty \mathrm{vol}\Big(\frac{u}{3-\lambda m} (-K_E-\lambda C_m)-vF\Big)dvdu\Bigg)=\\
&= \frac{3}{(4-\lambda s)^3}\Bigg(\int_0^{4-\lambda s} \frac{u^3}{(3-\lambda m)^3} \int_0^\infty \mathrm{vol}\big(-(K_E+\lambda C_m)-vF\big)dvdu\Bigg)=\\
&= \frac{3}{(4-\lambda s)^3}\Bigg(\int_0^{4-\lambda s} \frac{u^3}{3-\lambda m} S_{(E,\lambda C_m)}(F)du\Bigg)= \\
& = \frac{3(4-\lambda s)}{4(3-\lambda m)} S_{(E,\lambda C_m)}(F)\le  \frac{3(4-\lambda s)}{4(3-\lambda m)}\cdot \frac{A_{(E,\lambda C_m)}(F)}{\delta_O(E,\lambda C_m)} =  \frac{3(4-\lambda s)}{4(3-\lambda m)}\cdot \frac{A_{(\DP^2,\lambda C_m)}(F)}{\delta_O(\DP^2,\lambda C_m)} 
\end{align*}}
Then $A_{(\DP^3,\lambda S)}(E)=3-\lambda m $, $S_{(\DP^3,\lambda S)}(E)=\frac{3(4-\lambda s)}{4}$ and:
 {\allowdisplaybreaks 
 \begin{align*}
\delta_P&(\DP^3,\lambda S)
\ge \min\Bigg\{\frac{A_{(\DP^3,\lambda S)}(E)}{S_{(\DP^3,\lambda S)}(E)},\text{ }\inf_{O\in E}
\delta_O(E,W^E_{\bullet,\bullet})\Bigg\}
\ge \min \Bigg\{\frac{A_{(\DP^3,\lambda S)}(E)}{S_{(\DP^3,\lambda S)}(E)},\text{ }\inf_{\substack{F/E,\\ O\in C_{\widetilde{\DP}^3}(F)}} \inf_{O\in E}
\frac{A_{(E,\lambda C_m)}(F)}{S(W^E; F)}\Bigg\}=\\
& = \min \Bigg\{\frac{A_{(\DP^3,\lambda S)}(E)}{S_{(\DP^3,\lambda S)}(E)},\text{ }\inf_{\substack{F/E,\\ O\in C_{\widetilde{\DP}^3}(F)}} \inf_{O\in E}
\frac{A_{(\DP^2,\lambda C_m)}(F)}{S(W^{\DP^2}; F)}\Bigg\}
\ge \min\Bigg\{\frac{A_{(\DP^3,\lambda S)}(E)}{S_{(\DP^3,\lambda S)}(E)},\text{ } \inf_{O\in E}
 \delta_O(\DP^2,\lambda C_m)\cdot \frac{4(3-\lambda m)}{3(4-\lambda s)}\Bigg\}=\\
& =\min\Bigg\{\frac{4(3-\lambda m)}{3(4-\lambda s)},\text{ } 
 \delta(\DP^2,\lambda C_m)\cdot \frac{4(3-\lambda m)}{3(4-\lambda s)}\Bigg\}
    \end{align*}}
\end{proof}
\noindent We reduced the problem of computing the $\delta$-invariant on the threefold to the problem of computing $\delta$-invariant on a surface. 
\begin{corollary}
        If $S_3$ has  ordinary double points ($\DA_1$-singularities) then $(\DP^3,\lambda S_3)$  is $K$-stable.
\end{corollary}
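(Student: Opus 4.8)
The plan is to prove that $\delta(\DP^3,\lambda S_3)>1$ for $\lambda$ in the relevant interval (with $\lambda>0$); by the criterion recalled from \cite{Fujita} this is equivalent to $K$-stability. Since $\delta(\DP^3,\lambda S_3)=\inf_{P\in\DP^3}\delta_P(\DP^3,\lambda S_3)$, it suffices to exhibit one constant $>1$ bounding $\delta_P$ from below at every point $P$. I would organise the argument by the position of $P$: first the ordinary double points of $S_3$, then the smooth points of $S_3$, and finally the points off $S_3$.

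The decisive case is $P\in\mathrm{Sing}(S_3)$. There $m=\mathrm{mult}_P S_3=2$, and since the singularity is of type $A_1$ its projectivised tangent cone in the exceptional $E\cong\DP^2$ is a \emph{smooth} conic $C_2$. Feeding $s=3$, $m=2$ and the Main Theorem value $\delta(\DP^2,\lambda C_2)=1$ (smooth conic, valid for $\lambda\in[0,\tfrac34]$) into the singular-point Lemma above gives
$$\delta_P(\DP^3,\lambda S_3)\ \geq\ \min\Big\{\tfrac{4(3-2\lambda)}{3(4-3\lambda)},\ \delta(\DP^2,\lambda C_2)\cdot\tfrac{4(3-2\lambda)}{3(4-3\lambda)}\Big\}=\frac{4(3-2\lambda)}{3(4-3\lambda)},$$
and $4(3-2\lambda)>3(4-3\lambda)$ reduces to $\lambda>0$, so this is strictly larger than $1$.

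For a smooth point $P\in S_3$ I would run the smooth-point Lemma with a \emph{general} plane $T$ through $P$. Then $C_3=S_3\cap T$ is a smooth plane cubic and, for general $T$, the point $P$ is not a flex; hence the \emph{local} invariant is the ordinary-point value $\delta_P(\DP^2,\lambda C_3)=\tfrac{3-2\lambda}{3-3\lambda}$ rather than the smaller flex value $\tfrac{4-3\lambda}{4-4\lambda}$ that the Lemma uses to conclude only $\delta_P\ge 1$. This upgrades the estimate to $\delta_P(\DP^3,\lambda S_3)\ge\tfrac{3-2\lambda}{3-3\lambda}\cdot\tfrac{4(3-3\lambda)}{3(4-3\lambda)}=\tfrac{4(3-2\lambda)}{3(4-3\lambda)}>1$, exactly the constant from the singular case. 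For $P\notin S_3$ the boundary is locally trivial; a general plane through $P$ meets $S_3$ in a smooth cubic avoiding $P$, and a direct computation (blow-up of $P$, or a line through $P$) shows that the same flag estimate produces a bound strictly larger than $\tfrac{4(3-2\lambda)}{3(4-3\lambda)}$, so these points are never the binding ones.

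Taking the infimum over all three families yields $\delta(\DP^3,\lambda S_3)\ge\tfrac{4(3-2\lambda)}{3(4-3\lambda)}>1$ for $\lambda\in(0,\tfrac34]$, whence $K$-stability. I expect the main obstacle to be the smooth-point case: the smooth-point Lemma on its own only gives $\delta_P\ge1$, so one must justify rigorously that a general plane through a fixed smooth point cuts out a cubic for which $P$ is an ordinary (non-flex) point, and that the relevant local $\delta_P(\DP^2,\lambda C_3)$ is genuinely the strictly larger value $\tfrac{3-2\lambda}{3-3\lambda}$; without this strict improvement one cannot exclude $\delta=1$. A secondary matter is keeping the $\lambda$-intervals compatible, since the smooth-conic equality $\delta(\DP^2,\lambda C_2)=1$ is only guaranteed on $[0,\tfrac34]$.
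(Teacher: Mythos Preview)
Your approach is the paper's own: the Corollary is stated there without a separate proof, as an immediate consequence of the smooth-point and singular-point Lemmas together with the Main Theorem value $\delta(\DP^2,\lambda C_2)=1$. You have, however, put your finger on a genuine difficulty that the paper glosses over: as written, the smooth-point Lemma yields only $\delta_P\geq 1$, so strict $K$-stability does not follow from the two Lemmas alone.

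Your proposed upgrade --- choose the plane $T$ through $P$ generically so that $P$ is not a flex of $C_3=S_3\cap T$ --- works at most smooth points but cannot be justified uniformly in $P$. At a smooth point where the second fundamental form of $S_3$ vanishes (equivalently, where the tangent-plane section $S_3\cap T_PS_3$ is three concurrent lines; on a smooth cubic these are exactly the classical Eckardt points), \emph{every} tangent line $L\subset T_PS_3$ through $P$ satisfies $(L\cdot S_3)_P\geq 3$. Hence for \emph{every} plane $T\neq T_PS_3$ through $P$ the curve $S_3\cap T$ has $P$ as an inflection point, and your bound stays at~$1$. Cubics with only $A_1$-singularities can certainly carry such points (already the smooth Fermat cubic has eighteen Eckardt points, and one can specialise). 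So the obstacle you correctly flagged is not merely a matter of writing down a rigorous genericity argument: the claim ``for general $T$, $P$ is not a flex'' is actually \emph{false} at these finitely many points, and a different divisorial centre (for instance a flag involving one of the lines $\ell\subset S_3$ through $P$, or a suitable weighted blow-up) is needed there to push the estimate strictly above~$1$.
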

\begin{corollary}
        If $S_4$ has $\DA_n$-singularities and  ordinary triple points then $(\DP^3,\lambda S_4)$  is $K$-stable.
\end{corollary}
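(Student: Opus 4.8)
My plan is to combine the two reduction Lemmas with the valuative criterion $\delta(\DP^3,\lambda S_4)>1\iff K\text{-stable}$ (via \cite{Fujita}) and the identity $\delta(\DP^3,\lambda S_4)=\inf_{P\in\DP^3}\delta_P(\DP^3,\lambda S_4)$, so that it suffices to bound $\delta_P$ strictly below by something exceeding $1$ at every $P$. I would split $\DP^3$ into four loci: points off $S_4$, smooth points of $S_4$, the $A_n$-points (where $m:=\mathrm{mult}_P S_4=2$), and the ordinary triple points (where $m=3$). At each locus the plane curve that the relevant Lemma feeds into the Main Theorem is, respectively, a smooth quartic section, a smooth quartic section, the projectivised tangent cone (a conic), and the projectivised tangent cone (a cubic), so every local bound is reduced, with $s=4$, to a value already tabulated in the Main Theorem.

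For a point off $S_4$ or a smooth point of $S_4$ I apply the first Lemma with a general plane $T\ni P$; then $C_s=C_4$ is a smooth quartic and the Main Theorem gives $\delta(\DP^2,\lambda C_4)=\frac34\cdot\frac{4-3\lambda}{3-4\lambda}$ (or $\frac35\cdot\frac{5-4\lambda}{3-4\lambda}$ in the $4$-tangent case). Substituting, the correction factor $\frac{4(3-4\lambda)}{3(4-4\lambda)}$ telescopes against the $\frac34$ (resp. $\frac35$) to leave $\frac{4-3\lambda}{4(1-\lambda)}$ (resp. $\frac{5-4\lambda}{5(1-\lambda)}$), which exceeds $1$ for $\lambda>0$; together with $A(T)/S(T)=\frac{1}{1-\lambda}>1$ this gives $\delta_P>1$. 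For an $A_1$-point the tangent cone is a smooth conic, so $\delta(\DP^2,\lambda C_2)=1$, and the second Lemma yields $\delta_P\ge\frac{3-2\lambda}{3(1-\lambda)}>1$. These two families are routine once the Main Theorem is granted.

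The crux is the strict inequality at the worse singularities, where the first-order (tangent-cone) estimate degenerates to exactly $1$, and I expect this to be the main obstacle. For an $A_n$-point with $n\ge2$ the tangent cone is a pair of lines, so $\delta(\DP^2,\lambda C_2)=\frac{3-3\lambda}{3-2\lambda}$ and the product in the second Lemma collapses to $\frac{3-3\lambda}{3(1-\lambda)}=1$; moreover the surface $\delta$-invariant of $C_2$ is attained precisely at the node of the two lines, so merely localising $\delta_O(\DP^2,\lambda C_2)$ cannot beat $1$. Since this tangent cone is identical for every $n\ge2$, gaining the strict inequality forces a finer computation: one must blow up further along the resolution of the $A_n$ and run an Abban--Zhuang estimate with a longer flag $E\supset\ell\supset O$, invoking the Remark for the high-$A_n$ plane sections. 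The genuinely immovable point is the ordinary triple point, where $A_{(\DP^3,\lambda S_4)}(E)=3-3\lambda$ and $S_{(\DP^3,\lambda S_4)}(E)=3(1-\lambda)$ give $A(E)/S(E)=1$ intrinsically to the exceptional divisor $E$ of the blow-up; no refinement of a flag below $E$ can raise this ratio. Consequently the triple-point case caps the invariant at $1$, so a fully rigorous argument must either confine the strict-stability hypotheses to singularities no worse than $A_n$ (with the triple-point case recorded as $K$-semistability), or produce a separate argument showing every valuation centred at the triple point gives $A/S$ strictly above $1$; verifying exactly which borderline contributions can be improved, and confirming no other valuation drops the infimum to $1$, is the heart of the proof.
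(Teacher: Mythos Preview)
Your approach coincides with the paper's: the Corollary is stated without proof and is meant to be read as an immediate consequence of the two reduction Lemmas together with the Main Theorem values. You follow exactly that route but inspect the inequalities more carefully than the paper does, and the borderline behaviour you isolate is real.

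For $s=4$ the factor $\tfrac{4(3-\lambda m)}{3(4-\lambda s)}$ in Lemma~1.2 becomes $\tfrac{3-\lambda m}{3(1-\lambda)}$. At an ordinary triple point ($m=3$) this is identically $1$, and since the exceptional divisor $E$ of the point blow-up is itself a valuation centred at $P$ with $A(E)/S(E)=1$, one gets $\delta_P(\DP^3,\lambda S_4)=1$ exactly; so the pair is only $K$-semistable once $S_4$ carries such a point, and the Corollary as written is overstated. At an $\DA_n$-point with $n\ge 2$ ($m=2$, tangent cone two lines) the product in the Lemma also collapses to $1$, but here $A(E)/S(E)=\tfrac{3-2\lambda}{3(1-\lambda)}>1$, so the Lemma is merely inconclusive rather than sharp; strict stability at these points could in principle be recovered by a finer flag, though the paper does not supply one. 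The companion Corollaries for $S_5$ and $S_6$ do go through strictly, since $\tfrac{4(3-\lambda m)}{3(4-\lambda s)}>1$ for $s\ge 5$; it is precisely the coincidence $s=4$ that makes the quartic case borderline. In short, your analysis is correct and more thorough than the paper's, and it shows that the stated Corollary does not follow in full strength from the paper's own Lemmas.
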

\begin{corollary}
        If $S_5$ has $\DA_n$-singularities and  ordinary triple points then $(\DP^3,\lambda S_5)$  is $K$-stable.
\end{corollary}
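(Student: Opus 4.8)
The plan is to establish $\delta(\DP^3,\lambda S_5)>1$, which by Fujita's criterion is equivalent to $K$-stability, by bounding $\delta_P(\DP^3,\lambda S_5)$ from below at every $P$ and checking each bound is $>1$ on a common interval. Since $\delta(\DP^3,\lambda S_5)=\inf_P\delta_P(\DP^3,\lambda S_5)$ and $S_5$ has only finitely many (isolated, hence normal) singular points, it suffices to produce, for each point type---points off $S_5$, smooth points of $S_5$, points of type $A_n$, and ordinary triple points---an explicit lower bound in $\lambda$ that is strictly greater than $1$ on a common subinterval of $(0,4/5)$.

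\emph{Smooth and off-curve points.} At a smooth point $P\in S_5$ I would apply the first Lemma but sharpen its conclusion from $\geq 1$ to a strict bound. Running its proof with $s=5$ gives $\delta_P(\DP^3,\lambda S_5)\geq\min\{\tfrac{4}{4-5\lambda},\ \delta_P(\DP^2,\lambda C_5)\cdot\tfrac{4(3-5\lambda)}{3(4-5\lambda)}\}$, where $C_5=S_5\cap T$ is the plane quintic cut by a general plane $T$ through $P$. Because the Main Theorem does not cover degree $5$, I would estimate $\delta_P(\DP^2,\lambda C_5)$ directly from a line-and-point flag through the smooth point $P$, obtaining $\delta_P(\DP^2,\lambda C_5)\geq\tfrac{3(1-\lambda)}{3-5\lambda}$; substituting yields $\delta_P(\DP^3,\lambda S_5)\geq\tfrac{4(1-\lambda)}{4-5\lambda}$, which exceeds $1$ exactly for $\lambda>0$. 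This estimate requires $3-5\lambda>0$, i.e. $\lambda<3/5$, which will be the binding constraint. For $P\notin S_5$ the same flag, now with the boundary disjoint from $P$, gives the larger bound $\tfrac{4}{4-5\lambda}>1$.

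\emph{Singular points.} Here I would invoke the second Lemma, which reduces $\delta_P(\DP^3,\lambda S_5)$ to the surface invariant $\delta(\DP^2,\lambda C_m)$ of the projectivized tangent cone $C_m$, with $m=\mathrm{mult}_P S_5$. The key geometric input is the identification of $C_m$: an $A_1$ point ($m=2$) has a smooth conic tangent cone; an $A_n$ point with $n\geq 2$ ($m=2$) has tangent cone a pair of distinct lines; and an ordinary triple point ($m=3$) has a smooth cubic tangent cone. Feeding the corresponding Main Theorem values into $\delta(\DP^2,\lambda C_m)\cdot\tfrac{4(3-\lambda m)}{3(4-5\lambda)}$ and taking the minimum with the accompanying factor $\tfrac{4(3-\lambda m)}{3(4-5\lambda)}$ produces, after simplification, $\tfrac{4(3-2\lambda)}{3(4-5\lambda)}$ for $A_1$, $\tfrac{4(1-\lambda)}{4-5\lambda}$ for $A_n$ with $n\geq 2$, and $\tfrac{4(1-\lambda)}{4-5\lambda}$ for the ordinary triple point (where one uses the smooth-cubic value $\tfrac{4-3\lambda}{4-4\lambda}$, the cubic carrying a flex tangent, so that $\tfrac{4-3\lambda}{4-5\lambda}$ is the larger of the two terms in the minimum). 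One then checks each bound is $>1$ for $\lambda>0$, and that the Main Theorem ranges for $C_2$ and $C_3$ comfortably contain $(0,3/5)$.

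\emph{Conclusion and main obstacle.} Taking the minimum over the finitely many bounds gives $\delta(\DP^3,\lambda S_5)\geq\min\{\tfrac{4(1-\lambda)}{4-5\lambda},\ \tfrac{4(3-2\lambda)}{3(4-5\lambda)}\}=\tfrac{4(1-\lambda)}{4-5\lambda}>1$ for $\lambda\in(0,3/5)$, whence $(\DP^3,\lambda S_5)$ is $K$-stable. I expect the main obstacle to be the smooth-point case rather than the singular ones: since plane quintics lie outside the Main Theorem, one is forced into the crude line-and-point estimate, whose validity (needing $3-5\lambda>0$) is precisely what caps the interval at $3/5$ and what must be exploited to upgrade the first Lemma's $\geq 1$ to a strict inequality. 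A secondary point requiring care is the correct reading of the tangent cones and of which smooth-cubic branch of the Main Theorem applies, so that the surface $\delta$-values substituted into the second Lemma are the genuine global infima over $O\in E$.
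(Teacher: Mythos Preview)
Your proposal is correct and follows the same architecture the paper intends: Lemma~1 for smooth points, Lemma~2 for singular points, with the Main Theorem supplying $\delta(\DP^2,\lambda C_m)$ for the tangent cones $C_2$ (smooth conic for $A_1$, two lines for $A_{n\ge 2}$) and $C_3$ (smooth cubic for an ordinary triple point). Your identifications of the tangent cones and of the relevant Main-Theorem branch (a smooth plane cubic over $\DC$ always carries a flex, so $\delta=\tfrac{4-3\lambda}{4-4\lambda}$) are right, and your final bound $\delta_P\ge\tfrac{4(1-\lambda)}{4-5\lambda}>1$ is correct on $(0,\tfrac{3}{5})$.

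Where you go beyond the paper is at the smooth points. The paper states the corollary without proof, and its Lemma for smooth points only asserts $\delta_P(\DP^3,\lambda S)\ge 1$ while tacitly needing $\delta_P(\DP^2,\lambda C_s)$; for $s=5$ this is outside the Main Theorem. Your line-and-point flag estimate $\delta_P(\DP^2,\lambda C_5)\ge\tfrac{3(1-\lambda)}{3-5\lambda}$ (valid for $\lambda<\tfrac{3}{5}$) is exactly the missing ingredient, and plugging it into the Lemma upgrades $\ge 1$ to the strict inequality $\tfrac{4(1-\lambda)}{4-5\lambda}>1$ required for $K$-stability. You also explicitly handle points $P\notin S_5$, which the paper does not single out. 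So your argument is the same in spirit but strictly more complete.
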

\begin{corollary}
        If $S_6$ has $\DA_n$-singularities,   ordinary triple points, or ordinary quadruple points then $(\DP^3,\lambda S_6)$  is $K$-stable.
\end{corollary}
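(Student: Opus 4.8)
The plan is to invoke Fujita's criterion: for $\lambda$ in the asserted range the pair $(\mathbb{P}^3,\lambda S_6)$ is log Fano, so it is $K$-stable as soon as $\delta(\mathbb{P}^3,\lambda S_6)>1$, and since $\delta=\inf_P\delta_P$ it suffices to prove $\delta_P(\mathbb{P}^3,\lambda S_6)>1$ at every point $P$. I would run one uniform argument over all $P$: blow $P$ up as in the preceding Lemma, set $m=\mathrm{mult}_P S_6\in\{0,1,2,3,4\}$, let $C_m\subset E\cong\mathbb{P}^2$ be the projectivized tangent cone, and apply (the same computation gives it verbatim for every $m\geq 0$) the inequality
\[
\delta_P(\mathbb{P}^3,\lambda S_6)\ \geq\ \min\Big\{\tfrac{4(3-\lambda m)}{3(4-6\lambda)},\ \delta(\mathbb{P}^2,\lambda C_m)\cdot\tfrac{4(3-\lambda m)}{3(4-6\lambda)}\Big\}.
\]
Everything then reduces to substituting the Main Theorem value of $\delta(\mathbb{P}^2,\lambda C_m)$ and checking that both terms in the minimum exceed $1$.

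The core is a case check by multiplicity. For $P\notin S_6$ one has $m=0$, $C_0=\emptyset$, $\delta(\mathbb{P}^2,0)=1$, and both terms equal $\tfrac{4}{4-6\lambda}>1$. At a smooth point of $S_6$, $m=1$ and $C_1$ is a line, so $\delta(\mathbb{P}^2,\lambda C_1)=\tfrac{3(1-\lambda)}{3-\lambda}$ and the two terms become $\tfrac{4(3-\lambda)}{3(4-6\lambda)}$ and $\tfrac{4(1-\lambda)}{4-6\lambda}$. At an $A_n$ point, $m=2$ and $C_2$ is a smooth conic ($n=1$) or two distinct lines ($n\geq2$); using the smaller value $\tfrac{3-3\lambda}{3-2\lambda}$ as a safe lower bound for $\delta(\mathbb{P}^2,\lambda C_2)$ gives the binding term $\tfrac{12-12\lambda}{12-18\lambda}$. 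At an ordinary triple point, $m=3$ and $C_3$ is a smooth plane cubic, producing $\tfrac{12-12\lambda}{12-18\lambda}$ together with $\tfrac{12-8\lambda}{12-18\lambda}$ or $\tfrac{4-3\lambda}{4-6\lambda}$. At an ordinary quadruple point, $m=4$ and $C_4$ is a smooth plane quartic, for which the smaller (``$4$-tangent'') value $\tfrac{3}{5}\cdot\tfrac{5-4\lambda}{3-4\lambda}$ yields $\tfrac{12-16\lambda}{12-18\lambda}$ and $\tfrac{4(5-4\lambda)}{5(4-6\lambda)}$. In every case each term exceeds $1$ precisely because $\lambda>0$ (e.g.\ $12-16\lambda>12-18\lambda$), so $\delta_P>1$ throughout and $K$-stability follows.

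A structural remark I would stress is that a single blow-up already settles all $A_n$ simultaneously: the Lemma depends on $P$ only through the projectivized tangent cone $C_m$ and the \emph{global} surface invariant $\delta(\mathbb{P}^2,\lambda C_m)$, and for every $n\geq2$ this tangent cone is the same pair of lines, whose $\delta$ is independent of $n$. One therefore never resolves the singularity; the worst infinitely-near valuation is already absorbed into $\delta(\mathbb{P}^2,\lambda C_2)$, which sees the node of the two lines. The same mechanism lets $m=0$ and $m=1$ cover points off $S_6$ and smooth points of $S_6$ without any separate estimate.

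I expect the only genuine difficulty to be geometric identification and interval bookkeeping rather than computation. One must justify that ``ordinary triple/quadruple point'' forces $C_m$ to be a \emph{smooth} cubic/quartic, so that the smooth-curve rows of the Main Theorem apply, and that the $A_n$ tangent cone degenerates to two lines exactly when $n\geq2$; and one must confirm that the relevant range of $\lambda$ — the intersection of $(0,\tfrac{2}{3})$, $(0,\mathrm{lct}(\mathbb{P}^3,S_6))$, and the Main Theorem interval for each $C_m$ (each containing $(0,\tfrac{2}{3})$) — is exactly the asserted one, on which $4-6\lambda>0$ and $3-m\lambda>0$ both hold. Once the tangent cones are pinned down, the inequalities are the short uniform checks above, each collapsing to $\lambda>0$.
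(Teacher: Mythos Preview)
Your proposal is correct and follows the paper's intended route: the corollary is meant to drop out of Lemma~1.2 together with the Main Theorem values for $\delta(\mathbb{P}^2,\lambda C_m)$, exactly as you do case by case for $m\in\{0,1,2,3,4\}$. Your identification of the tangent cones ($A_1\Rightarrow$ smooth conic, $A_n$ with $n\ge2\Rightarrow$ two distinct lines, ordinary $m$-point $\Rightarrow$ smooth degree-$m$ curve) and the ensuing inequalities are all right, and the interval bookkeeping is fine since every relevant Main Theorem interval contains $(0,\tfrac{2}{3})$.

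The one genuine difference from the paper is organizational. The paper splits off smooth points of $S_6$ and handles them with the hyperplane-section Lemma~1.1, reserving the blow-up Lemma~1.2 for singular points; you instead run Lemma~1.2 uniformly for all $m\ge 0$. Your variant is a small improvement: it yields the strict inequality $\delta_P>1$ directly at smooth points (the paper's Lemma~1.1 only states $\ge 1$), and it keeps every auxiliary curve $C_m$ of degree at most $4$, so the Main Theorem applies verbatim, whereas Lemma~1.1 implicitly requires the smooth-point $\delta_P$ for a degree-$6$ plane section, which the Main Theorem does not state. The trade-off is nil here, so your uniform approach is the cleaner packaging of the same idea.
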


\begin{remark}
    In fact the main theorem covers different types of singularities as well, but due to the lack of the specific name we do not include them.
\end{remark}
\subsubsection{Cubic Threefolds}
  Let  $S_3$ be a surface in $\DP^3$ defined by a cubic polynomial $f(x,y,z)=0$. Consider the equation $w^3=f(x,y,z)$.
       It defines a cubic threefold $X$ together with a triple cover $$\iota:X\xrightarrow{3:1} \DP^3$$
       ramified in $S_3$.  We have $\iota^*(K_{\DP^3}+\frac{2}{3}S_3)=K_X$. Thus, by \cite{Dervan16} $X$ is $K$-stable if and only if $(\DP^3,\frac{2}{3}S_3)$ is $K$-stable. 
      \begin{corollary}
        If $S_3$ has ordinary double points then $X$  is $K$-stable.
\end{corollary}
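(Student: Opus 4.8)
The plan is to use the reduction recorded just above: by \cite{Dervan16} the threefold $X$ is $K$-stable if and only if the pair $(\DP^3,\frac{2}{3}S_3)$ is $K$-stable, and by \cite{Fujita} the latter is equivalent to $\delta(\DP^3,\frac{2}{3}S_3)>1$. Writing $\delta(\DP^3,\frac{2}{3}S_3)=\inf_P\delta_P(\DP^3,\frac{2}{3}S_3)$, I would produce a uniform bound $\delta_P>1$. Away from $S_3$ the boundary is locally trivial and the anticanonical polarization is $-K_{\DP^3}-\frac{2}{3}S_3=\MO_{\DP^3}(2)$, which gives $\delta_P>1$ there as well, so the real work is at points of $S_3$, which I split into the ordinary double points and the smooth points and attack with the two lemmas above, taking $s=3$ and $\lambda=\frac{2}{3}$.

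At an ordinary double point I would apply the singular-point lemma. Such a point is an $A_1$ singularity, so $m=\mathrm{mult}_PS_3=2$ and the projectivised tangent cone $C_m\subset E\cong\DP^2$ is a smooth conic; since $\frac{2}{3}\in[0,\frac{3}{4}]$, the main theorem gives $\delta(\DP^2,\frac{2}{3}C_2)=1$. The lemma then yields
\[
\delta_P\!\left(\DP^3,\tfrac{2}{3}S_3\right)\ \ge\ \min\!\left\{\tfrac{4(3-\frac{2}{3}\cdot2)}{3(4-\frac{2}{3}\cdot3)},\ 1\cdot\tfrac{4(3-\frac{2}{3}\cdot2)}{3(4-\frac{2}{3}\cdot3)}\right\}\ =\ \tfrac{10}{9}\ >\ 1 .
\]

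At a smooth point I would apply the smooth-point lemma with $T$ a general plane through $P$, so that $C_3=T\cap S_3$ is a smooth plane cubic and $P$ is an ordinary (non-flex) point of it; the relevant local value is then the generic one coming from the smooth-cubic line of the main theorem, $\delta_P(\DP^2,\frac{2}{3}C_3)=\frac{3-2\cdot\frac{2}{3}}{3-3\cdot\frac{2}{3}}=\frac{5}{3}$. The lemma gives
\[
\delta_P\!\left(\DP^3,\tfrac{2}{3}S_3\right)\ \ge\ \min\!\left\{\tfrac{4}{4-2},\ \tfrac{5}{3}\cdot\tfrac{4(3-2)}{3(4-2)}\right\}\ =\ \min\!\left\{2,\tfrac{10}{9}\right\}\ =\ \tfrac{10}{9}\ >\ 1 .
\]
Taking the infimum over all $P$ gives $\delta(\DP^3,\frac{2}{3}S_3)\ge\frac{10}{9}>1$, hence $(\DP^3,\frac{2}{3}S_3)$ and therefore $X$ is $K$-stable.

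The step I expect to be the main obstacle is obtaining a \emph{strict} bound at the smooth points. The smooth-point lemma in general yields only $\delta_P\ge1$, and if one inserted the worst-case global value $\frac{4-3\lambda}{4-4\lambda}=\frac{3}{2}$ for a smooth cubic carrying a $3$-tangent, the product would be exactly $\frac{3}{2}\cdot\frac{2}{3}=1$. The point is therefore to verify that for a general plane $T$ through $P$ the point $P$ is not an inflection point of the section $C_3$—so that the genuinely local invariant $\delta_P(\DP^2,\frac{2}{3}C_3)$ equals the generic value $\frac{5}{3}$ rather than the flex value $\frac{3}{2}$—which is exactly what upgrades $\ge1$ to $>1$. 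This amounts to checking that prescribing a smooth base point does not force the cutting plane to be an inflectional tangent, i.e.\ that the tangent line $T\cap T_PS_3$ meets $S_3$ with contact order $2$ for generic $T$.
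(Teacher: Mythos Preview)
Your approach matches the paper's: reduce to $(\DP^3,\tfrac{2}{3}S_3)$ via \cite{Dervan16} and then bound $\delta_P$ at each point using the smooth-point and singular-point lemmas with $s=3$, $\lambda=\tfrac{2}{3}$, plugging in the main theorem's values for $\delta(\DP^2,\lambda C_m)$. Your singular-point computation ($m=2$, $C_2$ a smooth conic, $\delta(\DP^2,\tfrac{2}{3}C_2)=1$, bound $\tfrac{10}{9}$) agrees with the paper.

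You are in fact more careful than the paper at smooth points. The smooth-point lemma as stated only yields $\delta_P\ge 1$, and indeed the inflectional value $\delta_P(\DP^2,\tfrac{2}{3}C_3)=\tfrac{3}{2}$ collapses the product $\tfrac{3}{2}\cdot\tfrac{2}{3}$ to exactly $1$; the paper passes from this to the K-stable corollary without further comment. Your observation that for a \emph{general} plane $T$ through $P$ the tangent line $\ell_T=T\cap T_PS_3$ is a general member of the pencil of lines through $P$ in $T_PS_3$---hence not among the finitely many tangent directions of the singular curve $T_PS_3\cap S_3$ at $P$, so that $\ell_T\cdot S_3=2P+Q$ with $Q\neq P$ and $P$ is not a flex of $C_3=T\cap S_3$---is correct and is precisely what upgrades the bound to $\tfrac{10}{9}>1$. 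Together with Bertini (which ensures $C_3$ is smooth for general $T$ through $P$, since the base locus is the single smooth point $P$ and $T\neq T_PS_3$), this makes the strict K-stability conclusion rigorous. So your proof is the paper's argument with the strictness gap explicitly closed.
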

       \begin{remark}
          This corollary also follows from the stronger statement proved in  \cite{XuLiu}. The authors described the $K$-moduli of Cubic Threefolds by proving that they coincide with $GIT$-moduli. Then using the classification in \cite{Allcock03} they obtained the explicit list of $K$-stable cubic threefolds which included all smooth cubic threefolds, cubic threefolds with isolated $\DA_n$-singularities ($n\le 4$) and $2$ more types of cubic threefolds which can be looked up in their work \cite[Corollary 1.2]{XuLiu}.
       \end{remark}

\subsubsection{Quartic Double  Solids}
    Consider a  quartic double solid $X$ which is a double cover $\DP^3$ branched along a  quartic surface $S_4$:
$$\iota: X\xrightarrow{2:1}\DP^3$$
We have $\iota^*(K_{\DP^3}+\frac{1}{2}S_4)=K_X$. Thus, by \cite{Dervan16} $X$ is $K$-stable if and only if $(\DP^3,\frac{1}{2}S_4)$ is $K$-stable.
    \begin{corollary}
        If $S_4$ has $\DA_n$-singularities and  ordinary triple points then $X$  is $K$-stable.
    \end{corollary}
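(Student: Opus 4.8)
The plan is to reduce the statement to the $\lambda=\tfrac12$ instance of the Corollary on $(\DP^3,\lambda S_4)$ established in the preceding subsection. By the crepant identity $K_X=\iota^*(K_{\DP^3}+\tfrac12 S_4)$ recorded above and Dervan's criterion \cite{Dervan16}, the threefold $X$ is $K$-stable if and only if the log Fano pair $(\DP^3,\tfrac12 S_4)$ is $K$-stable, and by \cite{Fujita} this in turn is equivalent to $\delta(\DP^3,\tfrac12 S_4)>1$. So the entire question becomes a single value of the parameter in the earlier Corollary. The first thing I would check is that $\tfrac12$ is admissible: for $s=4$ one has $4/s=1$, and since the allowed singularities ($A_n$ double points and ordinary triple points) all satisfy $\mathrm{lct}_P(\DP^3,S_4)=1$, the pair $(\DP^3,\tfrac12 S_4)$ is klt with $\tfrac12\in\big(0,\min\{4/s,\mathrm{lct}(\DP^3,S_4)\}\big)$, so $\delta$ is indeed the right invariant.

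The substance is the estimate $\delta(\DP^3,\tfrac12 S_4)=\inf_P\delta_P(\DP^3,\tfrac12 S_4)>1$, which I would obtain by stratifying $\DP^3$. Away from $\mathrm{Sing}(S_4)$ (including points off $S_4$) I would take a general plane $T\ni P$ and apply the smooth-point Lemma: the section $C=T\cap S_4$ is a quartic smooth at $P$, the reduction factor is $\tfrac{4(3-\lambda s)}{3(4-\lambda s)}=\tfrac23$ and $A_T/S_T=2$, so together with the Main Theorem values $\delta_P(\DP^2,\tfrac12 C)\in\{\tfrac{15}{8},\tfrac95\}$ this gives $\delta_P\ge\min\{2,\tfrac54\}=\tfrac54>1$ (resp.\ $\tfrac65$). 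At a double point $P$ with $m=\mathrm{mult}_P S_4=2$ I would blow up and apply the singular-point Lemma, whose tangent-cone curve $C_2$ is a smooth conic for an $A_1$ point and a pair of distinct lines for $A_n$ with $n\ge2$; here $A_E/S_E=\tfrac{4(3-\tfrac12\cdot 2)}{3(4-2)}=\tfrac43$, while the Main Theorem gives $\delta(\DP^2,\tfrac12 C_2)=1$ respectively $\tfrac34$. At an ordinary triple point $P$ the tangent cone consists of three general lines, so $\delta(\DP^2,\tfrac12 C_3)=1$ by the Main Theorem.

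Feeding these inputs into the singular-point Lemma, the $A_1$ points are comfortable, giving $\delta_P\ge\tfrac43$; for $A_n$ with $n\ge2$ the crude two-term bound only reads $\delta_P\ge\min\{\tfrac43,\tfrac34\cdot\tfrac43\}=1$, but since the divisorial value $A_E/S_E=\tfrac43$ strictly exceeds $1$ the deficit is localised at the node of the two tangent lines, and I would remove it with one further Abban--Zhuang refinement (a second blow-up centred at that node, fed by the local $\delta$ of the line pair), which I expect upgrades the estimate to a strict inequality. The hard part will be the ordinary triple point: there the divisorial ratio $\tfrac{4(3-\lambda m)}{3(4-\lambda s)}$ collapses to $1$ identically for $(m,s)=(3,4)$ and every $\lambda$, so the blow-up divisor $E$ is an exactly tight test valuation and the simple estimate saturates at $\delta_P\ge 1$. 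Extracting a strict bound in this extremal configuration is far more delicate than in the other cases, because the saturation comes from $E$ itself rather than from a deeper flag; any improvement must therefore draw on a finer description of the Zariski-chamber structure of $-\pi^*(K_{\DP^3}+\tfrac12 S_4)-uE$ and on the precise geometry of the three tangent planes, and this is the step I expect to be the genuine crux of the argument.
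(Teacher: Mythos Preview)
Your opening reduction---invoke Dervan's criterion to pass from $X$ to $(\DP^3,\tfrac12 S_4)$ and then appeal to the preceding Corollary on $(\DP^3,\lambda S_4)$---is exactly the paper's argument; the paper records nothing beyond that citation, so at the level of what is actually written you have reproduced its proof.

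Where you go further and unwind that earlier Corollary via the singular-point Lemma, your computations are correct and the difficulty you isolate is genuine. For $A_{n\ge 2}$ points (tangent cone two distinct lines) one has
\[
\frac{4(3-2\lambda)}{3(4-4\lambda)}\cdot\delta(\DP^2,\lambda C_2)=\frac{3-2\lambda}{3(1-\lambda)}\cdot\frac{3-3\lambda}{3-2\lambda}\equiv 1,
\]
and at any triple point the divisorial factor $\tfrac{4(3-3\lambda)}{3(4-4\lambda)}\equiv 1$, so in both families the Lemma's two-term bound yields only $\delta_P\ge 1$, not $>1$. The paper does not supply the extra argument needed to promote this to a strict inequality, so the gap you diagnose in your own attempt is equally present in the paper's treatment: as stated, the combination of the two Lemmas with the Main Theorem delivers K-semistability at these points, and the claimed K-stability requires further work that neither you nor the paper carries out. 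One minor terminological point: for a surface, ``ordinary triple point'' usually means the projectivized tangent cone $C_3$ is a \emph{smooth} plane cubic rather than three general lines, so in fact $\delta(\DP^2,\tfrac12 C_3)>1$; but since the saturation comes from the divisorial term $A(E)/S(E)=1$ rather than from the surface $\delta$, this does not rescue the bound.
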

    \begin{remark}
           If $S_4$ is smooth then $X$ is a smooth Quartic Double  Solid (Fano threefold in Family 1-12). Every smooth element in this family is known to be $K$-stable  \cite{Fano21, Dervan16}. This corollary  follows from the stronger statement proved in \cite{ADVL23} where singular Del Pezzo Threefolds of degree $2$ were studied.  The authors of this paper described $K$-moduli of Quartic Double  Solids. It follows from  \cite{ADVL23, Shah81} that $X$ is $K$-polystable if and only if the quartic surface $S_4$ is $GIT$-polystable with respect to natural action $PGL(4)$  except for those of the form $(x_0x_2+x_1^2+x_3^2)^2+a\cdot x_3^4$ for $a\in\DC$.
       \end{remark}

\subsubsection{Sextic Double  Solids}
    Consider a  sextic double solid $X$ which is a double cover $\DP^3$ branched along a  sextic surface $S_6$:
$$\iota: X\xrightarrow{2:1}\DP^3$$
We have $\iota^*(K_{\DP^3}+\frac{1}{2}S_6)=K_X$. Thus,  by \cite{Dervan16} $X$ is $K$-stable if and only if $(\DP^3,\frac{1}{2}S_6)$ is $K$-stable.
     \begin{corollary}
        If $S_6$ has $\DA_n$-singularities,   ordinary triple points, or ordinary quadruple points then $X$  is $K$-stable.
\end{corollary}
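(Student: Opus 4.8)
The plan is to transfer the question from the Fano threefold $X$ to the log Fano pair $(\DP^3,\frac12 S_6)$, which is precisely the kind of object controlled by the two Lemmas and the Main Theorem above. Since $\iota\colon X\to\DP^3$ is the double cover branched along $S_6$ and $\iota^*(K_{\DP^3}+\frac12 S_6)=K_X$, the criterion of \cite{Dervan16} identifies the $K$-stability of $X$ with that of $(\DP^3,\frac12 S_6)$, and by Fujita's characterisation it suffices to prove $\delta(\DP^3,\frac12 S_6)>1$. First I would confirm that $\lambda=\frac12$ is admissible, i.e. lies in $\big(0,\min\{\frac{4}{6},\mathrm{lct}(\DP^3,S_6)\}\big)$: indeed $\frac46=\frac23>\frac12$, and for $A_n$ points, ordinary triple points and ordinary quadruple points the local log canonical thresholds of $S_6$ are $1,\,1,\,\frac34$, so $\mathrm{lct}(\DP^3,S_6)\ge\frac34>\frac12$. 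Because $\delta=\inf_P\delta_P$, everything then reduces to showing $\delta_P(\DP^3,\frac12 S_6)>1$ for every $P\in\DP^3$.

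For a singular point $P$ of multiplicity $m$ I would apply the second Lemma. With $s=6$, $\lambda=\frac12$ (so $4-\lambda s=1$) its bound reads $\delta_P\ge\min\big\{\frac{4(3-\lambda m)}{3},\,\delta(\DP^2,\frac12 C_m)\cdot\frac{4(3-\lambda m)}{3}\big\}$, where $C_m\subset E\cong\DP^2$ is the projectivised tangent cone, a plane curve of degree $m$. The hypotheses pin down $m\in\{2,3,4\}$ and force $C_m$ to be mild, so the Main Theorem supplies $\delta(\DP^2,\frac12 C_m)$ on the nose: an $A_n$ point gives $m=2$ with $C_2$ a smooth conic or a pair of lines ($\delta\ge\frac34$); an ordinary triple point gives $m=3$ with $C_3$ smooth or a triangle of lines ($\delta\ge1$); and an ordinary quadruple point gives $m=4$ with $C_4$ smooth ($\delta\in\{\frac{15}{8},\frac95\}$). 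Substituting yields $\delta_P\ge2$, $\delta_P\ge2$ and $\delta_P\ge\frac43$ respectively, all strictly above $1$.

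The remaining points — smooth points of $S_6$ and points off $S_6$ — are where I expect the only real technical obstacle, caused by the boundary value $\lambda=\frac12=\frac36$: there the restricted surface pair $(\DP^2,\frac12 C_6)$ is log Calabi--Yau rather than log Fano, so the clean reduction factor $\frac{4(3-\lambda s)}{3(4-\lambda s)}$ of the first Lemma degenerates to $0$ and cannot be quoted verbatim. I would instead recompute the refined expected vanishing order directly from its integral definition: with $T|_T=H=\MO(1)$ and $4-\lambda s=1$ one finds $S(W^T_{\bullet,\bullet};F)=3\int_0^1(1-u)^3\,du\cdot S_H(F)=\frac34\,S_H(F)$, where the $S$-invariant is now taken with respect to $H$, and hence $\delta_P(T,W^T_{\bullet,\bullet})=\frac43\,\delta_P\big((\DP^2,\frac12 C_6);H\big)$.

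To finish I would feed in $\delta(\DP^2,H)=3\,\delta(\DP^2,-K_{\DP^2})=3$ (rescaling the polarisation). Off $S_6$ the curve misses $P$, so $\delta_P\big((\DP^2,\frac12 C_6);H\big)=\delta_P(\DP^2,H)\ge3$ and $\delta_P(T,W^T_{\bullet,\bullet})\ge4$. At a smooth point of $S_6$ the pair $(\DP^2,C_6)$ is log canonical at $P$, so $\mathrm{ord}_F(C_6)\le A_{\DP^2}(F)$ and the numerator obeys $A_{\DP^2}(F)-\frac12\mathrm{ord}_F(C_6)\ge\frac12 A_{\DP^2}(F)$; this gives $\delta_P\big((\DP^2,\frac12 C_6);H\big)\ge\frac12\delta(\DP^2,H)=\frac32$ and $\delta_P(T,W^T_{\bullet,\bullet})\ge2$. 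Combining with $A(T)/S(T)=4$ in both cases yields $\delta_P\ge2>1$. Together with the singular-point estimates this proves $\delta(\DP^3,\frac12 S_6)>1$, whence $X$ is $K$-stable. (Equivalently, once the companion Corollary giving $K$-stability of $(\DP^3,\lambda S_6)$ is granted, the statement is immediate by setting $\lambda=\frac12$ in the Dervan reduction.)
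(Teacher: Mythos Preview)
Your argument is correct and runs along the same lines as the paper: Dervan's criterion reduces $K$-stability of $X$ to that of $(\DP^3,\tfrac12 S_6)$, and this is then handled by combining the two preparatory Lemmas with the Main Theorem values of $\delta(\DP^2,\tfrac12 C_m)$ for $m\in\{2,3,4\}$. In the paper the corollary is simply recorded as a consequence of the companion corollary for $(\DP^3,\lambda S_6)$, with no further computation.

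The one place where you go beyond the paper is precisely the issue you flag: at $s=6$, $\lambda=\tfrac12$ one has $3-\lambda s=0$, so the rescaling in the smooth-point Lemma through $-(K_T+\lambda C_s)$ degenerates and the displayed bound $\delta_P(\DP^2,\lambda C_s)\cdot\tfrac{4(3-\lambda s)}{3(4-\lambda s)}$ cannot be quoted as written. Your direct evaluation $S(W^T_{\bullet,\bullet};F)=\tfrac34\,S_H(F)$ with $H=\MO_{\DP^2}(1)$, together with the log-canonicity estimate $A_{(\DP^2,\frac12 C_6)}(F)\ge\tfrac12 A_{\DP^2}(F)$ at smooth points of $C_6$ and $\delta(\DP^2;H)=3$, cleanly yields $\delta_P\ge 2$ at smooth points of $S_6$ and off $S_6$; this makes explicit what the paper treats as an unexamined limiting case. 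Your singular-point numerics ($\delta_P\ge 2,\,2,\,\tfrac43$ for $m=2,3,4$) agree with the intended use of the second Lemma and the Main Theorem.
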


 \subsubsection{Complete intersection of a quadric and a cubic in $\DP^5$} Another possible application is the following.  Let $X$ be the complete intersection of a quadric $X_2$ and a cubic $X_3$ given by a cubic polynomial $w^3=f_3(x,y,z,t,u)$.  Then $X$ is a  Fano threefold in Family 1-3 in a Mori-Mukai Classification, which is a triple cover of the quadric threefold $Y$ in $\DP^4$ branched over an anticanonical surface $S$:
 $$\iota: X\xrightarrow{3:1} Y$$
 We assume $Y$ to be smooth and $S$ to be reduced. By \cite{Dervan16} $X$ is $K$-stable if and only if $(Y,\frac{2}{3}S)$ is $K$-stable. Consider log Fano pairs $(Y,\lambda S)$.  Let $P\in S$ be the point on $S$. Consider the point blowup $\pi:\widetilde{Y}\to Y$ at this point with the exceptional divisor $E\cong\DP^2$ and let $m=\mathrm{mult}_PS$. Then
  \begin{lemma}
     $$\delta_P(X,\lambda S) \ge \min\Bigg\{\frac{3-\lambda m}{3(1-\lambda )},\text{ } \frac{4(3-\lambda m)}{15 - 9 \lambda - 2 \lambda m},\text{ }
 \delta(\DP^2,\lambda C_m)\cdot \frac{4(3-\lambda m)}{9(1-\lambda )}\Bigg\}$$
  \end{lemma}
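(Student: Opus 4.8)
The plan is to imitate the two preceding lemmas and run the Abban--Zhuang method on the point blow-up $\pi\colon\widetilde Y\to Y$, with exceptional divisor $E\cong\DP^2$, writing $m=\mathrm{mult}_P S$ and letting $C_m=\widetilde S\vert_E\subset E$ be the (degree $m$) projectivised tangent cone of $S$ at $P$. Since $S\sim -K_Y=3H$ with $H$ the hyperplane class and $H^3=2$, one has $-(K_Y+\lambda S)=3(1-\lambda)H$ and $(-(K_Y+\lambda S))^3=54(1-\lambda)^3$. The inequality
$$\delta_P(Y,\lambda S)\ge\min\Big\{\tfrac{A_{(Y,\lambda S)}(E)}{S_{(Y,\lambda S)}(E)},\ \inf_{O\in E}\delta_O\big(E,W^E_{\bullet,\bullet}\big)\Big\}$$
is the skeleton, and the three quantities in the statement will appear as $A(E)/S(E)$ and as two lower bounds for $\inf_O\delta_O(E,W^E_{\bullet,\bullet})$.

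The one genuinely new feature, compared with the $\DP^3$-lemmas, is that the Zariski decomposition of $-\pi^*(K_Y+\lambda S)-uE=3(1-\lambda)\pi^*H-uE$ has \emph{two} phases. First I would record the geometry: $\widetilde Y=\Bl_P Y$ also carries the contraction $\phi\colon\widetilde Y\to\DP^3$ obtained by projecting $Y\subset\DP^4$ from $P$, so $\widetilde Y\cong\Bl_C\DP^3$ for a conic $C$, and $\phi$ contracts the strict transform $\widetilde\Theta=\pi^*H-2E$ of the quadric cone $\Theta=T_PY\cap Y$. Testing $3(1-\lambda)\pi^*H-uE$ against the two extremal classes (a line of $E$, and the strict transform of a line of $Y$ through $P$) shows it is nef exactly for $u\in[0,3(1-\lambda)]$, while the pseudo-effective threshold is $6(1-\lambda)$; on the second phase $N(u)=(u-3(1-\lambda))\widetilde\Theta$ and $P(u)=(6(1-\lambda)-u)(\pi^*H-E)$. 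Integrating $\mathrm{vol}(P(u))$ over both phases yields $S_{(Y,\lambda S)}(E)=3(1-\lambda)$, and since $A_{(Y,\lambda S)}(E)=3-\lambda m$ this gives the first term $\tfrac{3-\lambda m}{3(1-\lambda)}$.

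Next I would feed this decomposition into the refined invariant. The crucial point is that $N(u)\vert_E$ is supported on the single conic $Z=\widetilde\Theta\vert_E\subset E$, while $P(u)\vert_E$ is always a nonnegative multiple of $L=\tfrac{1}{3-\lambda m}(-K_E-\lambda C_m)$. Carrying out the integrals, for a valuation $F$ over $E$ centred at $O$ I expect
$$S\big(W^E_{\bullet,\bullet};F\big)=\tfrac{3(1-\lambda)}{8}\,\mathrm{ord}_F(Z)+\tfrac{9(1-\lambda)}{4(3-\lambda m)}\,S_{(E,\lambda C_m)}(F).$$
When $O\notin Z$ the first summand vanishes, so $\tfrac{A_{(E,\lambda C_m)}(F)}{S(W^E;F)}=\tfrac{4(3-\lambda m)}{9(1-\lambda)}\,\tfrac{A_{(E,\lambda C_m)}(F)}{S_{(E,\lambda C_m)}(F)}$, and taking the infimum over $F$ recovers the third term $\delta(\DP^2,\lambda C_m)\cdot\tfrac{4(3-\lambda m)}{9(1-\lambda)}$.

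The hard part will be the case $O\in Z$, where the extra term $\tfrac{3(1-\lambda)}8\mathrm{ord}_F(Z)$ is present and produces the second bound, which I would write suggestively as $\tfrac{4(3-\lambda m)}{15-9\lambda-2\lambda m}=\tfrac{4(3-\lambda m)}{9(1-\lambda)+2(3-\lambda m)}$ (the $9(1-\lambda)$ is the old denominator, the $2(3-\lambda m)$ is the $Z$-contribution). Here one must control the interaction of the fixed smooth conic $Z$ with the a priori arbitrary curve $C_m$, uniformly in $C_m$. The route I would take is a further Abban--Zhuang refinement along the flag $O\in Z\subset E$: the codimension-one term contributes $\tfrac{A(Z)}{S(W^E;Z)}=\tfrac{4}{3(1-\lambda)}$ (using $S(W^E;Z)=\tfrac{3(1-\lambda)}4$), and the point term — into which the restriction of $N(u)$ to $Z$ must be tracked through both phases — supplies the stated quantity, so that $\delta_O(E,W^E_{\bullet,\bullet})\ge\min\{\text{second},\text{third}\}$ for $O\in Z$. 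Combining the two cases gives $\inf_O\delta_O(E,W^E_{\bullet,\bullet})\ge\min\{\text{second},\text{third}\}$, and with the first term this finishes the proof. The two obstacles I anticipate are therefore (i) justifying the two-phase Zariski decomposition through the $\Bl_P Y\cong\Bl_C\DP^3$ birational geometry, and (ii) the $C_m$-independent estimate of the $Z$-contribution when $O\in Z$, which is what forces the appearance of the second term in the minimum.
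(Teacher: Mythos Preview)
Your plan through the two-phase Zariski decomposition, the value $S_{(Y,\lambda S)}(E)=3(1-\lambda)$, and the reduction to $\delta(\DP^2,\lambda C_m)$ for $O\notin Z$ (giving the first and third terms) reproduces the paper's argument exactly, and your closed form $S(W^E_{\bullet,\bullet};F)=\tfrac{3(1-\lambda)}{8}\,\mathrm{ord}_F(Z)+\tfrac{9(1-\lambda)}{4(3-\lambda m)}\,S_{(E,\lambda C_m)}(F)$ is the correct expression for what the paper writes more loosely (the paper bounds the negative-part contribution by the constant $\tfrac12$ rather than carrying $\mathrm{ord}_F(Z)$).

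The genuine divergence is at $O\in Z$. The paper does \emph{not} pass to a further flag $O\in Z\subset E$. Instead it asserts that the conic $Z=\widetilde\Theta|_E$ and the tangent cone $C_m=\widetilde S|_E$ are disjoint in $E\cong\DP^2$; granting this, for $O\in Z$ one has $\delta_O(E,\lambda C_m)=1$ and $A_{(E,\lambda C_m)}(F)=A_{(E,0)}(F)\ge1$, and the crude bound on the $Z$-term gives $S(W^E_{\bullet,\bullet};F)\le\big(\tfrac12+\tfrac{9(1-\lambda)}{4(3-\lambda m)}\big)A(F)$, which is exactly the second term. This is a one-line estimate, far shorter than a three-step Abban--Zhuang.

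That said, the paper's disjointness claim cannot hold as stated: $Z$ is a conic and $C_m$ a degree-$m$ curve in $\DP^2$, and B\'ezout forces $Z\cdot C_m=2m>0$. So your instinct that $O\in Z$ needs a more careful treatment is sound. But the refinement along $Z$ that you propose runs into the same obstruction in another guise: the different on $Z$ is $\lambda\, C_m|_Z$, hence $A_{Z,\Delta_Z}(O)=1-\lambda(C_m\cdot Z)_O$, which is already non-positive whenever $C_m$ meets $Z$ with local multiplicity $\ge 1/\lambda$ (for instance tangentially at $\lambda=2/3$), and then the point term cannot supply the stated uniform bound. A clean derivation of the second term in full generality seems to need either an extra transversality hypothesis on how $S$ meets the tangent cone $\Theta=T_PY\cap Y$, or an argument different from both the paper's and yours.
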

  \begin{proof}
   Let $P(u)$ and $N(u)$ be a positive and negative part of the Zariski decomposition of the divisor $-\pi^*(K_{X}+\lambda S)-uE$. Let $H$ be a hyperplane in $Y$. Then:
  \begin{align*}
      &P(u)=
\begin{cases}
  3(1-\lambda)\pi^*(H)-uE    \text{ for }u\in [0,3(1-\lambda)],\\
  (6(1-\lambda)-u)(\pi^*(H)-E)    \text{ for }u\in [3(1-\lambda),6(1-\lambda)].
  \end{cases}\\
  &N(u)=
\begin{cases}
 0    \text{ for }u\in [0,3(1-\lambda)],\\
 (u-3(1-\lambda))(\pi^*(H)-2E)    \text{ for }u\in [3(1-\lambda),6(1-\lambda)].
  \end{cases}
  \end{align*}
  We have: 
$$\big(P(u)\big)^2=
\begin{cases}
   54 - u^3 - 162\lambda + 162\lambda^2 -54\lambda^3 \text{ for }u\in [0,3(1-\lambda)],\\
  (6(1-\lambda)-u)^3    \text{ for }u\in [3(1-\lambda),6(1-\lambda)].
  \end{cases}$$
  Thus
  $$S_{(Y,\lambda S)}(E)= \frac{1}{54(1 - \lambda)^3}\Bigg(\int_0^{3(1-\lambda)}\big(54 - u^3 - 162\lambda + 162\lambda^2 -54\lambda^3\big)du+\int_{3(1-\lambda)}^{6(1-\lambda)}  (6(1-\lambda)-u)^3   du\Bigg)=3 - 3\lambda.$$
  Moreover, for any $O\in E$ and for any divisor $F$ such that $O\in C_{E}(F)$    we get
{\allowdisplaybreaks \begin{align*}
S&\big(W^{E}_{\bullet,\bullet};F\big)=\\
&=\frac{3}{(-K_{Y}-\lambda S)^3}\Bigg(\int_0^{6(1-\lambda)}\big(P(u)^{2}\cdot E\big)\cdot\mathrm{ord}_{O}\Big(N(u)\big\vert_{E}\Big)du+\int_0^{6(1-\lambda)}\int_0^\infty \mathrm{vol}\big(P(u)\big\vert_{E}-vF\big)dvdu\Bigg)\le\\
&\le \frac{3}{54(1 - \lambda)^3}\Bigg( 
\int_{3(1-\lambda)}^{6(1-\lambda)}\big(P(u)^{2}\cdot E\big)du + \int_0^\tau\int_0^\infty \mathrm{vol}\big(P(u)\big\vert_{E}-vF\big)dvdu\Bigg)=\\
&= \frac{3}{54(1 - \lambda)^3}\Bigg( 9(1-\lambda)^3 +\int_0^{3(1-\lambda)}\int_0^\infty \mathrm{vol}\big(P(u)\big\vert_{E}-vF\big)dvdu+\int_{3(1-\lambda)}^{6(1-\lambda)}\int_0^\infty \mathrm{vol}\big(P(u)\big\vert_{E}-vF\big)dvdu\Bigg)=\\
&= \frac{3}{54(1 - \lambda)^3}\Bigg(9(1-\lambda)^3 + \int_0^{3(1-\lambda)}\int_0^\infty \mathrm{vol}\big(-(K_{\widetilde{Y}}+\lambda \widetilde{S}-(2-\lambda m)E)|_E-uE|_E-vF\big)dvdu+\\
&  \text{$ $ $ $ $ $ $ $ $ $ $ $ $ $ $ $ $ $ $ $ $ $  }\text{$ $ $ $ $ $ $ $ $ $ $ $ $ $ $ $ $ $ $ $ $ $  }\text{$ $ $ $ $ $ $ $ $ $ $ $ $ $ $ $ $ $ $ $ $ $  } + \int_{3(1-\lambda)}^{6(1-\lambda)}\int_0^\infty \mathrm{vol}\big((6(1-\lambda)-u)(\pi^*(H)-E))|_E-vF\big)dvdu\Bigg)=\\
&= \frac{3}{54(1 - \lambda)^3}\Bigg(9(1-\lambda)^3 + \int_0^{3(1 - \lambda)}\int_0^\infty \mathrm{vol}\big(-((K_{\widetilde{Y}}+E)+\lambda \widetilde{S})|_E+(3-\lambda m)E|_E-uE|_E-vF\big)dvdu+\\
&  \text{$ $ $ $ $ $ $ $ $ $ $ $ $ $ $ $ $ $ $ $ $ $  }\text{$ $ $ $ $ $ $ $ $ $ $ $ $ $ $ $ $ $ $ $ $ $  }\text{$ $ $ $ $ $ $ $ $ $ $ $ $ $ $ $ $ $ $ $ $ $  } + \int_{3(1-\lambda)}^{6(1-\lambda)}\int_0^\infty \mathrm{vol}\big(-(6(1-\lambda)-u)\Big(\frac{1}{3}\pi^*(K_Y)+E\Big)|_E-vF\big)dvdu\Bigg)=\\
&= \frac{3}{54(1 - \lambda)^3}\Bigg(9(1-\lambda)^3 + \int_0^{3(1-\lambda)}\int_0^\infty \mathrm{vol}\big(-(K_E+\lambda C_m)+(3-\lambda m)E|_E-uE|_E-vF\big)dvdu+\\
& \text{$ $ $ $ $ $ $ $ $ $ $ $ $ $ $ $ $ $ $ $ $ $  }\text{$ $ $ $ $ $ $ $ $ $ $ $ $ $ $ $ $ $ $ $ $ $  }\text{$ $ $ $ $ $ $ $ $ $ $ $ $ $ $ $ $ $ $ $ $ $  } + \int_{3(1-\lambda)}^{6(1-\lambda)}\int_0^\infty \mathrm{vol}\big(-(6(1-\lambda)-u)\Big(\frac{1}{3}K_{\widetilde{Y}}+\frac{1}{3}E\Big)|_E-vF\big)dvdu\Bigg)=\\
&= \frac{3}{54(1 - \lambda)^3}\Bigg(9(1-\lambda)^3 + \int_0^{3(1-\lambda)}\int_0^\infty \mathrm{vol}\big(-(K_E+\lambda C_m)+(3-\lambda m)E|_E-uE|_E-vF\big)dvdu+\\
& \text{$ $ $ $ $ $ $ $ $ $ $ $ $ $ $ $ $ $ $ $ $ $  }\text{$ $ $ $ $ $ $ $ $ $ $ $ $ $ $ $ $ $ $ $ $ $  }\text{$ $ $ $ $ $ $ $ $ $ $ $ $ $ $ $ $ $ $ $ $ $  } + \int_{3(1-\lambda)}^{6(1-\lambda)}\int_0^\infty \mathrm{vol}\big(-\frac{1}{3}(6(1-\lambda)-u)K_E-uE|_E-vF\big)dvdu\Bigg)=\\
&= \frac{3}{54(1 - \lambda)^3}\Bigg(9(1-\lambda)^3 + \int_0^{3(1-\lambda)}\int_0^\infty \mathrm{vol}\big(-uE|_E-vF\big)dvdu+\\
&  \text{$ $ $ $ $ $ $ $ $ $ $ $ $ $ $ $ $ $ $ $ $ $  }\text{$ $ $ $ $ $ $ $ $ $ $ $ $ $ $ $ $ $ $ $ $ $  }\text{$ $ $ $ $ $ $ $ $ $ $ $ $ $ $ $ $ $ $ $ $ $  } + \int_{3(1-\lambda)}^{6(1-\lambda)}\int_0^\infty \mathrm{vol}\big( -(6(1-\lambda)-u)E|_E-vF\big)dvdu\Bigg)=\\
&= \frac{3}{54(1 - \lambda)^3}\Bigg(
9(1-\lambda)^3 +
\int_0^{3(1-\lambda)}\int_0^\infty \mathrm{vol}\Big(\frac{u}{3-\lambda m} (-K_E-\lambda C_m)-vF\Big)dvdu+\\
&\text{$ $ $ $ $ $ $ $ $ $ $ $ $ $ $ $ $ $ $ $ $ $  }\text{$ $ $ $ $ $ $ $ $ $ $ $ $ $ $ $ $ $ $ $ $ $  }\text{$ $ $ $ $ $ $ $ $ $ $ $ $ $ $ $ $ $ $ $ $ $  }
+ \int_{3(1-\lambda)}^{6(1-\lambda)}\int_0^\infty \mathrm{vol}\Big(\frac{(6(1-\lambda)-u)}{3-\lambda m} (-K_E-\lambda C_m)-vF\Big)dvdu\Bigg)=\\
&= \frac{3}{54(1 - \lambda)^3}\Bigg(
9(1-\lambda)^3 + \int_0^{3(1-\lambda)} \frac{u^3}{(3-\lambda m)^3} \int_0^\infty \mathrm{vol}\big(-(K_E+\lambda C_m)-vF\big)dvdu+\\
&\text{$ $ $ $ $ $ $ $ $ $ $ $ $ $ $ $ $ $ $ $ $ $  }\text{$ $ $ $ $ $ $ $ $ $ $ $ $ $ $ $ $ $ $ $ $ $  }\text{$ $ $ $ $ $ $ $ $ $ $ $ $ $ $ $ $ $ $ $ $ $  }+\int_{3(1-\lambda)}^{6(1-\lambda)} \frac{(6(1-\lambda)-u)^3}{(3-\lambda m)^3} \int_0^\infty \mathrm{vol}\big(-(K_E+\lambda C_m)-vF\big)dvdu\Bigg)=\\
&= \frac{3}{54(1 - \lambda)^3}
\Bigg(9(1-\lambda)^3 + \int_0^{3(1-\lambda)} \frac{u^3}{3-\lambda m} S_{(E,\lambda C_m)}(E)du + \int_{3(1-\lambda)}^{6(1-\lambda)} \frac{(6(1-\lambda)-u)^3}{3-\lambda m} S_{(E,\lambda C_m)}(E)du \Bigg)= \\
&= \frac{3}{54(1 - \lambda)^3}
\Bigg(9(1-\lambda)^3 + \frac{3^4(1-\lambda)^4}{2(3-\lambda m)} S_{(E,\lambda C_m)}(F)\Bigg)= \frac{1}{2}+ \frac{9(1-\lambda)}{4(3-\lambda m)}S_{(E,\lambda C_m)}(F)\le \\
&   \le  \frac{1}{2}+ \frac{9(1-\lambda)}{4(3-\lambda m)}\cdot \frac{A_{(E,\lambda C_m)}(F)}{\delta_O(E,\lambda C_m)} 
\end{align*}}
\noindent Note that $C:=E\cap  (\pi^*(H)-2E) $ is a smooth curve which does not intersect $C_m$ and $\delta_O(E,\lambda C_m)=1$ for $O\in C$. Thus since in this case we have 
$$\frac{A_{(E,\lambda C_m)}(F)}{2}+ \frac{9(1-\lambda)}{4(3-\lambda m)}\cdot \frac{A_{(E,\lambda C_m)}(F)}{\delta_O(E,\lambda C_m)} = \frac{15 - 9 \lambda - 2 \lambda m}{4(3-\lambda m)} A_{(E,\lambda C_m)}(F) $$
Then
$$
S\big(W^{E}_{\bullet,\bullet};F\big)
\le
\begin{cases}
  \frac{9(1-\lambda)}{4(3-\lambda m)} \frac{A_{(E,\lambda C_m)}(F)}{\delta_O(E,\lambda C_m)} \text{ for }O\in E\backslash C,\\
  \frac{15 - 9 \lambda - 2 \lambda m}{4(3-\lambda m)}A_{(E,\lambda C_m)}(F) \text{ for }O\in C.
\end{cases}
$$
Then $A_{(Y,\lambda S)}(E)=3-m \lambda$, $S_{(Y,\lambda S)}(E)=3-3 \lambda $ then:
 {\allowdisplaybreaks 
 \begin{align*}
\delta_P(Y,\lambda S)&
\ge \min\Bigg\{\frac{A_{(Y,\lambda S)}(E)}{S_{(Y,\lambda S)}(E)},\text{ }\inf_{O\in E}
\delta_O(E,W^E_{\bullet,\bullet})\Bigg\}
\ge \min \Bigg\{\frac{A_{(Y,\lambda S)}(E)}{S_{(Y,\lambda S)}(E)},\text{ }\inf_{\substack{F/E,\\ O\in C_{\widetilde{Y}}(F)}} \inf_{O\in E}
\frac{A_{(E,\lambda C_m)}(F)}{S(W^E; F)}\Bigg\}=\\
& = \min \Bigg\{\frac{A_{(Y,\lambda S)}(E)}{S_{(Y,\lambda S)}(E)},\text{ }\inf_{\substack{F/E,\\ O\in C_{\widetilde{Y}}(F)}} \inf_{O\in E}
\frac{A_{(\DP^2,\lambda C_m)}(F)}{S(W^{\DP^2}; F)}\Bigg\}\ge\\
&\ge \min\Bigg\{\frac{A_{(Y,\lambda S)}(E)}{S_{(Y,\lambda S)}(E)},\text{ } \frac{4(3-\lambda m)}{15 - 9 \lambda - 2 \lambda m},\text{ } \inf_{O\in E}
 \delta_O(\DP^2,\lambda C_m)\cdot \frac{4(3-\lambda m)}{9(1-\lambda )}\Bigg\}=\\
& =\min\Bigg\{\frac{3- \lambda m}{3(1-\lambda )},\text{ } \frac{4(3-\lambda m)}{15 - 9 \lambda - 2 \lambda m},\text{ }
 \delta(\DP^2,\lambda C_m)\cdot \frac{4(3-\lambda m)}{9(1-\lambda )}\Bigg\}
    \end{align*}}
\end{proof}
\noindent We reduced the problem of computing the $\delta$-invariant on the threefold to the problem of computing $\delta$-invariant on a surface. Note that we are interested in case when $\frac{(3-\lambda m)}{3(1-\lambda)}\ge 1$ and $ \frac{4(3-\lambda m)}{15 - 9 \lambda - 2 \lambda m}\ge 1$.
For $\lambda=\frac{2}{3}$ we obtain
$$\delta_P(Y,\lambda S) \ge \min\Bigg\{3-\frac{2m}{3},\text{ } 1+\frac{9 - 4 m}{27 - 4 m},\text{ }
 \delta(\DP^2,\lambda C_m)\cdot 4\big(1-\frac{2m}{9}\big)\Bigg\}$$
 Which proves the following result: 
 \begin{corollary}
        If $S$ has at most ordinary double points then $\delta_P(Y,\frac{2}{3}S)>1$ for $P\in S$.
    \end{corollary}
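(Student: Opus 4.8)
The plan is to specialize the preceding Lemma to $\lambda=\frac{2}{3}$ and to verify that the resulting lower bound for $\delta_P(Y,\lambda S)$ exceeds $1$ at every point $P\in S$. The hypothesis that $S$ has at most ordinary double points means that the multiplicity $m=\mathrm{mult}_P S$ equals $1$ at a smooth point of $S$ and $2$ at a node, and in the latter case the projectivized tangent cone $C_2$ is a union of two distinct lines. This is exactly the information needed to read the relevant value of $\delta(\DP^2,\frac{2}{3}C_m)$ off the Main Theorem, since for $m\in\{1,2\}$ the point $\lambda=\frac{2}{3}$ lies in the interval $[0,1]$ on which the Main Theorem computes $\delta(\DP^2,\lambda C_1)$ and the two-lines value of $\delta(\DP^2,\lambda C_2)$.

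Concretely, I would first record the $\lambda=\frac{2}{3}$ specialization of the Lemma,
$$\delta_P(Y,\tfrac{2}{3}S)\ge\min\Bigg\{3-\tfrac{2m}{3},\ 1+\frac{9-4m}{27-4m},\ \delta(\DP^2,\tfrac{2}{3}C_m)\cdot 4\big(1-\tfrac{2m}{9}\big)\Bigg\},$$
and then treat the two admissible values of $m$ in turn. For a smooth point $(m=1)$ the curve $C_1$ is a line, so the Main Theorem gives $\delta(\DP^2,\tfrac{2}{3}C_1)=\tfrac{3}{7}$; substituting, the three quantities inside the minimum become $\tfrac{7}{3}$, $\tfrac{28}{23}$ and $\tfrac{4}{3}$, whose smallest value is $\tfrac{28}{23}>1$. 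For a node $(m=2)$ the curve $C_2$ is a union of two distinct lines, so the Main Theorem gives $\delta(\DP^2,\tfrac{2}{3}C_2)=\tfrac{3}{5}$; substituting, the three quantities become $\tfrac{5}{3}$, $\tfrac{20}{19}$ and $\tfrac{4}{3}$, whose smallest value is $\tfrac{20}{19}>1$. Hence $\delta_P(Y,\tfrac{2}{3}S)>1$ in both cases, and since every $P\in S$ falls into one of them, the corollary follows.

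I do not expect a serious obstacle, as the entire argument is a finite numerical check built on the preceding Lemma and the Main Theorem. The only points requiring care are the bookkeeping ones: one must use the smoothness of $Y$ to ensure the exceptional divisor of the point blow-up is $E\cong\DP^2$ so that the Lemma applies, identify the tangent cone at a node as two distinct lines so that the correct branch of the Main Theorem is selected, and confirm that the first two terms $3-\tfrac{2m}{3}$ and $1+\tfrac{9-4m}{27-4m}$ are themselves $>1$ for $m\in\{1,2\}$ so that they do not destroy the strict inequality. Each of these is immediate, so the corollary reduces to the two evaluations above.
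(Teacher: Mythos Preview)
Your proposal is correct and follows exactly the approach implicit in the paper: specialize the preceding Lemma to $\lambda=\tfrac{2}{3}$, read off $\delta(\DP^2,\tfrac{2}{3}C_m)$ from the Main Theorem for $m\in\{1,2\}$, and check numerically that the minimum exceeds $1$. The paper merely states the corollary after the specialization formula, whereas you carry out the explicit evaluations at $m=1$ and $m=2$, which are all correct.
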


\section{Proof of Main Theorem via  Kento Fujita’s formulas}
\noindent Let $C\subset X=\DP^2$ be a plane curve of degree $d$, $\lambda\in \big(0,\min\big\{\frac{3}{d},\mathrm{lct}(\DP^2,C)\big\}\big)$.  Let  $f\colon\widetilde{X}\to X$ be a birational morphism,
let $E$ be a prime divisor in $\widetilde{X}$.
We say that $E$ is a prime divisor \emph{over} $X$.
If~$E$~is~\mbox{$f$-exceptional}, we say that $E$ is an~exceptional invariant prime divisor \emph{over}~$X$.
We will denote the~subvariety $f(E)$ by $C_X(E)$. 
Let \index{$S_{(X,\lambda C)}(E)$}
$$
S_{(X,\lambda C)}(E)=\frac{1}{(-K_X-\lambda C)^2}\int_{0}^{\tau}\mathrm{vol}(f^*(-K_X-\lambda C)-vE)dv,
$$
where $\tau=\tau_{(X,\lambda C)}(E)$ is the~pseudo-effective threshold of $E$ with respect to $-K_X-\lambda C$, i.e. we have \index{pseudo-effective threshold}
$$
\tau_{(X,\lambda C)}(E)=\mathrm{sup}\Big\{ v \in \mathbb{Q}_{>0}\ \big\vert \ f^*(-K_X-\lambda C)-vE\ \text{is pseudo-effective}\Big\}.
$$
Let 
$$A_{(X,\lambda C)} (E) = 1 + \mathrm{ord}_E(K_{\widetilde{X}} - f^*(K_X+\lambda C)).$$
\index{$\beta$-invariant}
Let $P$ be a point in $X$. We can define a local $\delta$-invariant now
$$
\delta_P(X,\lambda C)=\inf_{\substack{E/X\\ P\in C_X(E)}}\frac{A_{(X,\lambda C)}(E)}{S_{(X,\lambda C)}(E)},
$$
where the~infimum runs over all prime divisors $E$ over the surface $X$ such that $P\in C_X(E)$. Moreover a global $\delta$-invariant is given by:
$$\delta(X,\lambda C)=\inf_{P\in X}\delta_P(X,\lambda C)$$
Several results can help us to estimate $\delta$-invariants.
Let $\mathcal{C}$ be a smooth curve on $X$ containing $P$. 
Set
$$
\tau(\mathcal{C})=\mathrm{sup}\Big\{v\in\mathbb{R}_{\geqslant 0}\ \big\vert\ \text{the divisor  $-K_X-\lambda C-v\mathcal{C}$ is pseudo-effective}\Big\}.
$$
For~$v\in[0,\tau]$, let $P(v)$ be the~positive part of the~Zariski decomposition of the~divisor $-K_X-\lambda C-v\mathcal{C}$,
and let $N(v)$ be its negative part. 
Then we set $$
S\big(W^{\mathcal{C}}_{\bullet,\bullet};P\big)=\frac{2}{(-K_X-\lambda C)^2}\int_0^{\tau(\mathcal{C})} h(v) dv,
\text{ where }
h(v)=\big(P(v)\cdot \mathcal{C}\big)\times\big(N(v)\cdot \mathcal{C}\big)_P+\frac{\big(P(v)\cdot \mathcal{C}\big)^2}{2}.
$$
It follows from {\cite[Theorem 1.7.1]{Fano21}} that:
\begin{equation}\label{estimation1}
    \delta_P(S)\geqslant\mathrm{min}\Bigg\{\frac{1}{S_{(X,\lambda C)}(\mathcal{C})},\frac{1}{S(W_{\bullet,\bullet}^\mathcal{C},P)}\Bigg\}.
\end{equation}
Unfortunately, using this approach we do not always get a good estimation. In this case, we can try to apply the generalization of this method. Let $\pi\colon\widehat{X}\to X$ be a weighted blowup of the point $P$. Suppose, in addition, that $\widehat{X}$ is a Mori Dream space Then
\begin{itemize}
\item the~$\pi$-exceptional curve $E_P$ such that $\pi(E_P)=P$ is smooth and isomorphic to $\DP^1$,
\item the~log pair $(\widehat{X},E_P)$ has purely log terminal singularities.
\end{itemize}
Thus, the birational map $\pi$ a plt blowup of a point $P$.
Write
$$
K_{E_P}+\Delta_{E_P}=\big(K_{\widehat{X}}+\lambda \widehat{C}+E_P\big)\big\vert_{E_P},
$$
where $\Delta_{E_P}$ is an~effective $\mathbb{Q}$-divisor on $E_P$ known as the~different of the~log pair $(\widehat{X},E_P)$.
Note that the~log pair $(E_P,\Delta_{E_P})$ has at most Kawamata log terminal singularities, and the~divisor $-(K_{E_P}+\Delta_{E_P})$ is $\pi\vert_{E_P}$-ample.
\\Let $O$ be a point on $E_P$. 
Set
$$
\tau(E_P)=\mathrm{sup}\Big\{v\in\mathbb{R}_{\geqslant 0}\ \big\vert\ \text{the divisor  $\pi^*(-K_X-\lambda C)-vE_P$ is pseudo-effective}\Big\}.
$$
For~$v\in[0,\tau]$, let $\widehat{P}(v)$ be the~positive part of the~Zariski decomposition of the~divisor $\pi^*(-K_X-\lambda C)-vE_P$,
and let $\widehat{N}(v)$ be its negative part. 
Then we set $$
S\big(W^{E_P}_{\bullet,\bullet};O\big)=\frac{2}{K_{\widehat{X}}^2}\int_0^{\tau(E_P)} \widehat{h}(v) dv,
\text{ where }
\widehat{h}(v)=\big(\widehat{P}(v)\cdot E_P\big)\times\big(\widehat{N}(v)\cdot E_P\big)_O+\frac{\big(\widehat{P}(v)\cdot E_P\big)^2}{2}.
$$
Let
$$
A_{E_P,\Delta_{E_P}}(O)=1-\mathrm{ord}_{\Delta_{E_P}}(O),
$$
It follows from {\cite[Theorem 1.7.9]{Fano21}} and {\cite[Corollary 1.7.12]{Fano21}} that
\begin{equation}
\label{estimation2}
\delta_P(S)\geqslant\mathrm{min}\Bigg\{\frac{A_S(E_P)}{S_S(E_P)},\inf_{O\in E_P}\frac{A_{E_P,\Delta_{E_P}}(O)}{S\big(W^{E_P}_{\bullet,\bullet};O\big)}\Bigg\},
\end{equation}
where the~infimum is taken over all points $O\in E_P$.
 \section{Line}
  \begin{lemma} \label{line}
 Let $C\subset \DP^2$ be a reduced plane curve of degree $d$, and  $P\in C$ be a smooth point on a line $L$ which is a component of $C$. Then
 $$\delta_P(\DP^2,\lambda C)=\frac{3(1-\lambda)}{3- d \lambda }\text{ for }\lambda\in\Big[0,\min\Big\{1,\frac{3}{d}\Big\}\Big].$$
 \end{lemma}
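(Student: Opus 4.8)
The plan is to establish the claimed equality by proving matching upper and lower bounds, in both of which the line $L$ itself plays the leading role: first as an explicit test divisor, then as the flag curve in the Abban--Zhuang/Fujita inequality \eqref{estimation1}.

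For the upper bound I would take $E=L$ as a prime divisor over $\DP^2$; since $C_X(L)=L\ni P$, it is admissible in the infimum defining $\delta_P$. As $L$ is a reduced component of $C$, its coefficient in $\lambda C$ is $\lambda$, so $A_{(\DP^2,\lambda C)}(L)=1-\lambda$. Writing $H$ for the line class, we have $-K_{\DP^2}-\lambda C\sim_{\DQ}(3-d\lambda)H$ and $L\sim H$, whence $-K_{\DP^2}-\lambda C-vL\sim_{\DQ}(3-d\lambda-v)H$ is nef for $v\in[0,3-d\lambda]$ and not pseudo-effective beyond. Thus $\tau=3-d\lambda$, $\mathrm{vol}(\cdots)=(3-d\lambda-v)^2$, and a one-line integration gives $S_{(\DP^2,\lambda C)}(L)=\tfrac{3-d\lambda}{3}$. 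Therefore $\delta_P(\DP^2,\lambda C)\le \frac{A_{(\DP^2,\lambda C)}(L)}{S_{(\DP^2,\lambda C)}(L)}=\frac{3(1-\lambda)}{3-d\lambda}$.

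For the lower bound I would apply the flag version of \eqref{estimation1} with $\mathcal{C}=L$, but in the form that carries the genuine log discrepancies in the numerators (rather than the constant $1$), precisely because $L$ is a boundary component. Write $C=L+R$ with $\deg R=d-1$. The Zariski decomposition of $-K_{\DP^2}-\lambda C-vL=(3-d\lambda-v)H$ is trivial on $[0,3-d\lambda]$, so $P(v)\cdot L=3-d\lambda-v$, $N(v)\cdot L=0$, hence $h(v)=\tfrac12(3-d\lambda-v)^2$ and $S\big(W^{L}_{\bullet,\bullet};P\big)=\tfrac{3-d\lambda}{3}$. For the point-level discrepancy, adjunction gives $K_L+\Delta_L=(K_{\DP^2}+L+\lambda R)\big|_L$, so $\Delta_L=\lambda\,(R|_L)$ is supported exactly on $L\cap R$; since $P$ is a smooth point of $C$ lying on $L$, no component of $R$ passes through $P$, so $\mathrm{ord}_P(\Delta_L)=0$ and $A_{(L,\Delta_L)}(P)=1$. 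Consequently
$$\delta_P(\DP^2,\lambda C)\ge \min\left\{\frac{3(1-\lambda)}{3-d\lambda},\ \frac{3}{3-d\lambda}\right\}=\frac{3(1-\lambda)}{3-d\lambda},$$
using $1-\lambda\le 1$. Combined with the upper bound this yields the equality, the hypothesis $\lambda\le\min\{1,\tfrac3d\}$ ensuring $A_{(\DP^2,\lambda C)}(L)\ge 0$ and $3-d\lambda>0$ so that every quantity is well defined.

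The main obstacle is the bookkeeping of log discrepancies. One must resist plugging $\mathcal{C}=L$ directly into \eqref{estimation1} with numerator $1$: that would output the spurious bound $\tfrac{3}{3-d\lambda}$, which \emph{exceeds} the true value and is therefore impossible. The resolution is to weight the first term by $A_{(\DP^2,\lambda C)}(L)=1-\lambda$ while recognizing that at the point level the different $\Delta_L$ is concentrated on $L\cap(C-L)$ and hence vanishes at the smooth point $P$, giving $A_{(L,\Delta_L)}(P)=1$. Pinning down this discrepancy of behaviour between the surface level and the point level is exactly what makes the two bounds meet.
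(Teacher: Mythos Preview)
Your argument is correct, and both the upper bound via $L$ and the adjunction computation showing $P\notin R=C-L$ so that $A_{(L,\Delta_L)}(P)=1$ are exactly right. However, your route differs from the paper's. For the lower bound the paper does \emph{not} take $\mathcal{C}=L$; instead it blows up $P$ and applies \eqref{estimation2} to the exceptional divisor $E$. One finds $A_{(\DP^2,\lambda C)}(E)=2-\lambda$ and $S_{(\DP^2,\lambda C)}(E)=\tfrac{2(3-d\lambda)}{3}$, while on $E$ the different is $\Delta_E=\lambda Q$ with $Q=\widetilde{L}\cap E$, and $S(W^E_{\bullet,\bullet};O)=\tfrac{3-d\lambda}{3}$ for every $O$; the worst point $O=Q$ returns $\tfrac{3(1-\lambda)}{3-d\lambda}$, matching the upper bound. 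Your approach is more economical in this instance since it avoids the blowup altogether and reads the answer directly off the flag $P\in L$; it does require, as you carefully flag, invoking the log-boundary form of the Abban--Zhuang inequality rather than \eqref{estimation1} as literally stated in the paper. The paper's blowup method costs an extra step here but has the advantage of uniformity: exactly the same template (blow up, possibly contract, compute the different on the exceptional curve) is used for every singularity type in the paper, whereas your shortcut relies on the special circumstance that a suitable smooth curve through $P$ is already present on $\DP^2$.
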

 \begin{proof}
Note that $\delta_P(\DP^2,\lambda C)\le \frac{A_{(\DP^2,\lambda C)}(L)}{S_{(\DP^2,\lambda C)}(L)}=\frac{3(1-\lambda)}{3- d \lambda }$. Let $\pi:\widetilde{S}\to S$ be the~blow up of the~point $P$,
with the exceptional divisor $E$.
\begin{center}
 \includegraphics[width=6cm]{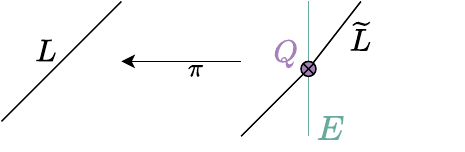}
 \end{center}
The Zariski decomposition of the divisor  $-\pi^*(K_{\DP^2}+\lambda C)-vE$ is given by:
\begin{align*}
P(v)=-\pi^*(K_{\DP^2}+\lambda C)-vE \text{ and }N(v)=0\text{ if }v\in[0,3- d \lambda ].
\end{align*}
Then
$$
P(v)^2=(d\lambda - 3- v )(v-(3- d\lambda))\text{ and }P(v)\cdot E=  v \text{ if }v\in[0,3- d \lambda ].
$$
For every $O\in L$ we get:
$$h(v) = \frac{v^2}{2}\text{ if }v\in[0,3-d \lambda ].
$$
So that
$$S\big(W^{E}_{\bullet,\bullet};O\big)= \frac{2}{(3-d \lambda )^2}\Big(\int_0^{3-d \lambda } \frac{v^2}{2} dv\Big)=\frac{3-d \lambda}{3}
$$
We have
$$
\delta_P(\DP^2,\lambda C)\geqslant\mathrm{min}\Bigg\{ \frac{3}{2}\cdot \frac{2-\lambda}{3-d \lambda } ,\inf_{O\in E}\frac{A_{E,\Delta_{E}}(O)}{S\big(W^{E}_{\bullet,\bullet};O\big)}\Bigg\},
$$
where $\Delta_{E}=\lambda Q$ where $Q=\widetilde{L}|_E$. 
So that
$$
\frac{A_{\overline{E},\Delta_{\overline{E}}}(O)}{S(W_{\bullet,\bullet}^{\overline{E}};O)}=
\left\{\aligned
&\frac{3(1-\lambda)}{3-d \lambda }\ \mathrm{if}\ O=Q,\\
&\frac{3}{3-d \lambda }\ \mathrm{otherwise}.
\endaligned
\right.
$$
Thus $\delta_P(\DP^2,\lambda C)= \frac{3(1-\lambda)}{3- d \lambda }$.     
 \end{proof}
 \section{Smooth conic}
 \begin{lemma}
 Let $C\subset \DP^2$ be a conic, $P\in C$ be a smooth point on $C$ such that the tangent line $L$ at this point. Then
 $$\delta_P(\DP^2,\lambda C)=1\text{ for }\lambda\in\Big[0,\frac{3}{4}\Big].$$
 \end{lemma}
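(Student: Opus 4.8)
The plan is to mirror the structure of the line lemma exactly, using the generalized Abban--Zhuang estimate \eqref{estimation2} with $E$ the exceptional divisor of the single point blow-up $\pi\colon\widetilde{S}\to S$ at $P$. The key geometric difference from the line case is that $C$ is a smooth conic, so $P$ is a smooth point of $C$ and the tangent line $L$ is \emph{not} a component of $C$; instead $L$ meets $C$ at $P$ with multiplicity $2$. This means the strict transform $\widetilde{C}$ and the strict transform $\widetilde{L}$ both meet $E$, but $\widetilde{L}$ is tangent to $C$ so after one blow-up the relevant tangency information is the point $Q=\widetilde{C}\cap E=\widetilde{L}\cap E$.

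First I would record the numerics. Since $-K_{\DP^2}-\lambda C \sim_{\DQ} (3-2\lambda)H$, one has $(-K_{\DP^2}-\lambda C)^2=(3-2\lambda)^2$. Then I would compute the Zariski decomposition of $-\pi^*(K_{\DP^2}+\lambda C)-vE$. As in the line lemma, I expect the negative part to vanish and $P(v)=-\pi^*(K_{\DP^2}+\lambda C)-vE$ to remain nef on the full interval $v\in[0,3-2\lambda]$, giving $P(v)^2=(3-2\lambda)^2-v^2$ and $P(v)\cdot E=v$. This yields $S_{(\DP^2,\lambda C)}(E)=\tfrac{2(3-2\lambda)}{3}$ and $A_{(\DP^2,\lambda C)}(E)=2$, so the first term in \eqref{estimation2} is $\tfrac{A}{S}=\tfrac{3}{3-2\lambda}$.

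Next I would compute the refined invariant $S(W^E_{\bullet,\bullet};O)$ along $E$. For $O\neq Q$ the point $O$ avoids both $\widetilde{C}$ and the tangent direction, so $h(v)=\tfrac{P(v)\cdot E)^2}{2}=\tfrac{v^2}{2}$ and the computation is identical to the line case, giving $S(W^E_{\bullet,\bullet};O)=\tfrac{3-2\lambda}{3}$; with $A_{E,\Delta_E}(O)=1$ this contributes $\tfrac{3}{3-2\lambda}$. The point $O=Q$ is the crux. The different $\Delta_E$ is supported at $Q$ with coefficient coming from the tangency: because $C$ is smooth at $P$, the strict transform $\widetilde{C}$ passes through $Q$ transversally to $E$, contributing $\lambda\cdot(\widetilde{C}\cdot E)_Q=\lambda$ to $\Delta_E$ at $Q$, so $A_{E,\Delta_E}(Q)=1-\lambda$. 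For the $S$-value at $Q$, I need the local intersection term $(N(v)\cdot E)_Q$; since $N(v)=0$ on the whole interval this term vanishes and $h(v)=\tfrac{v^2}{2}$ at $Q$ as well, giving $S(W^E_{\bullet,\bullet};Q)=\tfrac{3-2\lambda}{3}$ and ratio $\tfrac{3(1-\lambda)}{3-2\lambda}$.

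Assembling, \eqref{estimation2} gives $\delta_P\ge\min\{\tfrac{3}{3-2\lambda},\tfrac{3(1-\lambda)}{3-2\lambda}\}$, and I would check that on $\lambda\in[0,\tfrac34]$ this minimum is at least $1$. \textbf{The main obstacle} I anticipate is that this lower bound equals $\tfrac{3(1-\lambda)}{3-2\lambda}<1$ on the interval, which is \emph{too weak} to yield $\delta_P=1$ — the naive single blow-up over-penalizes the tangent point $Q$. The resolution is that the blow-up $\pi$ must be chosen to separate the tangency: I would instead perform a \emph{weighted} blow-up (or a sequence of ordinary blow-ups resolving the contact of $\widetilde{C}$ with the tangent direction) so that $\widetilde{C}$ no longer passes through the bad point of $E$, thereby redistributing the different and removing the spurious factor $(1-\lambda)$. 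Verifying that the weighted-blow-up target $\widehat{X}$ is a Mori dream space and recomputing the Zariski decomposition there — which will now have a genuine nonzero negative part $N(v)$ on part of the interval — is where the real work lies, and matching the resulting estimate to the sharp upper bound $\delta_P\le\tfrac{A(\mathcal{C})}{S(\mathcal{C})}=1$ obtained by testing against a suitable curve $\mathcal{C}$ through $P$ completes the proof.
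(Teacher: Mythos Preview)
Your diagnosis and proposed fix are exactly right and match the paper's approach: the single blow-up fails at the tangent point, and the paper resolves this with the $(1,2)$-weighted blow-up at $P$ (realized as two ordinary blow-ups---first at $P$, then at $E_1\cap\widetilde{C}\cap\widetilde{L}$---followed by contracting the first exceptional curve to a $\tfrac12(1,1)$ point), obtaining an exceptional divisor $\overline{E}$ with $A(\overline{E})=S(\overline{E})=3-2\lambda$, so that $\overline{E}$ itself supplies both the sharp upper bound $\delta_P\le 1$ and, via the refined estimate along $\overline{E}$, the matching lower bound. One small slip in your warm-up computation: for the ordinary blow-up $A_{(\DP^2,\lambda C)}(E)=2-\lambda$, not $2$, since the smooth conic $C$ passes through $P$ with multiplicity $1$.
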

 \begin{proof}
Let $\pi_1:S^1\to S$ be the~blow up of the~point $P$,
with the exceptional divisor $E_1^1$ and $L^1$, $C^1$ are strict transforms of $L$ and $C$ respectively; $\pi_2: S^2\to S^1$ be the~blow up of the~point $E_1^1\cap C^1\cap L^1$,
with the exceptional divisor $E$ and $L^2$, $C^2$ are strict transforms of $L^1$ and $C^1$ respectively; let $\theta:S^2\to\overline{S}$ be  the~contraction of  the~curve $E_1^2$,
and $\sigma$ is the~birational contraction of $\overline{E}=\theta(E)$. We denote the strict transforms of $C$ and $L$ on $\overline{S}$ by $\overline{C}$ and $\overline{L}$. Let $\overline{E}$ be the exceptional divisor of $\sigma$. Note that $\overline{E}$ contains one singular point $\overline{P}$  which is $\frac{1}{2}(1,1)$  singular point.
\begin{center}
 \includegraphics[width=15cm]{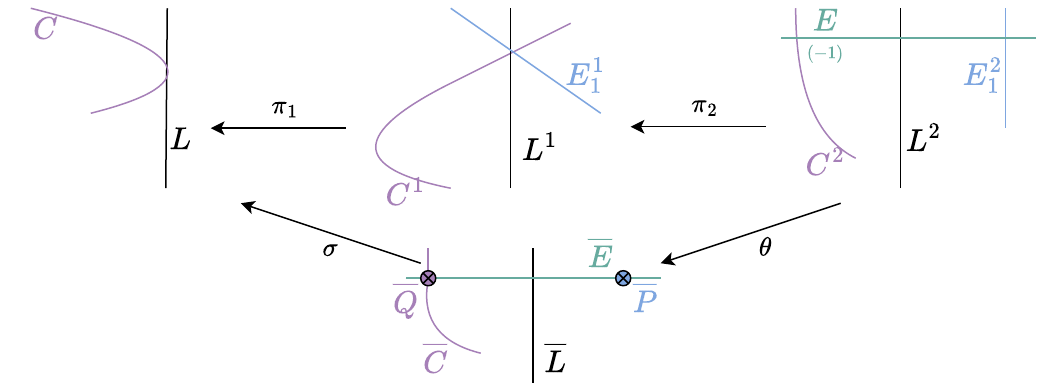}
 \end{center}
The intersections on $\overline{S}$ are given by:
\begin{center}
    \renewcommand{\arraystretch}{1.4}
    \begin{tabular}{|c|c|c|}
    \hline
         & $\overline{E}$ & $\overline{L}$ \\
    \hline
       $\overline{E}$  & $-\frac{1}{2}$ & $1$ \\
    \hline
       $\overline{L}$  & $1$ & $-1$ \\
    \hline
    \end{tabular}
\end{center}
 We have $\sigma^*(L)=\overline{L}+2\overline{E}$, $\sigma^*(C)=\overline{C}+2\overline{E}$, $\sigma^*(K_{\DP^2})=K_{\overline{S}}-2\overline{E}$. Thus, $A_{(\DP^2,\lambda C)}(\overline{E})=3-2\lambda$.
\noindent The Zariski decomposition of the divisor  $-\sigma^*(K_{\DP^2}+\lambda C)-v\overline{E}$ is given by:
\begin{align*}
&&P(v)=
\begin{cases}
-\sigma^*(K_{\DP^2}+\lambda C)-v\overline{E}\text{ if }v\in[0,3-2\lambda],\\
-\sigma^*(K_{\DP^2}+\lambda C)-v\overline{E}-\big(v-(3-2\lambda)\big)\overline{L}\text{ if }v\in[3-2\lambda, 2(3-2\lambda)],\\
\end{cases}\\&&
N(v)=
\begin{cases}
0\text{ if }v\in[0,3-2\lambda],\\
\big(v-(3-2\lambda)\big)\overline{L}\text{ if }v\in[3-2\lambda,2(3-2\lambda)].
\end{cases}
\end{align*}
Then
$$
P(v)^2=
\begin{cases}
(3-2\lambda)^2 -\frac{v^2}{2}\text{ if }v\in[0,3-2\lambda],\\
\frac{(v- 6 + 4\lambda )^2}{2}\text{ if }v\in[3-2\lambda,2(3-2\lambda)],
\end{cases}
\text{ and }
P(v)\cdot \overline{E}=
\begin{cases}
\frac{v}{2}\text{ if }v\in[0,3-2\lambda],\\
3-2\lambda -\frac{v}{2}\text{ if }v\in[3-2\lambda,2(3-2\lambda)],
\end{cases}
$$
Thus
$$S_S(\overline{E})=\frac{1}{(3-2\lambda)^2}\Big(\int_0^{3-2\lambda} (3-2\lambda)^2 -\frac{v^2}{2} dv+\int_{3-2\lambda}^{2(3-2\lambda)} \frac{(v- 6 + 4\lambda)^2}{2} dv\Big)=3-2\lambda$$
so that $\delta_P(\DP^2,\lambda C)\le \frac{3-2\lambda}{3-2\lambda}=1$. For every $O\in \overline{E}$, we get if $O\in \overline{E}\backslash \overline{L}$ or if $O\in \overline{E}\cap \overline{L}$:
$$h(v)\le 
\begin{cases}
\frac{v^2}{8}\text{ if }v\in[0,3-2\lambda],\\
\frac{(v- 6 + 4\lambda)^2}{8}\text{ if }v\in[3-2\lambda, 2(3-2\lambda)],
\end{cases}
\text{ or }
h(v)\le \begin{cases}
\frac{v^2}{8}\text{ if }v\in[0,3-2\lambda],\\
\frac{ (v- 6 + 4\lambda) (6 - 4\lambda - 3v)}{8}\text{ if }v\in[3-2\lambda, 2(3-2\lambda)].
\end{cases}
$$
So that
$$S\big(W^{\overline{E}}_{\bullet,\bullet};O\big)\le \frac{2}{(3-2\lambda)^2}\Big(\int_0^{3-2\lambda} \frac{v^2}{8} dv+\int_{3-2\lambda}^{2(3-2\lambda)} \frac{(v- 6 + 6\lambda)^2}{8} dv\Big)=\frac{3-2\lambda}{6}
$$
or
$$S\big(W^{\overline{E}}_{\bullet,\bullet};O\big)\le \frac{2}{(3-2\lambda)^2}\Big(\int_0^{3-2\lambda} \frac{v^2}{8} dv+\int_{3-2\lambda}^{2(3-2\lambda)} \frac{ (v- 6 + 4\lambda) (6 - 4\lambda - 3v)}{8} dv\Big)=\frac{3-2\lambda}{3}
$$
We have
$$
\delta_P(\DP^2,\lambda C)\geqslant\mathrm{min}\Bigg\{1,\inf_{O\in\overline{E}}\frac{A_{\overline{E},\Delta_{\overline{E}}}(O)}{S\big(W^{\overline{E}}_{\bullet,\bullet};O\big)}\Bigg\},
$$
where $\Delta_{\overline{E}}=\frac{1}{2}\overline{P}+\lambda \overline{Q}$, where $\overline{Q}=\overline{E}\cap \overline{C}$. 
So that
$$
\frac{A_{\overline{E},\Delta_{\overline{E}}}(O)}{S(W_{\bullet,\bullet}^{\overline{E}};O)}=
\left\{\aligned
&\frac{3}{3-2\lambda}\ \mathrm{if}\ O=\overline{E}\cap\overline{L} \text{ or if } O=\overline{P},\\
&\frac{6(1-\lambda)}{3-2\lambda}\ \mathrm{if}\ O=\overline{Q},\\
&\frac{6}{3-2\lambda}\ \mathrm{otherwise}.
\endaligned
\right.
$$
Thus $\delta_P(\DP^2,\lambda C)=1$ for $\lambda\in\big[0,\frac{3}{4}\big]$.
 \end{proof}
\section{Smooth cubic curves}
\begin{lemma}
 Let $C\subset \DP^2$ be a cubic curve, $P\in C$ be a smooth point on $C$ such that the tangent line $L$ at this point has multiplicity $2$. Then
 $$\delta_P(\DP^2,\lambda C)=\frac{3-2\lambda}{3-3 \lambda}\text{ for }\lambda\in\Big[0,1\Big].$$
 \end{lemma}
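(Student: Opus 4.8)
The plan is to compute $\delta_P$ by the same plt (weighted) blowup used for the smooth conic, changing only the global numerics. First I would extract the upper bound from a single divisor. Blow up $P$, then blow up the point at which the strict transforms of $C$, of the tangent line $L$, and of the first exceptional curve all meet — these three coincide precisely because $(L\cdot C)_P=2$, exactly as in the conic case — and finally contract to reach a surface $\overline{S}$ carrying a divisor $\overline{E}$ over $P$ together with one $\frac12(1,1)$ point $\overline{P}$. The local geometry at $P$ is identical to the conic, so $\sigma^*C=\overline{C}+2\overline{E}$, $\sigma^*L=\overline{L}+2\overline{E}$ and $\sigma^*K_{\DP^2}=K_{\overline{S}}-2\overline{E}$, whence $A_{(\DP^2,\lambda C)}(\overline{E})=3-2\lambda$. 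The only change from the conic is the anticanonical class: here $-K_{\DP^2}-\lambda C=3(1-\lambda)H$, so every threshold scales with $3-3\lambda$ in place of $3-2\lambda$, and a short integral gives $S_{(\DP^2,\lambda C)}(\overline{E})=3-3\lambda$. Hence $\delta_P(\DP^2,\lambda C)\le \frac{3-2\lambda}{3-3\lambda}$.

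For the lower bound I would apply the refined estimate \eqref{estimation2} to $\overline{E}$. The Zariski decomposition of $\sigma^*(-K_{\DP^2}-\lambda C)-v\overline{E}$ mirrors the conic one: the divisor is nef for $v\in[0,3-3\lambda]$, and for $v\in[3-3\lambda,\,6-6\lambda]$ the strict transform $\overline{L}$ enters the negative part with coefficient $v-(3-3\lambda)$ (the curve $\overline{C}$ stays nef, hitting the boundary only at $\tau=6-6\lambda$). Computing $S\big(W^{\overline{E}}_{\bullet,\bullet};O\big)$ together with $A_{\overline{E},\Delta_{\overline{E}}}(O)$, where $\Delta_{\overline{E}}=\frac12\overline{P}+\lambda\overline{Q}$ with $\overline{Q}=\overline{E}\cap\overline{C}$, I expect four cases: ratio $\frac{2}{1-\lambda}$ at a general point of $\overline{E}$, $\frac{1}{1-\lambda}$ at $\overline{P}$ and at $\overline{E}\cap\overline{L}$, and $2$ at $\overline{Q}$. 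Combined with the slope $\frac{A(\overline{E})}{S(\overline{E})}=\frac{3-2\lambda}{3-3\lambda}$, this yields $\delta_P\ge\min\big\{\frac{3-2\lambda}{3-3\lambda},\,\frac{1}{1-\lambda},\,2\big\}$.

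Since $\frac{3-2\lambda}{3-3\lambda}\le\frac{1}{1-\lambda}$ for every $\lambda$, and $\frac{3-2\lambda}{3-3\lambda}\le 2$ exactly when $\lambda\le\frac34$, the minimum equals $\frac{3-2\lambda}{3-3\lambda}$ on $[0,\frac34]$, matching the upper bound and closing the argument there. The main obstacle is the point $\overline{Q}=\overline{E}\cap\overline{C}$ once $\lambda>\frac34$: the boundary forces $A_{\overline{E},\Delta_{\overline{E}}}(\overline{Q})=1-\lambda$, while $\overline{C}$ meets $\overline{E}$ transversally so the negative part avoids $\overline{Q}$ and $S\big(W^{\overline{E}}_{\bullet,\bullet};\overline{Q}\big)$ stays at its generic value $\frac{1-\lambda}{2}$; the flag therefore returns only $2<\frac{3-2\lambda}{3-3\lambda}$. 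To cover the full interval $[0,1]$ I would insert a secondary refinement at $\overline{Q}$ — blow it up and run \eqref{estimation2} one level deeper along the flag $\overline{E}\supset\overline{Q}$, now exploiting the \emph{smooth} boundary curve $\overline{C}$ through $\overline{Q}$ — and check that every divisor centered at $\overline{Q}$ still satisfies $\frac{A}{S}\ge\frac{3-2\lambda}{3-3\lambda}$. This second-order estimate, not the first-order one, is where the genuine difficulty lies.
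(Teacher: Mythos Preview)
Your approach is exactly the paper's: the same $(1,2)$ weighted blowup producing $\overline{E}$ over $P$ with a single $\frac{1}{2}(1,1)$ point $\overline{P}$, the same Zariski decomposition with $\overline{L}$ entering the negative part at $v=3-3\lambda$, and the same different $\Delta_{\overline{E}}=\tfrac{1}{2}\overline{P}+\lambda\overline{Q}$. Your ratios $\frac{1}{1-\lambda}$ at $\overline{P}$ and at $\overline{E}\cap\overline{L}$, $2$ at $\overline{Q}$, and $\frac{2}{1-\lambda}$ elsewhere coincide with the paper's $\frac{3}{3-3\lambda}$, $\frac{6(1-\lambda)}{3-3\lambda}$, $\frac{6}{3-3\lambda}$, and the argument closes cleanly on $[0,\tfrac34]$.

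You are also right that the ratio at $\overline{Q}$ is identically $2$, so for $\lambda>\tfrac34$ the estimate \eqref{estimation2} returns only $\delta_P\ge 2<\frac{3-2\lambda}{3-3\lambda}$. The paper does \emph{not} carry out the second-level refinement you propose; its proof ends with the bare assertion ``Thus $\delta_P(\DP^2,\lambda C)=\frac{3-2\lambda}{3-3\lambda}$'' and goes no further than what you have already done. Note that in the Main Theorem the corresponding interval is stated as $[0,\tfrac34]$, so the $[0,1]$ in the lemma is almost certainly an overstatement rather than something the paper actually establishes. Your diagnosis of where the real difficulty lies is more careful than the paper itself.
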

 \begin{proof}
Let $\pi_1:S^1\to S$ be the~blow up of the~point $P$,
with the exceptional divisor $E_1^1$ and $L^1$, $C^1$ are strict transforms of $L$ and $C$ respectively; $\pi_2: S^2\to S^1$ be the~blow up of the~point $E_1^1\cap C^1\cap L^1$,
with the exceptional divisor $E$ and $L^2$, $C^2$ are strict transforms of $L^1$ and $C^1$ respectively; let $\theta:S^2\to\overline{S}$ be  the~contraction of  the~curve $E_1^2$,
and $\sigma$ is the~birational contraction of $\overline{E}=\theta(E)$. We denote the strict transforms of $C$ and $L$ on $\overline{S}$ by $\overline{C}$ and $\overline{L}$. Let $\overline{E}$ be the exceptional divisor of $\sigma$. Note that $\overline{E}$ contains one singular point $\overline{P}$  which is $\frac{1}{2}(1,1)$  singular point.
\begin{center}
 \includegraphics[width=15cm]{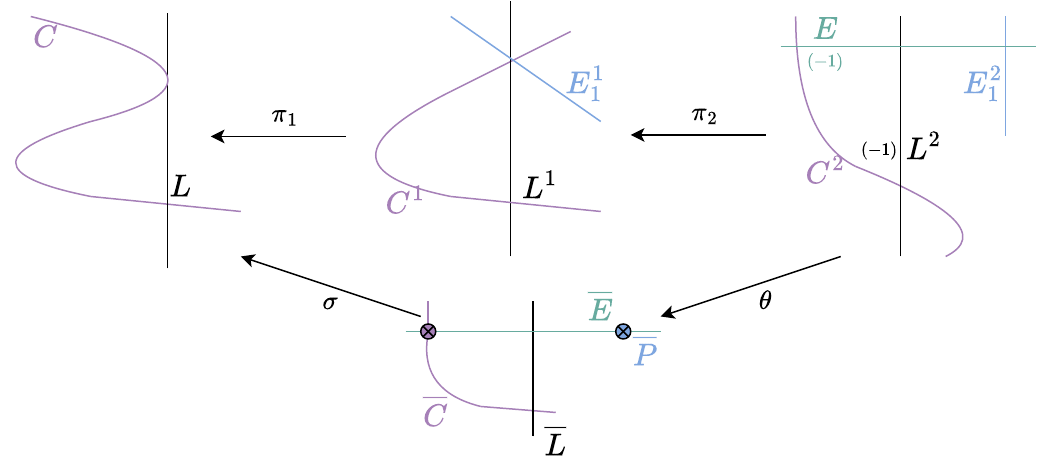}
 \end{center}
The intersections on $\overline{S}$ are given by:
\begin{center}
    \renewcommand{\arraystretch}{1.4}
    \begin{tabular}{|c|c|c|}
    \hline
         & $\overline{E}$ & $\overline{L}$ \\
    \hline
       $\overline{E}$  & $-\frac{1}{2}$ & $1$ \\
    \hline
       $\overline{L}$  & $1$ & $-1$ \\
    \hline
    \end{tabular}
\end{center}
 We have $\sigma^*(L)=\overline{L}+2\overline{E}$, $\sigma^*(C)=\overline{C}+2\overline{E}$, $\sigma^*(K_{\DP^2})=K_{\overline{S}}-2\overline{E}$. Thus, $A_{(\DP^2,\lambda C)}(\overline{E})=3-2\lambda$.
\noindent The Zariski decomposition of the divisor  $-\sigma^*(K_{\DP^2}+\lambda C)-v\overline{E}$ is given by:
\begin{align*}
&&P(v)=
\begin{cases}
-\sigma^*(K_{\DP^2}+\lambda C)-v\overline{E}\text{ if }v\in[0,3-3\lambda],\\
-\sigma^*(K_{\DP^2}+\lambda C)-v\overline{E}-\big(v-(3-3\lambda)\big)\overline{L}\text{ if }v\in[3-3\lambda, 2(3-3\lambda)],\\
\end{cases}\\&&
N(v)=
\begin{cases}
0\text{ if }v\in[0,3-3\lambda],\\
\big(v-(3-3\lambda)\big)\overline{L}\text{ if }v\in[3-3\lambda,2(3-3\lambda)].
\end{cases}
\end{align*}
Then
$$
P(v)^2=
\begin{cases}
(3-3\lambda)^2 -\frac{v^2}{2}\text{ if }v\in[0,3-3\lambda],\\
\frac{(v- 6 + 6\lambda )^2}{2}\text{ if }v\in[3-3\lambda,2(3-3\lambda)],
\end{cases}
\text{ and }
P(v)\cdot \overline{E}=
\begin{cases}
\frac{v}{2}\text{ if }v\in[0,3-3\lambda],\\
3-3\lambda -\frac{v}{2}\text{ if }v\in[3-3\lambda,2(3-3\lambda)],
\end{cases}
$$
Thus
$$S_S(\overline{E})=\frac{1}{(3-3\lambda)^2}\Big(\int_0^{3-3\lambda} (3-3\lambda)^2 -\frac{v^2}{2} dv+\int_{3-3\lambda}^{2(3-3\lambda)} \frac{(v- 6 + 6\lambda)^2}{2} dv\Big)=3-3\lambda$$
so that $\delta_P(\DP^2,\lambda C)\le \frac{3-2\lambda}{3-3\lambda}$. For every $O\in \overline{E}$, we get if $O\in \overline{E}\backslash \overline{L}$ or if $O\in \overline{E}\cap \overline{L}$:
$$h(v)\le 
\begin{cases}
\frac{v^2}{8}\text{ if }v\in[0,3-3\lambda],\\
\frac{(v- 6 + 6\lambda)^2}{8}\text{ if }v\in[3-3\lambda, 2(3-3\lambda)],
\end{cases}
\text{ or }
h(v)\le \begin{cases}
\frac{v^2}{8}\text{ if }v\in[0,3-3\lambda],\\
\frac{ (v- 6 + 6\lambda) (6 - 6\lambda - 3v)}{8}\text{ if }v\in[3-3\lambda, 2(3-3\lambda)].
\end{cases}
$$
So that
$$S\big(W^{\overline{E}}_{\bullet,\bullet};O\big)\le \frac{2}{(3-3\lambda)^2}\Big(\int_0^{3-3\lambda} \frac{v^2}{8} dv+\int_{3-3\lambda}^{2(3-3\lambda)} \frac{(v- 6 + 6\lambda)^2}{8} dv\Big)=\frac{3-3\lambda}{6}
$$
or
$$S\big(W^{\overline{E}}_{\bullet,\bullet};O\big)\le \frac{2}{(3-3\lambda)^2}\Big(\int_0^{3-3\lambda} \frac{v^2}{8} dv+\int_{3-3\lambda}^{2(3-3\lambda)} \frac{ (v- 6 + 6\lambda) (6 - 6\lambda - 3v)}{8} dv\Big)=\frac{3-3\lambda}{3}
$$
We have
$$
\delta_P(\DP^2,\lambda C)\geqslant\mathrm{min}\Bigg\{\frac{3-2\lambda}{3-3\lambda},\inf_{O\in\overline{E}}\frac{A_{\overline{E},\Delta_{\overline{E}}}(O)}{S\big(W^{\overline{E}}_{\bullet,\bullet};O\big)}\Bigg\},
$$
where $\Delta_{\overline{E}}=\frac{1}{2}\overline{P}+\lambda \overline{Q}$, where $\overline{Q}=\overline{E}\cap \overline{C}$. 
So that
$$
\frac{A_{\overline{E},\Delta_{\overline{E}}}(O)}{S(W_{\bullet,\bullet}^{\overline{E}};O)}=
\left\{\aligned
&\frac{3}{3-3\lambda}\ \mathrm{if}\ O=\overline{E}\cap\overline{L} \text{ or if } O=\overline{P},\\
&\frac{6(1-\lambda)}{3-3\lambda}\ \mathrm{if}\ O=\overline{Q},\\
&\frac{6}{3-3\lambda}\ \mathrm{otherwise}.
\endaligned
\right.
$$
Thus $\delta_P(\DP^2,\lambda C)=\frac{3-2\lambda}{3-3\lambda}$.
 \end{proof}

 \begin{lemma}
 Let $C\subset \DP^2$ be a cubic curve, $P\in C$ be a smooth point on $C$ such that the tangent line $L$ at this point has multiplicity $3$. Then
 $$\delta_P(\DP^2,\lambda C)=\frac{4-3\lambda}{4-4\lambda}\text{ for }\lambda\in\Big[0,\frac{3}{4}\Big].$$
 \end{lemma}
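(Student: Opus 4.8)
The plan is to mirror the proof of the preceding (multiplicity $2$) lemma, adding one more blow-up to account for the higher contact. Since $P$ is a flex of the smooth cubic $C$, the tangent line $L$ meets $C$ only at $P$, with contact of order $3$; in particular $L$ is not a component of $C$. First I would perform the sequence of point blow-ups $\pi_1,\pi_2,\pi_3$: blow up $P$, then the point $E_1^1\cap C^1\cap L^1$, then the triple point $E_2^2\cap C^2\cap L^2$, obtaining the final exceptional divisor $E$; at each stage the strict transforms of $C$ and $L$ stay tangent until the third blow-up separates them. I would then contract the chain of two $(-2)$-curves $E_1^3\cup E_2^3$ (an $A_2$-configuration) by $\theta$, so that the image $\overline{E}=\theta(E)$ is the exceptional divisor of a $(1,3)$-weighted blow-up $\sigma\colon\overline{S}\to\DP^2$; the contracted chain produces a single $A_2$ singular point $\overline{P}\in\overline{E}$. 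Recording $\mathrm{ord}_{\overline{E}}(C)=\mathrm{ord}_{\overline{E}}(L)=3$ and discrepancy $3$, one gets $\sigma^*(C)=\overline{C}+3\overline{E}$, $\sigma^*(L)=\overline{L}+3\overline{E}$, $\sigma^*(K_{\DP^2})=K_{\overline{S}}-3\overline{E}$, hence $A_{(\DP^2,\lambda C)}(\overline{E})=4-3\lambda$, together with the intersection numbers $\overline{E}^2=-\tfrac13$, $\overline{E}\cdot\overline{L}=1$, $\overline{L}^2=-2$ and $\overline{L}\cdot\overline{C}=0$.

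Next I would compute the Zariski decomposition of $-\sigma^*(K_{\DP^2}+\lambda C)-v\overline{E}$. Since $\overline{L}\cdot\overline{C}=0$, only $\overline{L}$ can enter the negative part, and it does so exactly when $v>3-3\lambda$, with coefficient $\tfrac{v-(3-3\lambda)}{2}$; the pseudo-effective threshold is $\tau=3(3-3\lambda)$. This yields the two-phase formulas for $P(v)^2$ and for $P(v)\cdot\overline{E}$ and, upon integrating, $S_{(\DP^2,\lambda C)}(\overline{E})=4-4\lambda$, giving the upper bound $\delta_P(\DP^2,\lambda C)\le\frac{4-3\lambda}{4-4\lambda}$.

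For the matching lower bound I would apply the refined estimate \eqref{estimation2} with $\Delta_{\overline{E}}=\tfrac23\overline{P}+\lambda\overline{Q}$, where the coefficient $\tfrac23$ is the different of the $A_2$ point and $\overline{Q}=\overline{E}\cap\overline{C}$. Using $P(v)\cdot\overline{E}=\tfrac v3$ on $[0,3-3\lambda]$ and $\tfrac{3(3-3\lambda)-v}{6}$ afterwards, and noting that the negative part meets $\overline{E}$ only at $\overline{E}\cap\overline{L}$, the local $S\big(W^{\overline{E}}_{\bullet,\bullet};O\big)$ equals $1-\lambda$ at $O=\overline{E}\cap\overline{L}$ and $\tfrac{1-\lambda}{3}$ at every other point. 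The resulting ratios $\frac{A_{\overline{E},\Delta_{\overline{E}}}(O)}{S(W^{\overline{E}}_{\bullet,\bullet};O)}$ are $\frac{3}{3-3\lambda}$ at both $\overline{E}\cap\overline{L}$ and $\overline{P}$, the value $3$ at $\overline{Q}$, and $\frac{9}{3-3\lambda}$ at a generic point. Each of these exceeds $\frac{4-3\lambda}{4-4\lambda}$ on $\big[0,\tfrac34\big]$ (the only non-trivial comparisons being $4-3\lambda\le 4$ and $\lambda\le\tfrac89$), so \eqref{estimation2} forces $\delta_P(\DP^2,\lambda C)\ge\frac{4-3\lambda}{4-4\lambda}$, and equality follows.

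The main obstacle I anticipate is bookkeeping rather than conceptual: getting the different coefficient at the $A_2$ point right (it is $1-\tfrac13=\tfrac23$, as opposed to the $\tfrac12$ of the $A_1$ point in the multiplicity-$2$ case) and correctly tracking which curve enters the negative part of the Zariski decomposition and at which threshold. Once the intersection table on $\overline{S}$ and the relation $\overline{L}\cdot\overline{C}=0$ are established, the two integrals for $S_{(\DP^2,\lambda C)}(\overline{E})$ and for the local $S\big(W^{\overline{E}}_{\bullet,\bullet};O\big)$ are routine and parallel those of the previous lemma.
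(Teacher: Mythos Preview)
Your proposal is correct and follows the same approach as the paper: the same three blow-ups producing a $\tfrac13(1,2)$ point on $\overline{E}$, the same intersection numbers $\overline{E}^2=-\tfrac13$, $\overline{E}\cdot\overline{L}=1$, $\overline{L}^2=-2$, the same Zariski decomposition with threshold $3-3\lambda$ and pseudo-effective threshold $3(3-3\lambda)$, and the same different $\Delta_{\overline{E}}=\tfrac23\overline{P}+\lambda\overline{Q}$. Your numerical values for $S_{(\DP^2,\lambda C)}(\overline{E})$, for $S\big(W^{\overline{E}}_{\bullet,\bullet};O\big)$ at each type of point, and for the resulting ratios all agree with the paper's, and your verification that the minimum is attained by $\frac{4-3\lambda}{4-4\lambda}$ on $[0,\tfrac34]$ is exactly what is needed.
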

 \begin{proof}
Let $\pi_1:S^1\to S$ be the~blow up of the~point $P$,
with the exceptional divisor $E_1^1$ and $L^1$, $C^1$ are strict transforms of $L$ and $C$ respectively; $\pi_2: S^2\to S^1$ be the~blow up of the~point $E_1^1\cap C^1\cap L^1$,
with the exceptional divisor $E_2^2$ and $L^2$, $C^2$, $E_1^2$ are strict transforms of $L^1$, $C^1$, $E_1^1$ respectively; $\pi_3: S^3\to S^2$ be the~blow up of the~point $E_2^2\cap L^2$,
with the exceptional divisor $E$ and $L^3$, $C^3$, $E_1^3$, $E_2^3$ are strict transforms of $L^2$, $C^2$, $E_1^2$, $E_2^2$ respectively; let $\theta:S^3\to\overline{S}$ be  the~contraction of  the~curves  $E_1^3$, $E_2^3$
and $\sigma$ is the~birational contraction of $\overline{E}=\theta(E)$. We denote the strict transforms of $C$ and $L$ on $\overline{S}$ by $\overline{C}$ and $\overline{L}$. Let $\overline{E}$ be the exceptional divisor of $\sigma$. Note that $\overline{E}$ contains one singular point $\overline{P}$  which is $\frac{1}{3}(1,2)$  singular point.
\begin{center}
 \includegraphics[width=18cm]{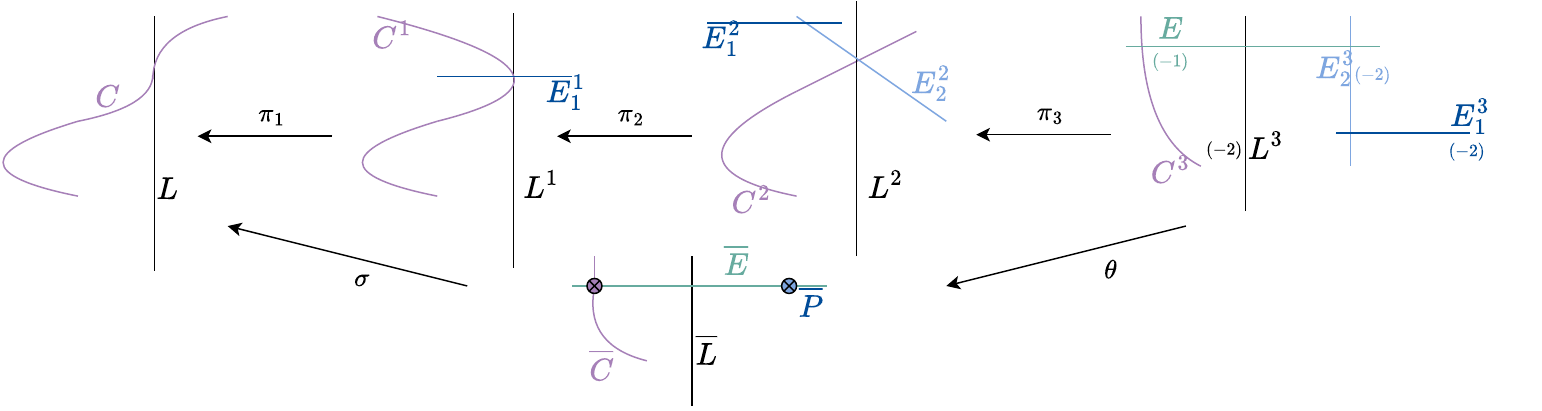}
 \end{center}
The intersections on $\overline{S}$ are given by:
\begin{center}
    \renewcommand{\arraystretch}{1.4}
    \begin{tabular}{|c|c|c|}
    \hline
         & $\overline{E}$ & $\overline{L}$ \\
    \hline
       $\overline{E}$  & $-\frac{1}{3}$ & $1$ \\
    \hline
       $\overline{L}$  & $1$ & $-2$ \\
    \hline
    \end{tabular}
\end{center}
 We have $\sigma^*(L)=\overline{L}+3\overline{E}$, $\sigma^*(C)=\overline{C}+3\overline{E}$, $\sigma^*(K_{\DP^2})=K_{\overline{S}}-3\overline{E}$. Thus, $A_{(\DP^2,\lambda C)}(\overline{E})=4-3\lambda$.
\noindent The Zariski decomposition of the divisor  $-\sigma^*(K_{\DP^2}+\lambda C)-v\overline{E}$ is given by:
\begin{align*}
&&P(v)=
\begin{cases}
-\sigma^*(K_{\DP^2}+\lambda C)-v\overline{E}\text{ if }v\in[0,3-3\lambda],\\
-\sigma^*(K_{\DP^2}+\lambda C)-v\overline{E}-\frac{1}{2}\big(v-(3-3\lambda)\big)\overline{L}\text{ if }v\in[3-3\lambda, 3(3-3\lambda)],\\
\end{cases}\\&&
N(v)=
\begin{cases}
0\text{ if }v\in[0,3-3\lambda],\\
\frac{1}{2}\big(v-(3-3\lambda)\big)\overline{L}\text{ if }v\in[3-3\lambda,3(3-3\lambda)].
\end{cases}
\end{align*}
Then
$$
P(v)^2=
\begin{cases}
(3-3\lambda)^2 -\frac{v^2}{3}\text{ if }v\in[0,3-3\lambda],\\
\frac{(v - 9 +9\lambda)^2}{6}\text{ if }v\in[3-3\lambda,3(3-3\lambda)],
\end{cases}
\text{ and }
P(v)\cdot \overline{E}=
\begin{cases}
\frac{v}{3}\text{ if }v\in[0,3-3\lambda],\\
\frac{1}{2}\big(3-3\lambda -\frac{v}{3}\big)\text{ if }v\in[3-3\lambda,3(3-3\lambda)],
\end{cases}
$$
Thus
$$S_S(\overline{E})=\frac{1}{(3-3\lambda)^2}\Big(\int_0^{3-3\lambda} (3-3\lambda)^2 -\frac{v^2}{3} dv+\int_{3-3\lambda}^{3(3-3\lambda)} \frac{(v - 9 +9\lambda)^2}{6} dv\Big)=\frac{4(3-3\lambda)}{3}$$
so that $\delta_P(\DP^2,\lambda C)\le \frac{3}{4}\cdot\frac{4-3\lambda}{3-3\lambda}=\frac{4-3\lambda}{4-4\lambda}$. For every $O\in \overline{E}$, we get if $O\in \overline{E}\backslash \overline{L}$ or if $O\in \overline{E}\cap \overline{L}$:
$$h(v)\le 
\begin{cases}
\frac{v^2}{18}\text{ if }v\in[0,3-3\lambda],\\
\frac{(v - 9 +9\lambda)^2}{72}\text{ if }v\in[3-3\lambda, 3(3-3\lambda)],
\end{cases}
\text{or }
h(v)\le \begin{cases}
\frac{v^2}{18}\text{ if }v\in[0,3-3\lambda],\\
\frac{ (v - 9 +9\lambda) (9 -5v - 9\lambda )}{72}\text{ if }v\in[3-2\lambda, 3(3-3\lambda)].
\end{cases}
$$
So that
$$S\big(W^{\overline{E}}_{\bullet,\bullet};O\big)\le \frac{2}{(3-3\lambda)^2}\Big(\int_0^{3-3\lambda} \frac{v^2}{18} dv+\int_{3-3\lambda}^{3(3-3\lambda)} \frac{(v - 9 +9\lambda)^2}{72} dv\Big)=\frac{3-3\lambda}{9}
$$
or
$$S\big(W^{\overline{E}}_{\bullet,\bullet};O\big)\le \frac{2}{(3-3\lambda)^2}\Big(\int_0^{3-3\lambda} \frac{v^2}{18} dv+\int_{3-3\lambda}^{3(3-3\lambda)} \frac{ (v - 9 +0\lambda) (9 -5v - 9\lambda )}{72} dv\Big)=\frac{3-3\lambda}{3}
$$
We have
$$
\delta_P(\DP^2,\lambda C)\geqslant\mathrm{min}\Bigg\{\frac{4-3\lambda}{4-4\lambda},\inf_{O\in\overline{E}}\frac{A_{\overline{E},\Delta_{\overline{E}}}(O)}{S\big(W^{\overline{E}}_{\bullet,\bullet};O\big)}\Bigg\},
$$
where $\Delta_{\overline{E}}=\frac{2}{3}\overline{P}+\lambda \overline{Q}$, where $\overline{Q}=\overline{E}\cap \overline{C}$. 
So that
$$
\frac{A_{\overline{E},\Delta_{\overline{E}}}(O)}{S(W_{\bullet,\bullet}^{\overline{E}};O)}=
\left\{\aligned
&\frac{3}{3-3\lambda}\ \mathrm{if}\ O=\overline{E}\cap\overline{L}\text{ or if } O=\overline{P},\\
&\frac{9(1-\lambda)}{3-3\lambda}\ \mathrm{if}\ O=\overline{Q},\\
&\frac{9}{3-3\lambda}\ \mathrm{otherwise}.
\endaligned
\right.
$$
Thus $\delta_P(\DP^2,\lambda C)=\frac{4-3\lambda}{4-4\lambda}$.
 \end{proof}

\section{Smooth quartic curves}
 \begin{lemma}\label{qc-smooth-2}
 Let $C\subset \DP^2$ be a  quartic curve, $P\in C$ be a smooth point on $C$ such that the tangent line $L$ at this point has multiplicity $2$. Then
 $$\delta_P(\DP^2,\lambda C)=\frac{3-2\lambda}{3-4 \lambda}\text{ for }\lambda\in\Big[0,\frac{3}{4}\Big].$$
 \end{lemma}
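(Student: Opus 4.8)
The plan is to reuse verbatim the birational construction from the smooth conic and the degree-$3$ tangent-multiplicity-$2$ lemmas, since the extraction of the relevant divisor over $P$ depends only on the order of tangency of $L$ and not on $\deg C$. First I would blow up $P$, then blow up the point $E_1^1\cap C^1\cap L^1$, contract the intermediate curve $E_1^2$, and finally extract $\overline{E}$ via $\sigma$. This yields a surface $\overline{S}$ carrying a single $\frac{1}{2}(1,1)$ quotient singularity $\overline{P}\in\overline{E}$, with the same intersection data $\overline{E}^2=-\frac12$, $\overline{E}\cdot\overline{L}=1$, $\overline{L}^2=-1$. The pullback relations $\sigma^*L=\overline{L}+2\overline{E}$, $\sigma^*C=\overline{C}+2\overline{E}$, $\sigma^*K_{\DP^2}=K_{\overline{S}}-2\overline{E}$ are unaffected by the degree, so $A_{(\DP^2,\lambda C)}(\overline{E})=3-2\lambda$ exactly as in those cases.

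The only effect of passing from $d=3$ to $d=4$ enters through the anticanonical volume and the pseudo-effective threshold, both governed by $3-d\lambda=3-4\lambda$. I therefore expect the Zariski decomposition of $-\sigma^*(K_{\DP^2}+\lambda C)-v\overline{E}$ to break at $v=3-4\lambda$, with $N(v)=0$ on $[0,3-4\lambda]$ and $N(v)=\big(v-(3-4\lambda)\big)\overline{L}$ on $[3-4\lambda,2(3-4\lambda)]$, the threshold being $\tau=2(3-4\lambda)$. From this I would read off $P(v)^2=(3-4\lambda)^2-\frac{v^2}{2}$ and $P(v)\cdot\overline{E}=\frac{v}{2}$ on the first segment, and the symmetric expressions $\frac{(v-6+8\lambda)^2}{2}$ and $3-4\lambda-\frac{v}{2}$ on the second. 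Integrating gives $S_{(\DP^2,\lambda C)}(\overline{E})=3-4\lambda$, hence the upper bound $\delta_P(\DP^2,\lambda C)\le\frac{A(\overline{E})}{S(\overline{E})}=\frac{3-2\lambda}{3-4\lambda}$.

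For the lower bound I would apply the refined inequality \eqref{estimation2} with the different $\Delta_{\overline{E}}=\frac12\overline{P}+\lambda\overline{Q}$, where $\overline{Q}=\overline{E}\cap\overline{C}$. Bounding $h(v)$ on each segment and integrating as in the conic and cubic lemmas, I expect $S\big(W^{\overline{E}}_{\bullet,\bullet};O\big)\le\frac{3-4\lambda}{6}$ at a general point of $\overline{E}$ (including $\overline{Q}$ and $\overline{P}$), and the weaker bound $\le\frac{3-4\lambda}{3}$ at the point $\overline{E}\cap\overline{L}$, where the negative part $\overline{L}$ meets $\overline{E}$. Combining these with the local log discrepancies $A_{\overline{E},\Delta_{\overline{E}}}(\overline{P})=\frac12$, $A_{\overline{E},\Delta_{\overline{E}}}(\overline{Q})=1-\lambda$, and $A=1$ elsewhere, the local ratios become $\frac{3}{3-4\lambda}$ at both $\overline{E}\cap\overline{L}$ and $\overline{P}$, $\frac{6(1-\lambda)}{3-4\lambda}$ at $\overline{Q}$, and $\frac{6}{3-4\lambda}$ at the remaining points.

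Collecting these with the divisorial ratio, $\delta_P$ is bounded below by the minimum of $\frac{3-2\lambda}{3-4\lambda}$, $\frac{3}{3-4\lambda}$, $\frac{6(1-\lambda)}{3-4\lambda}$ and $\frac{6}{3-4\lambda}$. The main obstacle, and the only place where the interval $\lambda\in[0,\frac34]$ is used, is checking that this minimum is attained at the divisorial term: one needs $3-2\lambda\le 3$ (immediate) and, crucially, $3-2\lambda\le 6(1-\lambda)$, which rearranges to $4\lambda\le 3$, i.e.\ exactly $\lambda\le\frac34$. Once this comparison is verified the lower bound matches the upper bound and $\delta_P(\DP^2,\lambda C)=\frac{3-2\lambda}{3-4\lambda}$ follows.
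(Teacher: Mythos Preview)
Your proposal is correct and follows the paper's proof essentially line for line: the same two-step blowup and contraction extracting $\overline{E}$ over the $\frac{1}{2}(1,1)$ point, the same Zariski decomposition breaking at $v=3-4\lambda$, the same different $\Delta_{\overline{E}}=\frac12\overline{P}+\lambda\overline{Q}$, and the same local ratios. Your explicit check that $3-2\lambda\le 6(1-\lambda)$ iff $\lambda\le\frac34$ is the one comparison the paper leaves implicit, so your write-up is in fact slightly more complete on that point.
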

 \begin{proof}
Let $\pi_1:S^1\to S$ be the~blow up of the~point $P$,
with the exceptional divisor $E_1^1$ and $L^1$, $C^1$ are strict transforms of $L$ and $C$ respectively; $\pi_2: S^2\to S^1$ be the~blow up of the~point $E_1^1\cap C^1\cap L^1$,
with the exceptional divisor $E$ and $L^2$, $C^2$ are strict transforms of $L^1$ and $C^1$ respectively; let $\theta:S^2\to\overline{S}$ be  the~contraction of  the~curve $E_1^2$,
and $\sigma$ is the~birational contraction of $\overline{E}=\theta(E)$. We denote the strict transforms of $C$ and $L$ on $\overline{S}$ by $\overline{C}$ and $\overline{L}$. Let $\overline{E}$ be the exceptional divisor of $\sigma$. Note that $\overline{E}$ contains one singular point $\overline{P}$  which is $\frac{1}{2}(1,1)$  singular point.
\begin{center}
 \includegraphics[width=15cm]{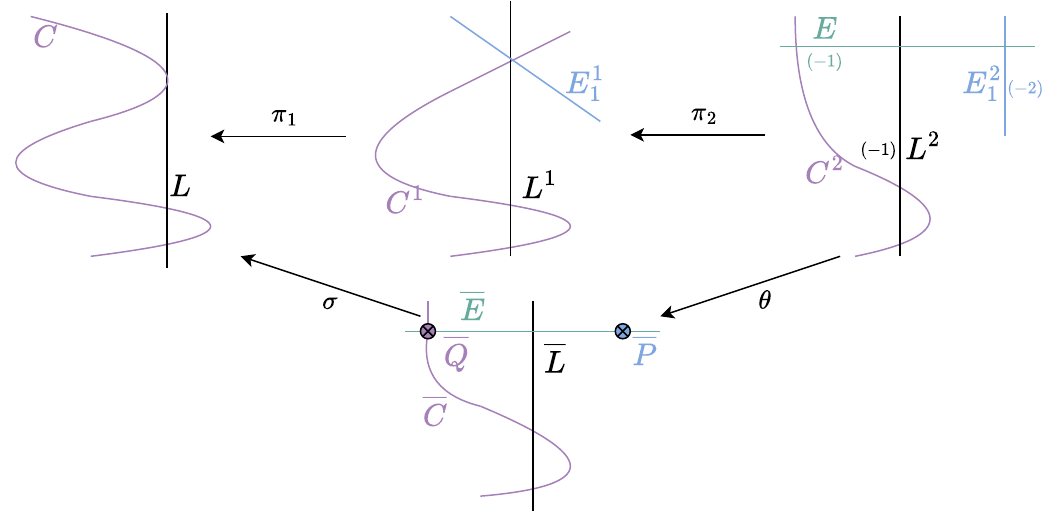}
 \end{center}
The intersections on $\overline{S}$ are given by:
\begin{center}
    \renewcommand{\arraystretch}{1.4}
    \begin{tabular}{|c|c|c|}
    \hline
         & $\overline{E}$ & $\overline{L}$ \\
    \hline
       $\overline{E}$  & $-\frac{1}{2}$ & $1$ \\
    \hline
       $\overline{L}$  & $1$ & $-1$ \\
    \hline
    \end{tabular}
\end{center}
 We have $\sigma^*(L)=\overline{L}+2\overline{E}$, $\sigma^*(C)=\overline{C}+2\overline{E}$, $\sigma^*(K_{\DP^2})=K_{\overline{S}}-2\overline{E}$. Thus, $A_{(\DP^2,\lambda C)}(\overline{E})=3-2\lambda$.
\noindent The Zariski decomposition of the divisor  $-\sigma^*(K_{\DP^2}+\lambda C)-v\overline{E}$ is given by:
\begin{align*}
&&P(v)=
\begin{cases}
-\sigma^*(K_{\DP^2}+\lambda C)-v\overline{E}\text{ if }v\in[0,3-4\lambda],\\
-\sigma^*(K_{\DP^2}+\lambda C)-v\overline{E}-\big(v-(3-4\lambda)\big)\overline{L}\text{ if }v\in[3-4\lambda, 2(3-4\lambda)],\\
\end{cases}\\&&
N(v)=
\begin{cases}
0\text{ if }v\in[0,3-4\lambda],\\
\big(v-(3-4\lambda)\big)\overline{L}\text{ if }v\in[3-4\lambda,2(3-4\lambda)].
\end{cases}
\end{align*}
Then
$$
P(v)^2=
\begin{cases}
(3-4\lambda)^2 -\frac{v^2}{2}\text{ if }v\in[0,3-4\lambda],\\
\frac{(v- 6 + 8\lambda )^2}{2}\text{ if }v\in[3-4\lambda,2(3-4\lambda)],
\end{cases}
\text{ and }
P(v)\cdot \overline{E}=
\begin{cases}
\frac{v}{2}\text{ if }v\in[0,3-4\lambda],\\
3-4\lambda -\frac{v}{2}\text{ if }v\in[3-4\lambda,2(3-4\lambda)],
\end{cases}
$$
Thus
$$S_S(\overline{E})=\frac{1}{(3-4\lambda)^2}\Big(\int_0^{3-4\lambda} (3-4\lambda)^2 -\frac{v^2}{2} dv+\int_{3-4\lambda}^{2(3-4\lambda)} \frac{(v- 6 + 8\lambda)^2}{2} dv\Big)=3-4\lambda$$
so that $\delta_P(\DP^2,\lambda C)\le \frac{3-2\lambda}{3-4\lambda}$. For every $O\in \overline{E}$, we get if $O\in \overline{E}\backslash \overline{L}$ or if $O\in \overline{E}\cap \overline{L}$:
$$h(v)\le 
\begin{cases}
\frac{v^2}{8}\text{ if }v\in[0,3-4\lambda],\\
\frac{(v- 6 + 8\lambda)^2}{8}\text{ if }v\in[3-4\lambda, 2(3-4\lambda)],
\end{cases}
\text{or }
h(v)\le \begin{cases}
\frac{v^2}{8}\text{ if }v\in[0,3-4\lambda],\\
\frac{ (v- 6 + 8\lambda) (6 - 8\lambda - 3v)}{8}\text{ if }v\in[3-4\lambda, 2(3-4\lambda)].
\end{cases}
$$
So that
$$S\big(W^{\overline{E}}_{\bullet,\bullet};O\big)\le \frac{2}{(3-4\lambda)^2}\Big(\int_0^{3-4\lambda} \frac{v^2}{8} dv+\int_{3-4\lambda}^{2(3-4\lambda)} \frac{(v- 6 + 8\lambda)^2}{8} dv\Big)=\frac{3-4\lambda}{6}
$$
or
$$S\big(W^{\overline{E}}_{\bullet,\bullet};O\big)\le \frac{2}{(3-4\lambda)^2}\Big(\int_0^{3-4\lambda} \frac{v^2}{8} dv+\int_{3-4\lambda}^{2(3-4\lambda)} \frac{ (v- 6 + 8\lambda) (6 - 8\lambda - 3v)}{8} dv\Big)=\frac{3-4\lambda}{3}
$$
We have
$$
\delta_P(\DP^2,\lambda C)\geqslant\mathrm{min}\Bigg\{\frac{3-2\lambda}{3-4\lambda},\inf_{O\in\overline{E}}\frac{A_{\overline{E},\Delta_{\overline{E}}}(O)}{S\big(W^{\overline{E}}_{\bullet,\bullet};O\big)}\Bigg\},
$$
where $\Delta_{\overline{E}}=\frac{1}{2}\overline{P}+\lambda\overline{Q}$, where $\overline{Q}=\overline{E}\cap \overline{C}$. 
So that
$$
\frac{A_{\overline{E},\Delta_{\overline{E}}}(O)}{S(W_{\bullet,\bullet}^{\overline{E}};O)}=
\left\{\aligned
&\frac{3}{3-4\lambda}\ \mathrm{if}\ O=\overline{E}\cap\overline{L} \text{ or if } O=\overline{P},\\
&\frac{6(1-\lambda)}{3-4\lambda}\ \mathrm{if}\ O=\overline{Q},\\
&\frac{6}{3-4\lambda}\ \mathrm{otherwise}.
\endaligned
\right.
$$
Thus $\delta_P(\DP^2,\lambda C)=\frac{3-2\lambda}{3-4\lambda}$.
 \end{proof}

 \begin{lemma}
 Let $C\subset \DP^2$ be a quartic curve, $P\in C$ be a smooth point on $C$ such that the tangent line $L$ at this point has multiplicity $3$. Then
 $$\delta_P(\DP^2,\lambda C)=\frac{3}{4}\cdot\frac{4-3\lambda}{3-4\lambda}\text{ for }\lambda\in\Big[0,\frac{3}{4}\Big].$$
 \end{lemma}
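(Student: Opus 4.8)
The plan is to follow verbatim the structure of the multiplicity-$3$ smooth cubic lemma, changing only the numerical ``size'' of the anticanonical polarisation. Since $P$ is a smooth point of $C$ and $(L\cdot C)_P=3$, the strict transforms of $L$ and $C$ stay tangent after a single blow-up, so I would perform a chain of three point blow-ups: blow up $P$ to obtain $E_1^1$, then blow up $E_1^1\cap C^1\cap L^1$ to obtain $E_2^2$, then blow up $E_2^2\cap L^2$ to obtain the final exceptional curve $E$. Contracting the intermediate curves $E_1^3,E_2^3$ by $\theta$ and then contracting by $\sigma$ produces a surface $\overline{S}$ on which $\overline{E}$ carries a single $\tfrac13(1,2)$ quotient singularity $\overline{P}$, exactly as in the cubic case. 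The relevant intersection numbers are $\overline{E}^2=-\tfrac13$, $\overline{E}\cdot\overline{L}=1$, $\overline{L}^2=-2$, together with $\sigma^*L=\overline{L}+3\overline{E}$, $\sigma^*C=\overline{C}+3\overline{E}$ and $\sigma^*K_{\DP^2}=K_{\overline{S}}-3\overline{E}$; these give $A_{(\DP^2,\lambda C)}(\overline{E})=4-3\lambda$.

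Next I would compute the Zariski decomposition of $-\sigma^*(K_{\DP^2}+\lambda C)-v\overline{E}$. Because $-K_{\DP^2}-\lambda C=(3-4\lambda)H$, the only change from the cubic computation is the replacement of $3-3\lambda$ by $3-4\lambda$: the negative part vanishes for $v\in[0,3-4\lambda]$, and equals $\tfrac12\big(v-(3-4\lambda)\big)\overline{L}$ for $v\in[3-4\lambda,3(3-4\lambda)]$. Consequently $P(v)^2=(3-4\lambda)^2-\tfrac{v^2}{3}$ on the first interval and $\tfrac{(v-9+12\lambda)^2}{6}$ on the second, and integrating yields $S_{(\DP^2,\lambda C)}(\overline{E})=\tfrac{4(3-4\lambda)}{3}$. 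Testing $\delta_P$ against $\overline{E}$ already gives the upper bound $\delta_P(\DP^2,\lambda C)\le\tfrac{A_{(\DP^2,\lambda C)}(\overline{E})}{S_{(\DP^2,\lambda C)}(\overline{E})}=\tfrac34\cdot\tfrac{4-3\lambda}{3-4\lambda}$.

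For the matching lower bound I would apply the refined estimate \eqref{estimation2} with the different $\Delta_{\overline{E}}=\tfrac23\overline{P}+\lambda\overline{Q}$, where $\overline{Q}=\overline{E}\cap\overline{C}$. Using $P(v)\cdot\overline{E}=\tfrac{v}{3}$ on the first interval and $\tfrac12\big(3-4\lambda-\tfrac{v}{3}\big)$ on the second, I would bound the local density $h(v)$ at each point $O\in\overline{E}$: away from $\overline{L}$ this produces $S\big(W^{\overline{E}}_{\bullet,\bullet};O\big)=\tfrac{3-4\lambda}{9}$, while at $O=\overline{E}\cap\overline{L}$ the extra contribution $(N(v)\cdot\overline{E})_O$ on the second interval raises it to $\tfrac{3-4\lambda}{3}$. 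Together with $A_{\overline{E},\Delta_{\overline{E}}}(\overline{P})=\tfrac13$, $A_{\overline{E},\Delta_{\overline{E}}}(\overline{Q})=1-\lambda$ and $A_{\overline{E},\Delta_{\overline{E}}}(O)=1$ otherwise, the candidate ratios become $\tfrac{3}{3-4\lambda}$ at $\overline{P}$ and at $\overline{E}\cap\overline{L}$, $\tfrac{9(1-\lambda)}{3-4\lambda}$ at $\overline{Q}$, and $\tfrac{9}{3-4\lambda}$ at a general point.

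It then remains to check that each local ratio dominates the global bound $\tfrac34\cdot\tfrac{4-3\lambda}{3-4\lambda}$ on $[0,\tfrac34]$; the two binding inequalities are $4-3\lambda\le 4$ (comparing with the ratio at $\overline{P}$ and $\overline{E}\cap\overline{L}$) and $12(1-\lambda)\ge 4-3\lambda$, i.e. $\lambda\le\tfrac89$ (comparing with the ratio at $\overline{Q}$), both valid for $\lambda\in[0,\tfrac34]$. Hence the minimum in \eqref{estimation2} is realised by the $\overline{E}$-term and $\delta_P(\DP^2,\lambda C)=\tfrac34\cdot\tfrac{4-3\lambda}{3-4\lambda}$. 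I expect the only genuinely delicate point to be the local computation of $h(v)$ at $\overline{E}\cap\overline{L}$: there the Zariski chamber changes and $\overline{L}$ contributes to the negative part, so the second-interval bound must separate the generic estimate $\tfrac{(P(v)\cdot\overline{E})^2}{2}$ from the sharper one including $(N(v)\cdot\overline{E})_O$, exactly as the two displayed bounds for $h(v)$ do in the cubic case.
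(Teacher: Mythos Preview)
Your proposal is correct and follows essentially the same approach as the paper: the same three-blow-up construction producing a $\tfrac13(1,2)$ point on $\overline{E}$, the same intersection table and pullback formulas, the same Zariski decomposition (with $3-3\lambda$ replaced by $3-4\lambda$), the same values of $S_{(\DP^2,\lambda C)}(\overline{E})$ and $S(W^{\overline{E}}_{\bullet,\bullet};O)$, and the same different $\Delta_{\overline{E}}=\tfrac23\overline{P}+\lambda\overline{Q}$ leading to the identical case-by-case ratios. Your explicit verification that the local ratios dominate $\tfrac34\cdot\tfrac{4-3\lambda}{3-4\lambda}$ on $[0,\tfrac34]$ is a small addition the paper leaves implicit.
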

 \begin{proof}
Let $\pi_1:S^1\to S$ be the~blow up of the~point $P$,
with the exceptional divisor $E_1^1$ and $L^1$, $C^1$ are strict transforms of $L$ and $C$ respectively; $\pi_2: S^2\to S^1$ be the~blow up of the~point $E_1^1\cap C^1\cap L^1$,
with the exceptional divisor $E_2^2$ and $L^2$, $C^2$, $E_1^2$ are strict transforms of $L^1$, $C^1$, $E_1^1$ respectively; $\pi_3: S^3\to S^2$ be the~blow up of the~point $E_2^2\cap L^2$,
with the exceptional divisor $E$ and $L^3$, $C^3$, $E_1^3$, $E_2^3$ are strict transforms of $L^2$, $C^2$, $E_1^2$, $E_2^2$ respectively; let $\theta:S^3\to\overline{S}$ be  the~contraction of  the~curves  $E_1^3$, $E_2^3$
and $\sigma$ is the~birational contraction of $\overline{E}=\theta(E)$. We denote the strict transforms of $C$ and $L$ on $\overline{S}$ by $\overline{C}$ and $\overline{L}$. Let $\overline{E}$ be the exceptional divisor of $\sigma$. Note that $\overline{E}$ contains one singular point $\overline{P}$  which is $\frac{1}{3}(1,2)$  singular point.
\begin{center}
 \includegraphics[width=18cm]{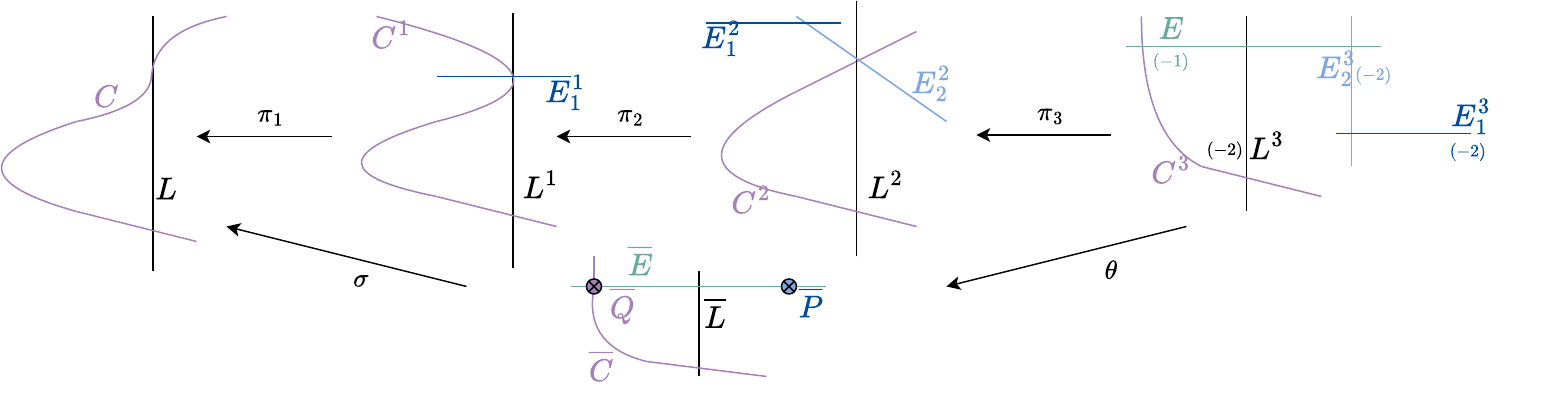}
 \end{center}
The intersections on $\overline{S}$ are given by:
\begin{center}
    \renewcommand{\arraystretch}{1.4}
    \begin{tabular}{|c|c|c|}
    \hline
         & $\overline{E}$ & $\overline{L}$ \\
    \hline
       $\overline{E}$  & $-\frac{1}{3}$ & $1$ \\
    \hline
       $\overline{L}$  & $1$ & $-2$ \\
    \hline
    \end{tabular}
\end{center}
 We have $\sigma^*(L)=\overline{L}+3\overline{E}$, $\sigma^*(C)=\overline{C}+3\overline{E}$, $\sigma^*(K_{\DP^2})=K_{\overline{S}}-3\overline{E}$. Thus, $A_{(\DP^2,\lambda C)}(\overline{E})=4-3\lambda$.
\noindent The Zariski decomposition of the divisor  $-\sigma^*(K_{\DP^2}+\lambda C)-v\overline{E}$ is given by:
\begin{align*}
&&P(v)=
\begin{cases}
-\sigma^*(K_{\DP^2}+\lambda C)-v\overline{E}\text{ if }v\in[0,3-4\lambda],\\
-\sigma^*(K_{\DP^2}+\lambda C)-v\overline{E}-\frac{1}{2}\big(v-(3-4\lambda)\big)\overline{L}\text{ if }v\in[3-4\lambda, 3(3-4\lambda)],\\
\end{cases}\\&&
N(v)=
\begin{cases}
0\text{ if }v\in[0,3-4\lambda],\\
\frac{1}{2}\big(v-(3-4\lambda)\big)\overline{L}\text{ if }v\in[3-4\lambda,3(3-4\lambda)].
\end{cases}
\end{align*}
Then
$$
P(v)^2=
\begin{cases}
(3-4\lambda)^2 -\frac{v^2}{3}\text{ if }v\in[0,3-4\lambda],\\
\frac{(v - 9 +12\lambda)^2}{6}\text{ if }v\in[3-4\lambda,3(3-4\lambda)],
\end{cases}
\text{ and }
P(v)\cdot \overline{E}=
\begin{cases}
\frac{v}{3}\text{ if }v\in[0,3-4\lambda],\\
\frac{1}{2}\big(3-4\lambda -\frac{v}{3}\big)\text{ if }v\in[3-4\lambda,3(3-4\lambda)],
\end{cases}
$$
Thus
$$S_S(\overline{E})=\frac{1}{(3-4\lambda)^2}\Big(\int_0^{3-4\lambda} (3-4\lambda)^2 -\frac{v^2}{3} dv+\int_{3-4\lambda}^{3(3-4\lambda)} \frac{(v - 9 +12\lambda)^2}{6} dv\Big)=\frac{4(3-4\lambda)}{3}$$
so that $\delta_P(\DP^2,\lambda C)\le \frac{3}{4}\cdot\frac{4-3\lambda}{3-4\lambda}$. For every $O\in \overline{E}$, we get if $O\in \overline{E}\backslash \overline{L}$ or if $O\in \overline{E}\cap \overline{L}$:
$$h(v)\le 
\begin{cases}
\frac{v^2}{18}\text{ if }v\in[0,3-4\lambda],\\
\frac{(v - 9 +12\lambda)^2}{72}\text{ if }v\in[3-4\lambda, 3(3-4\lambda)],
\end{cases}
\text{or }
h(v)\le \begin{cases}
\frac{v^2}{18}\text{ if }v\in[0,3-4\lambda],\\
\frac{ (v - 9 +12\lambda) (9 -5v - 12\lambda )}{72}\text{ if }v\in[3-4\lambda, 3(3-4\lambda)].
\end{cases}
$$
So that
$$S\big(W^{\overline{E}}_{\bullet,\bullet};O\big)\le \frac{2}{(3-4\lambda)^2}\Big(\int_0^{3-4\lambda} \frac{v^2}{18} dv+\int_{3-4\lambda}^{3(3-4\lambda)} \frac{(v - 9 +12\lambda)^2}{72} dv\Big)=\frac{3-4\lambda}{9}
$$
or
$$S\big(W^{\overline{E}}_{\bullet,\bullet};O\big)\le \frac{2}{(3-4\lambda)^2}\Big(\int_0^{3-4\lambda} \frac{v^2}{18} dv+\int_{3-4\lambda}^{3(3-4\lambda)} \frac{ (v - 9 +12\lambda) (9 -5v - 12\lambda )}{72} dv\Big)=\frac{3-4\lambda}{3}
$$
We have
$$
\delta_P(\DP^2,\lambda C)\geqslant\mathrm{min}\Bigg\{\frac{3}{4}\cdot\frac{4-3\lambda}{3-4\lambda},\inf_{O\in\overline{E}}\frac{A_{\overline{E},\Delta_{\overline{E}}}(O)}{S\big(W^{\overline{E}}_{\bullet,\bullet};O\big)}\Bigg\},
$$
where $\Delta_{\overline{E}}=\frac{2}{3}\overline{P}+\lambda \overline{Q}$, where $\overline{Q}=\overline{E}\cap \overline{C}$.  
So that
$$
\frac{A_{\overline{E},\Delta_{\overline{E}}}(O)}{S(W_{\bullet,\bullet}^{\overline{E}};O)}=
\left\{\aligned
&\frac{3}{3-4\lambda}\ \mathrm{if}\ O=\overline{E}\cap\overline{L}\text{ or if } O=\overline{P},\\
&\frac{9(1-\lambda)}{3-4\lambda}\ \mathrm{if}\ O=\overline{Q},\\
&\frac{9}{3-4\lambda}\ \mathrm{otherwise}.
\endaligned
\right.
$$
Thus $\delta_P(\DP^2,\lambda C)=\frac{3}{4}\cdot\frac{4-3\lambda}{3-4\lambda}$.
 \end{proof}

  \begin{lemma}
 Let $C\subset \DP^2$ be a  quartic curve, $P\in C$ be a smooth point on $C$ such that the tangent line $L$ at this point has multiplicity $3$. Then
 $$\delta_P(\DP^2,\lambda C)=\frac{3}{5}\cdot\frac{5-4\lambda}{3-4\lambda}\text{ for }\lambda\in\Big[0,\frac{3}{4}\Big].$$
 \end{lemma}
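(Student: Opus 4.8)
The plan is to follow the same blowup--Zariski--Fujita scheme used in the three preceding quartic lemmas, the only difference being the length of the resolving chain. First I would resolve the configuration at $P$ by a sequence of point blowups along $L$, exactly as in the previous lemma but with one additional blowup: blow up $P$, then three successive infinitely near points of the chain lying on the strict transform of $L$. Contracting the intermediate exceptional curves and then the remaining $\big(-\tfrac14\big)$-curve $\overline{E}$ produces a surface $\overline{S}$ on which $\overline{E}$ carries a single $\tfrac14(1,3)$ quotient singularity $\overline{P}$, with intersection data $\overline{E}^2=-\tfrac14$, $\overline{E}\cdot\overline{L}=1$, $\overline{L}^2=-3$, and pullbacks $\sigma^*(L)=\overline{L}+4\overline{E}$, $\sigma^*(C)=\overline{C}+4\overline{E}$, $\sigma^*(K_{\DP^2})=K_{\overline{S}}-4\overline{E}$. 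These yield $A_{(\DP^2,\lambda C)}(\overline{E})=5-4\lambda$, the numerator of the target value.

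Next I would compute the Zariski decomposition of $-\sigma^*(K_{\DP^2}+\lambda C)-v\overline{E}$. On the first range $v\in[0,3-4\lambda]$ the divisor is already nef, so $N(v)=0$, $P(v)^2=(3-4\lambda)^2-\tfrac{v^2}{4}$ and $P(v)\cdot\overline{E}=\tfrac{v}{4}$; at $v=3-4\lambda$ the curve $\overline{L}$ enters the negative part, and on the second range $v\in[3-4\lambda,\,4(3-4\lambda)]$ one has $N(v)=\tfrac13\big(v-(3-4\lambda)\big)\overline{L}$, with pseudo-effective threshold $\tau=4(3-4\lambda)$. Integrating $P(v)^2$ over both ranges gives $S_S(\overline{E})=\tfrac53(3-4\lambda)$, whence $\delta_P(\DP^2,\lambda C)\le \tfrac{A_{(\DP^2,\lambda C)}(\overline{E})}{S_S(\overline{E})}=\tfrac35\cdot\tfrac{5-4\lambda}{3-4\lambda}$, the required upper bound.

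For the matching lower bound I would apply the refined estimate \eqref{estimation2} on $\overline{E}$. With different $\Delta_{\overline{E}}=\tfrac34\overline{P}+\lambda\overline{Q}$, where $\overline{Q}=\overline{E}\cap\overline{C}$, I would compute $h(v)$ and hence $S\big(W^{\overline{E}}_{\bullet,\bullet};O\big)$ separately at the three relevant points --- the node $\overline{E}\cap\overline{L}$ together with the quotient point $\overline{P}$, the contact point $\overline{Q}$, and a generic $O\in\overline{E}$ --- using $P(v)\cdot\overline{E}$ together with the order of $N(v)\vert_{\overline{E}}$ at $O$ on the second range. The three local ratios $A_{\overline{E},\Delta_{\overline{E}}}(O)/S\big(W^{\overline{E}}_{\bullet,\bullet};O\big)$ come out in the form $\tfrac{3}{3-4\lambda}$, $\tfrac{c(1-\lambda)}{3-4\lambda}$ and $\tfrac{c}{3-4\lambda}$ for a constant $c$, and each must be checked to dominate the candidate $\tfrac35\cdot\tfrac{5-4\lambda}{3-4\lambda}$ on the whole interval $\lambda\in[0,\tfrac34]$; then \eqref{estimation2} forces equality with the upper bound.

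The main obstacle is this second, point-by-point half of the argument rather than the $S_S(\overline{E})$ computation. Two things need care: first, pinning down the exact negative-part coefficient $\tfrac13$ and the threshold $\tau=4(3-4\lambda)$ on the second range, since $N(v)$ is supported on $\overline{L}$ and its slope governs every value of $h(v)$ at the special points; and second, verifying uniformly on $[0,\tfrac34]$ that the contact-point ratio at $\overline{Q}$ --- the one carrying the extra factor $1-\lambda$ --- never drops below the global candidate, this being the tightest estimate and the place where the endpoint $\lambda=\tfrac34$, with $3-4\lambda\to 0$, must be handled by comparing numerators directly.
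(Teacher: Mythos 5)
Your proposal matches the paper's proof essentially step for step: the same four-point blowup chain along $L$ yielding $\overline{E}$ with a single $\frac{1}{4}(1,3)$ point, the same intersection data and pullbacks giving $A_{(\DP^2,\lambda C)}(\overline{E})=5-4\lambda$, the same Zariski decomposition with $N(v)=\frac{1}{3}\big(v-(3-4\lambda)\big)\overline{L}$ on $[3-4\lambda,4(3-4\lambda)]$ and $S_S(\overline{E})=\frac{5}{3}(3-4\lambda)$, and the same lower bound via the different $\Delta_{\overline{E}}=\frac{3}{4}\overline{P}+\lambda\overline{Q}$, where the paper's ratios confirm your constant $c=12$ (namely $\frac{12(1-\lambda)}{3-4\lambda}$ at $\overline{Q}$ and $\frac{12}{3-4\lambda}$ at a generic point, both dominating $\frac{3}{5}\cdot\frac{5-4\lambda}{3-4\lambda}$ by direct numerator comparison since all ratios share the denominator $3-4\lambda$). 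The approach is correct and is essentially the paper's argument.
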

 \begin{proof}
Let $\pi_1:S^1\to S$ be the~blow up of the~point $P$,
with the exceptional divisor $E_1^1$ and $L^1$, $C^1$ are strict transforms of $L$ and $C$ respectively; $\pi_2: S^2\to S^1$ be the~blow up of the~point $E_1^1\cap C^1\cap L^1$,
with the exceptional divisor $E_2^2$ and $L^2$, $C^2$, $E_1^2$ are strict transforms of $L^1$, $C^1$, $E_1^1$ respectively; $\pi_3: S^3\to S^2$ be the~blow up of the~point $E_2^2\cap L^2$,
with the exceptional divisor $E_3^3$ and $L^3$, $C^3$, $E_1^3$, $E_2^3$ are strict transforms of $L^2$, $C^2$, $E_1^2$, $E_2^2$ respectively; $\pi_4: S^4\to S^3$ be the~blow up of the~point $E_2^2\cap L^2$,
with the exceptional divisor $E$ and $L^4$, $C^4$, $E_1^4$, $E_2^4$, $E_3^4$ are strict transforms of $L^3$, $C^3$, $E_1^3$, $E_2^3$, $E_3^3$  respectively; let $\theta:S^4\to\overline{S}$ be  the~contraction of  the~curves  $E_1^3$, $E_2^3$, $E_3^3$
and $\sigma$ is the~birational contraction of $\overline{E}=\theta(E)$. We denote the strict transforms of $C$ and $L$ on $\overline{S}$ by $\overline{C}$ and $\overline{L}$. Let $\overline{E}$ be the exceptional divisor of $\sigma$. Note that $\overline{E}$ contains one singular point $\overline{P}$  which is $\frac{1}{4}(1,3)$  singular point.
\begin{center}
 \includegraphics[width=18cm]{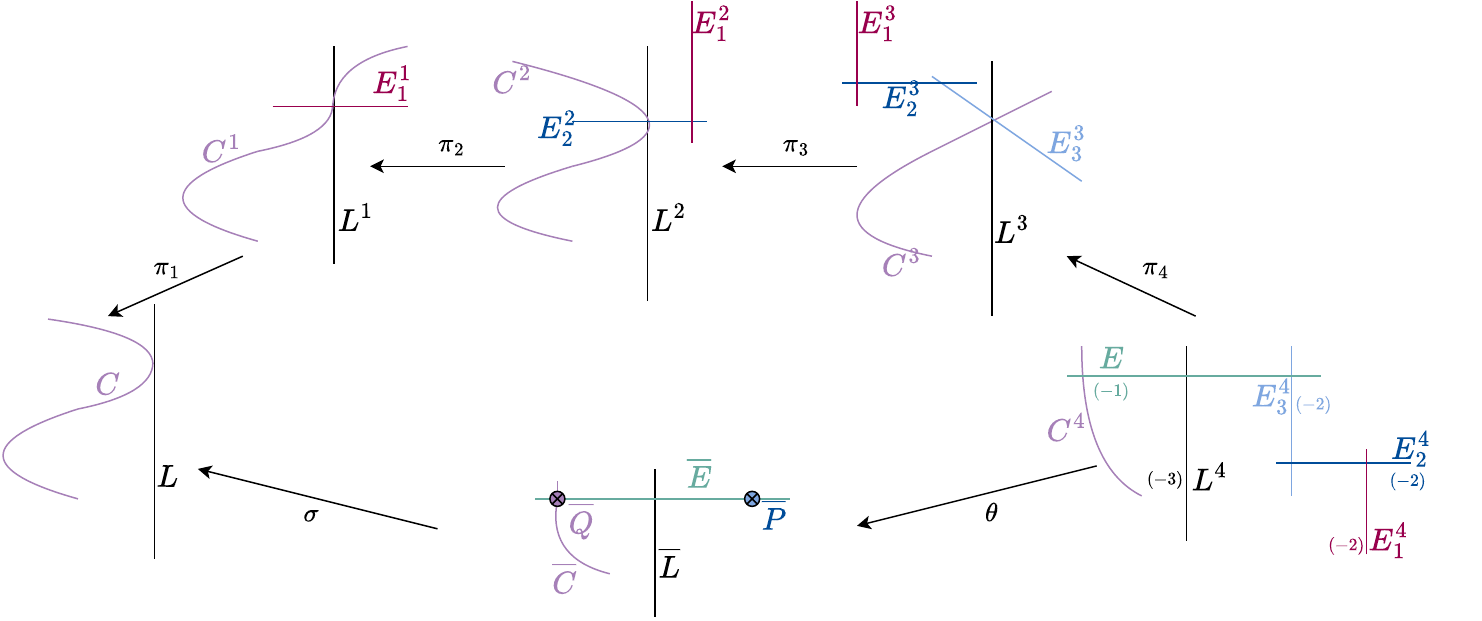}
 \end{center}
The intersections on $\overline{S}$ are given by:
\begin{center}
    \renewcommand{\arraystretch}{1.4}
    \begin{tabular}{|c|c|c|}
    \hline
         & $\overline{E}$ & $\overline{L}$ \\
    \hline
       $\overline{E}$  & $-\frac{1}{4}$ & $1$ \\
    \hline
       $\overline{L}$  & $1$ & $-3$ \\
    \hline
    \end{tabular}
\end{center}
 We have $\sigma^*(L)=\overline{L}+4\overline{E}$, $\sigma^*(C)=\overline{C}+4\overline{E}$, $\sigma^*(K_{\DP^2})=K_{\overline{S}}-4\overline{E}$. Thus, $A_{(\DP^2,\lambda C)}(\overline{E})=5-4\lambda$.
\noindent The Zariski decomposition of the divisor  $-\sigma^*(K_{\DP^2}+\lambda C)-v\overline{E}$ is given by:
\begin{align*}
&&P(v)=
\begin{cases}
-\sigma^*(K_{\DP^2}+\lambda C)-v\overline{E}\text{ if }v\in[0,3-4\lambda],\\
-\sigma^*(K_{\DP^2}+\lambda C)-v\overline{E}-\frac{1}{3}\big(v-(3-4\lambda)\big)\overline{L}\text{ if }v\in[3-4\lambda, 4(3-4\lambda)],\\
\end{cases}\\&&
N(v)=
\begin{cases}
0\text{ if }v\in[0,3-4\lambda],\\
\frac{1}{3}\big(v-(3-4\lambda)\big)\overline{L}\text{ if }v\in[3-4\lambda,4(3-4\lambda)].
\end{cases}
\end{align*}
Then
$$
P(v)^2=
\begin{cases}
(3-4\lambda)^2 -\frac{v^2}{4}\text{ if }v\in[0,3-4\lambda],\\
\frac{(v - 12 +16\lambda)^2}{12}\text{ if }v\in[3-4\lambda,4(3-4\lambda)],
\end{cases}
\text{ and }
P(v)\cdot \overline{E}=
\begin{cases}
\frac{v}{4}\text{ if }v\in[0,3-4\lambda],\\
\frac{1}{3}\big(3-4\lambda -\frac{v}{4}\big)\text{ if }v\in[3-4\lambda,4(3-4\lambda)],
\end{cases}
$$
Thus
$$S_S(\overline{E})=\frac{1}{(3-4\lambda)^2}\Big(\int_0^{3-4\lambda} (3-4\lambda)^2 -\frac{v^2}{4} dv+\int_{3-4\lambda}^{4(3-4\lambda)} \frac{(v - 12 +16\lambda)^2}{12} dv\Big)=\frac{5(3-4\lambda)}{3}$$
so that $\delta_P(\DP^2,\lambda C)\le \frac{3}{5}\frac{5-4\lambda}{3-4\lambda}$. For every $O\in \overline{E}$, we get if $O\in \overline{E}\backslash \overline{L}$ or if $O\in \overline{E}\cap \overline{L}$:
$$h(v)\le 
\begin{cases}
\frac{v^2}{32}\text{ if }v\in[0,3-4\lambda],\\
\frac{(v - 12 +16\lambda)^2}{72}\text{ if }v\in[3-4\lambda, 4(3-4\lambda)],
\end{cases}
\text{or }
h(v)\le \begin{cases}
\frac{v^2}{32}\text{ if }v\in[0,3-4\lambda],\\
\frac{ (v - 12 +16\lambda) (12 -7v - 16\lambda )}{288}\text{ if }v\in[3-4\lambda, 4(3-4\lambda)].
\end{cases}
$$
So that
$$S\big(W^{\overline{E}}_{\bullet,\bullet};O\big)\le \frac{2}{(3-4\lambda)^2}\Big(\int_0^{3-4\lambda} \frac{v^2}{32} dv+\int_{3-4\lambda}^{4(3-4\lambda)} \frac{(v - 12 +16\lambda)^2}{288} dv\Big)=\frac{3-4\lambda}{12}
$$
or
$$S\big(W^{\overline{E}}_{\bullet,\bullet};O\big)\le \frac{2}{(3-4\lambda)^2}\Big(\int_0^{3-4\lambda} \frac{v^2}{32} dv+\int_{3-4\lambda}^{4(3-4\lambda)} \frac{ (v - 12 +16\lambda) (12 -7v - 16\lambda )}{288} dv\Big)=\frac{3-4\lambda}{3}
$$
We have
$$
\delta_P(\DP^2,\lambda C)\geqslant\mathrm{min}\Bigg\{\frac{3}{5}\cdot\frac{5-4\lambda}{3-4\lambda},\inf_{O\in\overline{E}}\frac{A_{\overline{E},\Delta_{\overline{E}}}(O)}{S\big(W^{\overline{E}}_{\bullet,\bullet};O\big)}\Bigg\},
$$
where $\Delta_{\overline{E}}=\frac{3}{4}\overline{P}+\lambda\overline{Q}$, where $\overline{Q}=\overline{E}\cap \overline{C}$. 
So that
$$
\frac{A_{\overline{E},\Delta_{\overline{E}}}(O)}{S(W_{\bullet,\bullet}^{\overline{E}};O)}=
\left\{\aligned
&\frac{3}{3-4\lambda}\ \mathrm{if}\ O=\overline{E}\cap\overline{L}\text{ or if } O=\overline{P},\\
&\frac{12(1-\lambda)}{3-4\lambda}\ \mathrm{if}\ O=\overline{Q},\\
&\frac{12}{3-4\lambda}\ \mathrm{otherwise}.
\endaligned
\right.
$$
Thus $\delta_P(\DP^2,\lambda C)=\frac{3}{5}\cdot\frac{5-4\lambda}{3-4\lambda}$.
 \end{proof}
 \noindent This proves the following theorem:
 \begin{theorem}
  Let $C\subset \DP^2$ be a smooth quartic curve. Then  for $\lambda\in\big[0,\frac{3}{4}\big]$ we have
 $$\delta(\DP^2,\lambda C)=
 \begin{cases}
 \frac{3}{4}\cdot\frac{4-3\lambda}{3-4\lambda}\text{ if there is no  4-tangent to }C,\\
 \frac{3}{5}\cdot\frac{5-4\lambda}{3-4\lambda}\text{ if there exists a 4-tangent to }C.
 \end{cases}.$$
 \end{theorem}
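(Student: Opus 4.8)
The plan is to compute the global invariant $\delta(\DP^2,\lambda C)=\inf_{P\in\DP^2}\delta_P(\DP^2,\lambda C)$ by assembling the three preceding lemmas, which already determine $\delta_P$ for every point $P$ lying on $C$. For a smooth quartic the intersection multiplicity $m=(L\cdot C)_P$ of the tangent line $L$ at $P$ with $C$ takes only the values $2$, $3$, or $4$, and the lemmas give $\delta_P=\frac{3-2\lambda}{3-4\lambda}$, $\frac{3}{4}\cdot\frac{4-3\lambda}{3-4\lambda}$, and $\frac{3}{5}\cdot\frac{5-4\lambda}{3-4\lambda}$ respectively. The remaining tasks are then: (i) bound $\delta_P$ for $P\notin C$ and show it cannot lower the infimum; (ii) order the three on-curve values; and (iii) record that a smooth quartic always carries a flex, so the $m=3$ value is always realised.

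For (i), I would observe that if $P\notin C$ then for every prime divisor $E$ over $\DP^2$ with $C_{\DP^2}(E)=P$ the centre is disjoint from $C$, so $A_{(\DP^2,\lambda C)}(E)=A_{\DP^2}(E)$, while $-K_{\DP^2}-\lambda C=(3-4\lambda)H$ is proportional to $-K_{\DP^2}=3H$. The change of variables $v=(3-4\lambda)w$ in the integral defining $S$ then yields
$$
S_{(\DP^2,\lambda C)}(E)=\frac{3-4\lambda}{3}\,S_{\DP^2}(E),
$$
whence $\delta_P(\DP^2,\lambda C)=\frac{3}{3-4\lambda}\,\delta_P(\DP^2)\ge\frac{3}{3-4\lambda}$, using $\delta(\DP^2)=1$. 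One then checks $\frac{3}{3-4\lambda}\ge\frac{3}{5}\cdot\frac{5-4\lambda}{3-4\lambda}$ for $\lambda\ge0$, so points off $C$ never beat the on-curve values.

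For (ii), over the common positive denominator $3-4\lambda$ (recall $\lambda\le\frac34$) the three numerators compare directly: rewriting the $m=2$ and $m=3$ values with denominator $4(3-4\lambda)$ gives numerators $12-8\lambda$ and $12-9\lambda$, and the ratio of the $m=3$ to the $m=4$ value is $\frac{20-15\lambda}{20-16\lambda}\ge1$. Hence on $[0,\frac34]$ one has $\frac{3}{5}\cdot\frac{5-4\lambda}{3-4\lambda}\le\frac{3}{4}\cdot\frac{4-3\lambda}{3-4\lambda}\le\frac{3-2\lambda}{3-4\lambda}$, with equalities only at $\lambda=0$. For (iii), the inflection points of $C$ are cut out by its Hessian, a curve of degree $3(4-2)=6$, so by B\'ezout $C$ has $24$ flexes counted with multiplicity; in particular a flex always exists, whereas a $4$-tangent (hyperflex) may or may not.

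Assembling these, the infimum over $P\in C$ equals the $m=4$ value when a $4$-tangent is present and the $m=3$ value otherwise, and by (i) the infimum over $P\notin C$ is strictly larger; this gives the stated dichotomy. The genuinely delicate point is (i): one must confirm that no exotic divisor over a point off $C$ produces a smaller ratio, which the proportionality $-K_{\DP^2}-\lambda C\sim_{\mathbb{Q}}(3-4\lambda)(-\tfrac13 K_{\DP^2})$ together with the known value $\delta(\DP^2)=1$ settles cleanly. Everything else reduces to the elementary algebra of (ii) and the classical flex count in (iii), the substantive analytic work having already been carried out in the three lemmas.
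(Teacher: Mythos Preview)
Your proof is correct and follows the paper's approach, which simply records the theorem as an immediate consequence of the three preceding lemmas without further argument; you are actually more thorough, explicitly handling points $P\notin C$ via the scaling $S_{(\DP^2,\lambda C)}(E)=\frac{3-4\lambda}{3}\,S_{\DP^2}(E)$ and invoking the Hessian for the existence of flexes, neither of which the paper spells out. One small imprecision: in (iii) a flex may have $m=4$, so the $m=3$ value is not literally ``always realised''---but your final assembly only uses that in the no-4-tangent case every flex has $m=3$, which is correct.
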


 \section{Curves with $A_1$ singularities}
   \begin{lemma}\label{A1-points}
 Let $C\subset \DP^2$ be a plane curve of degree $d$ with an $A_1$ singularity at point $P\in C$ on $C$. Then
 $$\delta_P(\DP^2,\lambda C)=\frac{3-3\lambda}{3-d \lambda }\text{ for }\lambda\in\Big[0,\min\Big\{1, \frac{3}{d}\Big\}\Big].$$
 \end{lemma}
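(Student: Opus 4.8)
The plan is to follow the single-blow-up strategy already used in Lemma~\ref{line}, since an $A_1$ singularity (a node) is resolved after one ordinary blow up. First I would let $\pi\colon\widetilde{S}\to\DP^2$ be the blow up of $P$ with exceptional divisor $E$, and record that $\mathrm{mult}_P C=2$, so that $\widetilde{C}=\pi^*C-2E$ and hence $A_{(\DP^2,\lambda C)}(E)=2-2\lambda$. The crucial geometric input is that, because the singularity is of type $A_1$, the two local branches of $C$ at $P$ have distinct tangent directions; consequently $\widetilde{C}$ meets $E$ transversally at two \emph{distinct} points $Q_1,Q_2$.

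Next I would compute the Zariski decomposition of $\pi^*(-K_{\DP^2}-\lambda C)-vE=(3-d\lambda)\pi^*H-vE$, where $H$ is a line. Since the Mori cone of $\widetilde{S}$ is generated by $E$ and the strict transform $\widetilde{L}=\pi^*H-E$ of a general line through $P$, this divisor is nef precisely for $v\in[0,3-d\lambda]$; hence $N(v)=0$, $P(v)=(3-d\lambda)\pi^*H-vE$ and $\tau=3-d\lambda$. Then $P(v)^2=(3-d\lambda)^2-v^2$ and $P(v)\cdot E=v$, which gives $S_{(\DP^2,\lambda C)}(E)=\tfrac{2(3-d\lambda)}{3}$ and therefore the upper bound $\delta_P(\DP^2,\lambda C)\le \frac{A_{(\DP^2,\lambda C)}(E)}{S_{(\DP^2,\lambda C)}(E)}=\frac{3-3\lambda}{3-d\lambda}$, which is exactly the claimed value.

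For the matching lower bound I would apply the refined estimate \eqref{estimation2}. Because $(\widetilde{S},E)$ is smooth, adjunction yields the different $\Delta_E=\lambda Q_1+\lambda Q_2$, so $A_{E,\Delta_E}(O)=1-\lambda$ at $O\in\{Q_1,Q_2\}$ and $A_{E,\Delta_E}(O)=1$ otherwise. Since $N(v)\equiv 0$ on the whole range, the flag term simplifies to $h(v)=\tfrac12(P(v)\cdot E)^2=\tfrac{v^2}{2}$, independent of $O$, giving $S(W^E_{\bullet,\bullet};O)=\tfrac{3-d\lambda}{3}$ for every $O\in E$. Evaluating the quotient then gives $\frac{3-3\lambda}{3-d\lambda}$ at $Q_1,Q_2$ and $\frac{3}{3-d\lambda}$ elsewhere, so that $\inf_{O\in E}A_{E,\Delta_E}(O)/S(W^E_{\bullet,\bullet};O)=\frac{3-3\lambda}{3-d\lambda}$ (using $\lambda\ge 0$). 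Combining with the $A(E)/S(E)$ term, both entries of the minimum in \eqref{estimation2} equal $\frac{3-3\lambda}{3-d\lambda}$, and together with the upper bound this yields equality on the range $\lambda\in[0,\min\{1,3/d\}]$, where $3-3\lambda\ge 0$ and $3-d\lambda>0$ (the latter also keeping $(\DP^2,\lambda C)$ log Fano).

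The one point requiring genuine care — rather than a true obstacle — is confirming that the different is supported at two \emph{distinct} points: this is precisely the feature distinguishing an $A_1$ node from tangential contact, which would instead force a longer tower of blow ups followed by a weighted contraction, as in the higher-tangency lemmas proved above. Once this is granted, every computation reduces to the same nef-divisor bookkeeping as in Lemma~\ref{line}, with multiplicity $2$ in place of $1$, and no further subtleties arise.
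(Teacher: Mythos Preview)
Your proposal is correct and follows essentially the same approach as the paper: a single ordinary blow up at $P$, trivial Zariski decomposition on $[0,3-d\lambda]$, computation of $S(E)=\tfrac{2(3-d\lambda)}{3}$ and $S(W^E_{\bullet,\bullet};O)=\tfrac{3-d\lambda}{3}$, and the different $\Delta_E=\lambda Q_1+\lambda Q_2$ at the two distinct nodal branches. If anything, you are slightly more explicit than the paper in justifying the nef threshold via the Mori cone and in emphasizing why the $A_1$ hypothesis forces $Q_1\neq Q_2$.
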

 \begin{proof}
Let $\pi:\widetilde{S}\to S$ be the~blow up of the~point $P$,
with the exceptional divisor $E$. We denote the strict transform of $C$  on $\widetilde{S}$ by $\widetilde{C}$.
\begin{center}
 \includegraphics[width=8cm]{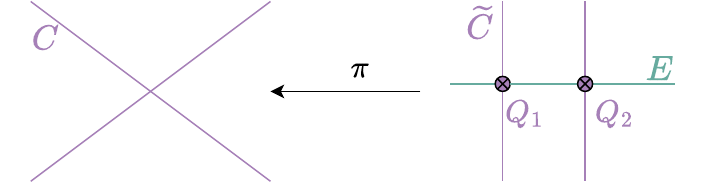}
 \end{center}
 We have $\pi^*(C)=\widetilde{C}+2E$, $\pi^*(K_{\DP^2})=K_{\widetilde{S}}-E$. Thus, $A_{(\DP^2,\lambda C)}(E)=2-2\lambda$.
\noindent The Zariski decomposition of the divisor  $-\pi^*(K_{\DP^2}+\lambda C)-vE$ is given by:
\begin{align*}
P(v)=-\pi^*(K_{\DP^2}+\lambda C)-vE \text{ and }N(v)=0\text{ if }v\in[0,3- d \lambda ].
\end{align*}
Then
$$
P(v)^2=\big(v-(3-d \lambda )\big)\big(v+(3-d \lambda )\big)\text{ and }P(v)\cdot E= v\text{ if }v\in[0,3- d \lambda ].
$$
Thus
$$S_{(\DP^2, \lambda C)}(E)=\frac{1}{(3- d \lambda )^2}\Big(\int_0^{3-d \lambda } \big(v-(3-d \lambda )\big)\big(v+(3-d \lambda )\big) dv\Big)=\frac{2(3-d \lambda )}{3}$$
so that $\delta_P(\DP^2,\lambda C)\le \frac{3}{2}\cdot \frac{2-2\lambda}{3-d \lambda }= \frac{3(1-\lambda)}{3-d \lambda } $. For every $O\in E$ we get:
$$h(v) = \frac{v^2}{2}\text{ if }v\in[0,3-d \lambda ].
$$
So that
$$S\big(W^{E}_{\bullet,\bullet};O\big)= \frac{2}{(3-d \lambda )^2}\Big(\int_0^{3-d \lambda } \frac{v^2}{2} dv\Big)=\frac{3-d \lambda }{3}$$
We have
$$
\delta_P(\DP^2,\lambda C)\geqslant\mathrm{min}\Bigg\{ \frac{3(1-\lambda)}{3-d \lambda } ,\inf_{O\in E}\frac{A_{E,\Delta_{E}}(O)}{S\big(W^{E}_{\bullet,\bullet};O\big)}\Bigg\},
$$
where $\Delta_{E}=\lambda Q_1+\lambda Q_2$ where $ Q_1+ Q_2=\widetilde{C}|_E$. 
So that
$$
\frac{A_{\overline{E},\Delta_{\overline{E}}}(O)}{S(W_{\bullet,\bullet}^{\overline{E}};O)}=
\left\{\aligned
&\frac{3(1-\lambda)}{3-d \lambda }\ \mathrm{if}\ O\in\{Q_1,Q_2\},\\
&\frac{3}{3-d \lambda }\ \mathrm{otherwise}.
\endaligned
\right.
$$
Thus $\delta_P(\DP^2,\lambda C)= \frac{3(1-\lambda)}{3-d \lambda }$.
 \end{proof}
 \noindent This proves the following theorems:
  \begin{theorem}
  Suppose $C_2\subset \DP^2$ is a curve of degree $2$ with  $A_1$ singularity, i.e. $C$ is a union of two different lines. Then  for $\lambda\in\big[0,1\big]$ we have:
 $$\delta(\DP^2,\lambda C_2)=\frac{3-3\lambda}{3- 2 \lambda}.$$
 \end{theorem}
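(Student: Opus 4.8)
The plan is to assemble the global invariant $\delta(\DP^2,\lambda C_2)=\inf_{P\in\DP^2}\delta_P(\DP^2,\lambda C_2)$ out of local computations already in hand, by partitioning $\DP^2$ into three types of points and checking that the smallest local value is attained along $C_2$. Since the global invariant is an infimum of local ones, the two inequalities reduce to (i) exhibiting one point with the claimed value, and (ii) bounding every point from below by it.

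First I would record the upper bound. As $C_2$ is a union of two distinct lines, its unique singular point $P_0$ is the $A_1$ node where the two lines meet. Applying Lemma~\ref{A1-points} with $d=2$ gives $\delta_{P_0}(\DP^2,\lambda C_2)=\frac{3-3\lambda}{3-2\lambda}$ on $\lambda\in[0,1]$ (here $\min\{1,\tfrac{3}{2}\}=1$), whence $\delta(\DP^2,\lambda C_2)\le\frac{3-3\lambda}{3-2\lambda}$.

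Next comes the matching lower bound, for which I would verify $\delta_P\ge\frac{3-3\lambda}{3-2\lambda}$ at every point. A point of $C_2$ other than $P_0$ is a smooth point lying on exactly one of the two line components, so Lemma~\ref{line} with $d=2$ gives $\delta_P=\frac{3(1-\lambda)}{3-2\lambda}=\frac{3-3\lambda}{3-2\lambda}$, matching the node exactly. It remains to treat $P\notin C_2$, and here I would use the estimate \eqref{estimation1} with $\mathcal{C}$ any line through $P$. Writing $\ell=3-2\lambda$ so that $-K_{\DP^2}-\lambda C_2\sim_{\DQ}\ell H$, the divisor $\ell H-v\mathcal{C}=(\ell-v)H$ is nef for $v\in[0,\ell]$ with $N(v)=0$ throughout; this yields $S_{(\DP^2,\lambda C_2)}(\mathcal{C})=\frac{\ell}{3}$, and since $P\notin C_2$ forces the local term $(N(v)\cdot\mathcal{C})_P$ to vanish, also $S(W^{\mathcal{C}}_{\bullet,\bullet};P)=\frac{\ell}{3}$. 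Hence $\delta_P\ge\frac{3}{\ell}=\frac{3}{3-2\lambda}$, which strictly exceeds $\frac{3-3\lambda}{3-2\lambda}$ for $\lambda>0$.

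Combining the three cases, the infimum over all $P$ equals $\frac{3-3\lambda}{3-2\lambda}$, proving the theorem. The only content beyond substituting $d=2$ into the two preceding lemmas is the off-curve estimate, which I expect to be the main (though routine) obstacle, since one must rule out a divisor centered away from $C_2$ producing a smaller ratio. The fibration bound \eqref{estimation1} dispatches this uniformly, and because $\frac{3}{3-2\lambda}$ is strictly larger than the on-curve value, the global minimum is forced onto $C_2$.
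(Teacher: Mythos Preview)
Your proof is correct and follows the same approach as the paper: the paper simply records ``This proves the following theorems'' after Lemma~\ref{A1-points}, implicitly combining it with Lemma~\ref{line} for the smooth points of $C_2$ and taking the minimum. You make this assembly explicit and, in addition, handle the off-curve points $P\notin C_2$ via \eqref{estimation1}, a case the paper leaves tacit; your computation there is correct and shows these points never realise the infimum.
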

  \begin{theorem}
  Suppose $C_3\subset \DP^2$ is a cubic curve with at most $A_1$ singularities and at least one $A_1$ singularity. Then  for $\lambda\in\big[0,1\big]$ we have:
 $$\delta(\DP^2,\lambda C_3)=1.$$
 \end{theorem}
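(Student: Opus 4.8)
The strategy is to evaluate the local invariant $\delta_P(\DP^2,\lambda C_3)$ at every point $P\in\DP^2$ and then read off the global value from $\delta(\DP^2,\lambda C_3)=\inf_{P}\delta_P(\DP^2,\lambda C_3)$. The upper bound is immediate: by hypothesis $C_3$ carries at least one node $P_0$, and Lemma~\ref{A1-points} with $d=3$ gives $\delta_{P_0}(\DP^2,\lambda C_3)=\tfrac{3-3\lambda}{3-3\lambda}=1$, so $\delta(\DP^2,\lambda C_3)\le 1$. Everything then reduces to proving the matching lower bound $\delta_P\ge 1$ at all remaining points.

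I would organise the lower bound according to the position of $P$. At any node we again have $\delta_P=1$ by Lemma~\ref{A1-points}. If $P\notin C_3$, take a general line $\mathcal C=L$ through $P$; then $-K_{\DP^2}-\lambda C_3$ is nef of degree $3-3\lambda$ and $L$ meets $C_3$ in three points distinct from $P$, so $P$ is not charged by the different on $L$ and estimate~\eqref{estimation1} applies with $A_P=1$. A direct computation gives $S_{(\DP^2,\lambda C_3)}(L)=S\big(W^{L}_{\bullet,\bullet};P\big)=1-\lambda$, whence $\delta_P\ge\tfrac{1}{1-\lambda}>1$. By the classification of reduced cubics whose only singularities are nodes, $C_3$ is an irreducible nodal cubic, a smooth conic together with a transversal line, or three lines in general position; in the two reducible cases every smooth point lying on a \emph{line} component falls under Lemma~\ref{line} with $d=3$, giving $\delta_P=\tfrac{3(1-\lambda)}{3-3\lambda}=1$.

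The substantive points are the smooth points lying on a component of degree $\ge 2$, that is, on the irreducible cubic or on the conic. For these I would run verbatim the weighted blow-up computation of the smooth-cubic Lemmas: blow up $P$ and the relevant infinitely near points on the strict transforms of the tangent line $L$ and of the local branch of $C_3$, contract to the surface $\overline S$ carrying a $\tfrac12(1,1)$ (respectively $\tfrac13(1,2)$) point, and feed the resulting data into estimate~\eqref{estimation2}. A non-flex tangent (contact order $2$) yields $\delta_P=\tfrac{3-2\lambda}{3-3\lambda}>1$, while a flex (contact order $3$, possible only on the irreducible cubic) yields $\delta_P=\tfrac{4-3\lambda}{4-4\lambda}>1$; inspecting the full list of flag quotients $\big\{\tfrac{1}{1-\lambda},\,3,\,\tfrac{3}{1-\lambda}\big\}$ shows their minimum, together with the leading term, stays strictly above $1$ for every $\lambda\in(0,1)$, so no separate treatment of $\lambda$ near $1$ is required.

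The one thing that genuinely requires justification is that these smooth-cubic computations survive the presence of the nodes elsewhere on $C_3$, and this is the step I expect to be the main obstacle to a fully rigorous write-up. It rests on two observations. First, B\'ezout forbids the tangent line at a smooth point from passing through a node: a tangency already contributes intersection multiplicity $2$ at $P$, and a node would add at least $2$ more, exceeding $L\cdot C_3=3$; consequently the local picture near $\overline E$ and $\overline L$—and hence the different $\Delta_{\overline E}$—is exactly as in the smooth case. Second, since all blow-ups take place at $P$ and its infinitely near points, the surface $\overline S$ is unchanged in a neighbourhood of each node, so the Zariski decomposition of $-\sigma^{*}(K_{\DP^2}+\lambda C_3)-v\overline E$ acquires no new negative curve and coincides with the one computed for a smooth cubic. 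Granting this, $\delta_P\ge 1$ holds at every point with equality at the nodes, and therefore $\delta(\DP^2,\lambda C_3)=1$ for $\lambda\in[0,1]$.
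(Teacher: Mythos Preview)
Your proposal is correct and follows essentially the same strategy as the paper: compute $\delta_P$ locally using the relevant lemmas and take the infimum. The paper's ``proof'' of this theorem is literally the sentence ``This proves the following theorems'' placed after Lemma~\ref{A1-points}, with the lower bound at non-nodal points supplied by the preceding Lemmas~\ref{line} (line components), 5.1--5.2 (smooth points with $2$- or $3$-tangent), and the trivial off-curve estimate---exactly the ingredients you assemble.

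One remark on what you call the ``main obstacle'': the paper sidesteps it by stating Lemmas~5.1 and~5.2 for an arbitrary cubic $C$ with $P$ a smooth point, not for a smooth cubic. So the weighted-blow-up computations apply verbatim to nodal cubics without further justification. Your B\'ezout argument (tangent at a smooth point cannot hit a node) and your observation that the Zariski decomposition picks up no new negative curve are both correct, but they are already implicit in the generality of the lemma statements. In particular, you need not flag this as the delicate step---the paper simply takes it for granted.
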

 \begin{theorem}
  Suppose $C_4\subset \DP^2$ is a quartic curve with at most $A_1$ singularities and at least one $A_1$ singularity. Then  for $\lambda\in\big[0,\frac{3}{4}\big]$ we have:
 $$\delta(\DP^2,\lambda C_4)=\frac{3-3\lambda}{3- 4 \lambda}.$$
 \end{theorem}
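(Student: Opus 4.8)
The plan is to obtain the global statement from Lemma~\ref{A1-points} together with the local computations at smooth points carried out in the smooth-quartic lemmas above, using the identity $\delta(\DP^2,\lambda C_4)=\inf_{P\in\DP^2}\delta_P(\DP^2,\lambda C_4)$. Thus it suffices to locate the point minimizing the local $\delta$-invariant and to verify its value is $\frac{3-3\lambda}{3-4\lambda}$. For the upper bound, the hypothesis provides at least one $A_1$ point $P$, and Lemma~\ref{A1-points} with $d=4$ gives $\delta_P(\DP^2,\lambda C_4)=\frac{3-3\lambda}{3-4\lambda}$ on $[0,\tfrac34]$; since the global invariant is an infimum of local ones, this yields $\delta(\DP^2,\lambda C_4)\le\frac{3-3\lambda}{3-4\lambda}$ immediately.

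For the matching lower bound I would run over the remaining points and show each has local $\delta$-invariant at least $\frac{3-3\lambda}{3-4\lambda}$. The crucial observation, which lets the smooth-quartic lemmas apply to a merely nodal quartic, is that every $S$-type quantity appearing in those proofs depends only on the numerical class $-K_{\DP^2}-\lambda C_4=(3-4\lambda)H$ and on the chosen divisor or flag, not on the specific curve $C_4$: indeed $S_{(\DP^2,\lambda C_4)}(E)=\frac{1}{(3-4\lambda)^2}\int_0^\tau\mathrm{vol}\big(f^*((3-4\lambda)H)-vE\big)\,dv$, and likewise for $S(W^{\mathcal C}_{\bullet,\bullet};\cdot)$. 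The only genuine dependence on $C_4$ enters through the log discrepancies $A_{(\DP^2,\lambda C_4)}(E)$ and the different $\Delta_E$, which for divisors $E$ centered at a given point are determined by the local behaviour of $C_4$ there. Consequently, at a smooth point $Q\in C_4$ the entire local computation is dictated by the contact order $m\in\{2,3,4\}$ of the tangent line and is identical to that of a smooth quartic; the nodes, sitting at other points, are invisible to divisors centered at $Q$.

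It remains to handle the three cases of the remaining points. (i) At any further $A_1$ point, Lemma~\ref{A1-points} again gives $\frac{3-3\lambda}{3-4\lambda}$. (ii) At a smooth point $Q\in C_4$ the local value is, by the smooth-quartic lemmas and the transfer principle above, one of $\frac{3-2\lambda}{3-4\lambda}$, $\frac34\cdot\frac{4-3\lambda}{3-4\lambda}$, $\frac35\cdot\frac{5-4\lambda}{3-4\lambda}$ according to contact order $2,3,4$; clearing the positive factor $3-4\lambda$, the smallest of these is the contact-$4$ value, and $3-3\lambda<3-\tfrac{12}{5}\lambda$ for $\lambda>0$ shows it still strictly exceeds $\frac{3-3\lambda}{3-4\lambda}$. (iii) At a point $Q\notin C_4$ I would apply estimate~\eqref{estimation1} with a general line $\mathcal C$ through $Q$: then $(3-4\lambda)H-v\mathcal C=(3-4\lambda-v)H$ has trivial negative part with $P(v)=(3-4\lambda-v)H$, so $S_{(\DP^2,\lambda C_4)}(\mathcal C)=S(W^{\mathcal C}_{\bullet,\bullet};Q)=\frac{3-4\lambda}{3}$ and hence $\delta_Q\ge\frac{3}{3-4\lambda}>\frac{3-3\lambda}{3-4\lambda}$.

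Combining the three cases, the infimum over all points of $\DP^2$ is attained precisely at the $A_1$ point(s) and equals $\frac{3-3\lambda}{3-4\lambda}$, which proves the theorem. The step I expect to require the most care is (ii): one must articulate clearly the transfer principle—that the $S$-invariants depend only on the class $(3-4\lambda)H$ while the $A$-invariants and differents are local—so as to justify invoking the smooth-quartic lemmas for a nodal curve. Everything else reduces to the elementary comparisons of explicit rational functions of $\lambda$ on $[0,\tfrac34]$ recorded above.
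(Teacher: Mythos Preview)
Your approach is essentially the paper's: the theorem is deduced from Lemma~\ref{A1-points} (the $A_1$ computation) together with the smooth-point lemmas of Section~6 and the observation that points off $C_4$ are harmless. Two remarks.

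First, the ``transfer principle'' you carefully articulate is not needed: the smooth-point lemmas (e.g.\ Lemma~\ref{qc-smooth-2}) are already stated for an arbitrary quartic curve $C$ with $P$ a smooth point of $C$, not only for globally smooth $C$. So you may invoke them directly.

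Second, your case (ii) has a genuine omission. You assert that at a smooth point $Q\in C_4$ the tangent line has contact order $2$, $3$ or $4$, and quote the three corresponding values. But a nodal quartic can have line components (four general lines; a line and a transversal cubic; two lines and a conic; etc.), and at a smooth point $Q$ on a line component $L\subset C_4$ the tangent line is $L$ itself, so the contact order is infinite and none of your three lemmas applies. This case is covered by Lemma~\ref{line}, which gives
\[
\delta_Q(\DP^2,\lambda C_4)=\frac{3(1-\lambda)}{3-4\lambda}=\frac{3-3\lambda}{3-4\lambda},
\]
exactly the claimed global value. So the conclusion survives, but your case split must include this possibility (and note that the infimum is then also attained at such points, not only at the nodes).
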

 \section{Curves with $A_2$ singularities}
 \begin{lemma}
 Let $C\subset \DP^2$ be a plane curve of degree $d$ with an $A_2$ singularity at point $P\in C$  on $C$. Then
 $$\delta_P(\DP^2,\lambda C)=\frac{3}{5}\cdot\frac{5-6\lambda}{3-d \lambda }\text{ for }\lambda\in\Big[0,\min\Big\{\frac{5}{6}, \frac{3}{d}\Big\}\Big].$$
 \end{lemma}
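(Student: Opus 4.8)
The plan is to imitate the strategy of Lemma~\ref{A1-points}, but to replace the single blow-up by the full log resolution of the cusp. An $A_2$ singularity is analytically $y^2=x^3$, whose tangent cone is the double line $L=\{y=0\}$, so I would extract the monomial (weighted) valuation $\overline{E}$ of weights $(2,3)$ in the coordinates $(x,y)$. Concretely: blow up $P$ to get $E_1$; blow up the point $C^1\cap E_1$ (through which the strict transforms of $C$ and of the cuspidal tangent both pass) to get $E_2$; blow up the triple point $E_1^2\cap E_2\cap C^2$ to get $E=E_3$. The resulting chain is $E_1^3(-3)-E_3(-1)-E_2^3(-2)$, and contracting $E_1^3$ and $E_2^3$ produces $\sigma\colon\overline{S}\to\DP^2$ whose exceptional curve $\overline{E}$ carries a $\frac13(1,2)$ point $\overline{P}_1$ (from $E_1$) and a $\frac12(1,1)$ point $\overline{P}_2$ (from $E_2$). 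Since $\mathrm{ord}_{\overline{E}}(x)=2$ and $\mathrm{ord}_{\overline{E}}(y)=3$, one reads off $\sigma^*C=\overline{C}+6\overline{E}$, $\sigma^*L=\overline{L}+3\overline{E}$, $\sigma^*K_{\DP^2}=K_{\overline{S}}-4\overline{E}$, hence $A_{(\DP^2,\lambda C)}(\overline{E})=5-6\lambda$, together with the intersection table $\overline{E}^2=-\tfrac16$, $\overline{E}\cdot\overline{L}=\tfrac12$, $\overline{L}^2=-\tfrac12$.

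Next I would compute the Zariski decomposition of $-\sigma^*(K_{\DP^2}+\lambda C)-v\overline{E}$. Writing $a=3-d\lambda$ and using $\sigma^*H\equiv\overline{L}+3\overline{E}$, the divisor is nef on $v\in[0,2a]$ (both $\overline{E}$ and $\overline{L}$ are met non-negatively), while on $v\in[2a,3a]$ the cuspidal tangent $\overline{L}$ enters the negative part, giving $N(v)=(v-2a)\overline{L}$ and the clean form $P(v)=(3a-v)(\overline{L}+\overline{E})$, with pseudo-effective threshold $\tau=3a$. Integrating $P(v)^2=a^2-\tfrac{v^2}{6}$ on the first chamber and $\tfrac{(3a-v)^2}{3}$ on the second yields $S_{(\DP^2,\lambda C)}(\overline{E})=\tfrac{5a}{3}=\tfrac{5(3-d\lambda)}{3}$, so that $\delta_P(\DP^2,\lambda C)\le A/S=\tfrac{3}{5}\cdot\tfrac{5-6\lambda}{3-d\lambda}$.

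For the matching lower bound I would apply \eqref{estimation2} with the different $\Delta_{\overline{E}}=\tfrac12\overline{P}_2+\tfrac23\overline{P}_1+\lambda\overline{Q}$, where $\overline{Q}=\overline{C}\cap\overline{E}$ is a smooth point of $\overline{E}$. Using $P(v)\cdot\overline{E}=\tfrac v6$ on $[0,2a]$ and $\tfrac{3a-v}{3}$ on $[2a,3a]$, and noting that $N(v)$ meets $\overline{E}$ only at $\overline{P}_2$, the point-by-point computation of $S\big(W^{\overline{E}}_{\bullet,\bullet};O\big)$ gives $\tfrac{3-d\lambda}{9}$ at a general point and at $\overline{P}_1,\overline{Q}$, but $\tfrac{3-d\lambda}{6}$ at $\overline{P}_2$; hence
$$
\frac{A_{\overline{E},\Delta_{\overline{E}}}(O)}{S\big(W^{\overline{E}}_{\bullet,\bullet};O\big)}=
\begin{cases}
\frac{3}{3-d\lambda}&O\in\{\overline{P}_1,\overline{P}_2\},\\
\frac{9(1-\lambda)}{3-d\lambda}&O=\overline{Q},\\
\frac{9}{3-d\lambda}&\text{otherwise.}
\end{cases}
$$
A short comparison shows $\tfrac{3}{5}\cdot\tfrac{5-6\lambda}{3-d\lambda}$ is no larger than any of these for $\lambda\in\big[0,\min\{\tfrac56,\tfrac3d\}\big]$ (the relevant inequalities reduce to $5-6\lambda\le5$ and $27\lambda\le30$), so \eqref{estimation2} gives $\delta_P\ge A/S$, and equality follows.

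The main obstacle I anticipate is the bookkeeping on the singular surface $\overline{S}$: pinning down the two quotient singularities and their exact contributions $\tfrac12$ and $\tfrac23$ to the different, and correctly locating the strict transform $\overline{L}$ through $\overline{P}_2$ so that the local intersection $\big(N(v)\cdot\overline{E}\big)_{\overline{P}_2}$ is read off at the orbifold point rather than spread over $\overline{E}$. Once the intersection table and the two-chamber structure of the Zariski decomposition are fixed, the remaining integrals are routine and run in complete parallel to the smooth-tangent lemmas proved above.
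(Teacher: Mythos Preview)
Your approach is exactly the one the paper takes: extract the $(2,3)$-weighted valuation $\overline{E}$ via three ordinary blow-ups followed by contraction of $E_1^3$ and $E_2^3$, compute $A(\overline{E})=5-6\lambda$ and $S(\overline{E})=\tfrac{5(3-d\lambda)}{3}$ from the two-chamber Zariski decomposition you describe, and finish with \eqref{estimation2} using the different on $\overline{E}$. Your intersection table, Zariski decomposition, the values $S\big(W^{\overline{E}}_{\bullet,\bullet};O\big)\in\{\tfrac{a}{9},\tfrac{a}{6}\}$, and the final case-by-case ratios all match the paper's computation.

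One small slip: contracting the single $(-3)$-curve $E_1^3$ produces a $\tfrac{1}{3}(1,1)$ point, not $\tfrac{1}{3}(1,2)$ (the latter would require a chain of two $(-2)$-curves). This is harmless for your argument, since the coefficient of $\overline{P}_1$ in the different is $1-\tfrac{1}{3}=\tfrac{2}{3}$ either way, but you should correct the label.
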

 \begin{proof}
Let $L$ be a tangent line at point $P$.  Let $\pi_1:S^1\to S$ be the~blow up of the~point $P$,
with the exceptional divisor $E_1^1$ and $L^1$, $C^1$ are strict transforms of $L$ and $C$ respectively; $\pi_2: S^2\to S^1$ be the~blow up of the~point $E_1^1\cap C^1\cap L^1$,
with the exceptional divisor $E_2^2$ and $L^2$, $C^2$, $E_1^2$ are strict transforms of $L^1$, $C^1$, $E_1^1$ respectively; $\pi_3: S^3\to S^2$ be the~blow up of the~point $E_1^2\cap E_2^2\cap C^2$,
with the exceptional divisor $E$ and $L^3$, $C^3$, $E_1^3$, $E_2^3$ are strict transforms of $L^2$, $C^2$, $E_1^2$, $E_2^2$ respectively; let $\theta:S^3\to\overline{S}$ be  the~contraction of  the~curves  $E_1^3$, $E_2^3$
and $\sigma$ is the~birational contraction of $\overline{E}=\theta(E)$. We denote the strict transforms of $C$ and $L$ on $\overline{S}$ by $\overline{C}$ and $\overline{L}$. Let $\overline{E}$ be the exceptional divisor of $\sigma$. Note that $\overline{E}$ contains two singular points $\overline{P}_1$ and  $\overline{P}_2$   which are $\frac{1}{3}(1,1)$ and  $\frac{1}{2}(1,1)$ singular points respectively.
\begin{center}
 \includegraphics[width=16cm]{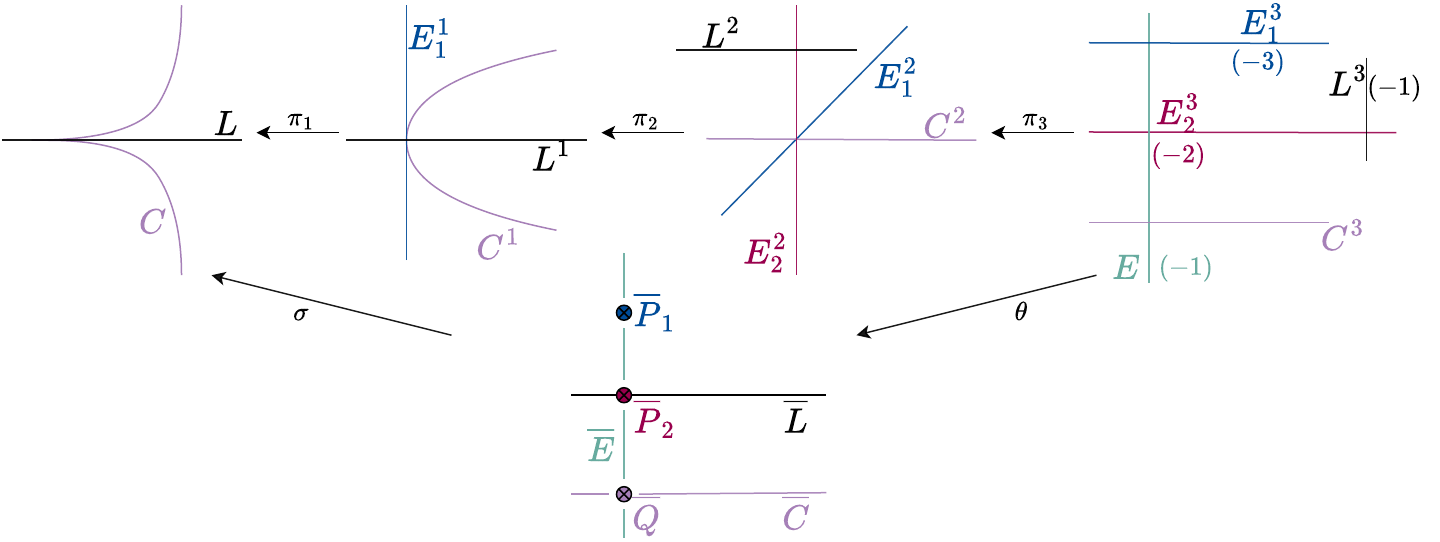}
 \end{center}
The intersections on $\overline{S}$ are given by:
\begin{center}
    \renewcommand{\arraystretch}{1.4}
    \begin{tabular}{|c|c|c|}
    \hline
         & $\overline{E}$ & $\overline{L}$ \\
    \hline
       $\overline{E}$  & $-\frac{1}{6}$ & $\frac{1}{2}$ \\
    \hline
       $\overline{L}$  & $\frac{1}{2}$ & $-\frac{1}{2}$ \\
    \hline
    \end{tabular}
\end{center}
 We have $\sigma^*(L)=\overline{L}+3\overline{E}$, $\sigma^*(C)=\overline{C}+6\overline{E}$, $\sigma^*(K_{\DP^2})=K_{\overline{S}}-4\overline{E}$. Thus, $A_{(\DP^2,\lambda C)}(\overline{E})=5-6\lambda$.
\noindent The Zariski decomposition of the divisor  $-\sigma^*(K_{\DP^2}+\lambda C)-v\overline{E}$ is given by:
\begin{align*}
&&P(v)=
\begin{cases}
-\sigma^*(K_{\DP^2}+\lambda C)-v\overline{E}\text{ if }v\in[0,2(3-d \lambda )],\\
-\sigma^*(K_{\DP^2}+\lambda C)-v\overline{E}-2\big(v-(3-d \lambda )\big)\overline{L}\text{ if }v\in[2(3-d \lambda ), 3(3-d \lambda )],\\
\end{cases}\\&&
N(v)=
\begin{cases}
0\text{ if }v\in[0,2(3-d \lambda )],\\
2\big(v-(3-d \lambda )\big)\overline{L}\text{ if }v\in[2(3-d \lambda ),3(3-d \lambda )].
\end{cases}
\end{align*}
Then
{\small $$
P(v)^2=
\begin{cases}
(3-d \lambda )^2 -\frac{v^2}{6}\text{ if }v\in[0,2(3-4)\lambda],\\
\frac{(v - 3(3-d\lambda))^2}{3}\text{ if }v\in[2(3-d \lambda ),3(3-d \lambda )],
\end{cases}
\text{and }
P(v)\cdot \overline{E}=
\begin{cases}
\frac{v}{6}\text{ if }v\in[0,2(3-d \lambda )],\\
3-d \lambda  -\frac{v}{3}\text{ if }v\in[2(3-d \lambda ),3(3-d \lambda )],
\end{cases}
$$}
Thus
$$S_{(\DP^2, \lambda C)}(\overline{E})=\frac{1}{(3-d \lambda )^2}\Big(\int_0^{2(3-d \lambda )} (3-d \lambda )^2 -\frac{v^2}{6} dv+\int_{2(3-d \lambda )}^{3(3-d \lambda )} \frac{(v - 3(3-d\lambda))^2}{3} dv\Big)=\frac{5(3-d \lambda )}{3}$$
so that $\delta_P(\DP^2,\lambda C)\le \frac{3}{5}\cdot\frac{5-6\lambda}{3-d \lambda }$. For every $O\in \overline{E}$, we get if $O\in \overline{E}\backslash \overline{L}$ or if $O\in \overline{E}\cap \overline{L}$:
\begin{align*}
&&h(v)\le 
\begin{cases}
\frac{v^2}{72}\text{ if }v\in[0,2(3-d \lambda )],\\
\frac{(v - 3(3-d\lambda))^2}{18}\text{ if }v\in[2(3-d \lambda ), 3(3-d \lambda )],
\end{cases}\\
&\text{or}&
h(v)\le \begin{cases}
\frac{v^2}{72}\text{ if }v\in[0,2(3-d \lambda )],\\
\frac{ (v - 3(3-d\lambda)) (3(3 -d\lambda) -2v)}{18}\text{ if }v\in[2(3-d \lambda ), 3(3-d \lambda )].
\end{cases}
\end{align*}
So that
$$S\big(W^{\overline{E}}_{\bullet,\bullet};O\big)\le \frac{2}{(3-d \lambda)^2}\Big(\int_0^{2(3-d \lambda )} \frac{v^2}{72} dv+\int_{2(3-d\lambda )}^{3(3-d \lambda )} \frac{(v - 3(3-d\lambda))^2}{18} dv\Big)=\frac{3-d \lambda }{9}
$$
or
$$S\big(W^{\overline{E}}_{\bullet,\bullet};O\big)\le \frac{2}{(3-d \lambda )^2}\Big(\int_0^{2(3-d \lambda )} \frac{v^2}{72} dv+\int_{2(3-d \lambda )}^{3(3-d \lambda )} \frac{ (v - 3(3-d\lambda)) (3(3 -d\lambda) -2v)}{18} dv\Big)=\frac{3-d \lambda }{6}
$$
We have
$$
\delta_P(\DP^2,\lambda C)\geqslant\mathrm{min}\Bigg\{\frac{3}{5}\cdot\frac{5-6\lambda}{3-d \lambda },\inf_{O\in\overline{E}}\frac{A_{\overline{E},\Delta_{\overline{E}}}(O)}{S\big(W^{\overline{E}}_{\bullet,\bullet};O\big)}\Bigg\},
$$
where $\Delta_{\overline{E}}=\frac{2}{3}\overline{P}_1+\frac{1}{2}\overline{P}_2+\lambda\overline{Q}$  where $\overline{Q}=\overline{C}|_{\overline{E}}$. 
So that
$$
\frac{A_{\overline{E},\Delta_{\overline{E}}}(O)}{S(W_{\bullet,\bullet}^{\overline{E}};O)}=
\left\{\aligned
&\frac{3}{3-d \lambda }\ \mathrm{if}\ O\in \{\overline{P}_1,\overline{P}_2\},\\
&\frac{9(1-\lambda)}{3-d \lambda }\ \mathrm{if}\ O= \overline{Q},\\
&\frac{9}{3-d \lambda }\ \mathrm{otherwise}.
\endaligned
\right.
$$
Thus $\delta_P(\DP^2,\lambda C)=\frac{3}{5}\cdot\frac{5-6\lambda}{3-d \lambda }$ \text{ for }$\lambda\in\big[0,\min\big\{\frac{5}{6}, \frac{3}{d}\big\}\big]$.
 \end{proof}

  \noindent This proves the following theorems:
   \begin{theorem}
  Suppose $C_3\subset \DP^2$ is a cubic curve with  $A_3$ singularity. Then  for $\lambda\in\big[0,\frac{5}{6}\big] $ we have:
 $$\delta(\DP^2,\lambda C_3)=\frac{5-6\lambda}{5-5 \lambda }.$$
 \end{theorem}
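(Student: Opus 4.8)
The plan is to promote the local computation of the previous lemma to the asserted global value through the identity $\delta(\DP^2,\lambda C_3)=\inf_{P\in\DP^2}\delta_P(\DP^2,\lambda C_3)$. Specialising that lemma to $d=3$ gives, at the singular point $P_0$ of $C_3$, the value $\delta_{P_0}(\DP^2,\lambda C_3)=\frac{3}{5}\cdot\frac{5-6\lambda}{3-3\lambda}=\frac{5-6\lambda}{5-5\lambda}$, and this holds on the whole interval $[0,\frac{5}{6}]$ since $\min\{\frac{5}{6},1\}=\frac{5}{6}$. Taking $E$ to be the divisor over $P_0$ constructed there already yields the bound $\delta(\DP^2,\lambda C_3)\le\frac{5-6\lambda}{5-5\lambda}$, so it remains to establish the reverse inequality $\delta_P(\DP^2,\lambda C_3)\ge\frac{5-6\lambda}{5-5\lambda}$ for every $P$, i.e. that the infimum over points is attained at $P_0$.

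I would stratify $\DP^2$ and bound $\delta_P$ on each stratum. At $P_0$ the previous lemma gives equality. If $C_3$ carries a line component $L$, then at each of its smooth points Lemma~\ref{line} with $d=3$ gives $\delta_P=\frac{3(1-\lambda)}{3-3\lambda}=1$, and $1\ge\frac{5-6\lambda}{5-5\lambda}$ is equivalent to $\lambda\ge0$. At a smooth point of $C_3$ the contribution depends only on the order of tangency of the tangent line, and the two lemmas of the Smooth cubic curves section give $\delta_P=\frac{3-2\lambda}{3-3\lambda}$ for an ordinary tangent and $\delta_P=\frac{4-3\lambda}{4-4\lambda}$ at a flex; clearing the positive denominators, the inequalities $\frac{3-2\lambda}{3-3\lambda}\ge\frac{5-6\lambda}{5-5\lambda}$ and $\frac{4-3\lambda}{4-4\lambda}\ge\frac{5-6\lambda}{5-5\lambda}$ reduce respectively to $8\lambda(1-\lambda)\ge0$ and $9\lambda(1-\lambda)\ge0$, both valid on $[0,\frac{5}{6}]$. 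Finally, for $P\notin C_3$, estimate~\eqref{estimation1} with a general line $\mathcal{C}$ through $P$ gives $S_{(\DP^2,\lambda C_3)}(\mathcal{C})=S(W^{\mathcal{C}}_{\bullet,\bullet};P)=1-\lambda$, whence $\delta_P\ge\frac{1}{1-\lambda}\ge\frac{5-6\lambda}{5-5\lambda}$.

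Combining the strata, every point of $\DP^2$ satisfies $\delta_P(\DP^2,\lambda C_3)\ge\frac{5-6\lambda}{5-5\lambda}$ with equality at $P_0$, and therefore $\delta(\DP^2,\lambda C_3)=\frac{5-6\lambda}{5-5\lambda}$ on $[0,\frac{5}{6}]$. Since the gap in the flex comparison is only $9\lambda(1-\lambda)$, the flex is the decisive smooth competitor, and it is precisely the singular point $P_0$ that dictates the global value.

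The main obstacle is justifying that the Smooth cubic curves lemmas transfer to smooth points of a \emph{singular} cubic: I would verify that their blow-up towers, Zariski decompositions and $S$-values are determined only by $d=3$ and by the local contact order of the tangent line at $P$, the divisor $-\sigma^*(K_{\DP^2}+\lambda C_3)$ acquiring a negative part solely along the strict transform of that tangent, with $P_0$ too far away to interfere. One must also check that no smooth point admits a degenerate tangent through $P_0$, but this is excluded by Bézout: a line tangent at a smooth point ($\mathrm{mult}\ge2$) and passing through the double point $P_0$ ($\mathrm{mult}\ge2$) would meet $C_3$ in degree $\ge4>3$. A final technical point is that the flex lemma is recorded only on $[0,\frac{3}{4}]$ whereas the target interval is $[0,\frac{5}{6}]$; as the flex value at $\lambda=\frac{3}{4}$ already exceeds the target by a wide margin and varies continuously, a short prolongation of that Zariski-decomposition computation (or any crude lower bound) closes the remaining sub-interval $(\frac{3}{4},\frac{5}{6}]$.
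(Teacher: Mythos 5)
Your proposal is correct and follows essentially the paper's own route: the paper deduces this theorem directly from the preceding $A_2$ lemma (the stated ``$A_3$'' in the theorem is a typo for $A_2$, as the value $\frac{5-6\lambda}{5-5\lambda}=\frac{3}{5}\cdot\frac{5-6\lambda}{3-3\lambda}$ matches the cuspidal case with $d=3$, which you implicitly and correctly assume), leaving the infimum over the remaining points of $\DP^2$ implicit, and your stratification simply makes that implicit globalization explicit using the paper's line, ordinary-tangent, flex, and generic-line estimates, with the comparison inequalities $8\lambda(1-\lambda)\ge 0$ and $9\lambda(1-\lambda)\ge 0$ checked correctly. Both of your flagged worries are already resolved within the source: the two smooth-point lemmas are stated for arbitrary cubic curves (not only smooth ones), so no transfer argument is needed, and the flex computation in the paper actually yields $\frac{4-3\lambda}{4-4\lambda}$ for all $\lambda\le\frac{8}{9}$ (the binding constraint being the ratio $\frac{9(1-\lambda)}{3-3\lambda}=3$ at $\overline{Q}$), so the sub-interval $\big(\frac{3}{4},\frac{5}{6}\big]$ is covered without any further prolongation.
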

 \begin{theorem}
  Suppose $C_4\subset \DP^2$ is a quartic curve with at most $A_2$ singularities and at least one $A_2$ singularity. Then  for $\lambda\in\big[0,\frac{3}{4}\big]$ we have:
 $$\delta(\DP^2,\lambda C_4)=\frac{3}{5}\cdot\frac{5-6\lambda}{3-4 \lambda }.$$
 \end{theorem}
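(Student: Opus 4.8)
The plan is to use $\delta(\DP^2,\lambda C_4)=\inf_{P\in\DP^2}\delta_P(\DP^2,\lambda C_4)$ and to prove that the infimum is attained at an $A_2$ singularity, where its value is supplied by the preceding lemma. Applying that lemma with $d=4$, and noting $\min\{\frac56,\frac34\}=\frac34$, gives $\delta_P(\DP^2,\lambda C_4)=\frac35\cdot\frac{5-6\lambda}{3-4\lambda}$ at every $A_2$ point and for all $\lambda\in[0,\frac34]$. It then remains to show that every other point of $\DP^2$ has local $\delta$-invariant at least this large on $[0,\frac34)$.

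First I would stratify $\DP^2$ by local type. By hypothesis the singularities of $C_4$ are of type $A_1$ or $A_2$ only, and Lemma~\ref{A1-points} with $d=4$ gives $\frac{3-3\lambda}{3-4\lambda}$ at an $A_1$ point. A smooth point of $C_4$ lying on a line component is governed by Lemma~\ref{line}, giving $\frac{3(1-\lambda)}{3-4\lambda}$; a smooth point whose tangent line is not a component is governed by the quartic smooth-point lemmas according as the contact order of the tangent is $2$, $3$ or $4$ (Lemma~\ref{qc-smooth-2} and the two lemmas following it), giving $\frac{3-2\lambda}{3-4\lambda}$, $\frac34\cdot\frac{4-3\lambda}{3-4\lambda}$ and $\frac35\cdot\frac{5-4\lambda}{3-4\lambda}$. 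Finally, for $P\notin C_4$ a single blow-up with exceptional divisor $E$ has $A_{(\DP^2,\lambda C_4)}(E)=2$ and $S_{(\DP^2,\lambda C_4)}(E)=\frac{2(3-4\lambda)}{3}$, and since the different vanishes the estimate~\eqref{estimation2} gives $\delta_P\ge\frac{3}{3-4\lambda}$.

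The comparison is then purely arithmetic. On $[0,\frac34)$ the factor $3-4\lambda$ is positive and common to all the values listed, so they are ordered by their numerators, each a linear function of $\lambda$ equal to $3$ at $\lambda=0$. The slopes are $0$ off $C_4$, then $-2,-\frac94,-\frac{12}{5}$ in the three smooth-tangent cases, $-3$ both on a line component and at an $A_1$ point, and $-\frac{18}{5}$ at an $A_2$ point. As the $A_2$ slope is the most negative, the $A_2$ value is pointwise the smallest on $[0,\frac34)$, with equality among all of them only at $\lambda=0$. Hence $\inf_P\delta_P=\frac35\cdot\frac{5-6\lambda}{3-4\lambda}$ on $[0,\frac34)$, and the endpoint $\lambda=\frac34$ follows by continuity, which is the claim.

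I expect no essential difficulty, since the earlier lemmas already carry the geometry; the care lies entirely in the completeness of the stratification. Specifically, one must check that the smooth-point lemmas are genuinely local and therefore apply at each smooth point of $C_4$ (including smooth points of a conic component, where the tangent meets $C_4$ with contact order exactly $2$), that smooth points of a line component are instead captured by Lemma~\ref{line}, and that points off $C_4$, covered by none of the earlier statements, are handled by the direct blow-up computation above. Once every point-type is accounted for, the slope comparison finishes the proof.
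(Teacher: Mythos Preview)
Your proposal is correct and follows essentially the same approach as the paper: the paper's proof is simply the phrase ``This proves the following theorems'' placed after the $A_2$ lemma, relying implicitly on the reader to combine that lemma with the earlier local computations at smooth points (Lemmas~\ref{qc-smooth-2} and the two following it), at line-component points (Lemma~\ref{line}), and at $A_1$ points (Lemma~\ref{A1-points}), and to observe that the $A_2$ value is smallest. You have made this explicit, added the easy off-curve case, and organized the comparison via the common denominator $3-4\lambda$ and the slopes of the numerators, which is a clean way to verify the inequality; nothing further is needed.
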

 \section{Curves with $A_3$ singularities}
 \begin{lemma}
 Let $C\subset \DP^2$ be a plane curve of degree $d$ with an $A_3$ singularity at point $P\in C$  on $C$. Then
 $$\delta_P(\DP^2,\lambda C)=\frac{3-4 \lambda }{3-d \lambda }\text{ for }\lambda\in\Big[0,\min\Big\{\frac{3}{4}, \frac{3}{d}\Big\}\Big].$$
 \end{lemma}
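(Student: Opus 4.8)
The plan is to follow the same blow-up-and-contract strategy used for the $A_1$ and $A_2$ cases: extract a single divisorial valuation $\overline{E}$ over $P$ whose ratio $A/S$ equals the claimed value, and then verify via Fujita's inequality \eqref{estimation2} that no point of $\overline{E}$ produces a smaller ratio. Writing the tacnode locally as $y^2=x^4$ with common tangent line $L=\{y=0\}$, I first blow up $P$ (exceptional $E_1^1$); the strict transform $C^1$ acquires an ordinary node at $q_1=E_1^1\cap L^1\cap C^1$. Blowing up $q_1$ separates the two branches and produces the divisor $E$, and one checks $\mathrm{ord}_E(\pi^*C)=4$ (using $\mathrm{mult}_P C=2$ and $\mathrm{mult}_{q_1}C^1=2$) while $E_1^2$ becomes a $(-2)$-curve. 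Contracting $E_1^2$ gives $\theta\colon S^2\to\overline{S}$ with a single $\tfrac12(1,1)$ point $\overline{P}$ on $\overline{E}=\theta(E)$; let $\sigma\colon\overline{S}\to\DP^2$ be the resulting contraction of $\overline{E}$. A routine computation then yields $\overline{E}^2=-\tfrac12$, $\overline{E}\cdot\overline{L}=1$, $\overline{L}^2=-1$, together with $\sigma^*L=\overline{L}+2\overline{E}$, $\sigma^*C=\overline{C}+4\overline{E}$, and $\sigma^*K_{\DP^2}=K_{\overline{S}}-2\overline{E}$, so that $A_{(\DP^2,\lambda C)}(\overline{E})=3-4\lambda$.

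Next I would compute the Zariski decomposition of $-\sigma^*(K_{\DP^2}+\lambda C)-v\overline{E}$, abbreviating $w=3-d\lambda$. Since $\overline{S}$ has Picard rank two with Mori cone generated by $\overline{E}$ and $\overline{L}$, nefness is decided by intersecting against these two curves: $D_v\cdot\overline{E}=\tfrac v2\ge 0$ always, while $D_v\cdot\overline{L}=w-v\ge 0$ exactly for $v\in[0,w]$. Hence $N(v)=0$ on $[0,w]$, and for $v\in[w,2w]$ the condition $P(v)\cdot\overline{L}=0$ forces $N(v)=(v-w)\overline{L}$, with pseudo-effective threshold $\tau=2w$ (confirmed by $P(2w)^2=0$). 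This gives $P(v)^2=w^2-\tfrac{v^2}{2}$ on $[0,w]$ and $P(v)^2=\tfrac{(2w-v)^2}{2}$ on $[w,2w]$, whence $S_{(\DP^2,\lambda C)}(\overline{E})=\tfrac{1}{w^2}\int_0^{2w}P(v)^2\,dv=w=3-d\lambda$. Therefore the valuative estimate already gives $\delta_P(\DP^2,\lambda C)\le\tfrac{3-4\lambda}{3-d\lambda}$.

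For the matching lower bound I would apply \eqref{estimation2} with the different $\Delta_{\overline{E}}=\tfrac12\overline{P}+\lambda\overline{Q}_1+\lambda\overline{Q}_2$, where $\overline{Q}_1,\overline{Q}_2=\overline{C}\cap\overline{E}$ are the two now-separated branch points ($\overline{C}\cdot\overline{E}=2$) and $\overline{R}=\overline{L}\cap\overline{E}$ is distinct from all of these. Using $P(v)\cdot\overline{E}=\tfrac v2$ on $[0,w]$ and $w-\tfrac v2$ on $[w,2w]$, together with $(N(v)\cdot\overline{E})_O=(v-w)$ only when $O=\overline{R}$, the local integrals give $S(W^{\overline{E}}_{\bullet,\bullet};O)=\tfrac{w}{6}$ at a general point and at $\overline{P},\overline{Q}_i$ (which $\overline{L}$ avoids), and $S(W^{\overline{E}}_{\bullet,\bullet};\overline{R})=\tfrac{w}{3}$. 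Combining with the log discrepancies $A_{\overline{E},\Delta_{\overline{E}}}(O)\in\{1,\tfrac12,1-\lambda\}$ produces the ratios $\tfrac{3}{3-d\lambda}$ at $\overline{R}$ and $\overline{P}$, $\tfrac{6(1-\lambda)}{3-d\lambda}$ at $\overline{Q}_i$, and $\tfrac{6}{3-d\lambda}$ elsewhere; each of these dominates $\tfrac{3-4\lambda}{3-d\lambda}$ for all $\lambda\in[0,\min\{\tfrac34,\tfrac3d\}]$. Hence $\delta_P(\DP^2,\lambda C)\ge\tfrac{3-4\lambda}{3-d\lambda}$, and equality follows.

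The main obstacle I anticipate is bookkeeping rather than conceptual: correctly identifying that the extremal valuation is the weighted blow-up with weights $(1,2)$ realized by exactly one blow-up of the node $q_1$ (so that $\mathrm{ord}_E C=4$ and $A(E)=3$, distinguishing this case from the smooth-conic case where $\mathrm{ord}_E C=2$), and tracking the orbifold point through the contraction so that the different and the local intersection numbers $(N(v)\cdot\overline{E})_O$ are attached to the correct points of $\overline{E}\cong\DP^1$. The delicate check is that $\overline{R}$, where $\overline{L}$ meets $\overline{E}$, is disjoint from both the orbifold point $\overline{P}$ and the two branch points $\overline{Q}_i$; this separation is exactly what keeps the worst local ratio equal to $\tfrac{3}{3-d\lambda}$ and prevents the estimate from dropping below the target value.
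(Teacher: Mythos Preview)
Your proposal is correct and follows exactly the paper's approach: the same $(1,2)$-weighted extraction realized by two blow-ups and a contraction, the same Zariski decomposition, and the same application of \eqref{estimation2}. The one point the paper handles that you do not is the subcase where the tangent line $L$ is itself a component of $C$ (e.g.\ a line plus a tangent cubic); then one of your branch points $\overline{Q}_i$ coincides with $\overline{R}=\overline{L}\cap\overline{E}$, so at that point $A=1-\lambda$ while $S=\tfrac{3-d\lambda}{3}$, giving the ratio $\tfrac{3(1-\lambda)}{3-d\lambda}$---still $\ge\tfrac{3-4\lambda}{3-d\lambda}$, so your conclusion is unaffected, but your disjointness claim for $\overline{R}$ needs this caveat.
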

 \begin{proof}
We have two options:
\\{\it Option 1:} The tangent line $L$ to $C$ at point $P$ is not a component of $C$. Let $\pi_1:S^1\to S$ be the~blow up of the~point $P$,
with the exceptional divisor $E_1^1$ and $L^1$, $C^1$ are strict transforms of $L$ and $C$ respectively; $\pi_2: S^2\to S^1$ be the~blow up of the~point $E_1^1\cap C^1\cap L^1$,
with the exceptional divisor $E$ and $L^2$, $C^2$ are strict transforms of $L^1$ and $C^1$ respectively; let $\theta:S^2\to\overline{S}$ be  the~contraction of  the~curve $E_1^2$,
and $\sigma$ is the~birational contraction of $\overline{E}=\theta(E)$. We denote the strict transforms of $C$ and $L$ on $\overline{S}$ by $\overline{C}$ and $\overline{L}$. Let $\overline{E}$ be the exceptional divisor of $\sigma$. Note that $\overline{E}$ contains one singular point $\overline{P}$  which is $\frac{1}{2}(1,1)$  singular point.
\begin{center}
 \includegraphics[width=11cm]{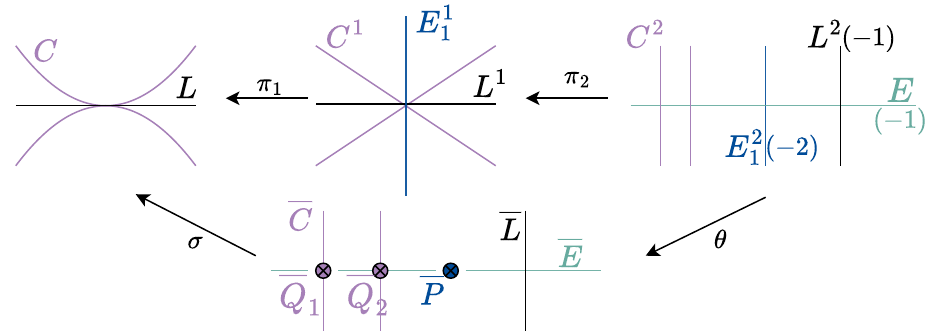}
 \end{center}
 {\it Option 2:} The tangent line $L$ to $C$ at point $P$ is a component of $C$. Let $C=\mathcal{C}+L$. Let $\pi_1:S^1\to S$ be the~blow up of the~point $P$,
with the exceptional divisor $E_1^1$ and $L^1$, $C^1$ are strict transforms of $L$ and $\mathcal{C}$ respectively; $\pi_2: S^2\to S^1$ be the~blow up of the~point $E_1^1\cap C^1\cap L^1$,
with the exceptional divisor $E$ and $L^2$, $C^2$ are strict transforms of $L^1$ and $C^1$ respectively; let $\theta:S^2\to\overline{S}$ be  the~contraction of  the~curve $E_1^2$,
and $\sigma$ is the~birational contraction of $\overline{E}=\theta(E)$. We denote the strict transforms of $C$ and $L$ on $\overline{S}$ by $\overline{C}$ and $\overline{L}$. Let $\overline{E}$ be the exceptional divisor of $\sigma$. Note that $\overline{E}$ contains one singular point $\overline{P}$  which is $\frac{1}{2}(1,1)$  singular point.
\begin{center}
 \includegraphics[width=12cm]{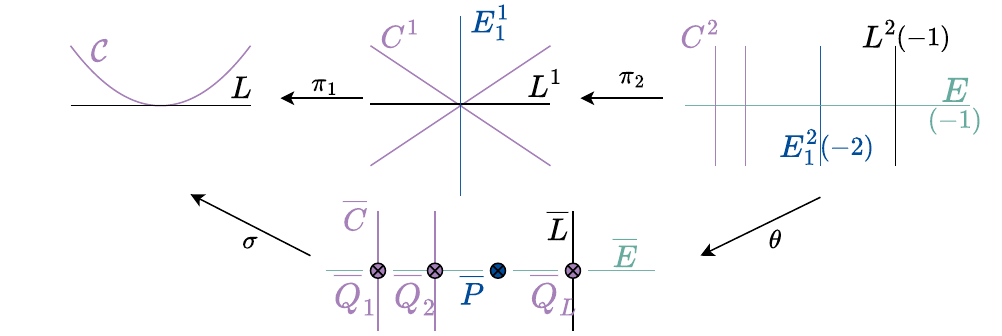}
 \end{center}
In both cases the intersections on $\overline{S}$ are given by:
\begin{center}
    \renewcommand{\arraystretch}{1.4}
    \begin{tabular}{|c|c|c|}
    \hline
         & $\overline{E}$ & $\overline{L}$ \\
    \hline
       $\overline{E}$  & $-\frac{1}{2}$ & $1$ \\
    \hline
       $\overline{L}$  & $1$ & $-1$ \\
    \hline
    \end{tabular}
\end{center}
 We have $\sigma^*(L)=\overline{L}+2\overline{E}$, $\sigma^*(C)=\overline{C}+4\overline{E}$, $\sigma^*(K_{\DP^2})=K_{\overline{S}}-2\overline{E}$. Thus, $A_{(\DP^2,\lambda C)}(\overline{E})=3-4 \lambda $.
\noindent The Zariski decomposition of the divisor  $-\sigma^*(K_{\DP^2}+\lambda C)-v\overline{E}$ is given by:
\begin{align*}
&&P(v)=
\begin{cases}
-\sigma^*(K_{\DP^2}+\lambda C)-v\overline{E}\text{ if }v\in[0,3-d \lambda ],\\
-\sigma^*(K_{\DP^2}+\lambda C)-v\overline{E}-\big(v-(3-d \lambda )\big)\overline{L}\text{ if }v\in[3-d \lambda , 2(3-d \lambda )],\\
\end{cases}\\&&
N(v)=
\begin{cases}
0\text{ if }v\in[0,3-d \lambda ],\\
\big(v-(3-d \lambda )\big)\overline{L}\text{ if }v\in[3-d \lambda ,2(3-d \lambda )].
\end{cases}
\end{align*}
Then
$$
P(v)^2=
\begin{cases}
(3-d \lambda )^2 -\frac{v^2}{2}\text{ if }v\in[0,3-d \lambda ],\\
\frac{(v- 2(3- 4\lambda) )^2}{2}\text{ if }v\in[3-d \lambda ,2(3-d \lambda )],
\end{cases}
\text{ and }
P(v)\cdot \overline{E}=
\begin{cases}
\frac{v}{2}\text{ if }v\in[0,3-d \lambda ],\\
3-d \lambda  -\frac{v}{2}\text{ if }v\in[3-d \lambda ,2(3-d \lambda )],
\end{cases}
$$
Thus
$$S_{(\DP^2, \lambda C)}(\overline{E})=\frac{1}{(3-d \lambda )^2}\Big(\int_0^{3-d \lambda } (3-d \lambda )^2 -\frac{v^2}{2} dv+\int_{3-d \lambda }^{2(3-d \lambda )} \frac{(v- 2(3- d\lambda))^2}{2} dv\Big)=3-d \lambda $$
so that $\delta_P(\DP^2,\lambda C)\le \frac{3-4 \lambda }{3-d \lambda }$. For every $O\in \overline{E}$, we get if $O\in \overline{E}\backslash \overline{L}$ or if $O\in \overline{E}\cap \overline{L}$:
{\small $$h(v)\le 
\begin{cases}
\frac{v^2}{8}\text{ if }v\in[0,3-d \lambda ],\\
\frac{(v- 2(3 - d\lambda))^2}{8}\text{ if }v\in[3-d \lambda , 2(3-d \lambda )],
\end{cases}
\text{or }
h(v)\le \begin{cases}
\frac{v^2}{8}\text{ if }v\in[0,3-d \lambda ],\\
\frac{ (v- 2(3 - d\lambda)) (2(3 - d\lambda) - 3v)}{8}\text{ if }v\in[3-d \lambda , 2(3-d \lambda )].
\end{cases}
$$}
So that
$$S\big(W^{\overline{E}}_{\bullet,\bullet};O\big)\le \frac{2}{(3-d \lambda )^2}\Big(\int_0^{3-d \lambda } \frac{v^2}{8} dv+\int_{3-d \lambda }^{2(3-d \lambda )} \frac{(v- 2(3 - 4\lambda))^2}{8} dv\Big)=\frac{3-d \lambda }{6}
$$
or
$$S\big(W^{\overline{E}}_{\bullet,\bullet};O\big)\le \frac{2}{(3-d \lambda )^2}\Big(\int_0^{3-d \lambda } \frac{v^2}{8} dv+\int_{3-d \lambda }^{2(3-d \lambda )} \frac{ (v- 6 + 8\lambda) (6 - 8\lambda - 3v)}{8} dv\Big)=\frac{3-d \lambda }{3}
$$
We have
$$
\delta_P(\DP^2,\lambda C)\geqslant\mathrm{min}\Bigg\{\frac{3-4 \lambda }{3-d \lambda },\inf_{O\in\overline{E}}\frac{A_{\overline{E},\Delta_{\overline{E}}}(O)}{S\big(W^{\overline{E}}_{\bullet,\bullet};O\big)}\Bigg\},
$$
where $\Delta_{\overline{E}}=\frac{1}{2}\overline{P}+\lambda\overline{Q}_1+\lambda\overline{Q}_2$ where $\overline{Q}_1+\overline{Q}_2=\overline{C}|_{\overline{E}}$  (resp. $\Delta_{\overline{E}}=\frac{1}{2}\overline{P}+\lambda\overline{Q}_1+\lambda\overline{Q}_2+ \lambda\overline{Q}_L$ where $\overline{Q}_1+\overline{Q}_2+\overline{Q}_L=\overline{C}|_{\overline{E}}$ ).
So that
$$
\frac{A_{\overline{E},\Delta_{\overline{E}}}(O)}{S(W_{\bullet,\bullet}^{\overline{E}};O)}=
\left\{\aligned
&\frac{3}{3-d \lambda }\ \mathrm{if}\ O=\overline{E}\cap\overline{L} \text{ or if } O=\overline{P},\\
&\frac{6(1-\lambda)}{3-d \lambda }\ \mathrm{if}\  O\in\{\overline{Q}_1,\overline{Q}_2\}\text{ (resp. $O\in\{\overline{Q}_1,\overline{Q}_2,\overline{Q}_L\}$)},\\
&\frac{6}{3-d \lambda }\ \mathrm{otherwise}.
\endaligned
\right.
$$
Thus $\delta_P(\DP^2,\lambda C)=\frac{3-4 \lambda }{3-d \lambda }$.
 \end{proof}
   \noindent This proves the following theorem:
    \begin{theorem}
  Suppose $C_3\subset \DP^2$ is a cubic curve with  $A_3$ singularity. Then  for $\lambda\in\big[0,\frac{3}{4}\big]$ we have:
 $$\delta(\DP^2,\lambda C_3)=\frac{3-4 \lambda }{3-3 \lambda }.$$
 \end{theorem}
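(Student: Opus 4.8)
The plan is to obtain the global statement as a corollary of the preceding Lemma, which computes the local invariant at the singular point, by verifying that the infimum $\delta(\DP^2,\lambda C_3)=\inf_{P\in\DP^2}\delta_P(\DP^2,\lambda C_3)$ is attained at the $A_3$ point. First I would identify the curve. A reduced plane cubic cannot carry a tacnode on an irreducible component: an irreducible cubic has arithmetic genus $1$, while an $A_3$ singularity has $\delta$-invariant $2$, which would force negative geometric genus. Hence a cubic with an $A_3$ singularity splits as a smooth conic $\mathcal{C}$ together with a line $L$ tangent to it, and by B\'ezout these meet only at the tacnode $P_0$. In particular $L\subset C_3$, so this is exactly ``Option 2'' of the Lemma with $d=3$, $P_0$ is the unique singular point of $C_3$, and the Lemma gives $\delta_{P_0}(\DP^2,\lambda C_3)=\frac{3-4\lambda}{3-3\lambda}$. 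This already yields the bound $\delta(\DP^2,\lambda C_3)\le\frac{3-4\lambda}{3-3\lambda}$.

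It remains to prove $\delta_P\ge\frac{3-4\lambda}{3-3\lambda}$ at every point $P\neq P_0$. Since $\frac{3-4\lambda}{3-3\lambda}\le 1$ for all $\lambda\ge 0$, it suffices to show $\delta_P\ge 1$ there. For a smooth point of the line component one may quote Lemma~\ref{line} with $d=3$, which gives the exact value $\frac{3(1-\lambda)}{3-3\lambda}=1$. For the remaining points — smooth points of the conic and points off $C_3$ — I would apply the flag estimate~\eqref{estimation1} with a general line $\mathcal{C}_0$ through $P$. Because $-K_{\DP^2}-\lambda C_3=(3-3\lambda)H$ and $\mathcal{C}_0\sim H$, the divisor $-K_{\DP^2}-\lambda C_3-v\mathcal{C}_0=(3-3\lambda-v)H$ is nef with $N(v)=0$, so a direct computation gives $S_{(\DP^2,\lambda C_3)}(\mathcal{C}_0)=1-\lambda$ and $S\big(W^{\mathcal{C}_0}_{\bullet,\bullet};P\big)=1-\lambda$. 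The restriction $\Delta_{\mathcal{C}_0}=\lambda\,C_3|_{\mathcal{C}_0}$ carries coefficient $\lambda$ at $P$ exactly when $P$ lies on $C_3$ (and nothing when $P\notin C_3$), so the second quotient in~\eqref{estimation1} equals $\frac{1-\lambda}{1-\lambda}=1$ for $P\in C_3$ and $\frac{1}{1-\lambda}$ otherwise, while the first quotient is $\frac{1}{1-\lambda}\ge 1$. In every case $\delta_P\ge 1\ge\frac{3-4\lambda}{3-3\lambda}$.

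Combining the two inequalities, the infimum over all points of $\DP^2$ equals $\frac{3-4\lambda}{3-3\lambda}$ and is realized at $P_0$, which is the claimed formula on $\big[0,\frac{3}{4}\big]$. The computation at $P_0$ is supplied verbatim by the Lemma, so the only genuine work is the uniform lower bound away from $P_0$; I expect the one delicate point to be the bookkeeping of the different $\Delta_{\mathcal{C}_0}$ in~\eqref{estimation1} (i.e.\ whether the auxiliary line meets the conic or the line transversally), together with the trivial but necessary check that the resulting bounds never fall below $\frac{3-4\lambda}{3-3\lambda}$ on the whole interval.
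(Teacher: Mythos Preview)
Your proposal is correct and follows the paper's intended argument: the paper simply writes ``This proves the following theorem'' after the $A_3$ lemma, relying implicitly on the earlier local lemmas to handle the remaining points. Your identification of $C_3$ as a conic plus a tangent line is right, the invocation of Lemma~\ref{line} for smooth points of the line component is exactly what the paper would do, and your general-line flag at smooth points of the conic (and off $C_3$) is a valid and slightly more elementary substitute for the paper's Lemma~5.1, which via a $(1,2)$-weighted blowup yields the sharper value $\delta_P=\frac{3-2\lambda}{3-3\lambda}\ge 1$ at those points; since only the lower bound $\delta_P\ge 1\ge\frac{3-4\lambda}{3-3\lambda}$ is needed, your shortcut loses nothing.
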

 \begin{theorem}
  Suppose $C_4\subset \DP^2$ is a quartic curve with at most $A_3$ singularities and at least one $A_3$ singularity. Then  for $\lambda\in\big[0,\frac{3}{4}\big]$ we have:
 $$\delta(\DP^2,\lambda C_4)=1.$$
 \end{theorem}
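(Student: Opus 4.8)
The plan is to reduce everything to the localization formula $\delta(\DP^2,\lambda C_4)=\inf_{P\in\DP^2}\delta_P(\DP^2,\lambda C_4)$ recorded in the preliminary section, and then to show that the infimum is attained at an $A_3$ point, where it equals $1$. Since $C_4$ has at most $A_3$ singularities, every point of $\DP^2$ falls into one of the cases already treated: an $A_3$, $A_2$, or $A_1$ point of $C_4$, a smooth point of $C_4$, or a point off $C_4$. The strategy is simply to read off $\delta_P$ from the corresponding lemma in each case and compare the resulting functions of $\lambda$ on $[0,\tfrac34]$.

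For the upper bound I would fix an $A_3$ point $P_0$, which exists by hypothesis, and apply the preceding $A_3$ lemma with $d=4$, which gives
$$\delta_{P_0}(\DP^2,\lambda C_4)=\frac{3-4\lambda}{3-4\lambda}=1,$$
so that $\delta(\DP^2,\lambda C_4)\le 1$ on the whole interval.

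For the lower bound I would verify $\delta_P(\DP^2,\lambda C_4)\ge 1$ at every $P$. At an $A_3$ point the value is $1$ as above. At an $A_1$ point Lemma~\ref{A1-points} with $d=4$ gives $\tfrac{3-3\lambda}{3-4\lambda}\ge 1$, since the numerator minus the denominator equals $\lambda\ge 0$ while $3-4\lambda>0$ for $\lambda\in[0,\tfrac34)$. At an $A_2$ point the $A_2$ lemma with $d=4$ gives $\tfrac{3}{5}\cdot\tfrac{5-6\lambda}{3-4\lambda}=\tfrac{15-18\lambda}{15-20\lambda}\ge 1$ by the same numerator-versus-denominator comparison. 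The crucial observation is that the three smooth-quartic lemmas (starting with Lemma~\ref{qc-smooth-2}) are stated for an arbitrary smooth point of a degree-$4$ curve and their blow-up computations depend only on the contact order of the tangent line and on $\deg C_4=4$; hence they apply verbatim to the smooth points of our singular $C_4$, yielding $\tfrac{3-2\lambda}{3-4\lambda}$, $\tfrac34\cdot\tfrac{4-3\lambda}{3-4\lambda}$, or $\tfrac35\cdot\tfrac{5-4\lambda}{3-4\lambda}$ according as the tangent line has contact of order $2$, $3$, or $4$, and each of these exceeds $1$ on $[0,\tfrac34)$ by the same elementary estimate. Finally, for a point $P\notin C_4$ the germ of the pair at $P$ carries no boundary, so every divisor centered at $P$ has $A_{(\DP^2,\lambda C_4)}(E)=A_{\DP^2}(E)$, and rescaling the polarization from $-K_{\DP^2}=3H$ to $-(K_{\DP^2}+\lambda C_4)=(3-4\lambda)H$ multiplies $S$ by $\tfrac{3-4\lambda}{3}$; since $\delta_P(\DP^2)=1$, this gives $\delta_P(\DP^2,\lambda C_4)\ge\tfrac{3}{3-4\lambda}>1$.

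Combining the two bounds yields $\delta(\DP^2,\lambda C_4)=1$ on $[0,\tfrac34)$, and the value persists at $\lambda=\tfrac34$ by continuity. The argument is in essence a bookkeeping of the preceding local lemmas, so there is no serious computational obstacle here; the one point I would emphasize as the genuine content is the reusability observation that the smooth-point computations are purely local and therefore remain valid once the quartic acquires $A_1$, $A_2$, or $A_3$ singularities elsewhere. This is exactly what confirms that the $A_3$ singularity is the worst point and supplies the global infimum equal to $1$.
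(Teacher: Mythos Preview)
Your proposal is correct and follows exactly the approach the paper intends: the paper simply writes ``This proves the following theorem'' immediately after the $A_3$ lemma, relying on the reader to combine it with the previously established local values at $A_1$, $A_2$, and smooth points of a quartic. Your write-up is more explicit than the paper's---in particular you spell out the comparison of the various local $\delta_P$ values on $[0,\tfrac34]$ and treat the off-curve points, which the paper leaves implicit---but the underlying argument is identical.
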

\section{Curves with $A_4$ singularities}
 \begin{lemma}
 Suppose $C\subset \DP^2$ is a curve of degree $d$ with $A_4$ singularity at point $P\in C$  on $C$. Then
 $$\delta_P(\DP^2,\lambda C)=\frac{6}{13}\cdot \frac{7-10\lambda}{3-d\lambda}\text{ for }\lambda\in \Big[\frac{3}{8}, \frac{7}{10}\Big]\text{ and }\delta_P(\DP^2,\lambda C)\ge\frac{3}{2(3-d\lambda)}\text{ for }\lambda\in \Big[0,\frac{3}{8}\Big].$$
 \end{lemma}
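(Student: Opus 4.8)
The plan is to follow the template of the preceding lemmas: extract a single weighted-blowup divisor $\overline E$ over $P$, bound $\delta_P$ from above by $A_{(\DP^2,\lambda C)}(\overline E)/S_{(\DP^2,\lambda C)}(\overline E)$, and then run the Abban--Zhuang inequality \eqref{estimation2} to get a matching lower bound on the stated range. The decisive geometric point — and the only real difference from the $A_2$ case — is the contact order of the tangent line. Although an $A_4$ singularity is analytically $w^2=x^5$, the actual straight tangent line $L$ meets $C$ at $P$ only to order $4$: the order-$5$ contact is realized by an osculating conic, not a line (this is forced when $\deg C=4$, since then $L\cdot C=4$). Hence the divisor I want is the weighted blowup with weights $(2,5)$ in $A_4$-adapted coordinates, for which $\sigma^*(L)=\overline L+4\overline E$ and $\sigma^*(C)=\overline C+10\overline E$, so that $A_{(\DP^2,\lambda C)}(\overline E)=7-10\lambda$. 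Using contact $4$ rather than $5$ is exactly what produces the coefficient $\tfrac{6}{13}$ instead of the $\tfrac{3}{7}$ one would naively obtain.

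I would realize $\sigma\colon\overline S\to\DP^2$ by the usual chain of point blowups resolving the $A_4$ point (each blowup dropping the index by $2$) followed by contracting the intermediate exceptional curves, so that $\overline E$ carries two cyclic quotient singularities $\overline P_1,\overline P_2$ of types $\tfrac12(1,1)$ and $\tfrac15(1,\cdot)$ with $\overline E^2=-\tfrac1{10}$. From $\sigma^*(L)=\overline L+4\overline E$ one gets $\overline E\cdot\overline L=\tfrac25$ and $\overline L^2=-\tfrac35$. Writing $t=3-d\lambda$, so that $(-K_{\DP^2}-\lambda C)^2=t^2$, the Zariski decomposition of $-\sigma^*(K_{\DP^2}+\lambda C)-v\overline E$ should have $N(v)=0$ on $[0,\tfrac{5t}2]$ and $N(v)=\tfrac{2v-5t}{3}\overline L$ on $[\tfrac{5t}2,4t]$, with $\tau=4t$. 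Integrating $P(v)^2$ over the two chambers yields $S_{(\DP^2,\lambda C)}(\overline E)=\tfrac{13}{6}\,t$, and hence the upper bound $\delta_P\le\tfrac{6}{13}\cdot\tfrac{7-10\lambda}{3-d\lambda}$.

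For the lower bound I would compute $h(v)$ and $S(W^{\overline E}_{\bullet,\bullet};O)$ at each point of $\overline E$, with different $\Delta_{\overline E}=\tfrac12\overline P_1+\tfrac45\overline P_2+\lambda\overline Q$, where $\overline Q=\overline C\cap\overline E$ is a smooth point. The crucial observation is that $\overline L$ meets $\overline E$ precisely at the $\tfrac15$-point $\overline P_2$ (the leading weighted term of $L$ is $x^2$, of weight $4<5$), so the negative-part contribution concentrates there. With $A_{\overline E,\Delta_{\overline E}}(\overline P_2)=1-\tfrac45=\tfrac15$, I expect $S(W^{\overline E}_{\bullet,\bullet};\overline P_2)=\tfrac{2t}{15}$, giving the ratio $\tfrac{1/5}{2t/15}=\tfrac{3}{2(3-d\lambda)}$, while $\overline P_1$, $\overline Q$, and generic points all give strictly larger ratios. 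This is exactly the quantity in the auxiliary bound.

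Then \eqref{estimation2} gives $\delta_P\ge\min\{\tfrac{6}{13}\cdot\tfrac{7-10\lambda}{3-d\lambda},\ \tfrac{3}{2(3-d\lambda)}\}$. Comparing the two terms, $\tfrac{6}{13}(7-10\lambda)\le\tfrac32$ precisely when $\lambda\ge\tfrac38$; thus on $[\tfrac38,\tfrac7{10}]$ the minimum is the $\overline E$-term and matches the upper bound, proving the equality, whereas on $[0,\tfrac38]$ the minimum is $\tfrac{3}{2(3-d\lambda)}$, which is the stated lower bound. I expect the main obstacle to be twofold: first, pinning down the tangent-line contact as $4$ (which alone determines whether the answer is $\tfrac{6}{13}$ or $\tfrac{3}{7}$); and second, the bookkeeping of $\Delta_{\overline E}$ and of which orbifold point $\overline L$ passes through, since the entire threshold $\lambda=\tfrac38$ is generated by the refined estimate at $\overline P_2$.
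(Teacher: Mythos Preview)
Your proposal is correct and matches the paper's approach exactly: the same $(2,5)$-weighted blowup (realized there via four ordinary blowups and three contractions), the same intersection table $\overline E^2=-\tfrac1{10}$, $\overline E\cdot\overline L=\tfrac25$, $\overline L^2=-\tfrac35$, the same Zariski chambers $[0,\tfrac{5t}{2}]\cup[\tfrac{5t}{2},4t]$ with $S(\overline E)=\tfrac{13t}{6}$, and the same identification of $\overline L\cap\overline E$ with the $\tfrac15(1,2)$-point giving $S(W^{\overline E}_{\bullet,\bullet};\overline P_2)=\tfrac{2t}{15}$ and hence the ratio $\tfrac{3}{2(3-d\lambda)}$. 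Your different $\Delta_{\overline E}=\tfrac12\overline P_1+\tfrac45\overline P_2+\lambda\overline Q$ is in fact more carefully stated than the paper's, which omits the $\lambda\overline Q$ term (harmlessly, since $\tfrac{12(1-\lambda)}{3-d\lambda}$ never attains the minimum on the relevant range).
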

  \begin{proof}
Let $L$ be a tangent line at point $P$.  Let $\pi_1:S^1\to S$ be the~blow up of the~point $P$,
with the exceptional divisor $E_1^1$ and $L^1$, $C^1$ are strict transforms of $L$ and $C$ respectively; $\pi_2: S^2\to S^1$ be the~blow up of the~point $E_1^1\cap C^1\cap L^1$,
with the exceptional divisor $E_2^2$ and $L^2$, $C^2$, $E_1^2$ are strict transforms of $L^1$, $C^1$, $E_1^1$ respectively; $\pi_3: S^3\to S^2$ be the~blow up of the~point $E_2^2\cap C^2$,
with the exceptional divisor $E_3^3$ and $L^3$, $C^3$, $E_1^3$, $E_2^3$ are strict transforms of $L^2$, $C^2$, $E_1^2$, $E_2^2$ respectively; $\pi_4: S^4\to S^3$ be the~blow up of the~point $E_3^3\cap E_2^2\cap C^3$,
with the exceptional divisor $E_3^3$ and $L^4$, $C^4$, $E_1^4$, $E_2^4$, $E_3^4$ are strict transforms of $L^3$, $C^3$, $E_1^3$, $E_2^3$, $E_3^2$ respectively; let $\theta:S^4\to\overline{S}$ be  the~contraction of  the~curves  $E_1^4$, $E_2^4$, $E_3^4$
and $\sigma$ is the~birational contraction of $\overline{E}=\theta(E)$. We denote the strict transforms of $C$ and $L$ on $\overline{S}$ by $\overline{C}$ and $\overline{L}$. Let $\overline{E}$ be the exceptional divisor of $\sigma$. Note that $\overline{E}$ contains two singular points $\overline{P}_{1,2}$ and  $\overline{P}_3$   which are $\frac{1}{5}(1,2)$ and  $\frac{1}{2}(1,1)$ singular points respectively.
\begin{center}
 \includegraphics[width=12cm]{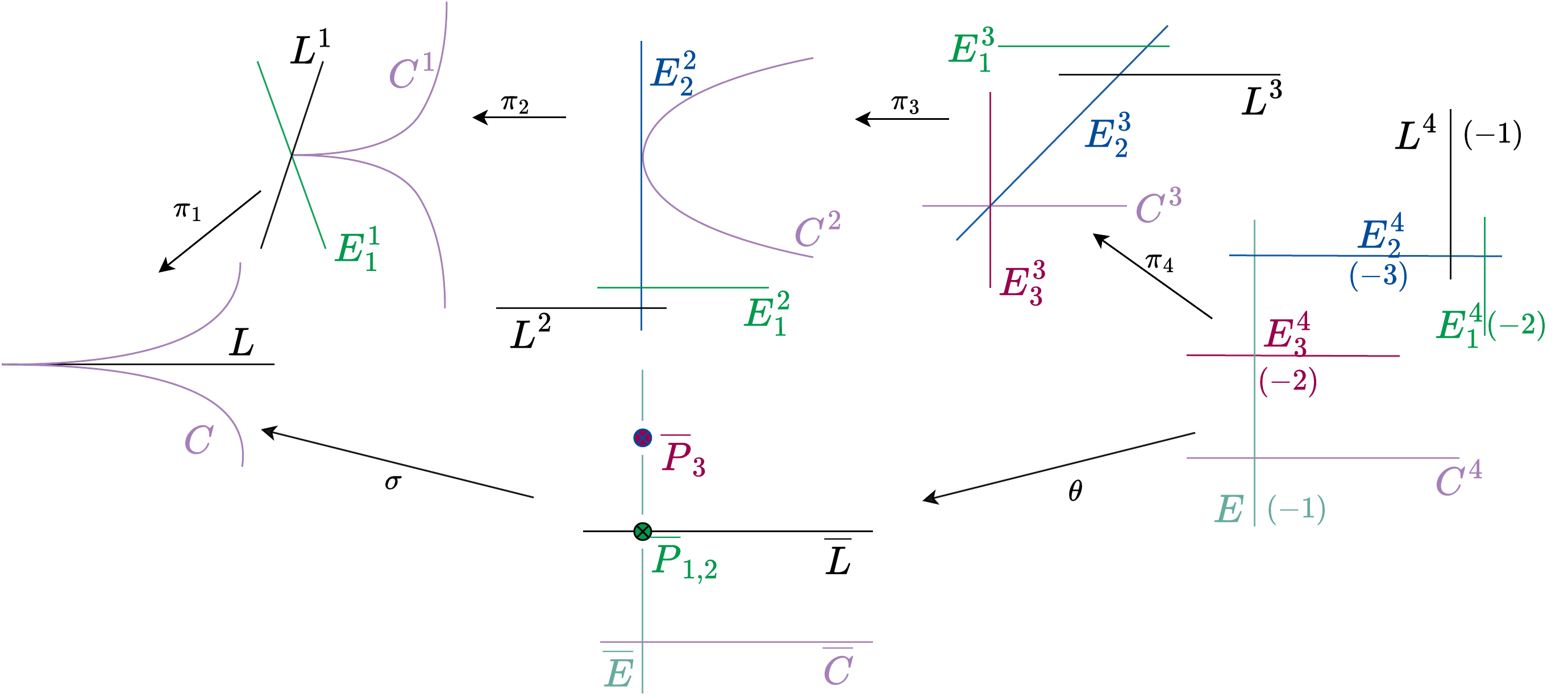}
 \end{center}
The intersections on $\overline{S}$ are given by:
\begin{center}
    \renewcommand{\arraystretch}{1.4}
    \begin{tabular}{|c|c|c|}
    \hline
         & $\overline{E}$ & $\overline{L}$ \\
    \hline
       $\overline{E}$  & $-\frac{1}{10}$ & $\frac{2}{5}$ \\
    \hline
       $\overline{L}$  & $\frac{2}{5}$ & $-\frac{3}{5}$ \\
    \hline
    \end{tabular}
\end{center}
 We have $\sigma^*(L)=\overline{L}+4\overline{E}$, $\sigma^*(C)=\overline{C}+10\overline{E}$, $\sigma^*(K_{\DP^2})=K_{\overline{S}}-6\overline{E}$. Thus, $A_{(\DP^2,\lambda C)}(\overline{E})=7-10\lambda$.
\noindent The Zariski decomposition of the divisor  $-\sigma^*(K_{\DP^2}+\lambda C)-v\overline{E}$ is given by:
\begin{align*}
&&P(v)=
\begin{cases}
-\sigma^*(K_{\DP^2}+\lambda C)-v\overline{E}\text{ if }v\in\big[0,\frac{5}{2}(3-d\lambda)\big],\\
-\sigma^*(K_{\DP^2}+\lambda C)-v\overline{E}-\frac{2}{3}\big(v-\frac{5}{2}(3-d\lambda)\big)\overline{L}\text{ if }v\in\big[\frac{5}{2}(3-d\lambda), 4(3-d\lambda)\big],\\
\end{cases}\\&&
N(v)=
\begin{cases}
0\text{ if }v\in\big[0,\frac{5}{2}(3-d\lambda)\big],\\
\frac{2}{3}\big(v-\frac{5}{2}(3-d\lambda)\big)\overline{L}\text{ if }v\in\big[\frac{5}{2}(3-d\lambda),4(3-d\lambda)\big].
\end{cases}
\end{align*}
Then
$$
P(v)^2=
\begin{cases}
(3-d\lambda)^2 -\frac{v^2}{10}\text{ if }v\in\big[0,\frac{5}{2}(3-d\lambda)\big],\\
\frac{(v - 12 +4d\lambda)^2}{6}\text{ if }v\in\big[\frac{5}{2}(3-d\lambda),4(3-d\lambda)\big],
\end{cases}
\text{ and }
P(v)\cdot \overline{E}=
\begin{cases}
\frac{v}{10}\text{ if }v\in\big[0,\frac{5}{2}(3-d\lambda)\big],\\
\frac{2}{3}\big(3-d\lambda -\frac{v}{4}\big)\text{ if }v\in\big[\frac{5}{2}(3-d\lambda),4(3-d\lambda)\big],
\end{cases}
$$
Thus
$$S_S(\overline{E})=\frac{1}{(3-d\lambda)^2}\Big(\int_0^{5/2(3-d\lambda)} (3-d\lambda)^2 -\frac{v^2}{10} dv+\int_{5/2(3-d\lambda)}^{4(3-d\lambda)} \frac{(v - 12 +16\lambda)^2}{6} dv\Big)=\frac{13(3-d\lambda)}{6}$$
so that $\delta_P(\DP^2,\lambda C)\le \frac{6}{13}\frac{7-10\lambda}{3-d\lambda}$. For every $O\in \overline{E}$, we get if $O\in \overline{E}\backslash \overline{L}$ or if $O\in \overline{E}\cap \overline{L}$:
$$h(v)\le 
\begin{cases}
\frac{v^2}{200}\text{ if }v\in\big[0,\frac{5}{2}(3-d\lambda)\big],\\
\frac{(v - 12 +4d\lambda)^2}{72}\text{ if }v\in\big[\frac{5}{2}(3-d\lambda),4(3-d\lambda)\big],
\end{cases}
\text{or }
h(v)\le \begin{cases}
\frac{v^2}{200}\text{ if }v\in\big[0,\frac{5}{2}(3-d\lambda)\big],\\
\frac{ (v - 12 +4d\lambda) (60 -11v - 20d\lambda )}{360}\text{ if }v\in\big[\frac{5}{2}(3-d\lambda),4(3-d\lambda)\big].
\end{cases}
$$
So that
$$S\big(W^{\overline{E}}_{\bullet,\bullet};O\big)\le \frac{2}{(3-d\lambda)^2}\Big(\int_0^{5/2(3-d\lambda)} \frac{v^2}{200} dv+\int_{5/2(3-d\lambda)}^{4(3-d\lambda)} \frac{(v - 12 +4d\lambda)^2}{72} dv\Big)=\frac{3-d\lambda}{12}
$$
or
$$S\big(W^{\overline{E}}_{\bullet,\bullet};O\big)\le \frac{2}{(3-d\lambda)^2}\Big(\int_0^{5/2(3-d\lambda)} \frac{v^2}{200} dv+\int_{5/2(3-d\lambda)}^{4(3-d\lambda)} \frac{ (v - 12 +4d\lambda) (60 -11v - 20d\lambda )}{360} dv\Big)=\frac{2(3-d\lambda)}{15}
$$
We have
$$
\delta_P(\DP^2,\lambda C)\geqslant\mathrm{min}\Bigg\{\frac{6}{13}\cdot \frac{7-10\lambda}{3-d\lambda},\inf_{O\in\overline{E}}\frac{A_{\overline{E},\Delta_{\overline{E}}}(O)}{S\big(W^{\overline{E}}_{\bullet,\bullet};O\big)}\Bigg\},
$$
where $\Delta_{\overline{E}}=\frac{4}{5}\overline{P}_{1,2}+\frac{1}{2}\overline{P}_{3}$. 
So that
$$
\frac{A_{\overline{E},\Delta_{\overline{E}}}(O)}{S(W_{\bullet,\bullet}^{\overline{E}};O)}=
\left\{\aligned
&\frac{3}{2(3-d\lambda)}\ \mathrm{if}\ O=\overline{P}_{1,2},\\
&\frac{6}{3-d\lambda}\ \mathrm{if}\ O=\overline{P}_{3},\\
&\frac{12}{3-d\lambda}\ \mathrm{otherwise}.
\endaligned
\right.
$$
Thus $$\delta_P(\DP^2,\lambda C)=\frac{6}{13}\cdot \frac{7-10\lambda}{3-d\lambda}\text{ for }\lambda\in \Big[\frac{3}{8}, \frac{7}{10}\Big]\text{ and }\delta_P(\DP^2,\lambda C)\ge\frac{3}{2(3-d\lambda)}\text{ for }\lambda\in \Big[0,\frac{3}{8}\Big].$$
 \end{proof}
 \noindent This proves the following theorem:
 \begin{theorem}
   Suppose $C\subset \DP^2$ is a curve of degree  with at most $A_4$ singularities and at least one $A_4$ singularity. Then  we have:
$$\delta(\DP^2,\lambda C)=\frac{6}{13}\cdot \frac{7-10\lambda}{3-4\lambda}\text{ for }\lambda\in \Big[\frac{3}{8}, \frac{7}{10}\Big]\text{ and }\delta(\DP^2,\lambda C)\ge \frac{3}{2(3-4\lambda)} \text{ for }\lambda\in \Big[0,\frac{3}{8}\Big].$$
 \end{theorem}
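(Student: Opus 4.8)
The starting point is the identity $\delta(\DP^2,\lambda C)=\inf_{Q\in\DP^2}\delta_Q(\DP^2,\lambda C)$, so the whole statement reduces to locating the point that realises this infimum. My plan is to argue that the $A_4$ point $P$ is the worst point and to read off all the competing local values from the lemmas proved earlier. Specialising the preceding lemma to $d=4$ already gives, writing $\delta_P=\delta_P(\DP^2,\lambda C)$,
\[
\delta_P=\frac{6}{13}\cdot\frac{7-10\lambda}{3-4\lambda}\ \text{ on }\Big[\tfrac38,\tfrac{7}{10}\Big],\qquad \delta_P\ge\frac{3}{2(3-4\lambda)}\ \text{ on }\Big[0,\tfrac38\Big].
\]
Since $\delta\le\delta_P$ is automatic, the upper bound in the equality range is free, and I am left with the lower bound $\delta_Q\ge\delta_P$ at every other point $Q$.

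To organise the lower bound I would stratify $\DP^2$ by the type of $Q$. If $Q\notin C$ a general-point estimate gives $\delta_Q=\frac{3}{3-4\lambda}$; if $Q$ is a smooth point of $C$ the smooth-quartic lemmas give $\frac{3-2\lambda}{3-4\lambda}$, $\frac34\cdot\frac{4-3\lambda}{3-4\lambda}$ or $\frac35\cdot\frac{5-4\lambda}{3-4\lambda}$ according as the tangent line meets $C$ with multiplicity $2$, $3$ or $4$; and if $Q$ is singular then, by hypothesis, it is of type $A_1$, $A_2$, $A_3$ or $A_4$, with local values $\frac{3-3\lambda}{3-4\lambda}$, $\frac35\cdot\frac{5-6\lambda}{3-4\lambda}$, $1=\frac{3-4\lambda}{3-4\lambda}$ and the value at $P$, supplied by the respective lemmas. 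Every one of these shares the denominator $3-4\lambda$, so the comparison collapses to comparing numerators.

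The decisive computation is then purely elementary. Writing each value as (numerator)$/(3-4\lambda)$, the $A_4$ numerator is $\frac{42-60\lambda}{13}$, and the tightest competitor is the $A_3$ numerator $3-4\lambda$; one checks $\frac{42-60\lambda}{13}\le 3-4\lambda\iff 3\le 8\lambda\iff\lambda\ge\tfrac38$, while the analogous inequalities against the $A_1$, $A_2$ and the three smooth numerators all hold throughout $[\tfrac38,\tfrac{7}{10}]$ (each reducing to a linear inequality $\lambda\ge c$ with $c<\tfrac38$). This shows the infimum is attained at $P$ and proves the equality on $[\tfrac38,\tfrac{7}{10}]$. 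For $\lambda\in[0,\tfrac38]$ the genuine local value at $P$ is only bounded below by $\frac{3}{2(3-4\lambda)}$, whose numerator is $\tfrac32$; since on this subinterval every other numerator above is $\ge\tfrac32$ (the binding case being $A_3$, where $3-4\lambda\ge\tfrac32\iff\lambda\le\tfrac38$), taking the infimum yields $\delta\ge\frac{3}{2(3-4\lambda)}$, as claimed.

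The genuinely delicate point is not any single estimate but the book-keeping at the transition $\lambda=\tfrac38$, where the $A_3$ and $A_4$ local values coincide and the binding constraint in the $A_4$ computation switches from the divisor $\overline{E}$ (which yields equality, being an honest upper bound for $\delta_P$) to the quotient singularity $\overline{P}_{1,2}$ (which yields only a lower bound). This switch is precisely why the theorem asserts an equality above $\tfrac38$ but only an inequality below it. I expect no further obstacle, since all the hard geometric input — the weighted blow-ups, the Zariski decompositions, and the Abban–Zhuang flag estimate \eqref{estimation2} — is already discharged in the lemmas being invoked.
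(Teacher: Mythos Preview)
Your proposal is correct and follows essentially the same approach as the paper: the paper derives this theorem directly from the preceding $A_4$ lemma together with the earlier lemmas for smooth, $A_1$, $A_2$ and $A_3$ points, simply stating ``This proves the following theorem'' without spelling out the numerator comparisons. You have made explicit precisely the elementary inequalities (in particular the binding $A_3$ comparison $\tfrac{42-60\lambda}{13}\le 3-4\lambda\iff\lambda\ge\tfrac38$) and the case $Q\notin C$ that the paper leaves to the reader, and your identification of the switch at $\lambda=\tfrac38$ between the $\overline E$-bound and the $\overline P_{1,2}$-bound is exactly why the paper's $A_4$ lemma changes from an equality to an inequality there.
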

  \section{Curves with $A_5$ singularities}
   \begin{lemma}
 Let $C\subset \DP^2$ be a plane curve of degree $d$ with an $A_5$ singularity at point $P\in C$  on $C$, such that the tangent line at point $P$ to $C$ is not a component of $C$. Then
$$\delta_P(\DP^2,\lambda C)=\frac{6}{7}\cdot \frac{4-6\lambda}{3-d \lambda }\text{ for }\lambda\in \Big[\frac{3}{8}, \min \Big\{\frac{2}{3}, \frac{3}{d}\Big\}\Big]\text{ and }\delta_P(\DP^2,\lambda C)\ge\frac{3}{2(3-d \lambda )}\text{ for }\lambda\in \Big[0,\frac{3}{8}\Big].$$
 \end{lemma}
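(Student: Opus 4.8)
The plan is to follow the same weighted-blowup-and-Zariski-decomposition scheme as in the preceding lemmas, with the essential new input being a careful analysis of how the (geometric) tangent line $L$ meets the two branches of $C$ at the $A_5$ point. First I would extract the relevant divisor $\overline{E}$ by a tower of three point blowups $\pi_1,\pi_2,\pi_3$: blow up $P$, then the point where the strict transforms of the two branches and of $E_1^1$ coincide, then the point where the two branches separate; after contracting the strict transforms of $E_1$ and $E_2$ one is left with $\sigma\colon\overline{S}\to\DP^2$ contracting a single curve $\overline{E}$ with $\overline{E}^2=-\tfrac13$ carrying one quotient singular point $\overline{P}_1$ of type $\tfrac13(1,2)$. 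The decisive point is that, because $L\not\subset C$, the line $L$ is tangent to each smooth branch only to order $2$; consequently $\sigma^*(L)=\overline{L}+2\overline{E}$ (so that $m_L=2$, in contrast to $m_L=3$ in the case $L\subset C$), while $\sigma^*(C)=\overline{C}+6\overline{E}$ and $\sigma^*(K_{\DP^2})=K_{\overline{S}}-3\overline{E}$, giving $A_{(\DP^2,\lambda C)}(\overline{E})=4-6\lambda$. I would then record the intersections $\overline{E}^2=-\tfrac13$, $\overline{E}\cdot\overline{L}=\tfrac23$, $\overline{L}^2=-\tfrac13$, and observe that $\overline{L}$ passes through $\overline{P}_1$ while $\overline{C}$ meets $\overline{E}$ transversally at two smooth points $\overline{Q}_1,\overline{Q}_2$.

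Next I would compute the Zariski decomposition of $-\sigma^*(K_{\DP^2}+\lambda C)-v\overline{E}$. Writing $t=3-d\lambda$, the divisor stays nef until $v=\tfrac32 t$ (where its intersection with $\overline{L}$ drops to zero), and on $[\tfrac32 t,2t]$ the negative part is $(2v-3t)\overline{L}$, with $P(v)^2=(2t-v)^2$ and pseudo-effective threshold $\tau=2t$. Integrating, $S_{(\DP^2,\lambda C)}(\overline{E})=\tfrac{7t}{6}$, whence the upper bound $\delta_P(\DP^2,\lambda C)\le \tfrac{6}{7}\cdot\tfrac{4-6\lambda}{3-d\lambda}$.

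For the lower bound I would apply the refinement \eqref{estimation2} with $\Delta_{\overline{E}}=\tfrac23\overline{P}_1+\lambda\overline{Q}_1+\lambda\overline{Q}_2$. Using $P(v)\cdot\overline{E}=\tfrac{v}{3}$ on $[0,\tfrac32 t]$ and $P(v)\cdot\overline{E}=2t-v$ on $[\tfrac32 t,2t]$, a direct computation of $h(v)$ gives $S\big(W^{\overline{E}}_{\bullet,\bullet};O\big)=\tfrac{t}{6}$ at a general point and at the $\overline{Q}_i$, and $S\big(W^{\overline{E}}_{\bullet,\bullet};\overline{P}_1\big)=\tfrac{2t}{9}$ at the singular point, where the extra term $(N(v)\cdot\overline{E})_{\overline{P}_1}=\tfrac23(2v-3t)$ contributes. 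Hence
\[
\frac{A_{\overline{E},\Delta_{\overline{E}}}(O)}{S\big(W^{\overline{E}}_{\bullet,\bullet};O\big)}=
\begin{cases}
\frac{3}{2(3-d\lambda)} & \text{if } O=\overline{P}_1,\\
\frac{6(1-\lambda)}{3-d\lambda} & \text{if } O\in\{\overline{Q}_1,\overline{Q}_2\},\\
\frac{6}{3-d\lambda} & \text{otherwise,}
\end{cases}
\]
so that $\delta_P(\DP^2,\lambda C)\ge\min\big\{\tfrac67\cdot\tfrac{4-6\lambda}{3-d\lambda},\,\tfrac{3}{2(3-d\lambda)}\big\}$. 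Since $\tfrac67\cdot\tfrac{4-6\lambda}{3-d\lambda}\le\tfrac{3}{2(3-d\lambda)}$ exactly when $\lambda\ge\tfrac38$, this matches the upper bound on $\big[\tfrac38,\min\{\tfrac23,\tfrac3d\}\big]$ and yields $\delta_P(\DP^2,\lambda C)\ge\tfrac{3}{2(3-d\lambda)}$ on $\big[0,\tfrac38\big]$.

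The main obstacle I expect is the geometric bookkeeping that produces $m_L=2$, and in particular the verification that $\overline{L}$ passes through the singular point $\overline{P}_1$: it is precisely this incidence that forces $S\big(W^{\overline{E}}_{\bullet,\bullet};\overline{P}_1\big)=\tfrac{2t}{9}$ and hence produces the critical value $\lambda=\tfrac38$. One must also confirm that $\overline{E}$ realizes the valuation minimizing $A/S$, so that the upper and lower bounds coincide, and keep track of the two endpoints of the interval, namely $\tfrac23=\mathrm{lct}$ of the $A_5$ singularity and $\tfrac3d$ coming from the positivity of $-K_{\DP^2}-\lambda C$.
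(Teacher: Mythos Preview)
Your proposal is correct and follows essentially the same approach as the paper: the same three-step blowup tower producing a divisor $\overline{E}$ with a single $\tfrac{1}{3}(1,2)$ point, the same intersection matrix and Zariski decomposition with break at $v=\tfrac{3}{2}(3-d\lambda)$, and the same case analysis on $\overline{E}$ yielding the critical ratio $\tfrac{3}{2(3-d\lambda)}$ at the singular point where $\overline{L}$ meets $\overline{E}$. Your identification of $m_L=2$ (versus $m_L=3$ when $L\subset C$) and of the incidence $\overline{P}\in\overline{L}$ as the distinguishing geometric inputs is exactly the point of the argument.
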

 \begin{proof}
Let $L$ be a tangent line at point $P$.  Let $\pi_1:S^1\to S$ be the~blow up of the~point $P$,
with the exceptional divisor $E_1^1$ and $L^1$, $C^1$ are strict transforms of $L$ and $C$ respectively; $\pi_2: S^2\to S^1$ be the~blow up of the~point $E_1^1\cap C^1$,
with the exceptional divisor $E_2^2$ and $L^2$, $C^2$, $E_1^2$ are strict transforms of $L^1$, $C^1$, $E_1^1$ respectively; $\pi_3: S^3\to S^2$ be the~blow up of the~point  $E_2^2\cap C^2$,
with the exceptional divisor $E$ and $L^3$, $C^3$, $E_1^3$, $E_2^3$ are strict transforms of $L^2$, $C^2$, $E_1^2$, $E_2^2$ respectively; let $\theta:S^3\to\overline{S}$ be  the~contraction of  the~curves  $E_1^3$, $E_2^3$
and $\sigma$ is the~birational contraction of $\overline{E}=\theta(E)$. We denote the strict transforms of $C$ and $L$ on $\overline{S}$ by $\overline{C}$ and $\overline{L}$. Let $\overline{E}$ be the exceptional divisor of $\sigma$. Note that $\overline{E}$ contains one singular point $\overline{P}$    which is a $\frac{1}{3}(1,2)$ singular point.
\begin{center}
 \includegraphics[width=16cm]{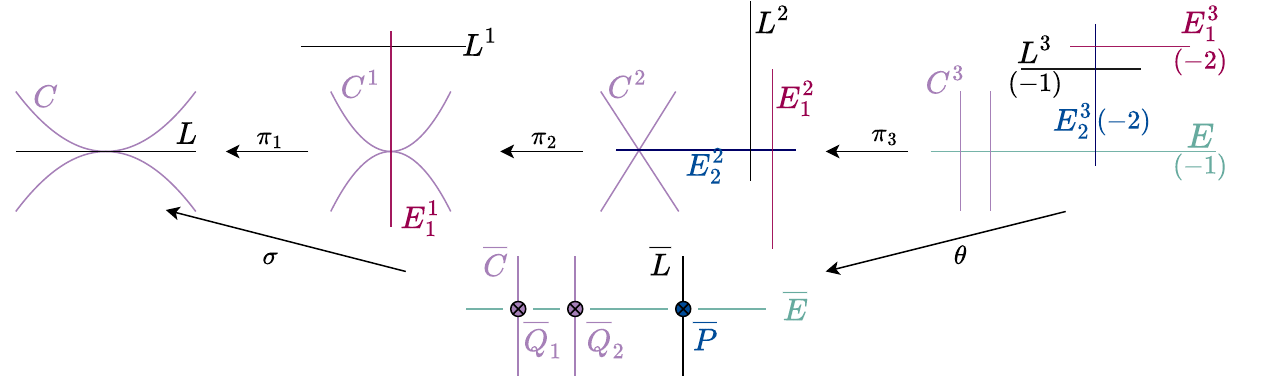}
 \end{center}
The intersections on $\overline{S}$ are given by:
\begin{center}
    \renewcommand{\arraystretch}{1.4}
    \begin{tabular}{|c|c|c|}
    \hline
         & $\overline{E}$ & $\overline{L}$ \\
    \hline
       $\overline{E}$  & $-\frac{1}{3}$ & $\frac{2}{3}$ \\
    \hline
       $\overline{L}$  & $\frac{2}{3}$ & $-\frac{1}{3}$ \\
    \hline
    \end{tabular}
\end{center}
 We have $\sigma^*(L)=\overline{L}+2\overline{E}$, $\sigma^*(C)=\overline{C}+6\overline{E}$, $\sigma^*(K_{\DP^2})=K_{\overline{S}}-3\overline{E}$. Thus, $A_{(\DP^2,\lambda C)}(\overline{E})=4-6\lambda$.
\noindent The Zariski decomposition of the divisor  $-\sigma^*(K_{\DP^2}+\lambda C)-v\overline{E}$ is given by:
\noindent The Zariski decomposition of the divisor  $-\sigma^*(K_{\DP^2}+\lambda C)-v\overline{E}$ is given by:
\begin{align*}
&&P(v)=
\begin{cases}
-\sigma^*(K_{\DP^2}+\lambda C)-v\overline{E}\text{ if }v\in\big[0,\frac{3}{2}(3-d \lambda )\big],\\
-\sigma^*(K_{\DP^2}+\lambda C)-v\overline{E}-2\big(v-\frac{3}{2}(3-d \lambda )\big)\overline{L}\text{ if }v\in\big[\frac{3}{2}(3-d \lambda ), 2(3-d \lambda )\big],\\
\end{cases}\\&&
N(v)=
\begin{cases}
0\text{ if }v\in\big[0,\frac{3}{2}(3-d \lambda )\big],\\
2\big(v-\frac{3}{2}(3-d \lambda )\big)\overline{L}\text{ if }v\in\big[\frac{3}{2}(3-d \lambda ),2(3-d \lambda )\big].
\end{cases}
\end{align*}
Then
$$
P(v)^2=
\begin{cases}
(3-d \lambda )^2 -\frac{v^2}{3}\text{ if }v\in\big[0,\frac{3}{2}(3-d \lambda )\big],\\
(v - 6 +8\lambda )^2\text{ if }v\in\big[\frac{3}{2}(3-d \lambda ),2(3-d \lambda )\big],
\end{cases}$$
\text{ and }$$
P(v)\cdot \overline{E}=
\begin{cases}
\frac{v}{3}\text{ if }v\in\big[0,\frac{3}{2}(3-d \lambda )\big],\\
\frac{1}{2}(3-d \lambda  -\frac{v}{3})\text{ if }v\in\big[\frac{3}{2}(3-d \lambda ),2(3-d \lambda )\big],
\end{cases}
$$
Thus
$$S_{(\DP^2, \lambda C)}(\overline{E})=\frac{1}{(3-d \lambda )^2}\Big(\int_0^{3/2(3-d \lambda )} (3-d \lambda )^2 -\frac{v^2}{3} dv+\int_{3/2(3-d \lambda )}^{2(3-d \lambda )} (v - 6 +8\lambda)^2  dv\Big)=\frac{7(3-d \lambda )}{6}$$
so that $\delta_P(\DP^2,\lambda C)\le \frac{6}{7}\frac{4-6\lambda}{3-d \lambda }$. For every $O\in \overline{E}$, we get if $O\in \overline{E}\backslash \overline{L}$ or if $O\in \overline{E}\cap \overline{L}$:
\begin{align*}
&&h(v)\le 
\begin{cases}
\frac{v^2}{18}\text{ if }v\in\big[0,\frac{3}{2}(3-d \lambda )\big],\\
\frac{(v - 2(3-d \lambda ))^2}{2}\text{ if }v\in\big[\frac{3}{2}(3-d \lambda ),2(3-d \lambda )\big],
\end{cases}\\
&\text{or}&
h(v)\le \begin{cases}
\frac{v^2}{18}\text{ if }v\in\big[0,\frac{3}{2}(3-d \lambda )\big],\\
\frac{ (v - 2(3-d \lambda ) ) (6(3-d \lambda ) -5v}{6}\text{ if }v\in\big[\frac{3}{2}(3-d \lambda ),2(3-d \lambda )\big].
\end{cases}
\end{align*}
So that
$$S\big(W^{\overline{E}}_{\bullet,\bullet};O\big)\le \frac{2}{(3-d \lambda )^2}\Big(\int_0^{3/2(3-d \lambda )} \frac{v^2}{18} dv+\int_{3/2(3-d \lambda )}^{2(3-d \lambda )} \frac{(v - 2(3-d \lambda ))^2}{72} dv\Big)=\frac{3-d \lambda }{6}
$$
or
$$S\big(W^{\overline{E}}_{\bullet,\bullet};O\big)\le \frac{2}{(3-d \lambda )^2}\Big(\int_0^{3/2(3-d \lambda )} \frac{v^2}{18} dv+\int_{3/2(3-d \lambda )}^{2(3-d \lambda )} \frac{ (v - 2(3-d \lambda )) (6(3-d \lambda ) -5v )}{6} dv\Big)=\frac{2(3-d \lambda )}{9}
$$
We have
$$
\delta_P(\DP^2,\lambda C)\geqslant\mathrm{min}\Bigg\{\frac{6}{7}\cdot\frac{4-6\lambda}{3-d \lambda } ,\inf_{O\in\overline{E}}\frac{A_{\overline{E},\Delta_{\overline{E}}}(O)}{S\big(W^{\overline{E}}_{\bullet,\bullet};O\big)}\Bigg\},
$$
where $\Delta_{\overline{E}}=\frac{2}{3}\overline{P}+\lambda\overline{Q}_1+\lambda\overline{Q}_2$ where $\overline{Q}_1+\overline{Q}_2=\overline{C}|_{\overline{E}}$. 
So that
$$
\frac{A_{\overline{E},\Delta_{\overline{E}}}(O)}{S(W_{\bullet,\bullet}^{\overline{E}};O)}=
\left\{\aligned
&\frac{3}{2(3-d \lambda )}\ \mathrm{if}\ O=\overline{P},\\
&\frac{6(1-\lambda)}{3-d \lambda }\ \mathrm{if}\ O\in\{\overline{Q}_1,\overline{Q}_2\},\\
&\frac{6}{3-d \lambda }\ \mathrm{otherwise}.
\endaligned
\right.
$$
Thus $$\delta_P(\DP^2,\lambda C)=\frac{6}{7}\cdot \frac{4-6\lambda}{3-d \lambda }\text{ for }\lambda\in \Big[\frac{3}{8}, \min \Big\{\frac{2}{3}, \frac{3}{d}\Big\}\Big]\text{ and }\delta_P(\DP^2,\lambda C)\ge\frac{3}{2(3-d \lambda )}\text{ for }\lambda\in \Big[0,\frac{3}{8}\Big].$$
 \end{proof}
 \noindent This proves the following theorem:
 \begin{theorem}
   Suppose $C_4\subset \DP^2$ is a quartic curve with  $A_5$ singularity such that the tangent line at
point $P$ to $C_4$ is not a component of $C_4$. Then  we have:
$$\delta_P(\DP^2,\lambda C_4)=\frac{6}{7}\cdot\frac{4-6\lambda}{3-d \lambda }\text{ for }\lambda\in \Big[\frac{3}{8}, \frac{2}{3}\Big]\text{ and }\delta_P(\DP^2,\lambda C_4)\ge\frac{3}{2(3-d \lambda )}\text{ for }\lambda\in \Big[0,\frac{3}{8}\Big].$$
 \end{theorem}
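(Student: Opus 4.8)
The plan is to obtain this theorem as the specialization to $d=4$ of the preceding lemma. That lemma computes $\delta_P(\DP^2,\lambda C)$ for \emph{any} plane curve $C$ of degree $d$ carrying an $A_5$ singularity at $P$ whose tangent line at $P$ is not a component of $C$; a quartic curve of exactly this type is the case $d=4$, so no new geometric input is needed and the entire weighted-blowup construction of the lemma — the tower $\pi_1,\pi_2,\pi_3$, the contractions $\theta$ and $\sigma$, the intersection table on $\overline{S}$, the Zariski decomposition of $-\sigma^*(K_{\DP^2}+\lambda C)-v\overline{E}$, and the resulting values of $S_{(\DP^2,\lambda C)}(\overline{E})$ and of $S(W^{\overline{E}}_{\bullet,\bullet};O)$ — transfers verbatim. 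Thus the proof consists essentially of invoking that lemma and recording the substitution.

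Concretely, I would first put $d=4$ in the three outputs of the lemma: the denominator $3-d\lambda$ becomes $3-4\lambda$, the closed value becomes $\frac{6}{7}\cdot\frac{4-6\lambda}{3-4\lambda}$, and the lower bound for small $\lambda$ becomes $\frac{3}{2(3-4\lambda)}$, matching the theorem. Next I would pin down the interval of validity. The lemma holds for $\lambda\in\big[\frac{3}{8},\min\{\frac{2}{3},\frac{3}{d}\}\big]$; since $d=4$ gives $\min\{\frac{2}{3},\frac{3}{4}\}=\frac{2}{3}$, the upper endpoint stated in the theorem is the positivity constraint $\frac{2}{3}$ (coming from $\lambda<\mathrm{lct}$ for the $A_5$ point), not $\frac{3}{d}=\frac{3}{4}$. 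The lower threshold $\frac{3}{8}$ and the regime $\lambda\in[0,\frac{3}{8}]$ are unaffected by the value of $d$: in the minimum of estimate~(\ref{estimation2}) the first entry $A_S(\overline{E})/S_S(\overline{E})=\frac{6}{7}\cdot\frac{4-6\lambda}{3-d\lambda}$ and the smallest point-contribution $A_{\overline{E},\Delta_{\overline{E}}}(\overline{P})/S(W^{\overline{E}}_{\bullet,\bullet};\overline{P})=\frac{3}{2(3-d\lambda)}$ both carry the factor $(3-d\lambda)^{-1}$, so they coincide exactly when $\frac{6}{7}(4-6\lambda)=\frac{3}{2}$, i.e.\ at $\lambda=\frac{3}{8}$, independently of $d$. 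For $\lambda\ge\frac{3}{8}$ the first entry is the smaller one and agrees with the upper bound $\delta_P\le \frac{6}{7}\cdot\frac{4-6\lambda}{3-d\lambda}$ produced by $S_{(\DP^2,\lambda C)}(\overline{E})$, which forces equality; for $\lambda\le\frac{3}{8}$ only the lower bound $\frac{3}{2(3-d\lambda)}$ survives, which is precisely the inequality asserted.

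The single point that merits a remark rather than a computation is why the conclusion is phrased for the \emph{local} invariant $\delta_P$ and not the global $\delta(\DP^2,\lambda C_4)$. When the tangent line at the $A_5$ point is not a component of $C_4$, this singularity need not realize the infimum defining $\delta$: the curve may possess other points whose local invariant is strictly smaller, so the local statement cannot be upgraded to a global one without a separate examination of the remaining points of $C_4$. The preceding lemma already delivers exactly the local result claimed, and I expect no genuine obstacle beyond transcribing $d=4$ and checking that $\frac{2}{3}<\frac{3}{4}$ selects the correct upper endpoint.
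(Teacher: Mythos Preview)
Your proposal is correct and matches the paper's approach exactly: the theorem is simply the preceding lemma specialized to $d=4$, and the paper itself gives no separate argument beyond the phrase ``This proves the following theorem.'' Your additional remarks (the check that $\min\{\tfrac{2}{3},\tfrac{3}{4}\}=\tfrac{2}{3}$ and the explanation of why only a local $\delta_P$-statement is claimed) are accurate and go slightly beyond what the paper bothers to spell out.
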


   \begin{lemma}
 Let $C\subset \DP^2$ be a quartic curve with $A_5$ singularity at point $P\in C$  on $C$, such that the tangent line at point $P$ to $C$ is a component of $C$. Then
$$\delta_P(\DP^2,\lambda C)=\frac{3}{4}\cdot\frac{4-6\lambda}{3-d \lambda }\text{ for }\lambda\in \Big[0,\min \Big\{\frac{2}{3}, \frac{3}{d}\Big\}\Big].$$
 \end{lemma}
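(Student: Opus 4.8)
The plan is to apply the Abban--Zhuang/Fujita inequality \eqref{estimation2} to a single divisor $\overline{E}$ over $P$ resolving the $A_5$ point, just as in the preceding lemma treating $L\not\subset C$, and then to check that the divisorial ratio $A_{(\DP^2,\lambda C)}(\overline{E})/S_{(\DP^2,\lambda C)}(\overline{E})$ is already the infimum. Because the singularity is $A_5$ and $L$ is a component, I would write $C=L+\mathcal{C}$ with $\mathcal{C}$ a cubic that is smooth at $P$ and meets $L$ with contact $3$; this is forced by the two-branch description of $A_5$. The decisive geometric feature is that $L$ is now one of the two branches, so $L$ and $\mathcal{C}$ remain together through three infinitely near points. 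Accordingly I would let $\pi_1,\pi_2,\pi_3$ be the blow ups of $P$, of $E_1^1\cap C^1\cap L^1$, and of $E_2^2\cap C^2\cap L^2$; contract the first two exceptional curves by $\theta$ to obtain $\overline{S}$; and let $\sigma$ contract $\overline{E}=\theta(E)$. Contracting the chain of two $(-2)$-curves produces the single $\tfrac{1}{3}(1,2)$-point $\overline{P}$ on $\overline{E}$.

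The first computational step is the numerical data. Discrepancy and intersection bookkeeping give $\overline{E}^2=-\tfrac13$, and---this is exactly what differs from the $L\not\subset C$ case---since $L$ is a full branch one finds $\mathrm{ord}_{\overline{E}}(L)=3$, so $\sigma^*L=\overline{L}+3\overline{E}$, $\sigma^*\mathcal{C}=\overline{\mathcal{C}}+3\overline{E}$, $\sigma^*C=\overline{C}+6\overline{E}$, and $\sigma^*K_{\DP^2}=K_{\overline{S}}-3\overline{E}$, whence $A_{(\DP^2,\lambda C)}(\overline{E})=4-6\lambda$. Imposing $\sigma^*L\cdot\overline{E}=0$ and $(\sigma^*L)^2=1$ then forces $\overline{E}\cdot\overline{L}=1$ and $\overline{L}^2=-2$ (against $\tfrac23$ and $-\tfrac13$ in the previous lemma), and similarly $\overline{\mathcal{C}}\cdot\overline{E}=1$.

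Next I would carry out the Zariski decomposition of $-\sigma^*(K_{\DP^2}+\lambda C)-v\overline{E}$. Writing $s=3-d\lambda$, one has $\big(-\sigma^*(K_{\DP^2}+\lambda C)-v\overline{E}\big)\cdot\overline{L}=s-v$, so $N(v)=0$ on $[0,s]$, while for $v\in[s,\tau]$ the curve $\overline{L}$ enters as $N(v)=\tfrac{v-s}{2}\overline{L}$; then $P(v)^2=s^2-\tfrac{v^2}{3}+\tfrac{(v-s)^2}{2}$, which vanishes at $\tau=3s$. Integrating yields $S_{(\DP^2,\lambda C)}(\overline{E})=\tfrac{1}{s^2}\int_0^{3s}P(v)^2\,dv=\tfrac{4s}{3}=\tfrac{4(3-d\lambda)}{3}$, hence the upper bound $\delta_P(\DP^2,\lambda C)\le \tfrac34\cdot\tfrac{4-6\lambda}{3-d\lambda}$.

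The last step upgrades \eqref{estimation2} to an equality. The different is $\Delta_{\overline{E}}=\tfrac23\overline{P}+\lambda\overline{Q}_L+\lambda\overline{Q}$, where $\overline{Q}_L=\overline{L}\cap\overline{E}$ and $\overline{Q}=\overline{\mathcal{C}}\cap\overline{E}$ are distinct smooth points of $\overline{E}$, since $L$ and $\mathcal{C}$ separate only at the third blow up. Using $P(v)\cdot\overline{E}=\tfrac{v}{3}$ on $[0,s]$ and $\tfrac{3s-v}{6}$ on $[s,3s]$, together with $(N(v)\cdot\overline{E})_O=\tfrac{v-s}{2}$ holding only at $O=\overline{Q}_L$, the local integrals give $S\big(W^{\overline{E}}_{\bullet,\bullet};O\big)=\tfrac{s}{9}$ at every point off $\overline{L}$ (in particular at $\overline{P}$, $\overline{Q}$, and the general point) and $\tfrac{s}{3}$ at $\overline{Q}_L$. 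With $A_{\overline{E},\Delta_{\overline{E}}}(\overline{P})=\tfrac13$, $A_{\overline{E},\Delta_{\overline{E}}}(\overline{Q})=A_{\overline{E},\Delta_{\overline{E}}}(\overline{Q}_L)=1-\lambda$, and $1$ at a general point, each ratio $A_{\overline{E},\Delta_{\overline{E}}}(O)/S\big(W^{\overline{E}}_{\bullet,\bullet};O\big)$ is at least $\tfrac34\cdot\tfrac{4-6\lambda}{3-d\lambda}$ on $\big[0,\min\{\tfrac23,\tfrac3d\}\big]$ (the binding point being $\overline{Q}_L$, with equality only at $\lambda=0$), so the infimum in \eqref{estimation2} is realised by $\overline{E}$ and equality holds. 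I expect the main obstacle to be combinatorial rather than analytic: one must pin down the blow up centres so that $L$ and $\mathcal{C}$ separate only at the final step---this is precisely what raises $\mathrm{ord}_{\overline{E}}(L)$ from $2$ to $3$ and creates the extra boundary point $\overline{Q}_L$ of weight $\lambda$ through which the negative part passes, which in turn explains why (unlike the $L\not\subset C$ case) the $\tfrac13(1,2)$-point is not binding and no regime change occurs on the interval.
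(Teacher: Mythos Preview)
Your proposal is correct and follows essentially the same approach as the paper: the same three blow-ups along $P$, $E_1^1\cap C^1\cap L^1$, $E_2^2\cap C^2\cap L^2$, contraction of $E_1^3,E_2^3$ to a $\tfrac13(1,2)$-point, the same intersection numbers $\overline{E}^2=-\tfrac13$, $\overline{E}\cdot\overline{L}=1$, $\overline{L}^2=-2$, the same Zariski decomposition and the same values $S(\overline{E})=\tfrac{4(3-d\lambda)}{3}$, $S(W^{\overline E}_{\bullet,\bullet};O)\in\{\tfrac{3-d\lambda}{9},\tfrac{3-d\lambda}{3}\}$. Your bookkeeping of the different is in fact cleaner than the paper's: since $C=L+\mathcal C$ with $\mathcal C$ smooth at $P$, $\overline C|_{\overline E}$ consists of exactly the two points $\overline Q_L=\overline L\cap\overline E$ and $\overline Q=\overline{\mathcal C}\cap\overline E$, so $\Delta_{\overline E}=\tfrac23\overline P+\lambda\overline Q_L+\lambda\overline Q$ and the minimal ratio $\tfrac{3(1-\lambda)}{3-d\lambda}$ occurs at $\overline Q_L$---the paper lists three boundary points and separates $\overline E\cap\overline L$ from $\overline Q_L$, which appears to be a harmless transcription slip.
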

 \begin{proof}
Let $L$ be a tangent line at point $P$ which is a component of $C$ and $\mathcal{C}\cup L=C$.  Let $\pi_1:S^1\to S$ be the~blow up of the~point $P$,
with the exceptional divisor $E_1^1$ and $L^1$, $C^1$ are strict transforms of $L$ and $C$ respectively; $\pi_2: S^2\to S^1$ be the~blow up of the~point $E_1^1\cap C^1\cap L^1$,
with the exceptional divisor $E_2^2$ and $L^2$, $C^2$, $E_1^2$ are strict transforms of $L^1$, $C^1$, $E_1^1$ respectively; $\pi_3: S^3\to S^2$ be the~blow up of the~point  $E_2^2\cap C^2\cap L^2$,
with the exceptional divisor $E$ and $L^3$, $C^3$, $E_1^3$, $E_2^3$ are strict transforms of $L^2$, $C^2$, $E_1^2$, $E_2^2$ respectively; let $\theta:S^3\to\overline{S}$ be  the~contraction of  the~curves  $E_1^3$, $E_2^3$
and $\sigma$ is the~birational contraction of $\overline{E}=\theta(E)$. We denote the strict transforms of $C$ and $L$ on $\overline{S}$ by $\overline{C}$ and $\overline{L}$. Let $\overline{E}$ be the exceptional divisor of $\sigma$. Note that $\overline{E}$ contains one singular point $\overline{P}$    which is a $\frac{1}{3}(1,2)$ singular point.
\begin{center}
 \includegraphics[width=16cm]{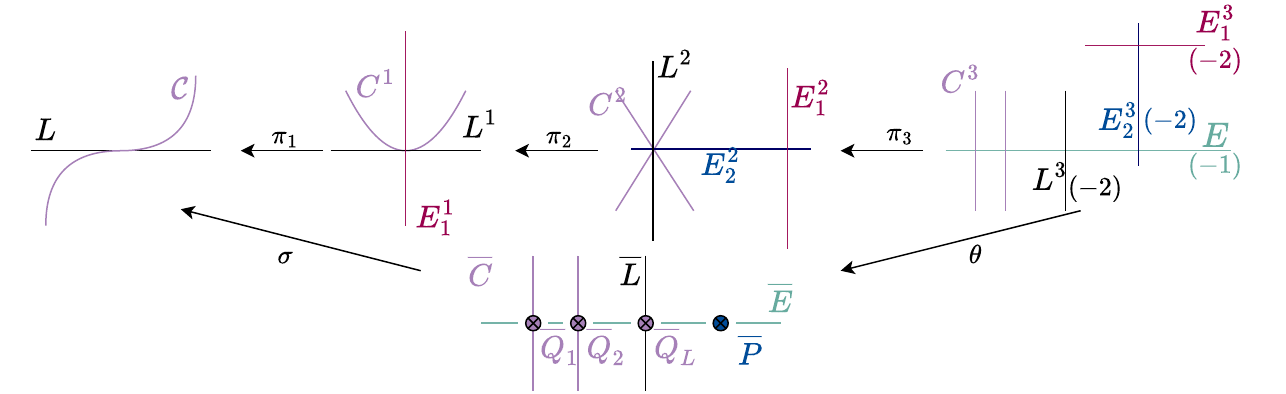}
 \end{center}
The intersections on $\overline{S}$ are given by:
\begin{center}
    \renewcommand{\arraystretch}{1.4}
    \begin{tabular}{|c|c|c|}
    \hline
         & $\overline{E}$ & $\overline{L}$ \\
    \hline
       $\overline{E}$  & $-\frac{1}{3}$ & $1$ \\
    \hline
       $\overline{L}$  & $1$ & $-2$ \\
    \hline
    \end{tabular}
\end{center}
 We have $\sigma^*(L)=\overline{L}+3\overline{E}$, $\sigma^*(C)=\overline{C}+\overline{L}+6\overline{E}$, $\sigma^*(K_{\DP^2})=K_{\overline{S}}-3\overline{E}$. Thus, $A_{(\DP^2,\lambda C)}(\overline{E})=4-6\lambda$.
 \noindent The Zariski decomposition of the divisor  $-\sigma^*(K_{\DP^2}+\lambda C)-v\overline{E}$ is given by:
\begin{align*}
&&P(v)=
\begin{cases}
-\sigma^*(K_{\DP^2}+\lambda C)-v\overline{E}\text{ if }v\in[0,3-d \lambda ],\\
-\sigma^*(K_{\DP^2}+\lambda C)-v\overline{E}-\frac{1}{2}\big(v-(3-d \lambda )\big)\overline{L}\text{ if }v\in[3-d \lambda , 3(3-d \lambda )],\\
\end{cases}\\&&
N(v)=
\begin{cases}
0\text{ if }v\in[0,3-d \lambda ],\\
\frac{1}{2}\big(v-(3-d \lambda )\big)\overline{L}\text{ if }v\in[3-d \lambda ,3(3-d \lambda )].
\end{cases}
\end{align*}
Then
$$
P(v)^2=
\begin{cases}
(3-d \lambda )^2 -\frac{v^2}{3}\text{ if }v\in[0,3-d \lambda ],\\
\frac{(v - 3(3-d \lambda ))^2}{6}\text{ if }v\in[3-d \lambda ,3(3-d \lambda )],
\end{cases}
\text{ and }
P(v)\cdot \overline{E}=
\begin{cases}
\frac{v}{3}\text{ if }v\in[0,3-d \lambda ],\\
\frac{1}{2}\big(3-d \lambda  -\frac{v}{3}\big)\text{ if }v\in[3-d \lambda ,3(3-d \lambda )],
\end{cases}
$$
Thus
$$S_{(\DP^2, \lambda C)}(\overline{E})=\frac{1}{(3-d \lambda )^2}\Big(\int_0^{3-d \lambda } (3-d \lambda )^2 -\frac{v^2}{3} dv+\int_{3-d \lambda }^{3(3-d \lambda )} \frac{(v - 3(3-d \lambda ))^2}{6} dv\Big)=\frac{4(3-d \lambda )}{3}$$
so that $\delta_P(\DP^2,\lambda C)\le \frac{3}{4}\cdot\frac{4-6\lambda}{3-d \lambda }$. For every $O\in \overline{E}$, we get if $O\in \overline{E}\backslash \overline{L}$ or if $O\in \overline{E}\cap \overline{L}$:
$$h(v)\le 
\begin{cases}
\frac{v^2}{18}\text{ if }v\in[0,3-d \lambda ],\\
\frac{(v - 9 +12\lambda)^2}{72}\text{ if }v\in[3-d \lambda , 3(3-d \lambda )],
\end{cases}
\text{or }
h(v)\le \begin{cases}
\frac{v^2}{18}\text{ if }v\in[0,3-d \lambda ],\\
\frac{ (v - 3(3-d \lambda )) (3(3-d \lambda ) -5v)}{72}\text{ if }v\in[3-d \lambda , 3(3-d \lambda )].
\end{cases}
$$
So that
$$S\big(W^{\overline{E}}_{\bullet,\bullet};O\big)\le \frac{2}{(3-d \lambda )^2}\Big(\int_0^{3-d \lambda } \frac{v^2}{18} dv+\int_{3-d \lambda }^{3(3-d \lambda )} \frac{(v - 3(3-d \lambda ))^2}{72} dv\Big)=\frac{3-d \lambda }{9}
$$
or
$$S\big(W^{\overline{E}}_{\bullet,\bullet};O\big)\le \frac{2}{(3-d \lambda )^2}\Big(\int_0^{3-d \lambda } \frac{v^2}{18} dv+\int_{3-d \lambda }^{3(3-d \lambda )} \frac{ (v - 3(3-d \lambda )) (3(3-d \lambda ) -5v )}{72} dv\Big)=\frac{3-d \lambda }{3}
$$
We have
$$
\delta_P(\DP^2,\lambda C)\geqslant\mathrm{min}\Bigg\{\frac{3}{4}\cdot\frac{4-6\lambda}{3-d \lambda },\inf_{O\in\overline{E}}\frac{A_{\overline{E},\Delta_{\overline{E}}}(O)}{S\big(W^{\overline{E}}_{\bullet,\bullet};O\big)}\Bigg\},
$$
where $\Delta_{\overline{E}}=\frac{2}{3}\overline{P}+\lambda\overline{Q}_1+\lambda\overline{Q}_2+\lambda\overline{Q}_L$ where $\overline{Q}_1+\overline{Q}_2+\overline{Q}_L=\overline{C}|_{\overline{E}}$.  
So that
$$
\frac{A_{\overline{E},\Delta_{\overline{E}}}(O)}{S(W_{\bullet,\bullet}^{\overline{E}};O)}=
\left\{\aligned
&\frac{3}{3-d \lambda }\ \mathrm{if}\ O=\overline{E}\cap\overline{L}\text{ or if } O=\overline{P},\\
&\frac{9(1-\lambda)}{3-d \lambda }\ \mathrm{if}\ O\in\{\overline{Q}_1,\overline{Q}_2,\overline{Q}_L\},\\
&\frac{9}{3-d \lambda }\ \mathrm{otherwise}.
\endaligned
\right.
$$
Thus $\delta_P(\DP^2,\lambda C)=\frac{3}{4}\cdot\frac{4-6\lambda}{3-d \lambda }$.
 \end{proof}
  \noindent This proves the following theorem:
 \begin{theorem}
   Suppose $C_4\subset \DP^2$ is a quartic curve with  $A_5$ singularity such that the tangent line at
point $P$ to $C_4$ is a component of $C_4$. Then  we have:
$$\delta(\DP^2,\lambda C_4)=\frac{3}{4}\cdot\frac{4-6\lambda}{3-4 \lambda }\text{ for }\lambda\in \Big[0, \frac{2}{3}\Big].$$
 \end{theorem}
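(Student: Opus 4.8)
The plan is to upgrade the \emph{local} computation of the preceding Lemma (taken with $d=4$) to the \emph{global} statement by means of the definition $\delta(\DP^2,\lambda C_4)=\inf_{Q\in\DP^2}\delta_Q(\DP^2,\lambda C_4)$. Since the $A_5$ point $P$ is one of the points in this infimum, the Lemma gives at once $\delta(\DP^2,\lambda C_4)\le\delta_P(\DP^2,\lambda C_4)=\tfrac34\cdot\tfrac{4-6\lambda}{3-4\lambda}$ on $[0,\tfrac23]$. The whole content of the Theorem is therefore the reverse inequality, i.e.\ that $P$ realises the global minimum: I must show $\delta_Q(\DP^2,\lambda C_4)\ge\tfrac34\cdot\tfrac{4-6\lambda}{3-4\lambda}$ for every other point $Q$.

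First I would pin down the geometry forced by the hypothesis. Write $C_4=\mathcal{C}\cup L$, where $L$ is the tangent line at $P$ and $\mathcal{C}$ is the residual cubic. An $A_5$ point where $L$ is a branch means two smooth branches meeting with intersection multiplicity $3$; hence $(L\cdot\mathcal{C})_P=3=(\deg L)(\deg\mathcal{C})$, so $L\cap\mathcal{C}=\{P\}$, the cubic $\mathcal{C}$ is irreducible, and $L$ is its inflectional tangent at the flex $P$. In particular, away from $P$ the curve $C_4$ is locally irreducible, and every $Q\neq P$ lies in exactly one of four classes: (a) a smooth point of $C_4$ on $L$; (b) a smooth point of $C_4$ on $\mathcal{C}$; (c) a node or cusp of $\mathcal{C}$ (the only singularities an irreducible cubic can carry, of types $A_1$ and $A_2$); (d) a point off $C_4$.

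Next I would quote the already-established local values for each class. For (a), Lemma~\ref{line} gives $\delta_Q=\tfrac{3(1-\lambda)}{3-4\lambda}$. For (b), the tangent line to $\mathcal{C}$ at $Q$ is distinct from $L$ and meets $C_4$ at $Q$ with multiplicity $2$, or $3$ at a flex of $\mathcal{C}$ (multiplicity $4$ is impossible since $\deg\mathcal{C}=3$ and $Q\notin L$), so Lemma~\ref{qc-smooth-2} and the subsequent smooth-quartic lemma give $\delta_Q=\tfrac{3-2\lambda}{3-4\lambda}$ or $\tfrac34\cdot\tfrac{4-3\lambda}{3-4\lambda}$. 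For (c), Lemma~\ref{A1-points} and the $A_2$ lemma (with $d=4$) give $\tfrac{3-3\lambda}{3-4\lambda}$ and $\tfrac35\cdot\tfrac{5-6\lambda}{3-4\lambda}$. For (d), applying \eqref{estimation1} to a general line $\ell$ through $Q$ one finds $N(v)\equiv0$, $P(v)\cdot\ell=3-4\lambda-v$, whence $S_{(\DP^2,\lambda C_4)}(\ell)=S(W^\ell_{\bullet,\bullet};Q)=\tfrac{3-4\lambda}{3}$ and thus $\delta_Q\ge\tfrac{3}{3-4\lambda}$. Every value here carries the denominator $3-4\lambda>0$ (legitimate because $[0,\tfrac23]\subset[0,\tfrac34]$, so all the cited lemmas are valid on the whole interval), so the comparison reduces to numerators: against the $A_5$ numerator $\tfrac34(4-6\lambda)=3-\tfrac92\lambda$, the competitors $3-3\lambda$, $3-2\lambda$, $3-\tfrac94\lambda$, $3-\tfrac{18}{5}\lambda$ and $3$ are all strictly larger for $\lambda\in(0,\tfrac23]$ and all equal to $3$ at $\lambda=0$.

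Combining the four classes, $\delta_Q\ge\tfrac34\cdot\tfrac{4-6\lambda}{3-4\lambda}=\delta_P$ for every $Q$, so the infimum is attained at $P$ and the two inequalities yield the claimed equality on $[0,\tfrac23]$. I expect the main obstacle to be \emph{completeness} of the point classification rather than any single estimate: one must be sure that the hypothesis ``$A_5$ is the worst singularity'' together with $\deg\mathcal{C}=3$ genuinely excludes every configuration that could drop the local invariant below the $A_5$ value — in particular that no $A_4$ or higher point and no $4$-tangent can occur on this $C_4$, which is exactly what the irreducibility of $\mathcal{C}$ and the bound $(L\cdot\mathcal{C})_P=3$ guarantee. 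Once that is secured, the argument is the uniform numerator comparison above.
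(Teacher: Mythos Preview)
Your proposal is correct and follows essentially the same route as the paper: the paper simply writes ``This proves the following theorem'' after the local $A_5$ Lemma, leaving implicit the comparison with the local $\delta$-values at all other point types established in earlier sections. You have made that implicit step explicit, correctly deducing from $(L\cdot\mathcal{C})_P=3$ that $\mathcal{C}$ is an irreducible cubic with flex at $P$, so that the only competing points are smooth points on $L$ or $\mathcal{C}$, an $A_1$ or $A_2$ on $\mathcal{C}$, or points off $C_4$, and then checking the numerator inequalities case by case.
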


  \section{Curves with $A_6$ singularities}
 \begin{lemma}
 Suppose $C\subset \DP^2$ is a  curve of degree $d$ with $A_6$ singularity at point $P\in C$  on $C$. Then
 $$\delta_P(\DP^2,\lambda C)=\frac{2}{5}\cdot\frac{9-14\lambda}{3-d\lambda}\text{ for }\lambda\in \Big[\frac{3}{8}, \frac{9}{14}\Big]\text{ and }\delta_P(\DP^2,\lambda C)\ge\frac{3}{2(3-d\lambda)}\text{ for }\lambda\in \Big[0,\frac{3}{8}\Big].$$
 \end{lemma}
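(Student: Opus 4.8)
The plan is to follow the same resolution-and-estimation scheme used above for the $A_2$ and $A_4$ singularities, adapting the bookkeeping to the longer blow-up chain of an $A_6$ point. First I would fix the tangent line $L$ to $C$ at $P$ and perform a sequence of point blow-ups at $P$ and its infinitely near points along the chain of the $A_6$ singularity, followed by the contraction of the intermediate exceptional curves, arriving at a surface $\overline{S}$ carrying a single prime divisor $\overline{E}$ over $\DP^2$ together with the strict transforms $\overline{C}$, $\overline{L}$ of $C$ and $L$. As in the $A_4$ case I expect $\overline{E}$ to meet the singular locus of $\overline{S}$ in two cyclic quotient points $\overline{P}_{1,2}$ and $\overline{P}_3$, of orders $7$ and $2$ respectively. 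The pullback relations should read $\sigma^*(C)=\overline{C}+14\overline{E}$ and $\sigma^*(K_{\DP^2})=K_{\overline{S}}-8\overline{E}$, which immediately gives the log discrepancy $A_{(\DP^2,\lambda C)}(\overline{E})=9-14\lambda$.

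Next I would record the intersection table on $\overline{S}$ and compute the Zariski decomposition of $-\sigma^*(K_{\DP^2}+\lambda C)-v\overline{E}$. There should be a single breakpoint where the strict transform $\overline{L}$ enters the negative part; on the two resulting subintervals I would write down $P(v)^2$ and $P(v)\cdot\overline{E}$ and integrate. The target value is
$$S_{(\DP^2,\lambda C)}(\overline{E})=\frac{5(3-d\lambda)}{2},$$
which yields the upper bound $\delta_P(\DP^2,\lambda C)\le \frac{2}{5}\cdot\frac{9-14\lambda}{3-d\lambda}$ through the ratio $A/S$.

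For the matching lower bound I would invoke the refined estimate \eqref{estimation2} with $\overline{E}$. Mirroring the $A_4$ computation, I expect the different to take the form $\Delta_{\overline{E}}=\frac{6}{7}\overline{P}_{1,2}+\frac{1}{2}\overline{P}_3$, with the strict transform $\overline{C}$ meeting $\overline{E}$ at the order-$7$ point $\overline{P}_{1,2}$. For each $O\in\overline{E}$ I would compute $h(v)$ from the local intersection data and integrate to obtain $S\big(W^{\overline{E}}_{\bullet,\bullet};O\big)$; at the critical point I anticipate $A_{\overline{E},\Delta_{\overline{E}}}(\overline{P}_{1,2})=\frac{1}{7}$ and $S\big(W^{\overline{E}}_{\bullet,\bullet};\overline{P}_{1,2}\big)=\frac{2(3-d\lambda)}{21}$, giving the ratio $\frac{3}{2(3-d\lambda)}$, while the points $\overline{P}_3$, $\overline{E}\cap\overline{L}$ and the generic point of $\overline{E}$ all yield strictly larger ratios. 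Taking the minimum then gives $\delta_P(\DP^2,\lambda C)=\min\big\{\frac{2}{5}\cdot\frac{9-14\lambda}{3-d\lambda},\,\frac{3}{2(3-d\lambda)}\big\}$; comparing the two expressions shows they coincide precisely at $\lambda=\frac{3}{8}$, which produces the stated value on $\big[\frac{3}{8},\frac{9}{14}\big]$ and the lower bound $\frac{3}{2(3-d\lambda)}$ on $\big[0,\frac{3}{8}\big]$.

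The main obstacle is purely combinatorial bookkeeping rather than a new idea: getting the resolution of the $A_6$ chain correct, determining the exact type (and hence the different coefficient) of the cyclic quotient point $\overline{P}_{1,2}$, and pinning down the breakpoint of the Zariski decomposition so that the self-intersections $P(v)^2$ and the numbers $P(v)\cdot\overline{E}$ come out right. Once these are fixed the integrals are routine, and the only remaining subtlety is verifying that $\overline{P}_{1,2}$ — and not $\overline{P}_3$, $\overline{E}\cap\overline{L}$, or a generic point of $\overline{E}$ — is the point realizing the infimum over $O\in\overline{E}$.
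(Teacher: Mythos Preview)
Your proposal is correct and follows precisely the paper's approach: the same five-step blow-up along the $A_6$ chain, the same contraction to $\overline{S}$ with the order-$7$ and order-$2$ cyclic quotient points on $\overline{E}$, the same pullback coefficients ($\sigma^*C=\overline{C}+14\overline{E}$, $A=9-14\lambda$), the same Zariski breakpoint, $S(\overline{E})=\tfrac{5(3-d\lambda)}{2}$, and the same different $\Delta_{\overline{E}}=\tfrac{6}{7}\overline{P}+\tfrac{1}{2}\overline{P}'$ leading to the critical ratio $\tfrac{3}{2(3-d\lambda)}$ at the order-$7$ point. One small geometric correction to make when you carry out the bookkeeping: the order-$7$ singular point is actually $\overline{E}\cap\overline{L}$ (this is why its $S$-value picks up the $N(v)$-contribution from $\overline{L}$ and equals $\tfrac{2(3-d\lambda)}{21}$), not where $\overline{C}$ meets $\overline{E}$; the strict transform $\overline{C}$ meets $\overline{E}$ at a separate smooth point whose ratio is harmlessly large, so your listed different and final minimum are still correct.
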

  \begin{proof}
Let $L$ be a tangent line at point $P$.  Let $\pi_1:S^1\to S$ be the~blow up of the~point $P$,
with the exceptional divisor $E_1^1$ and $L^1$, $C^1$ are strict transforms of $L$ and $C$ respectively; $\pi_2: S^2\to S^1$ be the~blow up of the~point $E_1^1\cap C^1\cap L^1$,
with the exceptional divisor $E_2^2$ and $L^2$, $C^2$, $E_1^2$ are strict transforms of $L^1$, $C^1$, $E_1^1$ respectively; $\pi_3: S^3\to S^2$ be the~blow up of the~point $E_2^2\cap C^2$,
with the exceptional divisor $E_3^3$ and $L^3$, $C^3$, $E_1^3$, $E_2^3$ are strict transforms of $L^2$, $C^2$, $E_1^2$, $E_2^2$ respectively; $\pi_4: S^5\to S^3$ be the~blow up of the~point $E_3^3\cap  C^3$,
with the exceptional divisor $E_4^5$ and $L^4$, $C^4$, $E_1^4$, $E_2^4$, $E_3^4$ are strict transforms of $L^3$, $C^3$, $E_1^3$, $E_2^3$, $E_3^2$ respectively;  $\pi_5: S^4\to S^4$ be the~blow up of the~point $E_4^4\cap E_3^4\cap   C^4$,
with the exceptional divisor $E$ and $L^5$, $C^5$, $E_1^5$, $E_2^5$, $E_3^5$ , $E_4^5$ are strict transforms of $L^4$, $C^4$, $E_1^4$, $E_2^4$, $E_3^4$, $E_4^4$ respectively; let $\theta:S^5\to\overline{S}$ be  the~contraction of  the~curves  $E_1^5$, $E_2^5$, $E_3^5$, $E_4^5$,
and $\sigma$ is the~birational contraction of $\overline{E}=\theta(E)$. We denote the strict transforms of $C$ and $L$ on $\overline{S}$ by $\overline{C}$ and $\overline{L}$. Let $\overline{E}$ be the exceptional divisor of $\sigma$. Note that $\overline{E}$ contains two singular points $\overline{P}_{1,2,3}$ and  $\overline{P}_4$   which are $\frac{1}{7}(1,3)$ and  $\frac{1}{2}(1,1)$ singular points respectively.
\begin{center}
 \includegraphics[width=16cm]{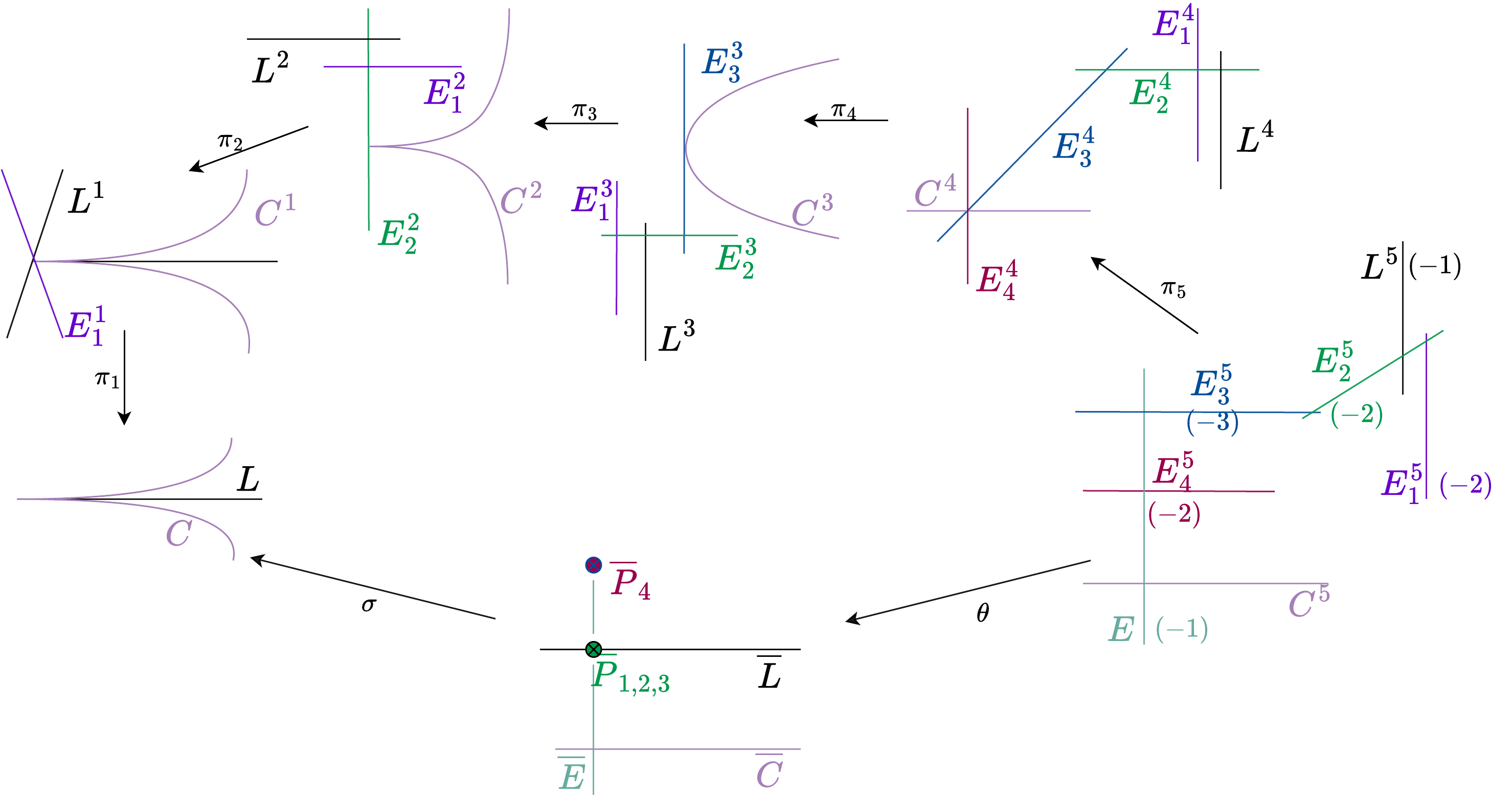}
 \end{center}
The intersections on $\overline{S}$ are given by:
\begin{center}
    \renewcommand{\arraystretch}{1.4}
    \begin{tabular}{|c|c|c|}
    \hline
         & $\overline{E}$ & $\overline{L}$ \\
    \hline
       $\overline{E}$  & $-\frac{1}{14}$ & $\frac{2}{7}$ \\
    \hline
       $\overline{L}$  & $\frac{2}{7}$ & $-\frac{1}{7}$ \\
    \hline
    \end{tabular}
\end{center}
 We have $\sigma^*(L)=\overline{L}+4\overline{E}$, $\sigma^*(C)=\overline{C}+14\overline{E}$, $\sigma^*(K_{\DP^2})=K_{\overline{S}}-8\overline{E}$. Thus, $A_{(\DP^2,\lambda C)}(\overline{E})=9-14\lambda$.
\noindent The Zariski decomposition of the divisor  $-\sigma^*(K_{\DP^2}+\lambda C)-v\overline{E}$ is given by:
\begin{align*}
&&P(v)=
\begin{cases}
-\sigma^*(K_{\DP^2}+\lambda C)-v\overline{E}\text{ if }v\in\big[0,\frac{7}{2}(3-d\lambda)\big],\\
-\sigma^*(K_{\DP^2}+\lambda C)-v\overline{E}-2\big(v-\frac{7}{2}(3-d\lambda)\big)\overline{L}\text{ if }v\in\big[\frac{7}{2}(3-d\lambda), 4(3-d\lambda)\big],\\
\end{cases}\\&&
N(v)=
\begin{cases}
0\text{ if }v\in\big[0,\frac{7}{2}(3-d\lambda)\big],\\
2\big(v-\frac{7}{2}(3-d\lambda)\big)\overline{L}\text{ if }v\in\big[\frac{7}{2}(3-d\lambda),4(3-d\lambda)\big].
\end{cases}
\end{align*}
Then
$$
P(v)^2=
\begin{cases}
(3-d\lambda)^2 -\frac{v^2}{14}\text{ if }v\in\big[0,\frac{7}{2}(3-d\lambda)\big],\\
\frac{(v - 12 +16\lambda)^2}{2}\text{ if }v\in\big[\frac{7}{2}(3-d\lambda),4(3-d\lambda)\big],
\end{cases}
\text{ and }
P(v)\cdot \overline{E}=
\begin{cases}
\frac{v}{14}\text{ if }v\in\big[0,\frac{7}{2}(3-d\lambda)\big],\\
2\big(3-d\lambda -\frac{v}{4}\big)\text{ if }v\in\big[\frac{7}{2}(3-d\lambda),4(3-d\lambda)\big],
\end{cases}
$$
Thus
$$S_S(\overline{E})=\frac{1}{(3-d\lambda)^2}\Big(\int_0^{7/2(3-d\lambda)} (3-d\lambda)^2 -\frac{v^2}{14} dv+\int_{7/2(3-d\lambda)}^{4(3-d\lambda)} \frac{(v - 12 +4d\lambda)^2}{6} dv\Big)=\frac{5(3-d\lambda)}{2}$$
so that $\delta_P(\DP^2,\lambda C)\le \frac{2}{5}\frac{9-14\lambda}{3-d\lambda}$. For every $O\in \overline{E}$, we get if $O\in \overline{E}\backslash \overline{L}$ or if $O\in \overline{E}\cap \overline{L}$:
$$h(v)\le 
\begin{cases}
\frac{v^2}{392}\text{ if }v\in\big[0,\frac{5}{2}(3-d\lambda)\big],\\
\frac{(v - 12 +4d\lambda)^2}{8}\text{ if }v\in\big[\frac{5}{2}(3-d\lambda),4(3-d\lambda)\big],
\end{cases}
\text{or }
h(v)\le \begin{cases}
\frac{v^2}{392}\text{ if }v\in\big[0,\frac{7}{2}(3-d\lambda)\big],\\
\frac{ (v - 12 +4d\lambda) (84 -9v - 28d\lambda )}{56}\text{ if }v\in\big[\frac{7}{2}(3-d\lambda),4(3-d\lambda)\big].
\end{cases}
$$
So that
$$S\big(W^{\overline{E}}_{\bullet,\bullet};O\big)\le \frac{2}{(3-d\lambda)^2}\Big(\int_0^{7/2(3-d\lambda)} \frac{v^2}{392} dv+\int_{7/2(3-d\lambda)}^{4(3-d\lambda)} \frac{(v - 12 +16\lambda)^2}{72} dv\Big)=\frac{3-d\lambda}{12}
$$
or
$$S\big(W^{\overline{E}}_{\bullet,\bullet};O\big)\le \frac{2}{(3-d\lambda)^2}\Big(\int_0^{7/2(3-d\lambda)} \frac{v^2}{392} dv+\int_{7/2(3-d\lambda)}^{4(3-d\lambda)} \frac{ (v - 12 +4d\lambda) (84 -9v - 28d\lambda )}{56} dv\Big)=\frac{2(3-d\lambda)}{21}
$$
We have
$$
\delta_P(\DP^2,\lambda C)\geqslant\mathrm{min}\Bigg\{\frac{2}{5}\cdot\frac{9-14\lambda}{3-d\lambda},\inf_{O\in\overline{E}}\frac{A_{\overline{E},\Delta_{\overline{E}}}(O)}{S\big(W^{\overline{E}}_{\bullet,\bullet};O\big)}\Bigg\},
$$
where $\Delta_{\overline{E}}=\frac{6}{7}\overline{P}_{1,2,3}+\frac{1}{2}\overline{P}_{4}$. 
So that
$$
\frac{A_{\overline{E},\Delta_{\overline{E}}}(O)}{S(W_{\bullet,\bullet}^{\overline{E}};O)}=
\left\{\aligned
&\frac{3}{2(3-d\lambda)}\ \mathrm{if}\ O=\overline{P}_{1,2,3},\\
&\frac{6}{3-d\lambda}\ \mathrm{if}\ O=\overline{P}_{4},\\
&\frac{12}{3-d\lambda}\ \mathrm{otherwise}.
\endaligned
\right.
$$
Thus $$\delta_P(\DP^2,\lambda C)=\frac{2}{5}\cdot\frac{9-14\lambda}{3-d\lambda}\text{ for }\lambda\in \Big[\frac{3}{8}, \frac{7}{10}\Big]\text{ and }\delta_P(\DP^2,\lambda C)\ge\frac{3}{2(3-d\lambda)}\text{ for }\lambda\in \Big[0,\frac{3}{8}\Big].$$
 \end{proof}
 \noindent This proves the following theorem:
 \begin{theorem}
   Suppose $C\subset \DP^2$ is a quartic curve with at most $A_6$ singularities and at least one $A_6$ singularity. Then  we have:
$$\delta(\DP^2,\lambda C)=\frac{2}{5}\cdot\frac{9-14\lambda}{3-4\lambda}\text{ for }\lambda\in \Big[\frac{3}{8}, \frac{9}{14}\Big]\text{ and }\delta(\DP^2,\lambda C)\ge \frac{3}{2(3-4\lambda)} \text{ for }\lambda\in \Big[0,\frac{3}{8}\Big].$$
 \end{theorem}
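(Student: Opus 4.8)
The plan is to promote the local computation of the preceding lemma to the global $\delta$-invariant through the identity $\delta(\DP^2,\lambda C)=\inf_{Q\in C}\delta_Q(\DP^2,\lambda C)$, and to show that the $A_6$ point $P$ realizes this infimum. The first and decisive step is geometric: a plane quartic carrying an $A_6$ singularity has no other singular point and is irreducible. Indeed, an $A_6$ point is unibranch, so $P$ lies on a single irreducible component, and that component, supporting a singularity of delta-invariant $\delta^{\mathrm{sing}}=3$, cannot have degree $\le 3$ (where $p_a\le 1$); hence it has degree $4$ and $C$ is irreducible. For the irreducible quartic the genus formula gives $\sum_{Q}\delta^{\mathrm{sing}}_Q\le p_a=3$, and since the $A_6$ already contributes $3$, it is the unique singular point of $C$. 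Thus every point of $C$ other than $P$ is smooth, with tangent line of multiplicity $2$, $3$ or $4$.

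It then remains to compare $\delta_P$ with the local invariants at the smooth points, for which I would invoke the three smooth-quartic lemmas, giving $\frac{3-2\lambda}{3-4\lambda}$, $\frac34\cdot\frac{4-3\lambda}{3-4\lambda}$ and $\frac35\cdot\frac{5-4\lambda}{3-4\lambda}$ for tangency orders $2$, $3$ and $4$. On $[\frac38,\frac9{14}]$ the preceding lemma with $d=4$ gives $\delta_P=\frac25\cdot\frac{9-14\lambda}{3-4\lambda}$. Since all of these share the positive denominator $3-4\lambda>0$ for $\lambda<\frac34$, each comparison reduces to a linear inequality between numerators. The smallest smooth value is the $4$-tangent one, and $\delta_P-\frac35\cdot\frac{5-4\lambda}{3-4\lambda}$ has numerator proportional to $3-16\lambda$, which is $\le 0$ for $\lambda\ge\frac3{16}$; as $\frac38>\frac3{16}$ this holds on the whole range and the other two smooth values are larger still. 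Hence $\delta_P$ is the global minimum and $\delta(\DP^2,\lambda C)=\frac25\cdot\frac{9-14\lambda}{3-4\lambda}$ on $[\frac38,\frac9{14}]$.

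For $\lambda\in[0,\frac38]$ the lemma supplies only the lower bound $\delta_P\ge\frac3{2(3-4\lambda)}$, coming from the cyclic quotient point $\overline P_{1,2,3}$ on the exceptional divisor. At the smooth points the exact formulas still apply and dominate this bound throughout the subinterval — for example $\frac35(5-4\lambda)\ge\frac32$ exactly when $\lambda\le\frac58$, so the $4$-tangent value stays $\ge\frac3{2(3-4\lambda)}$. Taking the infimum gives $\delta(\DP^2,\lambda C)\ge\frac3{2(3-4\lambda)}$ on $[0,\frac38]$, and the two regimes match continuously at $\lambda=\frac38$, where both expressions equal $1$.

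The hard part is really the opening reduction rather than the arithmetic. Without the observation that the $A_6$ forces $C$ to be smooth away from $P$, one would also have to rule out milder singular competitors; and while the differences of $\delta_P$ against $A_1,\dots,A_5$ with a non-component tangent all change sign precisely at $\lambda=\frac38$, the tangent-component $A_5$ value $\frac34\cdot\frac{4-6\lambda}{3-4\lambda}$ is only overtaken at $\lambda=\frac6{11}>\frac38$, which would spuriously destroy the claimed equality on part of the range. The irreducibility/adjunction argument is exactly what excludes that configuration, after which the comparison against smooth points alone is decisive.
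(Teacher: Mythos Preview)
Your argument is correct and in fact supplies the justification that the paper leaves implicit: the paper simply writes ``This proves the following theorem'' after the $A_6$ lemma, so the substance of the deduction---namely that the $A_6$ point realises the infimum of the local invariants---is exactly what you have filled in. Your key step, that a plane quartic with an $A_6$ singularity is irreducible (unibranch $\Rightarrow$ one component, $\delta^{\mathrm{sing}}(A_6)=3$ exceeds $p_a$ of any curve of degree $\le 3$) and then has $P$ as its unique singular point by adjunction, is correct and is precisely what makes the comparison reduce to the smooth-point lemmas. The numerical checks against the three smooth-tangency values are fine.

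Two small points. First, the identity you invoke should read $\delta(\DP^2,\lambda C)=\inf_{Q\in \DP^2}\delta_Q(\DP^2,\lambda C)$, not $\inf_{Q\in C}$; you should note (it is immediate from the blow-up computation) that for $Q\notin C$ one gets $\delta_Q=\tfrac{3}{3-4\lambda}$, which dominates all the smooth-point values on $C$, so the infimum is indeed attained on $C$. Second, in your final paragraph the assertion that the differences against $A_1,\dots,A_5$ (non-component tangent) ``all change sign precisely at $\lambda=\tfrac38$'' is not literally true: the $A_1$ and $A_2$ crossings are at $\tfrac{3}{13}$ and $\tfrac{3}{10}$ respectively, while $A_3,A_4,A_5$ (non-component) do meet the $A_6$ value at $\tfrac38$. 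This is harmless for your main argument, since you have already shown there are no other singular points and the aside is purely illustrative; but the sentence as written overstates the coincidence.
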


\section{Curves with $A_7$ singularities}
 \begin{lemma}
 Let $C\subset \DP^2$ be a plane curve of degree $d$ with an $A_7$ singularity at point $P\in C$  on $C$. Then
 $$\delta_P(\DP^2,\lambda C)=\frac{3}{4}\cdot\frac{5-8\lambda}{3-d \lambda }\text{ for }\lambda\in \Big[\frac{3}{8},\Big\{\frac{5}{8}, \frac{3}{d}\Big\}\Big].$$
 \end{lemma}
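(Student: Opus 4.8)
The plan is to follow the same weighted-blowup strategy used for the earlier $A_n$ cases and to apply the inequality \eqref{estimation2}. First I would resolve the $A_7$ singularity by the chain of point blowups that tracks the tangent line $L$: blow up $P$, then repeatedly blow up the intersection of the newest exceptional curve with the strict transform of $C$, performing four blowups in total, and finally contract the three intermediate exceptional curves via $\theta$ and $\sigma$ to obtain a surface $\overline{S}$ carrying a single divisor $\overline{E}$ over $P$. As in the $A_3$ and $A_5$ cases I expect $\overline{E}$ to contain exactly one cyclic quotient singularity $\overline{P}$, here of type $\tfrac{1}{4}(1,3)$, and I would record the intersection data $\overline{E}^2=-\tfrac{1}{4}$, $\overline{E}\cdot\overline{L}=\tfrac{1}{2}$, $\overline{L}^2=0$, together with $\sigma^*(L)=\overline{L}+2\overline{E}$, $\sigma^*(C)=\overline{C}+8\overline{E}$ and $\sigma^*(K_{\DP^2})=K_{\overline{S}}-4\overline{E}$, which give $A_{(\DP^2,\lambda C)}(\overline{E})=5-8\lambda$.

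The next step is the Zariski decomposition of $-\sigma^*(K_{\DP^2}+\lambda C)-v\overline{E}$. The decisive structural feature, which distinguishes $A_7$ from $A_5$, is that $\overline{L}^2=0$: the strict transform $\overline{L}$ is a nef boundary curve and therefore never enters the negative part. Consequently there is a single Zariski chamber, $P(v)=-\sigma^*(K_{\DP^2}+\lambda C)-v\overline{E}$ and $N(v)=0$ for $v\in[0,2(3-d\lambda)]$, with $P(v)^2=(3-d\lambda)^2-\tfrac{v^2}{4}$ and $P(v)\cdot\overline{E}=\tfrac{v}{4}$; the pseudo-effective threshold $\tau=2(3-d\lambda)$ is exactly the value where $P(v)^2$ and $P(v)\cdot\overline{L}$ vanish simultaneously. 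Integrating gives $S_{(\DP^2,\lambda C)}(\overline{E})=\tfrac{4}{3}(3-d\lambda)$, and hence the upper bound $\delta_P(\DP^2,\lambda C)\le \tfrac{3}{4}\cdot\tfrac{5-8\lambda}{3-d\lambda}$.

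For the matching lower bound I would apply \eqref{estimation2}. Because $N(v)\equiv 0$, the local term $(N(v)\cdot\overline{E})_O$ drops out and $h(v)=\tfrac{1}{2}(P(v)\cdot\overline{E})^2=\tfrac{v^2}{32}$ for every $O\in\overline{E}$, so that $S\big(W^{\overline{E}}_{\bullet,\bullet};O\big)=\tfrac{3-d\lambda}{6}$ uniformly. Writing the different as $\Delta_{\overline{E}}=\tfrac{3}{4}\overline{P}+\lambda\overline{Q}_1+\lambda\overline{Q}_2$ with $\overline{Q}_1+\overline{Q}_2=\overline{C}|_{\overline{E}}$, the ratios $A_{\overline{E},\Delta_{\overline{E}}}(O)/S\big(W^{\overline{E}}_{\bullet,\bullet};O\big)$ evaluate to $\tfrac{3}{2(3-d\lambda)}$ at $\overline{P}$, to $\tfrac{6(1-\lambda)}{3-d\lambda}$ at the $\overline{Q}_i$, and to $\tfrac{6}{3-d\lambda}$ elsewhere. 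Comparing these with the term $A_{(\DP^2,\lambda C)}(\overline{E})/S_{(\DP^2,\lambda C)}(\overline{E})=\tfrac{3}{4}\cdot\tfrac{5-8\lambda}{3-d\lambda}$ coming from $\overline{E}$ itself, one checks that $\tfrac{3}{4}(5-8\lambda)=\tfrac{3}{2}$ precisely at $\lambda=\tfrac{3}{8}$; since $6(1-\lambda)\ge\tfrac{3}{4}(5-8\lambda)$ always holds, for $\lambda\ge\tfrac{3}{8}$ the $\overline{E}$-term is the minimum, giving the equality $\delta_P(\DP^2,\lambda C)=\tfrac{3}{4}\cdot\tfrac{5-8\lambda}{3-d\lambda}$ on $[\tfrac{3}{8},\min\{\tfrac{5}{8},\tfrac{3}{d}\}]$, whereas for $\lambda\le\tfrac{3}{8}$ the singular point $\overline{P}$ is binding and only the bound $\delta_P\ge\tfrac{3}{2(3-d\lambda)}$ survives.

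The main obstacle is the geometric bookkeeping in the first paragraph: verifying that four blowups suffice, that the contraction produces exactly the $\tfrac{1}{4}(1,3)$ quotient singularity, and---most importantly---that $\overline{L}^2=0$. This last identity (equivalently $\sigma^*L=\overline{L}+2\overline{E}$ with $\overline{E}^2=-\tfrac{1}{4}$) is what forces the single-chamber behaviour and yields $S_{(\DP^2,\lambda C)}(\overline{E})=\tfrac{4}{3}(3-d\lambda)$; were the interaction of $L$ with the resolution even slightly different, $\overline{L}$ would acquire negative self-intersection and a genuine second chamber would appear, as in the $A_5$ case, changing the answer. One also needs $\overline{C}$ to remain nef on $[0,\tau]$, which holds in the relevant range, and the upper endpoint $\min\{\tfrac{5}{8},\tfrac{3}{d}\}$ is dictated by $A_{(\DP^2,\lambda C)}(\overline{E})=5-8\lambda\ge 0$ and the log Fano condition $3-d\lambda>0$.
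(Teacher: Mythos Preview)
Your proposal is correct and follows essentially the same route as the paper: the same four-blowup/three-contraction construction producing a $\tfrac{1}{4}(1,3)$ point on $\overline{E}$, the same intersection data $\overline{E}^2=-\tfrac14$, $\overline{E}\cdot\overline{L}=\tfrac12$, $\overline{L}^2=0$, the same pullbacks and log discrepancy $A=5-8\lambda$, the single-chamber Zariski decomposition on $[0,2(3-d\lambda)]$, and the identical computations of $S(\overline{E})=\tfrac{4}{3}(3-d\lambda)$ and $S(W^{\overline{E}}_{\bullet,\bullet};O)=\tfrac{3-d\lambda}{6}$. Your explicit explanation that $\overline{L}^2=0$ forces a single Zariski chamber is a nice clarifying remark that the paper leaves implicit.
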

  \begin{proof}
Let $L$ be a tangent line at point $P$.  Let $\pi_1:S^1\to S$ be the~blow up of the~point $P$,
with the exceptional divisor $E_1^1$ and $L^1$, $C^1$ are strict transforms of $L$ and $C$ respectively; $\pi_2: S^2\to S^1$ be the~blow up of the~point $E_1^1\cap C^1\cap L^1$,
with the exceptional divisor $E_2^2$ and $L^2$, $C^2$, $E_1^2$ are strict transforms of $L^1$, $C^1$, $E_1^1$ respectively; $\pi_3: S^3\to S^2$ be the~blow up of the~point $E_2^2\cap C^2$,
with the exceptional divisor $E_3^3$ and $L^3$, $C^3$, $E_1^3$, $E_2^3$ are strict transforms of $L^2$, $C^2$, $E_1^2$, $E_2^2$ respectively; $\pi_4: S^4\to S^3$ be the~blow up of the~point $E_3^3\cap C^3$,
with the exceptional divisor $E$ and $L^4$, $C^4$, $E_1^4$, $E_2^4$, $E_3^4$ are strict transforms of $L^3$, $C^3$, $E_1^3$, $E_2^3$, $E_3^2$ respectively; let $\theta:S^4\to\overline{S}$ be  the~contraction of  the~curves  $E_1^4$, $E_2^4$, $E_3^4$
and $\sigma$ is the~birational contraction of $\overline{E}=\theta(E)$. We denote the strict transforms of $C$ and $L$ on $\overline{S}$ by $\overline{C}$ and $\overline{L}$. Let $\overline{E}$ be the exceptional divisor of $\sigma$. Note that $\overline{E}$ contains a singular point $\overline{P}$   which is a $\frac{1}{4}(1,3)$  singular point.
\begin{center}
 \includegraphics[width=17cm]{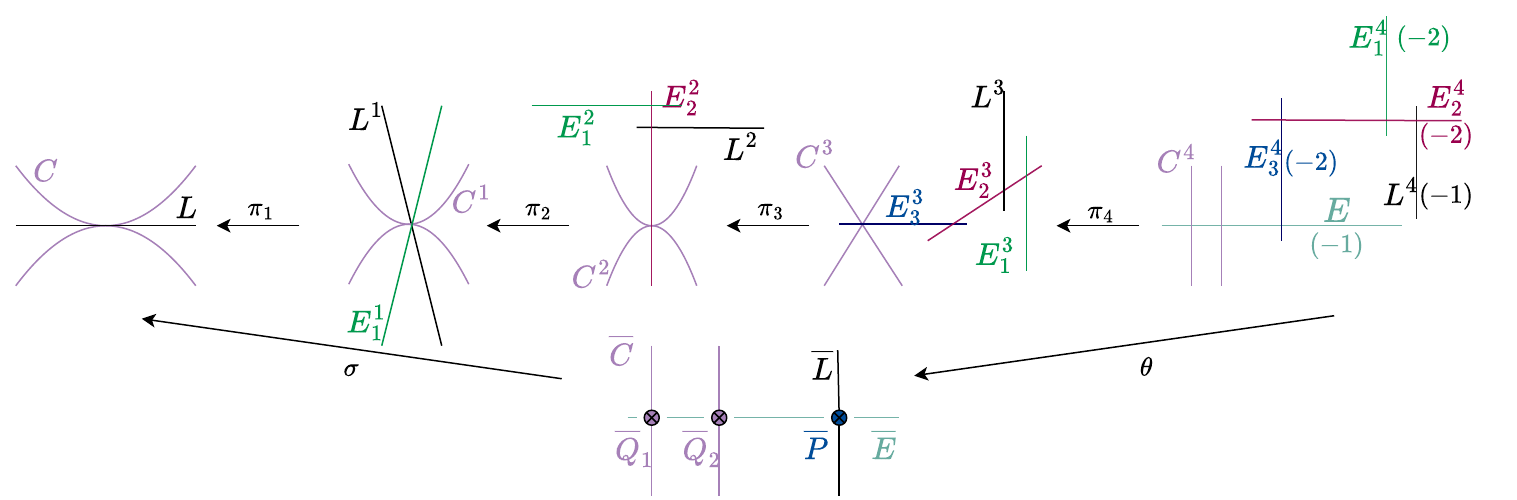}
 \end{center}
The intersections on $\overline{S}$ are given by:
\begin{center}
    \renewcommand{\arraystretch}{1.4}
    \begin{tabular}{|c|c|c|}
    \hline
         & $\overline{E}$ & $\overline{L}$ \\
    \hline
       $\overline{E}$  & $-\frac{1}{4}$ & $\frac{1}{2}$ \\
    \hline
       $\overline{L}$  & $\frac{1}{2}$ & $0$ \\
    \hline
    \end{tabular}
\end{center}
 We have $\sigma^*(L)=\overline{L}+2\overline{E}$, $\sigma^*(C)=\overline{C}+8\overline{E}$, $\sigma^*(K_{\DP^2})=K_{\overline{S}}-4\overline{E}$. Thus, $A_{(\DP^2,\lambda C)}(\overline{E})=5-8\lambda$.
\noindent The Zariski decomposition of the divisor  $-\pi^*(K_{\DP^2}+\lambda C)-vE$ is given by:
\begin{align*}
P(v)=-\pi^*(K_{\DP^2}+\lambda C)-vE \text{ and }N(v)=0\text{ if }v\in[0,2(3-d \lambda )].
\end{align*}
Then
$$
P(v)^2=\frac{(v-2(3-d \lambda ))(v+2(3-d \lambda ))}{4}{}\text{ and }P(v)\cdot E= \frac{v}{4}\text{ if }v\in[0,2(3-d \lambda )].
$$
Thus
$$S_{(\DP^2, \lambda C)}(E)=\frac{1}{(3-d \lambda )^2}\Big(\int_0^{2(3-d \lambda )} \frac{(v-2(3-d \lambda ))(v+2(3-d \lambda ))}{4} dv\Big)=\frac{4(3-d \lambda )}{3}$$
so that $\delta_P(\DP^2,\lambda C)\le \frac{3}{4}\cdot\frac{5-8\lambda}{3-d \lambda }$. For every $O\in E$ we get:
$$h(v) = \frac{v^2}{32}\text{ if }v\in[0,2(3-d \lambda )].
$$
So that
$$S\big(W^{E}_{\bullet,\bullet};O\big)= \frac{2}{(3-d \lambda )^2}\Big(\int_0^{3-d \lambda } \frac{v^2}{32} dv\Big)=\frac{3-d \lambda }{6} \le \frac{4(3-d \lambda )}{3(5-8\lambda)}\text{ for }\lambda\in \Big[\frac{3}{8},\Big\{\frac{5}{8}, \frac{3}{d}\Big\}\Big]
$$
where $\Delta_{\overline{E}}=\frac{3}{4}\overline{P}+\lambda\overline{Q}_1+\lambda\overline{Q}_2$ where $\overline{Q}_1+\overline{Q}_2=\overline{C}|_{\overline{E}}$.  
So that
$$
\frac{A_{\overline{E},\Delta_{\overline{E}}}(O)}{S(W_{\bullet,\bullet}^{\overline{E}};O)}=
\left\{\aligned
&\frac{6(1-\lambda)}{3-d \lambda }\ \mathrm{if}\ O\in\{\overline{Q}_1,\overline{Q}_2\},\\
&\frac{3}{2(3-d \lambda) }\ \mathrm{if}\ O=\overline{P},\\
&\frac{6}{3-d \lambda }\ \mathrm{otherwise}.
\endaligned
\right.
$$
Thus $\delta_P(\DP^2,\lambda C)=\frac{3}{4}\cdot \frac{5-8\lambda}{3-d \lambda }$ \text{ for }$\lambda\in \big[\frac{3}{8},\big\{\frac{5}{8}, \frac{3}{d}\big\}\big].$
 \end{proof}
  \noindent This proves the following theorem:
 \begin{theorem}
   Suppose $C_4\subset \DP^2$ is a quartic curve with  $A_7$ singularity. Then for $\lambda\in \big[\frac{3}{8},\frac{5}{8}\big].$ we have:
$$\delta(\DP^2,\lambda C_4)=\frac{3}{4}\cdot \frac{5-8\lambda}{3-4 \lambda }.$$
 \end{theorem}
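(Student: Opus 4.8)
The plan is to exhibit $\delta_P(\DP^2,\lambda C)$ as the ratio $A/S$ of a single carefully chosen divisor $\overline E$ over $P$, and then to match this value from below using the Abban--Zhuang inequality \eqref{estimation2} applied to $\overline E$. The upper bound $\delta_P(\DP^2,\lambda C)\le A_{(\DP^2,\lambda C)}(\overline E)/S_{(\DP^2,\lambda C)}(\overline E)$ is automatic once $\overline E$ is produced, since $\delta_P$ is an infimum over all divisors centred at $P$; the content is to show the lower bound meets it on the stated range.

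First I would construct the extraction of $\overline E$. As the singularity is $A_7$, I resolve it by a chain of point blowups $\pi_1,\dots,\pi_4$, at each stage blowing up the point where the strict transform of $C$ meets the latest exceptional curve (and, in the first stages, the tangent line $L$); the divisor produced by $\pi_4$ is the one I want. To bring it into usable form I contract the three intermediate exceptional curves by $\theta$ and realise $\overline E$ through a birational contraction $\sigma\colon\overline S\to\DP^2$. By the standard blowup bookkeeping I must verify $\sigma^*L=\overline L+2\overline E$, $\sigma^*C=\overline C+8\overline E$, $\sigma^*K_{\DP^2}=K_{\overline S}-4\overline E$, giving $A_{(\DP^2,\lambda C)}(\overline E)=5-8\lambda$, together with the intersection data $\overline E^2=-\tfrac14$, $\overline E\cdot\overline L=\tfrac12$, $\overline L^2=0$, and a single $\tfrac14(1,3)$ quotient point $\overline P$ on $\overline E$.

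The decisive structural feature, which makes this case cleaner than $A_4$--$A_6$, is $\overline L^2=0$. I would use this to argue that $N(v)=0$ for all $v$ up to the pseudo-effective threshold, i.e.\ the Zariski decomposition of $\sigma^*(-K_{\DP^2}-\lambda C)-v\overline E$ never develops a negative part. Since $\big(\sigma^*(-K_{\DP^2}-\lambda C)\big)^2=(3-d\lambda)^2$ and $\overline E^2=-\tfrac14$, the threshold is $\tau=2(3-d\lambda)$ with $P(v)^2=(3-d\lambda)^2-\tfrac{v^2}4$ and $P(v)\cdot\overline E=\tfrac v4$. Integrating yields $S_{(\DP^2,\lambda C)}(\overline E)=\tfrac{4(3-d\lambda)}3$ and hence $\delta_P(\DP^2,\lambda C)\le\tfrac34\cdot\tfrac{5-8\lambda}{3-d\lambda}$. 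Because $N(v)\equiv0$, the refined local density is point-independent: $h(v)=\tfrac12\big(P(v)\cdot\overline E\big)^2=\tfrac{v^2}{32}$ for every $O\in\overline E$, so $S\big(W^{\overline E}_{\bullet,\bullet};O\big)=\tfrac{3-d\lambda}6$ uniformly.

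Finally I would compute the different $\Delta_{\overline E}=\tfrac34\overline P+\lambda\overline{Q}_1+\lambda\overline{Q}_2$, where $\tfrac34=\tfrac{4-1}4$ is the contribution of the $\tfrac14(1,3)$ point and $\overline{Q}_1+\overline{Q}_2=\overline C|_{\overline E}$, and substitute into \eqref{estimation2}. As $S\big(W^{\overline E}_{\bullet,\bullet};O\big)$ is constant, the infimum over $O$ is controlled entirely by $A_{\overline E,\Delta_{\overline E}}(O)$, whose minimum $1-\tfrac34=\tfrac14$ occurs at $\overline P$, giving $\inf_O A_{\overline E,\Delta_{\overline E}}(O)/S=\tfrac3{2(3-d\lambda)}$. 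The lemma then rests on the elementary comparison $\tfrac34(5-8\lambda)\le\tfrac32$, equivalent to $\lambda\ge\tfrac38$: on this range the first term of \eqref{estimation2} realises the minimum, matches the upper bound, and forces equality. The main obstacle is the very first step---pinning down the correct blowup/contraction tower so that $\overline E$ is exactly the divisor computing $\delta_P$ and checking the intersection numbers, in particular the identity $\overline L^2=0$, since a different extracted divisor would either fail to attain the infimum or reintroduce a nonzero negative part and spoil the clean integrals.
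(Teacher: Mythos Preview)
Your proposal is correct and matches the paper's argument essentially step for step: the same four-blowup extraction of $\overline E$ with contraction of the intermediate curves, the same intersection data (in particular the key observation $\overline L^2=0$ forcing $N(v)\equiv0$), the same computation of $S(\overline E)=\tfrac{4(3-d\lambda)}{3}$ and the uniform $S(W^{\overline E}_{\bullet,\bullet};O)=\tfrac{3-d\lambda}{6}$, and the same different $\Delta_{\overline E}=\tfrac34\overline P+\lambda\overline Q_1+\lambda\overline Q_2$ leading to the comparison that isolates the range $\lambda\ge\tfrac38$. The only point left implicit in both your write-up and the paper is the passage from $\delta_P$ at the $A_7$ point to the global $\delta$, which follows since the local $\delta$-values at all other (smooth) points of such a quartic, computed elsewhere in the paper, exceed $\tfrac34\cdot\tfrac{5-8\lambda}{3-4\lambda}$ on the stated interval.
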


 \section{Curves with $D_4$ singularities}
   \begin{lemma}
 Let $C\subset \DP^2$ be is a plane curve of degree $d$ with a $D_4$ singularity at point $P\in C$ on $C$. Then
 $$\delta_P(\DP^2,\lambda C)=\frac{3}{2}\cdot\frac{2-3\lambda}{3-d \lambda }\text{ for }\lambda\in\Big[0,\frac{2}{3}\Big].$$
 \end{lemma}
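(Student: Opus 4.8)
The plan is to mirror the proof of Lemma~\ref{A1-points} almost verbatim, the only structural change being that a $D_4$ singularity is an ordinary triple point: it consists of three smooth branches through $P$ with pairwise distinct tangent directions, so its multiplicity at $P$ is $3$ rather than $2$. Consequently a single blow up $\pi\colon\widetilde{S}\to S=\DP^2$ of $P$, with exceptional divisor $E$, already resolves the singularity, and it is the only birational modification needed — no iterated blow ups or contractions as in the higher $A_n$ cases. First I would record $\pi^*(C)=\widetilde{C}+3E$ and $\pi^*(K_{\DP^2})=K_{\widetilde{S}}-E$, which gives $A_{(\DP^2,\lambda C)}(E)=1+(1-3\lambda)=2-3\lambda$; this is where the multiplicity $3$ enters and produces the numerator $2-3\lambda$.

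Next I would compute the Zariski decomposition of $-\pi^*(K_{\DP^2}+\lambda C)-vE$. Since the only relevant negative curve is the strict transform of a general line through $P$, against which this divisor has intersection $(3-d\lambda)-v$, the divisor stays nef with $N(v)=0$ for $v\in[0,3-d\lambda]$, and one finds $P(v)^2=(3-d\lambda)^2-v^2$ and $P(v)\cdot E=v$. Integrating gives $S_{(\DP^2,\lambda C)}(E)=\frac{2(3-d\lambda)}{3}$ — identical to the $A_1$ value, since $S(E)$ depends only on the blow-up geometry and not on how $C$ meets $E$. This already yields the upper bound $\delta_P(\DP^2,\lambda C)\le \frac{A_{(\DP^2,\lambda C)}(E)}{S_{(\DP^2,\lambda C)}(E)}=\frac{3}{2}\cdot\frac{2-3\lambda}{3-d\lambda}$.

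For the matching lower bound I would apply the refined estimate~\eqref{estimation2}. Because $N(v)=0$, one has $h(v)=\frac{1}{2}(P(v)\cdot E)^2=\frac{v^2}{2}$ for every point $O\in E$, so $S\big(W^E_{\bullet,\bullet};O\big)=\frac{3-d\lambda}{3}$ uniformly. The crucial input is the different: since $E$ is a smooth $\DP^1$ on the smooth surface $\widetilde{S}$ and $\widetilde{C}$ meets $E$ transversally in the three \emph{distinct} points $Q_1,Q_2,Q_3$ cut out by the three tangent directions, the different is $\Delta_E=\lambda(Q_1+Q_2+Q_3)$. Hence $A_{E,\Delta_E}(O)=1-\lambda$ at the $Q_i$ and $A_{E,\Delta_E}(O)=1$ otherwise, giving ratios $\frac{3(1-\lambda)}{3-d\lambda}$ at $Q_1,Q_2,Q_3$ and $\frac{3}{3-d\lambda}$ elsewhere.

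Finally I would compare the candidate values. Since $\frac{3(1-\lambda)}{3-d\lambda}\ge\frac{3}{2}\cdot\frac{2-3\lambda}{3-d\lambda}$ (equivalent to $\lambda\ge 0$) and likewise $\frac{3}{3-d\lambda}\ge\frac{3}{2}\cdot\frac{2-3\lambda}{3-d\lambda}$, the infimum in~\eqref{estimation2} is realized by $E$ itself, so the lower bound matches the upper bound and $\delta_P(\DP^2,\lambda C)=\frac{3}{2}\cdot\frac{2-3\lambda}{3-d\lambda}$. The admissible range $\lambda\in[0,\frac{2}{3}]$ is exactly the log-canonical range of the ordinary triple point, equivalently the condition $A_{(\DP^2,\lambda C)}(E)=2-3\lambda\ge 0$. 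I expect the one genuinely delicate point to be confirming that the three branches really separate after a single blow up, so that the different is supported on three distinct reduced points each with coefficient $\lambda$ — precisely the feature distinguishing $D_4$ from singularities whose branches share a tangent; all the remaining inequalities are elementary.
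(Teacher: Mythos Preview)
Your proposal is correct and follows essentially the same approach as the paper: a single blow up of $P$, computation of $A(E)=2-3\lambda$ and $S(E)=\frac{2(3-d\lambda)}{3}$ via the trivial Zariski decomposition, then the refinement on $E$ with different $\Delta_E=\lambda(Q_1+Q_2+Q_3)$ giving $S(W^E_{\bullet,\bullet};O)=\frac{3-d\lambda}{3}$ and the same case analysis at the three branch points. Your explicit verification that the minimum in~\eqref{estimation2} is attained by $E$ (rather than at any $Q_i$) is a useful clarification that the paper leaves implicit.
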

 \begin{proof}
Let $\pi:\widetilde{S}\to S$ be the~blow up of the~point $P$,
with the exceptional divisor $E$. We denote the strict transform of $C$  on $\widetilde{S}$ by $\widetilde{C}$.
\begin{center}
 \includegraphics[width=8cm]{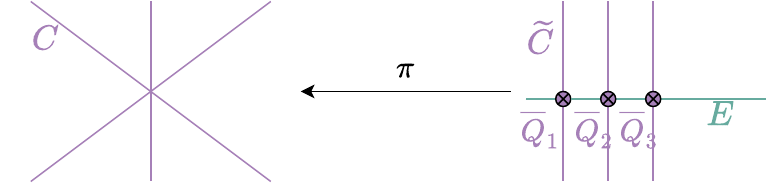}
 \end{center}
 We have $\pi^*(C)=\widetilde{C}+3E$, $\pi^*(K_{\DP^2})=K_{\widetilde{S}}-E$. Thus, $A_{(\DP^2,\lambda C)}(E)=2-3\lambda$.
\noindent The Zariski decomposition of the divisor  $-\pi^*(K_{\DP^2}+\lambda C)-vE$ is given by:
\begin{align*}
P(v)=-\pi^*(K_{\DP^2}+\lambda C)-vE \text{ and }N(v)=0\text{ if }v\in[0,3-d \lambda ].
\end{align*}
Then
$$
P(v)^2=\big(v-(3-d \lambda )\big)\big(v+(3-d \lambda )\big)\text{ and }P(v)\cdot E= v\text{ if }v\in[0,3-d \lambda ].
$$
Thus
$$S_{(\DP^2, \lambda C)}(E)=\frac{1}{(3-d \lambda )^2}\Big(\int_0^{3-d \lambda } \big(v-(3-d \lambda )\big)\big(v+(3-d \lambda )\big) dv\Big)=\frac{2(3-d \lambda )}{3}$$
so that $\delta_P(\DP^2,\lambda C)\le \frac{3}{2}\frac{2-3\lambda}{3-d \lambda }$. For every $O\in E$ we get:
$$h(v) = \frac{v^2}{2}\text{ if }v\in[0,3-d \lambda ].
$$
So that
$$S\big(W^{E}_{\bullet,\bullet};O\big)= \frac{2}{(3-d \lambda )^2}\Big(\int_0^{3-d \lambda } \frac{v^2}{2} dv\Big)=\frac{3-d \lambda }{3}\le \frac{2}{3}\cdot\frac{3-d \lambda }{2-3\lambda}
$$
We have
$$
\delta_P(\DP^2,\lambda C)\geqslant\mathrm{min}\Bigg\{  \frac{3}{2}\cdot\frac{2-3\lambda}{3-d \lambda },\inf_{O\in E}\frac{A_{E,\Delta_{E}}(O)}{S\big(W^{E}_{\bullet,\bullet};O\big)}\Bigg\},
$$
where $\Delta_{E}=\lambda Q_1+\lambda Q_2+\lambda Q_3$ where $Q_1+Q_2+Q_3=\widetilde{C}|_E$. 
So that
$$
\frac{A_{\overline{E},\Delta_{\overline{E}}}(O)}{S(W_{\bullet,\bullet}^{\overline{E}};O)}=
\left\{\aligned
&\frac{3(1-\lambda)}{3-d \lambda }\ \mathrm{if}\ O\in\{Q_1,Q_2,Q_3\},\\
&\frac{3}{3-d \lambda }\ \mathrm{otherwise}.
\endaligned
\right.
$$
Thus $\delta_P(\DP^2,\lambda C)=\frac{3}{2}\cdot\frac{2-3\lambda}{3-d \lambda }$ for $\lambda\in\big[0,\frac{2}{3}\big]$.
 \end{proof}
 \noindent This proves the following theorem:
  \begin{theorem}
  Suppose $C_3\subset \DP^2$ is a cubic curve with  $D_4$ singularity. Then  for $\lambda\in\big[0,\frac{2}{3}\big]$ we have:
 $$\delta(\DP^2,\lambda C_3)=\frac{2-3\lambda}{2- 2\lambda }.$$
 \end{theorem}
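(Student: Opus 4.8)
The plan is to derive this global statement directly from the local $D_4$-computation established just above (applied with $d=3$), supplemented by Lemma~\ref{line} and the estimate \eqref{estimation1} to control all remaining points of $\DP^2$. The upper bound $\delta(\DP^2,\lambda C_3)\le\delta_P$ is automatic since the global invariant is an infimum of local ones; the real work is the matching lower bound.

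First I would record the geometry forced by the hypothesis. A reduced plane cubic whose only singularity is of type $D_4$ has an ordinary triple point at $P$, and since $\mathrm{mult}_P C_3=3=\deg C_3$, the curve coincides with its tangent cone at $P$. Hence $C_3$ is a union of three distinct lines meeting only at $P$, so $P$ is the unique singular point and every other point of $C_3$ is a smooth point lying on exactly one line component.

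Next I would compute or bound $\delta_Q(\DP^2,\lambda C_3)$ for the three types of points $Q$. At $P$, the preceding $D_4$-lemma with $d=3$ gives $\delta_P=\frac{3}{2}\cdot\frac{2-3\lambda}{3-3\lambda}=\frac{2-3\lambda}{2-2\lambda}$. At a smooth point $Q\in C_3\setminus\{P\}$, which lies on a line component, Lemma~\ref{line} with $d=3$ yields $\delta_Q=\frac{3(1-\lambda)}{3-3\lambda}=1$. For $Q\notin C_3$, I would apply \eqref{estimation1} with a general line $\mathcal C$ through $Q$: because $-K_{\DP^2}-\lambda C_3=(3-3\lambda)H$ is proportional to the hyperplane class $H$, the Zariski decomposition of $-K_{\DP^2}-\lambda C_3-v\mathcal C$ has vanishing negative part throughout, and the resulting $S_{(\DP^2,\lambda C_3)}(\mathcal C)$ and $S(W^{\mathcal C}_{\bullet,\bullet};Q)$ both equal $1-\lambda$, so $\delta_Q\ge\frac{1}{1-\lambda}$.

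Finally I would assemble these into the global invariant. For $\lambda\in[0,\tfrac{2}{3}]$ one has $\frac{2-3\lambda}{2-2\lambda}\le 1\le\frac{1}{1-\lambda}$, the left inequality being equivalent to $\lambda\ge 0$. Taking the infimum over all $Q\in\DP^2$, the minimum is realised at the triple point, so $\delta(\DP^2,\lambda C_3)=\frac{2-3\lambda}{2-2\lambda}$; note that the endpoint $\lambda=\tfrac{2}{3}$ is exactly the log canonical threshold of an ordinary triple point, which explains the range. The only step needing genuine care is the lower bound away from $P$: one must verify that no divisor centred at a smooth point of $C_3$ or at a point off the curve produces a ratio $A/S$ smaller than the value at $P$. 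This is precisely what Lemma~\ref{line} and the general-line estimate deliver, and it is where a careless argument could fail, since the infimum defining $\delta_Q$ ranges a priori over all prime divisors over $Q$, not merely the obvious blow-ups.
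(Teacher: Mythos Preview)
Your proof is correct and takes essentially the same approach as the paper: the theorem is stated immediately after the $D_4$-lemma with the words ``This proves the following theorem'', relying implicitly on Lemma~\ref{line} for the smooth points on the line components and on the standard fact that points off $C_3$ give $\delta_Q\ge\frac{3}{3-d\lambda}$. You have simply made these implicit steps explicit, which is entirely appropriate.
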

 \begin{theorem}
  Suppose $C_4\subset \DP^2$ is a quartic curve with  $D_4$ singularity. Then  for $\lambda\in\big[0,\frac{2}{3}\big]$ we have:
 $$\delta(\DP^2,\lambda C_4)=\frac{3}{2}\cdot\frac{2-3\lambda}{3-d \lambda }.$$
 \end{theorem}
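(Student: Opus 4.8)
The plan is to deduce the global value from the preceding $D_4$-lemma (applied with $d=4$) together with the local computations at the remaining points of $\DP^2$. Since $\delta(\DP^2,\lambda C_4)=\inf_{P\in\DP^2}\delta_P(\DP^2,\lambda C_4)$ by definition, it suffices to show that the ordinary triple point realizes this infimum.

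First I would invoke the $D_4$-lemma at the triple point $P_0$ of $C_4$, which gives $\delta_{P_0}(\DP^2,\lambda C_4)=\frac{3}{2}\cdot\frac{2-3\lambda}{3-4\lambda}$ and hence the upper bound $\delta(\DP^2,\lambda C_4)\le\frac{3}{2}\cdot\frac{2-3\lambda}{3-4\lambda}$. The reverse inequality amounts to checking $\delta_Q(\DP^2,\lambda C_4)\ge\frac{3}{2}\cdot\frac{2-3\lambda}{3-4\lambda}$ for every $Q\neq P_0$, which I would handle in three families of points.

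For a point $Q\notin C_4$, applying \eqref{estimation1} to a general line $\mathcal{C}$ through $Q$ yields $S_{(\DP^2,\lambda C_4)}(\mathcal{C})=S(W^{\mathcal{C}}_{\bullet,\bullet};Q)=\frac{3-4\lambda}{3}$, so $\delta_Q\ge\frac{3}{3-4\lambda}$, and this exceeds the target value since $\frac{3}{3-4\lambda}\ge\frac{3}{2}\cdot\frac{2-3\lambda}{3-4\lambda}$ is equivalent to $3\lambda\ge 0$. For a smooth point $Q\in C_4$ the value is supplied by the smooth-quartic lemmas according to the contact order of the tangent line; even the smallest of these, the hyperflex value $\frac{3}{5}\cdot\frac{5-4\lambda}{3-4\lambda}$, dominates the triple-point value because $1-\frac{4}{5}\lambda\ge 1-\frac{3}{2}\lambda$. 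Finally, any further singular point of $C_4$ can only be milder than $D_4$: for an irreducible quartic the bound $p_a(C_4)=3=\delta_{D_4}$ forces $P_0$ to be the unique singularity, while in the reducible cases the additional points are of type $A_1$, $A_2$, or $A_3$, whose local values $\frac{3-3\lambda}{3-4\lambda}$, $\frac{3}{5}\cdot\frac{5-6\lambda}{3-4\lambda}$, and $1$ I would compare termwise with $\frac{3}{2}\cdot\frac{2-3\lambda}{3-4\lambda}$ (for instance $6-6\lambda\ge 6-9\lambda$ settles the $A_1$ case).

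The main obstacle is organizational rather than analytic: one must enumerate the singularity configurations compatible with ``$D_4$ is the worst singularity'' on a quartic and confirm, using the respective lemmas, that neither these points nor any special tangent line at a smooth point produces a local $\delta$-invariant below $\frac{3}{2}\cdot\frac{2-3\lambda}{3-4\lambda}$ anywhere on $\big[0,\frac{2}{3}\big]$. Each individual comparison reduces to a linear inequality in $\lambda$ that is tight only at $\lambda=0$, so once the list of competing points is complete the verification is routine.
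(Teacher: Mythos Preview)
Your approach is correct and is exactly the one the paper intends: the paper's ``proof'' of this theorem is the single sentence ``This proves the following theorem,'' relying on the $D_4$ lemma together with the local $\delta_P$-values already computed in earlier sections (for points off $C_4$, smooth points with tangent contact $2$, $3$, $4$, line components, and $A_n$ points). You have simply made the implicit comparison explicit, and your linear inequalities are all valid on $[0,\tfrac{2}{3}]$.

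One minor remark: by Hui's classification (see the appendix table in the paper), a plane quartic whose worst singularity is $D_4$ can only acquire additional $A_1$ points, not $A_2$ or $A_3$; the listed configurations are $D_4$, $D_4+A_1$, $D_4+2A_1$, $D_4+3A_1$. Your checks for $A_2$ and $A_3$ are therefore unnecessary, though of course harmless. Also, in the reducible cases $C_4$ contains line components, so for smooth points on such a component you should invoke Lemma~\ref{line} (giving $\tfrac{3(1-\lambda)}{3-4\lambda}$) rather than the smooth-quartic lemmas; the comparison $2(1-\lambda)\ge 2-3\lambda$ again holds trivially.
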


  \section{Curves with $D_5$ singularities}
 \begin{lemma}
 Let $C\subset \DP^2$ be a plane curve of degree $d$ with a $D_5$ singularity at point $P\in C$  on $C$. Then
 $$\delta_P(\DP^2,\lambda C)=\frac{3}{5}\cdot\frac{5-8\lambda}{3-d \lambda }\text{ for }\lambda\in\Big[0,\min \Big\{\frac{5}{8}, \frac{3}{d}\Big\}\Big].$$
 \end{lemma}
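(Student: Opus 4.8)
The plan is to compute $\delta_P$ by exhibiting a single divisorial valuation over $P$ that attains the infimum and then applying the Abban--Zhuang/Fujita inequality \eqref{estimation2}, exactly along the lines of the $A_2$ and $A_7$ lemmas. First I would fix local coordinates in which the $D_5$ point is $x^2y+y^4=y(x^2+y^3)=0$, so that the two local branches are the smooth branch $y=0$ and the cuspidal branch $x^2+y^3=0$, and the tangent line used in the resolution is $L=\{x=0\}$ (the double line of the tangent cone). The relevant valuation is the weighted blow-up of $P$ with weights $(3,2)$ in $(x,y)$; I would realize it by the usual chain of point blow-ups, contract the intermediate exceptional curves by $\theta$ and then $\sigma$, and obtain a surface $\overline S$ on which the extracted divisor $\overline E$ satisfies $\overline E^2=-\frac16$ and carries two cyclic quotient points $\overline P_1$ and $\overline P_2$ of types $\frac13(1,1)$ and $\frac12(1,1)$. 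The ambient geometry here coincides with the $A_2$ case; the only difference is that the $D_5$ equation has weighted order $8$ along $\overline E$ rather than $6$, so that $\sigma^*(C)=\overline C+8\overline E$ and $\sigma^*(K_{\DP^2})=K_{\overline S}-4\overline E$, giving $A_{(\DP^2,\lambda C)}(\overline E)=5-8\lambda$.

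Next I would record the intersections $\overline E\cdot\overline L=\frac12$, $\overline L^2=-\frac12$ together with $\sigma^*(L)=\overline L+3\overline E$, and run the Zariski decomposition of $-\sigma^*(K_{\DP^2}+\lambda C)-v\overline E$. Since $-\sigma^*(K_{\DP^2}+\lambda C)=(3-d\lambda)\sigma^*H$ and $\sigma^*H\cdot\overline L=1$, the positive part equals $-\sigma^*(K_{\DP^2}+\lambda C)-v\overline E$ until $v=2(3-d\lambda)$, where $\overline L$ enters the negative part with coefficient $2\big(v-(3-d\lambda)\big)$, up to the pseudo-effective threshold $\tau=3(3-d\lambda)$. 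This gives $P(v)^2=(3-d\lambda)^2-\frac{v^2}{6}$ on the first range and $\frac{(v-3(3-d\lambda))^2}{3}$ on the second, and integrating yields $S_{(\DP^2,\lambda C)}(\overline E)=\frac{5(3-d\lambda)}{3}$. Hence $\frac{A_{(\DP^2,\lambda C)}(\overline E)}{S_{(\DP^2,\lambda C)}(\overline E)}=\frac35\cdot\frac{5-8\lambda}{3-d\lambda}$, which already produces the upper bound $\delta_P(\DP^2,\lambda C)\le\frac35\cdot\frac{5-8\lambda}{3-d\lambda}$.

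For the matching lower bound I would apply \eqref{estimation2} with $E_P=\overline E$. Using the same two-range decomposition I would compute $h(v)$ and the refined invariants $S\big(W^{\overline E}_{\bullet,\bullet};O\big)$ at the points that matter: the two quotient points $\overline P_1,\overline P_2$, the point $\overline E\cap\overline L$, and the points $\overline E\cap\overline C$. Reading off the local ratios $\frac{A_{\overline E,\Delta_{\overline E}}(O)}{S(W^{\overline E}_{\bullet,\bullet};O)}$ from the different $\Delta_{\overline E}=\frac23\overline P_1+\frac12\overline P_2+\lambda\,\overline C|_{\overline E}$, I expect each of them to be at least $\frac35\cdot\frac{5-8\lambda}{3-d\lambda}$ on the whole interval $\lambda\in\big[0,\min\{\frac58,\frac3d\}\big]$, so that the minimum in \eqref{estimation2} is the first term and equality holds. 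Note that, unlike the $A_4$ and $A_6$ cases, no quotient point forces a lower cutoff here, which is consistent with the stated range starting at $0$.

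The step I expect to be the main obstacle is the exact bookkeeping of the different at the order-$3$ point $\overline P_1$. Because $\overline C\cdot\overline E=-8\,\overline E^2=\frac43$ is not an integer, the smooth branch of $C$ (tangent to $y=0$) must pass through $\overline P_1$, so the coefficient of $\Delta_{\overline E}$ there is not merely the quotient value $\frac23$ but combines it with a $\lambda$-term coming from that branch. Pinning down this coefficient correctly, and thereby checking that the local ratio at $\overline P_1$ stays $\ge\frac35\cdot\frac{5-8\lambda}{3-d\lambda}$ for all $\lambda$ in the interval, is the delicate point; the remaining ratios and all the integrals are routine and mirror the earlier lemmas.
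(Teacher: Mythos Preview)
Your approach is correct and coincides with the paper's: the same $(3,2)$-weighted extraction realized by three point blow-ups and the contraction of $E_1^3,E_2^3$, the same intersection data $\overline E^2=-\tfrac16$, $\overline E\cdot\overline L=\tfrac12$, $\overline L^2=-\tfrac12$, the same pullbacks $\sigma^*C=\overline C+8\overline E$, $\sigma^*K_{\DP^2}=K_{\overline S}-4\overline E$, and the same Zariski decomposition giving $S_{(\DP^2,\lambda C)}(\overline E)=\tfrac{5(3-d\lambda)}{3}$. The point you flag as the obstacle is exactly the one nontrivial step in the paper: the smooth branch of $C$ is carried to $\overline P_1$, so the different there is $\tfrac{2+\lambda}{3}$, yielding $\tfrac{A}{S}=\tfrac{3(1-\lambda)}{3-d\lambda}\ge\tfrac35\cdot\tfrac{5-8\lambda}{3-d\lambda}$ for all $\lambda$ in the stated range, and the infimum in \eqref{estimation2} is attained by $\overline E$.
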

 \begin{proof}
Let $L$ be a tangent line at point $P$.  Let $\pi_1:S^1\to S$ be the~blow up of the~point $P$,
with the exceptional divisor $E_1^1$ and $L^1$, $C^1$ are strict transforms of $L$ and $C$ respectively; $\pi_2: S^2\to S^1$ be the~blow up of the~point $E_1^1\cap C^1\cap L^1$,
with the exceptional divisor $E_2^2$ and $L^2$, $C^2$, $E_1^2$ are strict transforms of $L^1$, $C^1$, $E_1^1$ respectively; $\pi_3: S^3\to S^2$ be the~blow up of the~point $E_1^2\cap E_2^2\cap C^2$,
with the exceptional divisor $E$ and $L^3$, $C^3$, $E_1^3$, $E_2^3$ are strict transforms of $L^2$, $C^2$, $E_1^2$, $E_2^2$ respectively; let $\theta:S^3\to\overline{S}$ be  the~contraction of  the~curves  $E_1^3$, $E_2^3$
and $\sigma$ is the~birational contraction of $\overline{E}=\theta(E)$. We denote the strict transforms of $C$ and $L$ on $\overline{S}$ by $\overline{C}$ and $\overline{L}$. Let $\overline{E}$ be the exceptional divisor of $\sigma$. Note that $\overline{E}$ contains two singular points $\overline{P}_1$ and  $\overline{P}_2$   which are $\frac{1}{3}(1,1)$ and  $\frac{1}{2}(1,1)$ singular points respectively.
\begin{center}
 \includegraphics[width=16cm]{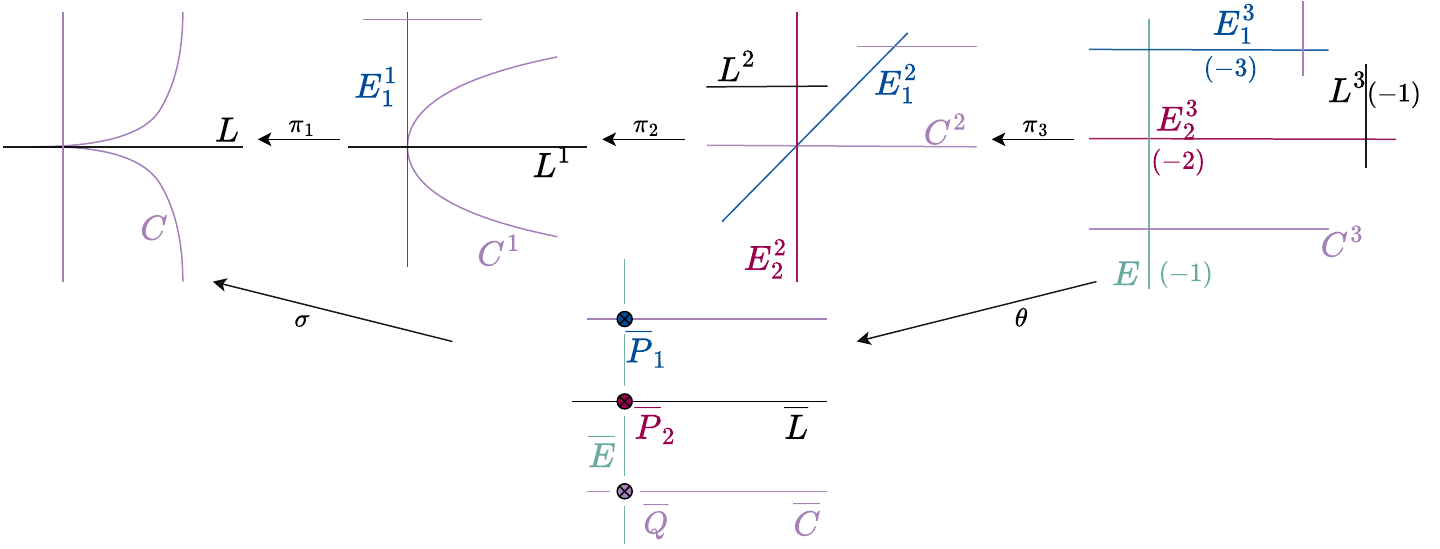}
 \end{center}
The intersections on $\overline{S}$ are given by:
\begin{center}
    \renewcommand{\arraystretch}{1.4}
    \begin{tabular}{|c|c|c|}
    \hline
         & $\overline{E}$ & $\overline{L}$ \\
    \hline
       $\overline{E}$  & $-\frac{1}{6}$ & $\frac{1}{2}$ \\
    \hline
       $\overline{L}$  & $\frac{1}{2}$ & $-\frac{1}{2}$ \\
    \hline
    \end{tabular}
\end{center}
 We have $\sigma^*(L)=\overline{L}+3\overline{E}$, $\sigma^*(C)=\overline{C}+8\overline{E}$, $\sigma^*(K_{\DP^2})=K_{\overline{S}}-4\overline{E}$. Thus, $A_{(\DP^2,\lambda C)}(\overline{E})=5-8\lambda$.
\noindent The Zariski decomposition of the divisor  $-\sigma^*(K_{\DP^2}+\lambda C)-v\overline{E}$ is given by:
\begin{align*}
&&P(v)=
\begin{cases}
-\sigma^*(K_{\DP^2}+\lambda C)-v\overline{E}\text{ if }v\in[0,2(3-d \lambda )],\\
-\sigma^*(K_{\DP^2}+\lambda C)-v\overline{E}-2\big(v-(3-d \lambda )\big)\overline{L}\text{ if }v\in[2(3-d \lambda ), 3(3-d \lambda )],\\
\end{cases}\\&&
N(v)=
\begin{cases}
0\text{ if }v\in[0,2(3-d \lambda )],\\
2\big(v-(3-d \lambda )\big)\overline{L}\text{ if }v\in[2(3-d \lambda ),3(3-d \lambda )].
\end{cases}
\end{align*}
Then
{\small $$
P(v)^2=
\begin{cases}
(3-d \lambda )^2 -\frac{v^2}{6}\text{ if }v\in[0,2(3-4)\lambda],\\
\frac{(v -3(3-d \lambda ))^2}{3}\text{ if }v\in[2(3-d \lambda ),3(3-d \lambda )],
\end{cases}
\text{and }
P(v)\cdot \overline{E}=
\begin{cases}
\frac{v}{6}\text{ if }v\in[0,2(3-d \lambda )],\\
3-d \lambda  -\frac{v}{3}\text{ if }v\in[2(3-d \lambda ),3(3-d \lambda )],
\end{cases}
$$}
Thus
$$S_{(\DP^2, \lambda C)}(\overline{E})=\frac{1}{(3-d \lambda )^2}\Big(\int_0^{2(3-d \lambda )} (3-d \lambda )^2 -\frac{v^2}{6} dv+\int_{2(3-d \lambda )}^{3(3-d \lambda )} \frac{(v - 3(3-d \lambda ))^2}{3} dv\Big)=\frac{5(3-d \lambda )}{3}$$
so that $\delta_P(\DP^2,\lambda C)\le \frac{3}{5}\cdot\frac{5-6\lambda}{3-d \lambda }$. For every $O\in \overline{E}$, we get if $O\in \overline{E}\backslash \overline{L}$ or if $O\in \overline{E}\cap \overline{L}$:
{\small $$h(v)\le 
\begin{cases}
\frac{v^2}{72}\text{ if }v\in[0,2(3-d \lambda )],\\
\frac{(v - 3(3-d \lambda ))^2}{18}\text{ if }v\in[2(3-d \lambda ), 3(3-d \lambda )],
\end{cases}
\text{or }
h(v)\le \begin{cases}
\frac{v^2}{72}\text{ if }v\in[0,2(3-d \lambda )],\\
\frac{ (v - 3(3-d \lambda )) (3(3-d \lambda ) -5v)}{18}\text{ if }v\in[2(3-d \lambda ), 3(3-d \lambda )].
\end{cases}
$$}
So that
$$S\big(W^{\overline{E}}_{\bullet,\bullet};O\big)\le \frac{2}{(3-d \lambda )^2}\Big(\int_0^{2(3-d \lambda )} \frac{v^2}{72} dv+\int_{2(3-d \lambda )}^{3(3-d \lambda )} \frac{(v - 3(3-d \lambda ))^2}{18} dv\Big)=\frac{3-d \lambda }{9}
$$
or
$$S\big(W^{\overline{E}}_{\bullet,\bullet};O\big)\le \frac{2}{(3-d \lambda )^2}\Big(\int_0^{2(3-d \lambda )} \frac{v^2}{72} dv+\int_{2(3-d \lambda )}^{3(3-d \lambda )} \frac{ (v -3(3-d \lambda )) (3(3-d \lambda ) -5v  )}{18} dv\Big)=\frac{3-d \lambda }{6}
$$
We have
$$
\delta_P(\DP^2,\lambda C)\geqslant\mathrm{min}\Bigg\{\frac{3}{5}\cdot\frac{5-8\lambda}{3-d \lambda },\inf_{O\in\overline{E}}\frac{A_{\overline{E},\Delta_{\overline{E}}}(O)}{S\big(W^{\overline{E}}_{\bullet,\bullet};O\big)}\Bigg\},
$$
where $\Delta_{\overline{E}}=\frac{2+\lambda}{3}\overline{P}_1+\frac{1}{2}\overline{P}_2+\lambda\overline{Q}$ where $\overline{Q}+\overline{P}_1=\overline{C}|_{\overline{E}}$.  
So that
$$
\frac{A_{\overline{E},\Delta_{\overline{E}}}(O)}{S(W_{\bullet,\bullet}^{\overline{E}};O)}=
\left\{\aligned
&\frac{3(1-\lambda)}{3-d \lambda }\ \mathrm{if}\ O=\overline{P}_1,\\
&\frac{9(1-\lambda)}{3-d \lambda }\ \mathrm{if}\ O=\overline{Q},\\
&\frac{3}{3-d \lambda }\ \mathrm{if}\ O=\overline{P}_2,\\
&\frac{9}{3-d \lambda }\ \mathrm{otherwise}.
\endaligned
\right.
$$
Thus $\delta_P(\DP^2,\lambda C)=\frac{3}{5}\cdot\frac{5-8\lambda}{3-d \lambda }$.
 \end{proof}

  \noindent This proves the following theorem:
 \begin{theorem}
  Suppose $C_4\subset \DP^2$ is a quartic curve with  $D_5$ singularity. Then  for $\lambda\in\big[0,\frac{5}{8}\big]$ we have:
 $$\delta(\DP^2,\lambda C_4)=\frac{3}{5}\cdot\frac{5-8\lambda}{3-4 \lambda }.$$
 \end{theorem}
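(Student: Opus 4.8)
The plan is to reuse the weighted-blow-up scheme from the earlier $A_n$ computations, choosing the centers so that the extracted divisor is adapted to a $D_5$ singularity. Concretely, I would first perform three point blow-ups: blow up $P$ to produce $E_1^1$; blow up the point $E_1^1\cap C^1\cap L^1$ (where $L$ is the tangent line at $P$) to produce $E_2^2$; and blow up the point $E_1^2\cap E_2^2\cap C^2$ to produce $E$. I would then contract $E_1^3$ and $E_2^3$ by $\theta$ and finally contract $\overline{E}=\theta(E)$ by $\sigma$. On $\overline{S}$ the curve $\overline{E}$ carries two cyclic quotient singularities, a $\frac13(1,1)$ point $\overline{P}_1$ and a $\frac12(1,1)$ point $\overline{P}_2$, with intersection data $\overline{E}^2=-\frac16$, $\overline{E}\cdot\overline{L}=\frac12$, $\overline{L}^2=-\frac12$ — the same numerical configuration that appeared in the $A_2$ case.

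From the pullbacks $\sigma^*(L)=\overline{L}+3\overline{E}$, $\sigma^*(C)=\overline{C}+8\overline{E}$, $\sigma^*(K_{\DP^2})=K_{\overline{S}}-4\overline{E}$ I would read off $A_{(\DP^2,\lambda C)}(\overline{E})=5-8\lambda$. The Zariski decomposition of $-\sigma^*(K_{\DP^2}+\lambda C)-v\overline{E}$ is then exactly as in the $A_2$ case: the negative part vanishes for $v\in[0,2(3-d\lambda)]$ and equals $2(v-(3-d\lambda))\overline{L}$ for $v\in[2(3-d\lambda),3(3-d\lambda)]$. Integrating $P(v)^2$ over both pieces gives $S_{(\DP^2,\lambda C)}(\overline{E})=\frac{5(3-d\lambda)}{3}$, hence the upper bound $\delta_P(\DP^2,\lambda C)\le A/S=\frac{3}{5}\cdot\frac{5-8\lambda}{3-d\lambda}$.

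For the lower bound I would apply the refined inequality \eqref{estimation2}. Bounding the local term $h(v)$ separately for $O\in\overline{E}\setminus\overline{L}$ and $O\in\overline{E}\cap\overline{L}$ and integrating yields $S(W^{\overline{E}}_{\bullet,\bullet};O)\le\frac{3-d\lambda}{9}$ in the generic direction and $\le\frac{3-d\lambda}{6}$ along $\overline{L}$. The crucial extra ingredient is the different $\Delta_{\overline{E}}$, which I expect to be $\frac{2+\lambda}{3}\overline{P}_1+\frac12\overline{P}_2+\lambda\overline{Q}$, where $\overline{C}|_{\overline{E}}=\overline{Q}+\overline{P}_1$. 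Evaluating $A_{\overline{E},\Delta_{\overline{E}}}(O)/S(W^{\overline{E}}_{\bullet,\bullet};O)$ at $\overline{P}_1,\overline{P}_2,\overline{Q}$ and at a general point gives $\frac{3(1-\lambda)}{3-d\lambda}$, $\frac{3}{3-d\lambda}$, $\frac{9(1-\lambda)}{3-d\lambda}$, and $\frac{9}{3-d\lambda}$ respectively; the smallest of these is $\frac{3(1-\lambda)}{3-d\lambda}$ at $\overline{P}_1$, and a short check shows $\frac{3}{5}\cdot\frac{5-8\lambda}{3-d\lambda}\le\frac{3(1-\lambda)}{3-d\lambda}$ for $\lambda\le\frac{10}{7}$, so the minimum in \eqref{estimation2} is $A/S(\overline{E})$ and equality holds.

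The step I expect to be the main obstacle is pinning down the different at $\overline{P}_1$. This is precisely what separates the $D_5$ case from the numerically identical $A_2$ case: here the strict transform $\overline{C}$ passes through the quotient singular point $\overline{P}_1$, so its coefficient in $\Delta_{\overline{E}}$ is not the bare quotient value $\frac23$ but the combined value $\frac{2+\lambda}{3}$, the extra $\frac{\lambda}{3}$ being the contribution of the branch of $\overline{C}$ weighted by the local index $\frac13$. Getting this adjunction bookkeeping correct, and confirming that the resulting ratio at $\overline{P}_1$ stays above $A/S(\overline{E})$ throughout $[0,\min\{5/8,3/d\}]$, is the heart of the argument.
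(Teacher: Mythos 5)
Your proposal matches the paper's proof essentially step for step: the same tower of three blow-ups followed by the contractions, the same intersection data and pullbacks giving $A_{(\DP^2,\lambda C)}(\overline{E})=5-8\lambda$ and $S_{(\DP^2,\lambda C)}(\overline{E})=\tfrac{5(3-d\lambda)}{3}$, the same Zariski decomposition and $h(v)$ bounds, and crucially the same different $\Delta_{\overline{E}}=\tfrac{2+\lambda}{3}\overline{P}_1+\tfrac12\overline{P}_2+\lambda\overline{Q}$ with the extra $\tfrac{\lambda}{3}$ at the quotient point, which is exactly the feature the paper uses to distinguish this case from the numerically identical $A_2$ configuration. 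The only cosmetic slip is your range "$\lambda\le\tfrac{10}{7}$" for the final comparison, which in fact holds for all $\lambda\ge 0$; this does not affect the argument.
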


 \section{Curves with $D_6$ singularities}
 \begin{lemma}
 Let $C\subset \DP^2$ be a plane curve of degree $d$ with a $D_6$ singularity at point $P\in C$  on $C$ such that the tangent line $L$ to $C$ at point $P$ is a component of $C$. Then
 $$\delta_P(\DP^2,\lambda C)=\frac{3-5\lambda}{3-d \lambda }\text{ for }\lambda\in\Big[0,\min \Big\{\frac{3}{5}, \frac{3}{d}\Big\}\Big].$$
 \end{lemma}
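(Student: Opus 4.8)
The plan is to follow the same Abban--Zhuang scheme via a plt blowup used throughout the previous lemmas, realized through estimation~\eqref{estimation2}. First I would resolve the $D_6$ singularity by an explicit sequence of point blowups $\pi_1,\dots,\pi_k\colon S^k\to\dots\to S$ over $P$, at each stage tracking the strict transforms of the line component $L$, of the residual curve $\mathcal C$ (so that $C=L\cup\mathcal C$), and of the successive exceptional curves. The local normal form $C=y(x^2+y^4)$ with $L=\{y=0\}$ a component indicates that the valuation computing the answer is the weighted one with $(\operatorname{wt}x,\operatorname{wt}y)=(2,1)$: it has log discrepancy $2+1=3$ and weighted order $\operatorname{wt}\big(y(x^2+y^4)\big)=5$ along $C$. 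After contracting all exceptional curves except the one realizing this valuation I obtain a surface $\overline S$ carrying a single exceptional divisor $\overline E$ over $P$ with one quotient singular point (expected of type $\tfrac12(1,1)$), together with the strict transforms $\overline C,\overline L$. The pullback formulas $\sigma^*K_{\DP^2}=K_{\overline S}-2\overline E$ and $\operatorname{ord}_{\overline E}C=5$ then give $A_{(\DP^2,\lambda C)}(\overline E)=3-5\lambda$, which already pins down the announced numerator and explains the endpoint $\lambda=\tfrac35$ of the interval, where $A_{(\DP^2,\lambda C)}(\overline E)=0$ (the bound $\tfrac3d$ being the usual pseudo-effectivity threshold of $-K_{\DP^2}-\lambda C$).

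The second step is to compute $S_{(\DP^2,\lambda C)}(\overline E)$. I would record the intersection table of $\overline E$ and $\overline L$ on $\overline S$, determine the pseudo-effective threshold $\tau$, and write down the Zariski decomposition of $-\sigma^*(K_{\DP^2}+\lambda C)-v\overline E$; as in the $A_3$ computation I expect the negative part to become nonzero once $v$ passes the value at which $\overline L$ begins to be contracted, with $P(v)$ splitting into two linear families and $\tau$ a small integer multiple of $3-d\lambda$. Carrying out the resulting two integrals should yield $S_{(\DP^2,\lambda C)}(\overline E)=3-d\lambda$, and hence the upper bound $\delta_P(\DP^2,\lambda C)\le\frac{3-5\lambda}{3-d\lambda}$.

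For the reverse inequality I would apply~\eqref{estimation2}: compute the local quantities $h(v)$ and $S\big(W^{\overline E}_{\bullet,\bullet};O\big)$ for $O$ running over the generic point of $\overline E$, the quotient point $\overline P$, the point $\overline E\cap\overline L$, and the points $\overline Q_i=\overline C\cap\overline E$, then read off the different $\Delta_{\overline E}$ (with coefficient $\tfrac12$ at $\overline P$ and $\lambda$ at each $\overline Q_i$ and along $\overline L$). The task is to verify that every ratio $\frac{A_{\overline E,\Delta_{\overline E}}(O)}{S(W^{\overline E}_{\bullet,\bullet};O)}$ is at least $\frac{3-5\lambda}{3-d\lambda}$ on the stated interval, so that the minimum in~\eqref{estimation2} is attained by the first term and equality holds. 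The main obstacle is the combinatorics of the $D_6$ resolution together with the presence of the line component: unlike the purely tangential $A_n$ cases, one must correctly locate which infinitely near point to blow up in order to extract exactly the weight-$(2,1)$ valuation, track how $\overline L$ enters the negative part of the Zariski decomposition, and confirm that the worst local ratio still occurs at $\overline E$ itself rather than at $\overline E\cap\overline L$ or at the quotient point $\overline P$.
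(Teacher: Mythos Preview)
Your overall strategy matches the paper's: extract the weight-$(2,1)$ exceptional divisor $\overline E$ (realized by two point blowups and one contraction), compute $A(\overline E)=3-5\lambda$ and $S(\overline E)=3-d\lambda$ via the Zariski decomposition, then verify the Abban--Zhuang lower bound~\eqref{estimation2}. The numbers $A$, $S$, and the final answer are all correct.

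However, your local model is misaligned with the paper's geometry in a way that would derail the execution. In the normal form $y(x^2+y^4)$ the three branches have pairwise local intersection numbers $\{1,1,2\}$, with $\{y=0\}$ being the ``odd'' branch (the one meeting the other two with multiplicity $1$ each). You take $L=\{y=0\}$, but in the paper's setup---and in every quartic $D_6$---the line component $L$ is one of the \emph{other} two branches, i.e.\ $L$ lies along the \emph{double} tangent direction of the tangent cone $\ell_1^2\ell_2$. Concretely, the paper blows up the single point $E_1^1\cap C^1\cap L^1$; for that triple intersection to be a single point, a branch of $\mathcal C=C\setminus L$ must be tangent to $L$, which forces $L$ along the double direction. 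With your placement of $L$ this triple intersection does not exist: $L^1$ and $\mathcal C^1$ meet $E_1$ at different points.

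Two concrete consequences. First, in your picture $\sigma^*L=\overline L+\overline E$ (not $\overline L+2\overline E$), so $\overline L\cdot\overline E=\tfrac12$ and $\overline L^2=\tfrac12>0$; hence $\overline L$ never enters $N(v)$. The curve that actually appears in the negative part is the strict transform of the line through $P$ in the double-tangent direction, which in your normal form is $\{x=0\}$ and is \emph{not} a component of $C$. Second, your $\overline L$ passes through the $\tfrac12(1,1)$ point $\overline P$, so the different is $\Delta_{\overline E}=\tfrac{1+\lambda}{2}\,\overline P+\lambda\overline Q_1+\lambda\overline Q_2$, not $\tfrac12\,\overline P+\cdots$ as you wrote. (In the paper's picture the extra $\tfrac{\lambda}{2}$ at $\overline P$ comes instead from the transverse branch of $\overline{\mathcal C}$; either way the coefficient at $\overline P$ is $\tfrac{1+\lambda}{2}$.) Once these two points are corrected the local ratios all satisfy $A/S\ge \tfrac{3(1-\lambda)}{3-d\lambda}\ge\tfrac{3-5\lambda}{3-d\lambda}$ and the argument closes, so the gap is in the setup rather than in the method.
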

 \begin{proof}
 The tangent line $L$ to $C$ at point $P$ is a component of $C$. Let $C=\mathcal{C}+L$. Let $\pi_1:S^1\to S$ be the~blow up of the~point $P$,
with the exceptional divisor $E_1^1$ and $L^1$, $C^1$ are strict transforms of $L$ and $\mathcal{C}$ respectively; $\pi_2: S^2\to S^1$ be the~blow up of the~point $E_1^1\cap C^1\cap L^1$,
with the exceptional divisor $E$ and $L^2$, $C^2$ are strict transforms of $L^1$ and $C^1$ respectively; let $\theta:S^2\to\overline{S}$ be  the~contraction of  the~curve $E_1^2$,
and $\sigma$ is the~birational contraction of $\overline{E}=\theta(E)$. We denote the strict transforms of $C$ and $L$ on $\overline{S}$ by $\overline{C}$ and $\overline{L}$. Let $\overline{E}$ be the exceptional divisor of $\sigma$. Note that $\overline{E}$ contains one singular point $\overline{P}$  which is $\frac{1}{2}(1,1)$  singular point.
\begin{center}
 \includegraphics[width=12cm]{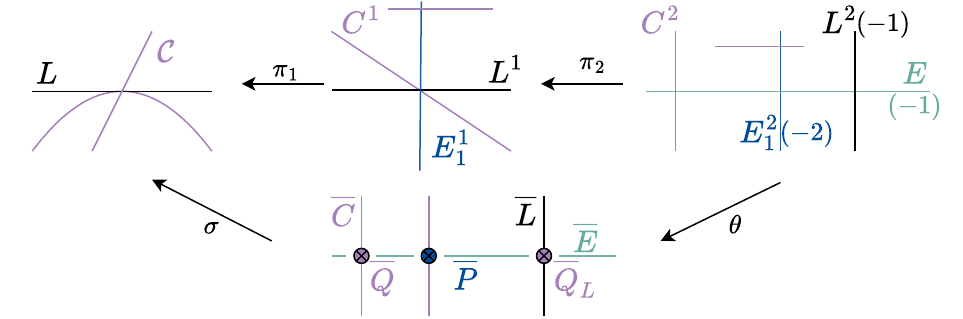}
 \end{center}
In both cases the intersections on $\overline{S}$ are given by:
\begin{center}
    \renewcommand{\arraystretch}{1.4}
    \begin{tabular}{|c|c|c|}
    \hline
         & $\overline{E}$ & $\overline{L}$ \\
    \hline
       $\overline{E}$  & $-\frac{1}{2}$ & $1$ \\
    \hline
       $\overline{L}$  & $1$ & $-1$ \\
    \hline
    \end{tabular}
\end{center}
 We have $\sigma^*(L)=\overline{L}+2\overline{E}$, $\sigma^*(C)=\overline{C}+\overline{L}+5\overline{E}$, $\sigma^*(K_{\DP^2})=K_{\overline{S}}-2\overline{E}$. Thus, $A_{(\DP^2,\lambda C)}(\overline{E})=3-5\lambda$.
\noindent The Zariski decomposition of the divisor  $-\sigma^*(K_{\DP^2}+\lambda C)-v\overline{E}$ is given by:
\begin{align*}
&&P(v)=
\begin{cases}
-\sigma^*(K_{\DP^2}+\lambda C)-v\overline{E}\text{ if }v\in[0,3-d \lambda ],\\
-\sigma^*(K_{\DP^2}+\lambda C)-v\overline{E}-\big(v-(3-d \lambda )\big)\overline{L}\text{ if }v\in[3-d \lambda , 2(3-d \lambda )],\\
\end{cases}\\&&
N(v)=
\begin{cases}
0\text{ if }v\in[0,3-d \lambda ],\\
\big(v-(3-d \lambda )\big)\overline{L}\text{ if }v\in[3-d \lambda ,2(3-d \lambda )].
\end{cases}
\end{align*}
Then
$$
P(v)^2=
\begin{cases}
(3-d \lambda )^2 -\frac{v^2}{2}\text{ if }v\in[0,3-d \lambda ],\\
\frac{(v- 2(3-d \lambda ))^2}{2}\text{ if }v\in[3-d \lambda ,2(3-d \lambda )],
\end{cases}
\text{ and }
P(v)\cdot \overline{E}=
\begin{cases}
\frac{v}{2}\text{ if }v\in[0,3-d \lambda ],\\
3-d \lambda  -\frac{v}{2}\text{ if }v\in[3-d \lambda ,2(3-d \lambda )],
\end{cases}
$$
Thus
$$S_{(\DP^2, \lambda C)}(\overline{E})=\frac{1}{(3-d \lambda )^2}\Big(\int_0^{3-d \lambda } (3-d \lambda )^2 -\frac{v^2}{2} dv+\int_{3-d \lambda }^{2(3-d \lambda )} \frac{(v- 2(3-d \lambda))^2}{2} dv\Big)=3-d \lambda $$
so that $\delta_P(\DP^2,\lambda C)\le \frac{3-5\lambda}{3-d \lambda }$. For every $O\in \overline{E}$, we get if $O\in \overline{E}\backslash \overline{L}$ or if $O\in \overline{E}\cap \overline{L}$:
$$h(v)\le 
\begin{cases}
\frac{v^2}{8}\text{ if }v\in[0,3-d \lambda ],\\
\frac{(v- 2(3-d \lambda))^2}{8}\text{ if }v\in[3-d \lambda , 2(3-d \lambda )],
\end{cases}
\text{or }
h(v)\le \begin{cases}
\frac{v^2}{8}\text{ if }v\in[0,3-d \lambda ],\\
\frac{ (v- 2(3-d \lambda) )(2(3-d \lambda) - 3v)}{8}\text{ if }v\in[3-d \lambda , 2(3-d \lambda )].
\end{cases}
$$
So that
$$S\big(W^{\overline{E}}_{\bullet,\bullet};O\big)\le \frac{2}{(3-d \lambda )^2}\Big(\int_0^{3-d \lambda } \frac{v^2}{8} dv+\int_{3-d \lambda }^{2(3-d \lambda )} \frac{(v- 2(3-d \lambda))^2}{8} dv\Big)=\frac{3-d \lambda }{6}
$$
or
$$S\big(W^{\overline{E}}_{\bullet,\bullet};O\big)\le \frac{2}{(3-d \lambda )^2}\Big(\int_0^{3-d \lambda } \frac{v^2}{8} dv+\int_{3-d \lambda }^{2(3-d \lambda )} \frac{ (v- 2(3-d \lambda)) (2(3-d \lambda) - 3v)}{8} dv\Big)=\frac{3-d \lambda }{3}
$$
We have
$$
\delta_P(\DP^2,\lambda C)\geqslant\mathrm{min}\Bigg\{ \frac{3-5\lambda}{3-d \lambda },\inf_{O\in\overline{E}}\frac{A_{\overline{E},\Delta_{\overline{E}}}(O)}{S\big(W^{\overline{E}}_{\bullet,\bullet};O\big)}\Bigg\},
$$
where $\Delta_{\overline{E}}=\frac{1+\lambda}{2}\overline{P}+\lambda \overline{Q}+\lambda \overline{Q}_L$ where $\overline{Q}+ \overline{Q}_L=\overline{C}|_{\overline{E}}$.  
So that
$$
\frac{A_{\overline{E},\Delta_{\overline{E}}}(O)}{S(W_{\bullet,\bullet}^{\overline{E}};O)}=
\left\{\aligned
&\frac{3(1-\lambda)}{3-d \lambda }\ \mathrm{if}\  O\in\{\overline{P},\overline{Q}_L\},\\
&\frac{6(1-\lambda)}{3-d \lambda }\ \mathrm{if}\ O=\overline{Q},\\
&\frac{6}{3-d \lambda }\ \mathrm{otherwise}.
\endaligned
\right.
$$
Thus $\delta_P(\DP^2,\lambda C)= \frac{3-5\lambda}{3-d \lambda }$.
 \end{proof}
   \noindent This proves the following theorem:
 \begin{theorem}
  Suppose $C_4\subset \DP^2$ is a quartic curve with  $D_6$ singularity. Then  for $\lambda\in\big[0,\frac{3}{5}\big]$ we have:
 $$\delta(\DP^2,\lambda C_4)= \frac{3-5\lambda}{3-4 \lambda }.$$
 \end{theorem}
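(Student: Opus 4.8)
The plan is to deduce the global statement from the preceding Lemma together with a classification of quartics carrying a $D_6$ point. Applying that Lemma with $d=4$ gives, at the $D_6$ point $P$, the exact value $\delta_P(\DP^2,\lambda C_4)=\frac{3-5\lambda}{3-4\lambda}$ on $\big[0,\frac{3}{5}\big]$. Since $\delta(\DP^2,\lambda C_4)=\inf_{Q\in C_4}\delta_Q(\DP^2,\lambda C_4)$, it then suffices to prove that every other point $Q\neq P$ satisfies $\delta_Q(\DP^2,\lambda C_4)\ge\frac{3-5\lambda}{3-4\lambda}$ throughout the interval; the infimum is attained at $P$ and the theorem follows.

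First I would determine exactly which configurations support a $D_6$ point. The $D_6$ singularity has $\delta$-invariant $4$ (three smooth branches with pairwise contact orders $1,1,2$), while a reduced plane quartic satisfies $\sum_{Q}\delta_Q=3+(c-1)-\sum_i g_i$, where $c$ is the number of components and $g_i$ their geometric genera. Forcing $\delta_P=4$ rules out all but a few types: the quartic must be reducible with rational components, and the only admissible cases are a line together with a nodal cubic (node at $P$), or two lines together with a conic. In each case the tangency producing the contact-$2$ pair of branches can only be a line–curve tangency, so the distinguished tangent line $L$ is necessarily a \emph{component} of $C_4$; this is precisely the hypothesis of the Lemma, and it must be checked that no non-line tangency can reproduce a $D_6$ (two smooth conics, for instance, meet in $A_{\mathrm{odd}}$ points only). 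The same budget shows $P$ absorbs essentially all of the admissible $\delta$, so the only singularities away from $P$ are nodes coming from the residual intersections of the components; all remaining points are smooth and lie either on the line component or on a rational component of degree $\le 3$.

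Next I would bound $\delta_Q$ at these remaining points by quoting the local computations already in place. At a smooth point of the line component, Lemma~\ref{line} with $d=4$ gives $\frac{3(1-\lambda)}{3-4\lambda}$; at a node, Lemma~\ref{A1-points} gives the same value; and at a smooth point of a component of degree $\le 3$ the smooth-point analysis (Lemma~\ref{qc-smooth-2} and its degree-$3$ analogue) gives $\frac{3-2\lambda}{3-4\lambda}$, or $\frac{3}{4}\cdot\frac{4-3\lambda}{3-4\lambda}$ at a flex, the contact order being capped by B\'ezout. After clearing the positive common denominator $3-4\lambda$, each comparison reduces to an elementary inequality between linear numerators valid on $\big[0,\frac{3}{5}\big]$; for example $3-3\lambda\ge 3-5\lambda$ and $12-9\lambda\ge 12-20\lambda$. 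Hence $\delta_Q\ge\frac{3-5\lambda}{3-4\lambda}$ for every $Q\neq P$, and combining with the value at $P$ yields $\delta(\DP^2,\lambda C_4)=\frac{3-5\lambda}{3-4\lambda}$.

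I expect the classification step, not the estimates, to be the main obstacle. One must rule out a second deep singularity (a further $D$- or higher $A$-point) using the $\delta$-budget and the fact that straight components admit no mutual tangency, verify that the tangent line is automatically a component so that the Lemma genuinely applies, and confirm that the endpoint $\frac{3}{5}$ is exactly the log canonical threshold at which the numerator $3-5\lambda$ vanishes, so that the stated formula holds on the full range $\big[0,\frac{3}{5}\big]$.
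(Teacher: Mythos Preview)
Your proposal is correct and follows the same approach as the paper: the paper simply writes ``This proves the following theorem'' immediately after the $D_6$ Lemma, leaving implicit exactly the classification and pointwise comparison you spell out (that the only quartics with a $D_6$ point are the line--nodal-cubic and the two-lines--conic configurations from Appendix~B, that the tangent line is automatically a component so the Lemma applies, and that the remaining points --- smooth points on line or low-degree components, and at most one extra $A_1$ --- all have $\delta_Q\ge\frac{3-5\lambda}{3-4\lambda}$ by Lemmas~\ref{line}, \ref{qc-smooth-2}, \ref{A1-points} and the flex lemma). One cosmetic slip: the infimum should run over all $Q\in\DP^2$, not just $Q\in C_4$; for $Q\notin C_4$ one has $\delta_Q=\frac{3}{3-4\lambda}\ge\frac{3-5\lambda}{3-4\lambda}$, so nothing changes.
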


  \section{Curves with $E_6$ singularity}
 \begin{lemma}
 Let $C\subset \DP^2$ be a plane curve of degree $d$ with a $E_6$ singularity at point $P\in C$  on $C$. Then
 $$\delta_P(\DP^2,\lambda C)=\frac{3}{7}\cdot \frac{7-12\lambda}{3-d \lambda }\text{ for }\lambda\in \Big[0, \min \Big\{\frac{7}{12}, \frac{3}{d}\Big\}\Big].$$
 \end{lemma}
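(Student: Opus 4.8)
The plan is to follow the template established in the previous lemmas: extract the divisor $\overline{E}$ over $P$ that computes $\delta_P$ by an explicit tower of blow-ups resolving the $E_6$ singularity together with its tangent line $L$, pass to a surface $\overline{S}$ on which $\overline{E}$ becomes a single curve carrying cyclic quotient singularities, compute $A$, $S$ and the Zariski decomposition to get the upper bound, and then apply the inequality \eqref{estimation2} for the matching lower bound.

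First I would set up the resolution. The $E_6$ singularity is analytically $x^3+y^4=0$, which is unibranch, so for a reduced curve the tangent line $L$ is never a component and no case split is needed; moreover $L$ has contact $4$ with the branch. I would blow up $P$ to get $E_1^1$, then blow up $E_1^1\cap C^1\cap L^1$ to get $E_2^2$, then $E_1^2\cap E_2^2\cap C^2$ to get $E_3^3$, and finally $E_1^3\cap E_3^3\cap C^3$ to get the divisor $E$. Tracking discrepancies and multiplicities along this tower (using that the last two centres each lie on two exceptional curves) gives $\sigma^*(L)=\overline{L}+4\overline{E}$, $\sigma^*(C)=\overline{C}+12\overline{E}$ and $\sigma^*(K_{\DP^2})=K_{\overline{S}}-6\overline{E}$, hence $A_{(\DP^2,\lambda C)}(\overline{E})=7-12\lambda$. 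Contracting the auxiliary curves $E_1,E_2,E_3$ produces $\overline{S}$; since $E_1$ is a single $(-4)$-curve and $E_2,E_3$ form a $(-2,-2)$-chain, the divisor $\overline{E}$ acquires a $\tfrac14(1,3)$ point $\overline{P}_1$ and a $\tfrac13(1,2)$ point $\overline{P}_2$, with $\overline{L}$ meeting $\overline{E}$ exactly at $\overline{P}_2$. Computing $\theta^*\overline{E}=E+\tfrac14E_1+\tfrac13E_2+\tfrac23E_3$ yields $\overline{E}^2=-\tfrac1{12}$, and then the pullback relations $\sigma^*(L)\cdot\overline{E}=0$ and $\sigma^*(L)^2=1$ give $\overline{E}\cdot\overline{L}=\tfrac13$ and $\overline{L}^2=-\tfrac13$.

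Next I would compute the Zariski decomposition of $-\sigma^*(K_{\DP^2}+\lambda C)-v\overline{E}$. Writing $w=3-d\lambda$, the divisor is nef for $v\in[0,3w]$ (phase~1, where $P(v)\cdot\overline{L}=0$ at $v=3w$), and for $v\in[3w,4w]$ the negative part is $N(v)=(v-3w)\overline{L}$, with pseudo-effective threshold $\tau=4w$. Integrating $P(v)^2$ over the two phases gives $S_{(\DP^2,\lambda C)}(\overline{E})=\tfrac{7w}{3}$, so $\delta_P(\DP^2,\lambda C)\le \tfrac37\cdot\tfrac{7-12\lambda}{3-d\lambda}$, which I expect to be the exact value. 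For the lower bound I set $\Delta_{\overline{E}}=\tfrac34\overline{P}_1+\tfrac23\overline{P}_2+\lambda\overline{Q}$ with $\overline{Q}=\overline{C}|_{\overline{E}}$ a smooth point distinct from $\overline{P}_1,\overline{P}_2$, and evaluate $S(W^{\overline{E}}_{\bullet,\bullet};O)$ via $h(v)=\big(P(v)\cdot\overline{E}\big)\big(N(v)\cdot\overline{E}\big)_O+\tfrac12\big(P(v)\cdot\overline{E}\big)^2$, where $P(v)\cdot\overline{E}=\tfrac{v}{12}$ in phase~1 and $w-\tfrac{v}{4}$ in phase~2. Away from $\overline{P}_2$ the local term vanishes and $S(W^{\overline{E}};O)=\tfrac{w}{12}$; at $\overline{P}_2$, where $\overline{L}$ contributes, one gets $S(W^{\overline{E}};\overline{P}_2)=\tfrac{w}{9}$. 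The resulting ratios $A_{\overline{E},\Delta_{\overline{E}}}(O)/S(W^{\overline{E}};O)$ are $\tfrac{3}{3-d\lambda}$ at both $\overline{P}_1$ and $\overline{P}_2$, $\tfrac{12(1-\lambda)}{3-d\lambda}$ at $\overline{Q}$, and $\tfrac{12}{3-d\lambda}$ elsewhere, so the infimum is $\tfrac{3}{3-d\lambda}$, which dominates $\tfrac37\cdot\tfrac{7-12\lambda}{3-d\lambda}$ for every $\lambda\ge0$; hence the minimum in \eqref{estimation2} equals the upper bound on all of $[0,\min\{\tfrac7{12},\tfrac3d\}]$.

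The main obstacle is the resolution bookkeeping: pinning down the blow-up centres and proximity relations so that the two quotient singularities come out as exactly $\tfrac14(1,3)$ and $\tfrac13(1,2)$ and the intersection numbers on $\overline{S}$ are correct, since any slip there corrupts $A$, $S(\overline{E})$ and the different simultaneously. A secondary point worth flagging, which explains why the admissible interval starts at $0$ rather than at $\tfrac38$ as in the $A_4$--$A_7$ cases, is that here \emph{both} singular points happen to give the same bound $\tfrac{3}{3-d\lambda}$, already exceeding $A/S$ for all $\lambda\ge0$, so no lower threshold on $\lambda$ is forced.
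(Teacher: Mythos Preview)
Your proposal is correct and follows essentially the same route as the paper: the same four-step tower of blow-ups extracting $\overline{E}$, the same contraction to $\overline{S}$, the same intersection numbers $\overline{E}^2=-\tfrac1{12}$, $\overline{E}\cdot\overline{L}=\tfrac13$, $\overline{L}^2=-\tfrac13$, the same two-phase Zariski decomposition with $S(\overline{E})=\tfrac{7(3-d\lambda)}{3}$, and the same values of $S(W^{\overline{E}}_{\bullet,\bullet};O)$ and of the different. One cosmetic slip: contracting the single $(-4)$-curve $E_1$ produces a $\tfrac14(1,1)$ point, not $\tfrac14(1,3)$ (the latter is $A_3$, a chain of three $(-2)$-curves); this is harmless since the different coefficient $1-\tfrac14=\tfrac34$ depends only on the index, and all your downstream numbers are correct.
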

  \begin{proof}
Let $L$ be a tangent line at point $P$.  Let $\pi_1:S^1\to S$ be the~blow up of the~point $P$,
with the exceptional divisor $E_1^1$ and $L^1$, $C^1$ are strict transforms of $L$ and $C$ respectively; $\pi_2: S^2\to S^1$ be the~blow up of the~point $E_1^1\cap C^1\cap L^1$,
with the exceptional divisor $E_2^2$ and $L^2$, $C^2$, $E_1^2$ are strict transforms of $L^1$, $C^1$, $E_1^1$ respectively; $\pi_3: S^3\to S^2$ be the~blow up of the~point $E_2^2\cap E_1^2\cap C^2$,
with the exceptional divisor $E_3^3$ and $L^3$, $C^3$, $E_1^3$, $E_2^3$ are strict transforms of $L^2$, $C^2$, $E_1^2$, $E_2^2$ respectively; $\pi_4: S^4\to S^3$ be the~blow up of the~point $E_3^3\cap E_1^3\cap C^3$,
with the exceptional divisor $E$ and $L^4$, $C^4$, $E_1^4$, $E_2^4$, $E_3^4$ are strict transforms of $L^3$, $C^3$, $E_1^3$, $E_2^3$, $E_3^2$ respectively; let $\theta:S^4\to\overline{S}$ be  the~contraction of  the~curves  $E_1^4$, $E_2^4$, $E_3^4$
and $\sigma$ is the~birational contraction of $\overline{E}=\theta(E)$. We denote the strict transforms of $C$ and $L$ on $\overline{S}$ by $\overline{C}$ and $\overline{L}$. Let $\overline{E}$ be the exceptional divisor of $\sigma$. Note that $\overline{E}$ contains two singular points $\overline{P}_{1}$ and  $\overline{P}_{2,3}$   which are $\frac{1}{4}(1,1)$ and  $\frac{1}{3}(1,2)$ singular points respectively.
\begin{center}
 \includegraphics[width=16cm]{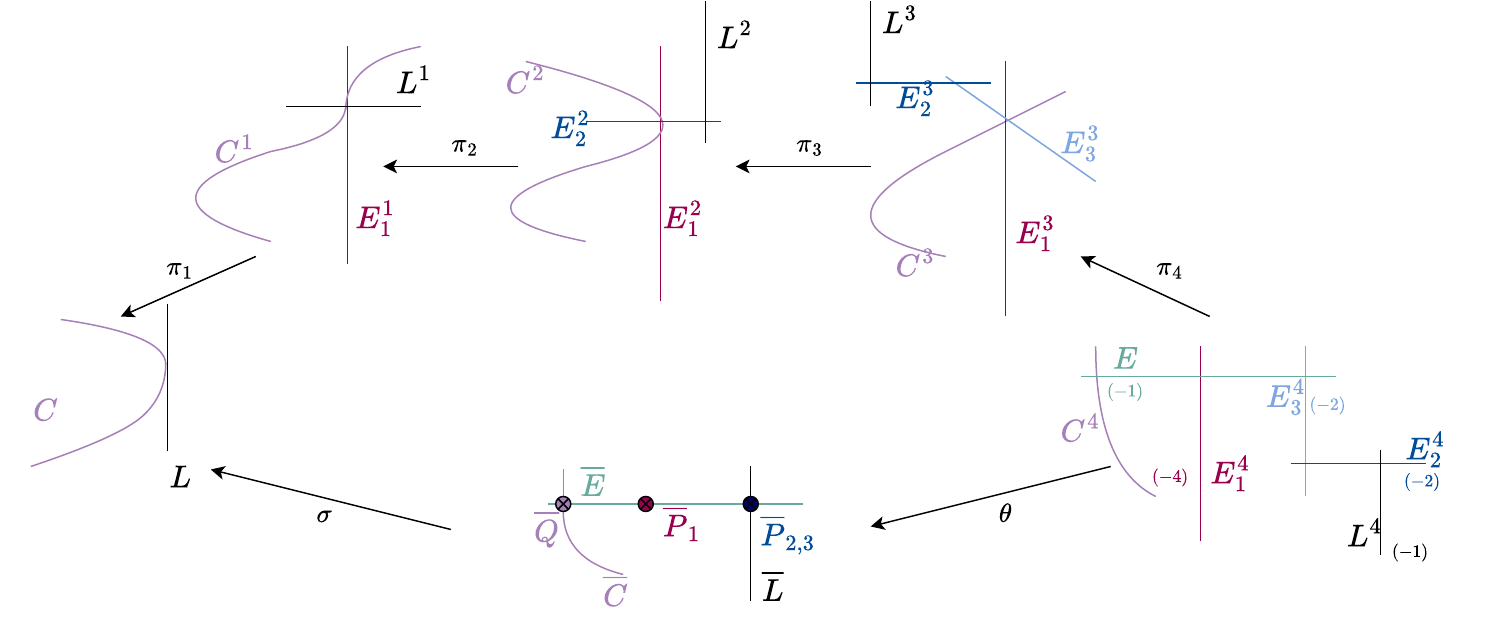}
 \end{center}
The intersections on $\overline{S}$ are given by:
\begin{center}
    \renewcommand{\arraystretch}{1.4}
    \begin{tabular}{|c|c|c|}
    \hline
         & $\overline{E}$ & $\overline{L}$ \\
    \hline
       $\overline{E}$  & $-\frac{1}{12}$ & $\frac{1}{3}$ \\
    \hline
       $\overline{L}$  & $\frac{1}{3}$ & $-\frac{1}{3}$ \\
    \hline
    \end{tabular}
\end{center}
 We have $\sigma^*(L)=\overline{L}+4\overline{E}$, $\sigma^*(C)=\overline{C}+12\overline{E}$, $\sigma^*(K_{\DP^2})=K_{\overline{S}}-6\overline{E}$. Thus, $A_{(\DP^2,\lambda C)}(\overline{E})=7-12\lambda$.
\noindent The Zariski decomposition of the divisor  $-\sigma^*(K_{\DP^2}+\lambda C)-v\overline{E}$ is given by:
\begin{align*}
&&P(v)=
\begin{cases}
-\sigma^*(K_{\DP^2}+\lambda C)-v\overline{E}\text{ if }v\in\big[0,3(3-d \lambda )\big],\\
-\sigma^*(K_{\DP^2}+\lambda C)-v\overline{E}-\big(v-3(3-d \lambda )\big)\overline{L}\text{ if }v\in\big[3(3-d \lambda ), 4(3-d \lambda )\big],\\
\end{cases}\\&&
N(v)=
\begin{cases}
0\text{ if }v\in\big[0,3(3-d \lambda )\big],\\
\big(v-3(3-d \lambda )\big)\overline{L}\text{ if }v\in\big[3(3-d \lambda ),4(3-d \lambda )\big].
\end{cases}
\end{align*}
Then
{\small $$
P(v)^2=
\begin{cases}
(3-d \lambda )^2 -\frac{v^2}{12}\text{ if }v\in\big[0,3(3-d \lambda )\big],\\
\frac{(v - 4(3-d \lambda ))^2}{4}\text{ if }v\in\big[3(3-d \lambda ),4(3-d \lambda )\big],
\end{cases}
\text{ and }
P(v)\cdot \overline{E}=
\begin{cases}
\frac{v}{12}\text{ if }v\in\big[0,3(3-d \lambda )\big],\\
3-d \lambda  -\frac{v}{4}\text{ if }v\in\big[3(3-d \lambda ),4(3-d \lambda )\big],
\end{cases}
$$}
Thus
$$S_{(\DP^2, \lambda C)}(\overline{E})=\frac{1}{(3-d \lambda )^2}\Big(\int_0^{3(3-d \lambda )} (3-d \lambda )^2 -\frac{v^2}{12} dv+\int_{3(3-d \lambda )}^{4(3-d \lambda )} \frac{(v - 4(3-d \lambda ))^2}{4} dv\Big)=\frac{7(3-d \lambda )}{3}$$
so that $\delta_P(\DP^2,\lambda C)\le \frac{3}{7}\frac{7-12\lambda}{3-d \lambda }$. For every $O\in \overline{E}$, we get if $O\in \overline{E}\backslash \overline{L}$ or if $O\in \overline{E}\cap \overline{L}$:
{\small $$h(v)\le 
\begin{cases}
\frac{v^2}{288}\text{ if }v\in\big[0,3(3-d \lambda )\big],\\
\frac{(v - 12 +16\lambda)^2}{32}\text{ if }v\in\big[3(3-d \lambda ),4(3-d \lambda )\big],
\end{cases}
\text{or }
h(v)\le \begin{cases}
\frac{v^2}{288}\text{ if }v\in\big[0,3(3-d \lambda )\big],\\
\frac{ (v - 4(3-d \lambda )) (20(3-d \lambda )  -7v )}{32}\text{ if }v\in\big[3(3-d \lambda ),4(3-d \lambda )\big].
\end{cases}
$$}
So that
$$S\big(W^{\overline{E}}_{\bullet,\bullet};O\big)\le \frac{2}{(3-d \lambda )^2}\Big(\int_0^{3(3-d \lambda )} \frac{v^2}{288} dv+\int_{3(3-d \lambda )}^{4(3-d \lambda )} \frac{(v -4(3-d \lambda ))^2}{32} dv\Big)=\frac{3-d \lambda }{12}
$$
or
$$S\big(W^{\overline{E}}_{\bullet,\bullet};O\big)\le \frac{2}{(3-d \lambda )^2}\Big(\int_0^{3(3-d \lambda )} \frac{v^2}{288} dv+\int_{3(3-d \lambda )}^{4(3-d \lambda )} \frac{ (v - 4(3-d \lambda )) (20(3-d \lambda ) -7v )}{32} dv\Big)=\frac{3-d \lambda }{9}
$$
We have
$$
\delta_P(\DP^2,\lambda C)\geqslant\mathrm{min}\Bigg\{\frac{3}{7}\cdot \frac{7-12\lambda}{3-d \lambda },\inf_{O\in\overline{E}}\frac{A_{\overline{E},\Delta_{\overline{E}}}(O)}{S\big(W^{\overline{E}}_{\bullet,\bullet};O\big)}\Bigg\},
$$
where $\Delta_{\overline{E}}=\frac{3}{4}\overline{P}_{1}+\frac{2}{3}\overline{P}_{2,3}+\lambda\overline{Q}$ where $\overline{Q}=\overline{C}|_{\overline{E}}$. 
So that
$$
\frac{A_{\overline{E},\Delta_{\overline{E}}}(O)}{S(W_{\bullet,\bullet}^{\overline{E}};O)}=
\left\{\aligned
&\frac{3}{3-d \lambda }\ \mathrm{if}\ O\in\{\overline{P}_{1}, \overline{P}_{2,3}\},\\
&\frac{12(1-\lambda)}{3-d \lambda }\ \mathrm{if}\ O=\overline{Q},\\
&\frac{12}{3-d \lambda }\ \mathrm{otherwise}.
\endaligned
\right.
$$
Thus $$\delta_P(\DP^2,\lambda C)=\frac{3}{7}\cdot \frac{7-12\lambda}{3-d \lambda }\text{ for }\lambda\in \Big[0, \frac{7}{12}\Big].$$
 \end{proof}
 \noindent This proves the following theorem:
 \begin{theorem}
   Suppose $C_4\subset \DP^2$ is a quartic curve with  $E_6$ singularity. Then  we have:
$$\delta(\DP^2,\lambda C_4)=\frac{3}{7}\cdot \frac{7-12\lambda}{3-4 \lambda }\text{ for }\lambda\in \Big[0, \frac{7}{12}\Big].$$
 \end{theorem}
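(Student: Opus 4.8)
The plan is to follow the weighted blowup strategy of the preceding lemmas, now applied to the $E_6$ singularity, which is analytically $x^3+y^4=0$ with tangent cone $x^3=0$; write $L$ for the tangent line at $P$. The divisor computing $\delta_P$ should be the one extracted by the monomial valuation with weights $\mathrm{wt}(x)=4$, $\mathrm{wt}(y)=3$ --- these are exactly the weights making $x^3+y^4$ quasi-homogeneous of weighted degree $12$. First I would realize this valuation as a chain of four point blowups $\pi_1,\dots,\pi_4$ of $P$ and its infinitely near points (blowing up the successive intersections of the new exceptional curve with the strict transforms of $C$, $L$, and the earlier exceptional curves), then contract the intermediate exceptional curves via $\theta$ and $\sigma$ to arrive at a surface $\overline{S}$ on which $\overline{E}$ is the unique exceptional divisor over $P$. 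On $\overline{E}$ this leaves two cyclic quotient points, a $\frac14(1,1)$ point $\overline{P}_1$ and a $\frac13(1,2)$ point $\overline{P}_{2,3}$, together with the intersection data $\overline{E}^2=-\tfrac1{12}$, $\overline{E}\cdot\overline{L}=\tfrac13$, $\overline{L}^2=-\tfrac13$. Reading off $\sigma^*(L)=\overline{L}+4\overline{E}$, $\sigma^*(C)=\overline{C}+12\overline{E}$ and $\sigma^*(K_{\DP^2})=K_{\overline{S}}-6\overline{E}$ then gives $A_{(\DP^2,\lambda C)}(\overline{E})=7-12\lambda$.

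Next I would compute the Zariski decomposition of $-\sigma^*(K_{\DP^2}+\lambda C)-v\overline{E}$. For $v\in[0,3(3-d\lambda)]$ the divisor is nef, so $N(v)=0$; as $v$ passes $3(3-d\lambda)$ the curve $\overline{L}$ acquires negative intersection and is peeled off, giving $N(v)=\big(v-3(3-d\lambda)\big)\overline{L}$ up to the pseudo-effective threshold $v=4(3-d\lambda)$. Using the intersection table I would record $P(v)^2$ and $P(v)\cdot\overline{E}$ on each interval and integrate to get $S_{(\DP^2,\lambda C)}(\overline{E})=\tfrac{7(3-d\lambda)}{3}$, which already produces the upper bound $\delta_P(\DP^2,\lambda C)\le \tfrac{A}{S}=\tfrac37\cdot\tfrac{7-12\lambda}{3-d\lambda}$.

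For the matching lower bound I would invoke the refined inequality \eqref{estimation2}. I would form $h(v)=\big(P(v)\cdot\overline{E}\big)\big(N(v)\cdot\overline{E}\big)_O+\tfrac12\big(P(v)\cdot\overline{E}\big)^2$, distinguishing a point $O$ off $\overline{L}$ (where the boundary term vanishes) from $O=\overline{E}\cap\overline{L}=\overline{P}_{2,3}$ (where $N(v)\cdot\overline{E}$ contributes), and integrating to obtain $S\big(W^{\overline{E}}_{\bullet,\bullet};O\big)\le\tfrac{3-d\lambda}{12}$ in the first case and $\le\tfrac{3-d\lambda}{9}$ in the second. The different of $(\overline{S},\overline{E})$ is $\Delta_{\overline{E}}=\tfrac34\overline{P}_1+\tfrac23\overline{P}_{2,3}+\lambda\overline{Q}$ with $\overline{Q}=\overline{C}|_{\overline{E}}$ (the coefficients $\tfrac34,\tfrac23$ being $1-\tfrac1r$ at the index-$r$ quotient points), so $A_{\overline{E},\Delta_{\overline{E}}}(O)=1-\mathrm{ord}_{\Delta_{\overline{E}}}(O)$ equals $\tfrac14$, $\tfrac13$, $1-\lambda$, or $1$ according as $O$ is $\overline{P}_1$, $\overline{P}_{2,3}$, $\overline{Q}$, or generic. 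Dividing, the two quotient points both give $\tfrac{3}{3-d\lambda}$, the point $\overline{Q}$ gives $\tfrac{12(1-\lambda)}{3-d\lambda}$, and a generic point gives $\tfrac{12}{3-d\lambda}$; each of these is at least $\tfrac37\cdot\tfrac{7-12\lambda}{3-d\lambda}$ throughout $\lambda\in[0,\min\{\tfrac{7}{12},\tfrac{3}{d}\}]$ (the endpoint $\tfrac{7}{12}=\tfrac13+\tfrac14$ is precisely the log canonical threshold of $E_6$, where $A_{(\DP^2,\lambda C)}(\overline{E})$ vanishes). Hence \eqref{estimation2} forces equality.

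The step I expect to be the main obstacle is the bookkeeping on the singular surface $\overline{S}$: arranging the four blowups so that the fractional self-intersections $\overline{E}^2=-\tfrac1{12}$, $\overline{L}^2=-\tfrac13$ and the two quotient types come out exactly, and then reading off the different $\Delta_{\overline{E}}$ at those quotient points to pin down the coefficients $\tfrac34$ and $\tfrac23$ (and the placement of $\overline{E}\cap\overline{L}$ at $\overline{P}_{2,3}$, which is what makes the two singular-point ratios coincide at $\tfrac{3}{3-d\lambda}$). Once the intersection table and the different are fixed, the Zariski decomposition, the two integrals for $S$, and the final comparisons are routine, exactly as in the earlier $A_n$ and $D_n$ cases.
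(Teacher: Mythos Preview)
Your proposal is correct and follows essentially the same approach as the paper: the same four-step blowup extracting the $(4,3)$-weighted valuation, the same contraction to $\overline{S}$ with the $\tfrac14(1,1)$ and $\tfrac13(1,2)$ quotient points, the same intersection table, Zariski decomposition, and the same comparison via \eqref{estimation2} yielding the ratios $\tfrac{3}{3-d\lambda}$, $\tfrac{12(1-\lambda)}{3-d\lambda}$, $\tfrac{12}{3-d\lambda}$. Your identification of $\overline{E}\cap\overline{L}$ with $\overline{P}_{2,3}$ and the resulting coincidence of the two singular-point ratios is exactly what the paper uses, so the only thing you should add explicitly for the global statement is the one-line observation that every other point of a quartic with an $E_6$ singularity has larger local $\delta_P$ by the earlier lemmas.
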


 \section{Curves with $E_7$ singularity}
 \begin{lemma}
 Let $C\subset \DP^2$ be a plane curve of degree $d$ with a $E_7$ singularity at point $P\in C$  on $C$. Then
 $$\delta_P(\DP^2,\lambda C)=\frac{3}{5}\cdot\frac{5-9\lambda}{3-d \lambda }\text{ for }\lambda\in \Big[0, \min \Big\{\frac{5}{9}, \frac{3}{d}\Big\}\Big].$$
 \end{lemma}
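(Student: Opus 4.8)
The plan is to follow the same strategy as in the preceding lemmas: extract a single divisor $\overline{E}$ over $P$ by a weighted blow-up, compute $A_{(\DP^2,\lambda C)}(\overline{E})$ and $S_{(\DP^2,\lambda C)}(\overline{E})$ to obtain the upper bound $\delta_P\le A/S$, and then apply the refined estimate \eqref{estimation2} to produce a matching lower bound. Analytically $C$ has local equation $x(x^2+y^3)=0$ at $P$, so its two branches are a smooth branch and the cuspidal branch $x^2+y^3=0$, both tangent to the line $L=\{x=0\}$. The divisor I want is the one produced by the weighted blow-up with weights $(3,2)$ adapted to $L$ (the weights balancing the cusp), realized by the same chain of point blow-ups and the same contraction of the intermediate exceptional curves used in the $A_2$-lemma. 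The resulting $\overline{E}\cong\DP(3,2)$ carries two cyclic quotient points $\overline{P}_1$ of type $\frac{1}{3}(1,1)$ and $\overline{P}_2$ of type $\frac{1}{2}(1,1)$, and on $\overline{S}$ one has $\overline{E}^2=-\frac{1}{6}$, $\overline{E}\cdot\overline{L}=\frac{1}{2}$, $\overline{L}^2=-\frac{1}{2}$, together with $\sigma^*(L)=\overline{L}+3\overline{E}$, $\sigma^*(C)=\overline{C}+9\overline{E}$ and $\sigma^*(K_{\DP^2})=K_{\overline{S}}-4\overline{E}$.

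From $\sigma^*(C)=\overline{C}+9\overline{E}$ and discrepancy $4$ I read off $A_{(\DP^2,\lambda C)}(\overline{E})=5-9\lambda$. The intersection numbers are identical to those in the $A_2$-lemma, so the Zariski decomposition of $-\sigma^*(K_{\DP^2}+\lambda C)-v\overline{E}$ is the same: the negative part is supported on $\overline{L}$ and enters at $v=2(3-d\lambda)$, with $\tau=3(3-d\lambda)$ and $P(v)^2=(3-d\lambda)^2-\frac{v^2}{6}$ on $[0,2(3-d\lambda)]$, then $\frac{(v-3(3-d\lambda))^2}{3}$ on $[2(3-d\lambda),3(3-d\lambda)]$. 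Integrating gives $S_{(\DP^2,\lambda C)}(\overline{E})=\frac{5(3-d\lambda)}{3}$, hence the upper bound $\delta_P(\DP^2,\lambda C)\le\frac{3}{5}\cdot\frac{5-9\lambda}{3-d\lambda}$.

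For the lower bound I apply \eqref{estimation2}. Because $P(v)\cdot\overline{E}$ and the local intersections with $\overline{E}$ coincide with the $A_2$ case, the flag integrals give $S\big(W^{\overline{E}}_{\bullet,\bullet};O\big)=\frac{3-d\lambda}{9}$ for $O\notin\overline{L}$ and $\frac{3-d\lambda}{6}$ for $O\in\overline{E}\cap\overline{L}$. The one genuinely new ingredient is the different: the cuspidal branch meets $\overline{E}$ at a smooth point $\overline{Q}$, whereas the smooth branch of $C$, which is a branch of $C$ at $P$ whether or not $L$ itself is a global component, passes through the $\frac{1}{2}$-point $\overline{P}_2$. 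Thus I expect $\Delta_{\overline{E}}=\frac{2}{3}\overline{P}_1+\frac{1+\lambda}{2}\overline{P}_2+\lambda\overline{Q}$, where the coefficient $\frac{1+\lambda}{2}$ records a $\lambda$-branch running through the $\frac{1}{2}$-point. Consequently $\frac{A_{\overline{E},\Delta_{\overline{E}}}(O)}{S(W^{\overline{E}}_{\bullet,\bullet};O)}$ equals $\frac{3(1-\lambda)}{3-d\lambda}$ at $\overline{P}_2$, $\frac{3}{3-d\lambda}$ at $\overline{P}_1$, $\frac{9(1-\lambda)}{3-d\lambda}$ at $\overline{Q}$, and $\frac{9}{3-d\lambda}$ otherwise.

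It then remains to check that the infimum in \eqref{estimation2} is attained by $\overline{E}$ itself. The smallest point ratio is $\frac{3(1-\lambda)}{3-d\lambda}$, and the inequality $\frac{3}{5}\cdot\frac{5-9\lambda}{3-d\lambda}\le\frac{3(1-\lambda)}{3-d\lambda}$ reduces to $5-9\lambda\le 5(1-\lambda)$, i.e. $4\lambda\ge 0$, which holds throughout. Hence $\delta_P\ge\frac{A}{S}$, matching the upper bound and giving equality for $\lambda\in\big[0,\min\{\frac{5}{9},\frac{3}{d}\}\big]$; here $\frac{5}{9}$ is exactly the vanishing locus of $A_{(\DP^2,\lambda C)}(\overline{E})=5-9\lambda$, and $\frac{3}{d}$ keeps $-K_{\DP^2}-\lambda C$ ample. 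I expect the only delicate point to be the orbifold bookkeeping on $\overline{E}$, namely justifying the coefficient $\frac{1+\lambda}{2}$ at $\overline{P}_2$ and confirming the tightness inequality, since every remaining computation is a verbatim repetition of the $A_2$-lemma.
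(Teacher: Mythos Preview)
Your proposal is correct and follows exactly the same route as the paper: the same $(3,2)$-weighted blow-up realized by three point blow-ups and two contractions, the identical intersection numbers, Zariski decomposition and flag integrals borrowed verbatim from the $A_2$ lemma, the same different $\Delta_{\overline{E}}=\frac{2}{3}\overline{P}_1+\frac{1+\lambda}{2}\overline{P}_2+\lambda\overline{Q}$, and the same comparison $5-9\lambda\le 5(1-\lambda)$ showing the minimum in \eqref{estimation2} is attained by $A(\overline{E})/S(\overline{E})$. The only cosmetic difference is that the paper records explicitly that the tangent line $L$ is a component of $C$ (forced when $d=4$), while you phrase the $\frac{1+\lambda}{2}$ contribution at $\overline{P}_2$ as coming from the smooth analytic branch regardless of whether it globalizes to $L$; both formulations yield the same different and the same conclusion.
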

 \begin{proof}
Let $L$ be a tangent line at point $P$. Note that $L$ is a component of $C$ and $C=\mathcal{C}+L$.  Let $\pi_1:S^1\to S$ be the~blow up of the~point $P$,
with the exceptional divisor $E_1^1$ and $L^1$, $C^1$ are strict transforms of $L$ and $C$ respectively; $\pi_2: S^2\to S^1$ be the~blow up of the~point $E_1^1\cap C^1\cap L^1$,
with the exceptional divisor $E_2^2$ and $L^2$, $C^2$, $E_1^2$ are strict transforms of $L^1$, $C^1$, $E_1^1$ respectively; $\pi_3: S^3\to S^2$ be the~blow up of the~point $E_1^2\cap E_2^2\cap C^2$,
with the exceptional divisor $E$ and $L^3$, $C^3$, $E_1^3$, $E_2^3$ are strict transforms of $L^2$, $C^2$, $E_1^2$, $E_2^2$ respectively; let $\theta:S^3\to\overline{S}$ be  the~contraction of  the~curves  $E_1^3$, $E_2^3$
and $\sigma$ is the~birational contraction of $\overline{E}=\theta(E)$. We denote the strict transforms of $C$ and $L$ on $\overline{S}$ by $\overline{C}$ and $\overline{L}$. Let $\overline{E}$ be the exceptional divisor of $\sigma$. Note that $\overline{E}$ contains two singular points $\overline{P}_1$ and  $\overline{P}_2$   which are $\frac{1}{3}(1,1)$ and  $\frac{1}{2}(1,1)$ singular points respectively.
\begin{center}
 \includegraphics[width=16cm]{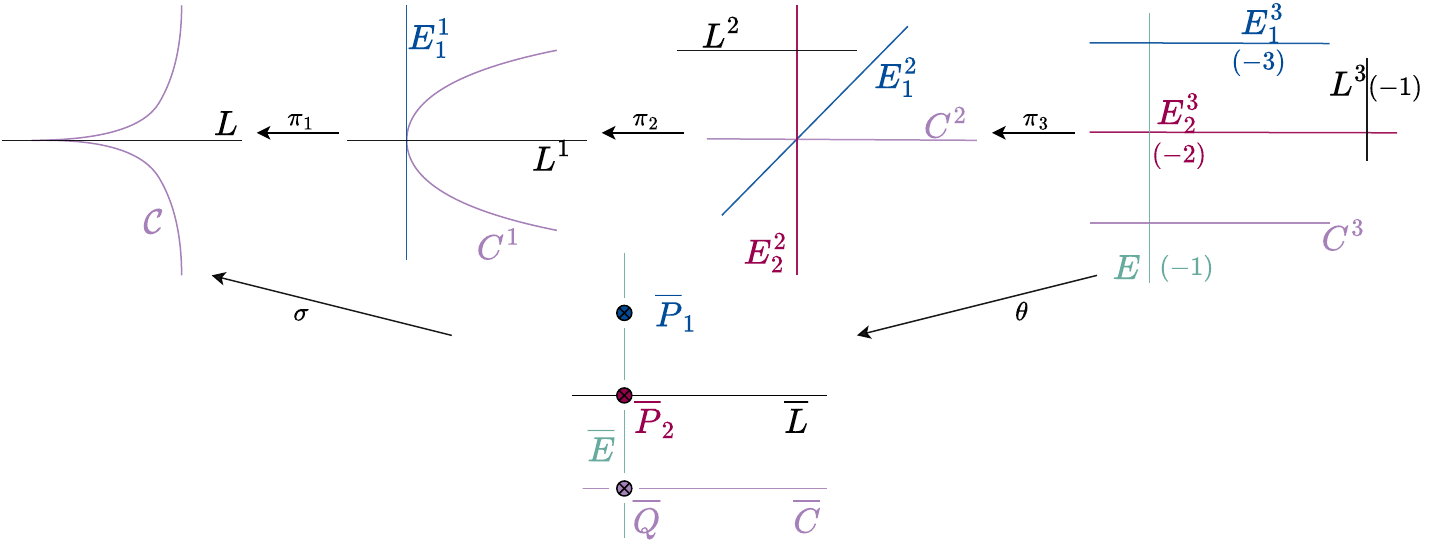}
 \end{center}
The intersections on $\overline{S}$ are given by:
\begin{center}
    \renewcommand{\arraystretch}{1.4}
    \begin{tabular}{|c|c|c|}
    \hline
         & $\overline{E}$ & $\overline{L}$ \\
    \hline
       $\overline{E}$  & $-\frac{1}{6}$ & $\frac{1}{2}$ \\
    \hline
       $\overline{L}$  & $\frac{1}{2}$ & $-\frac{1}{2}$ \\
    \hline
    \end{tabular}
\end{center}
 We have $\sigma^*(L)=\overline{L}+3\overline{E}$, $\sigma^*(C)=\overline{C}+\overline{L}+9\overline{E}$, $\sigma^*(K_{\DP^2})=K_{\overline{S}}-4\overline{E}$. Thus, $A_{(\DP^2,\lambda C)}(\overline{E})=5-9\lambda$.
\noindent The Zariski decomposition of the divisor  $-\sigma^*(K_{\DP^2}+\lambda C)-v\overline{E}$ is given by:
\begin{align*}
&&P(v)=
\begin{cases}
-\sigma^*(K_{\DP^2}+\lambda C)-v\overline{E}\text{ if }v\in[0,2(3-d \lambda )],\\
-\sigma^*(K_{\DP^2}+\lambda C)-v\overline{E}-2\big(v-(3-d \lambda )\big)\overline{L}\text{ if }v\in[2(3-d \lambda ), 3(3-d \lambda )],\\
\end{cases}\\&&
N(v)=
\begin{cases}
0\text{ if }v\in[0,2(3-d \lambda )],\\
2\big(v-(3-d \lambda )\big)\overline{L}\text{ if }v\in[2(3-d \lambda ),3(3-d \lambda )].
\end{cases}
\end{align*}
Then
$$
P(v)^2=
\begin{cases}
(3-d \lambda )^2 -\frac{v^2}{6}\text{ if }v\in[0,2(3-4)\lambda],\\
\frac{(v - 3(3-d \lambda ) )^2}{3}\text{ if }v\in[2(3-d \lambda ),3(3-d \lambda )],
\end{cases}
\text{and }
P(v)\cdot \overline{E}=
\begin{cases}
\frac{v}{6}\text{ if }v\in[0,2(3-d \lambda )],\\
3-d \lambda  -\frac{v}{3}\text{ if }v\in[2(3-d \lambda ),3(3-d \lambda )],
\end{cases}
$$
Thus
$$S_{(\DP^2, \lambda C)}(\overline{E})=\frac{1}{(3-d \lambda )^2}\Big(\int_0^{2(3-d \lambda )} (3-d \lambda )^2 -\frac{v^2}{6} dv+\int_{2(3-d \lambda )}^{3(3-d \lambda )} \frac{(v - 3(3-d \lambda ) )^2}{3} dv\Big)=\frac{5(3-d \lambda )}{3}$$
so that $\delta_P(\DP^2,\lambda C)\le \frac{3}{5}\cdot\frac{5-9\lambda}{3-d \lambda }$. For every $O\in \overline{E}$, we get if $O\in \overline{E}\backslash \overline{L}$ or if $O\in \overline{E}\cap \overline{L}$:
\begin{align*}
&&h(v)\le 
\begin{cases}
\frac{v^2}{72}\text{ if }v\in[0,2(3-d \lambda )],\\
\frac{(v - 3(3-d \lambda ))^2}{18}\text{ if }v\in[2(3-d \lambda ), 3(3-d \lambda )],
\end{cases}
\\&\text{or }&
h(v)\le \begin{cases}
\frac{v^2}{72}\text{ if }v\in[0,2(3-d \lambda )],\\
\frac{ (v - 3(3-d \lambda )) (3(3-d \lambda )-5v  )}{18}\text{ if }v\in[2(3-d \lambda ), 3(3-d \lambda )].
\end{cases}
\end{align*}
So that
$$S\big(W^{\overline{E}}_{\bullet,\bullet};O\big)\le \frac{2}{(3-d \lambda )^2}\Big(\int_0^{2(3-d \lambda )} \frac{v^2}{72} dv+\int_{2(3-d \lambda )}^{3(3-d \lambda )} \frac{(v - 3(3-d \lambda ))^2}{18} dv\Big)=\frac{3-d \lambda }{9}
$$
or
$$S\big(W^{\overline{E}}_{\bullet,\bullet};O\big)\le \frac{2}{(3-d \lambda )^2}\Big(\int_0^{2(3-d \lambda )} \frac{v^2}{72} dv+\int_{2(3-d \lambda )}^{3(3-d \lambda )} \frac{ (v -3(3-d \lambda )) (3(3-d \lambda ) -5v  )}{18} dv\Big)=\frac{3-d \lambda }{6}
$$
We have
$$
\delta_P(\DP^2,\lambda C)\geqslant\mathrm{min}\Bigg\{\frac{3}{5}\cdot\frac{5-9\lambda}{3-d \lambda },\inf_{O\in\overline{E}}\frac{A_{\overline{E},\Delta_{\overline{E}}}(O)}{S\big(W^{\overline{E}}_{\bullet,\bullet};O\big)}\Bigg\},
$$
where $\Delta_{\overline{E}}=\frac{2}{3}\overline{P}_1+\frac{1+\lambda}{2}\overline{P}_2+\lambda\overline{Q}$ where $\overline{Q}=\overline{C}|_{\overline{E}}$. 
So that
$$
\frac{A_{\overline{E},\Delta_{\overline{E}}}(O)}{S(W_{\bullet,\bullet}^{\overline{E}};O)}=
\left\{\aligned
&\frac{3}{3-d \lambda }\ \mathrm{if}\ O=\overline{P}_1,\\
&\frac{3(1-\lambda)}{3-d \lambda }\ \mathrm{if}\ O=\overline{P}_2,\\
&\frac{9(1-\lambda)}{3-d \lambda }\ \mathrm{if}\ O=\overline{Q},\\
&\frac{9}{3-d \lambda }\ \mathrm{otherwise}.
\endaligned
\right.
$$
Thus $\delta_P(\DP^2,\lambda C)=\frac{3}{5}\cdot\frac{5-9\lambda}{3-d \lambda }$.
 \end{proof}
  \noindent This proves the following theorem:
 \begin{theorem}
   Suppose $C_4\subset \DP^2$ is a quartic curve with  $E_7$ singularity. Then  we have:
$$\delta(\DP^2,\lambda C_4)=\frac{3}{5}\cdot\frac{5-9\lambda}{3-4 \lambda }\text{ for }\lambda\in \Big[0, \frac{5}{9}\Big].$$
 \end{theorem}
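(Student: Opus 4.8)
The plan is to reproduce the weighted-blowup strategy of the preceding lemmas and apply Fujita's estimate \eqref{estimation2} to a single divisor $\overline{E}$ extracted by a plt blowup of $P$. The feature that distinguishes the $E_7$ case is that the tangent line $L$ at $P$ is necessarily a component of $C$, so I would write $C=\mathcal{C}+L$ and carry $L$ along as a separate boundary component throughout. Concretely, I would extract $\overline{E}$ by the composition of three point blowups (blowing up $P$, then the triple point $E_1^1\cap C^1\cap L^1$, then the triple point $E_1^2\cap E_2^2\cap C^2$), followed by the contraction $\theta$ of $E_1^3,E_2^3$ and the contraction $\sigma$ of $\overline{E}=\theta(E)$. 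This yields a surface $\overline{S}$ on which $\overline{E}$ carries two cyclic quotient singularities, $\overline{P}_1$ of type $\frac{1}{3}(1,1)$ and $\overline{P}_2$ of type $\frac{1}{2}(1,1)$.

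Next I would record the intersection data $\overline{E}^2=-\frac16$, $\overline{E}\cdot\overline{L}=\frac12$, $\overline{L}^2=-\frac12$ together with the pullback formulas $\sigma^*(L)=\overline{L}+3\overline{E}$, $\sigma^*(C)=\overline{C}+\overline{L}+9\overline{E}$, $\sigma^*(K_{\DP^2})=K_{\overline{S}}-4\overline{E}$; the latter two immediately give $A_{(\DP^2,\lambda C)}(\overline{E})=5-9\lambda$. I would then compute the Zariski decomposition of $-\sigma^*(K_{\DP^2}+\lambda C)-v\overline{E}$, which changes behaviour at $v=2(3-d\lambda)$ when $\overline{L}$ enters the negative part, producing the usual two-piece formulas for $P(v)^2$ and $P(v)\cdot\overline{E}$. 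Integrating $P(v)^2$ yields $S_{(\DP^2,\lambda C)}(\overline{E})=\frac{5(3-d\lambda)}{3}$, and hence the upper bound $\delta_P(\DP^2,\lambda C)\le\frac{3}{5}\cdot\frac{5-9\lambda}{3-d\lambda}$.

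For the matching lower bound I would bound $\widehat{h}(v)$ at each point $O\in\overline{E}$ by the two candidate expressions according to whether $O\in\overline{L}$, integrate to obtain $S\big(W^{\overline{E}}_{\bullet,\bullet};O\big)\le\frac{3-d\lambda}{9}$ off $\overline{L}$ and $\le\frac{3-d\lambda}{6}$ on $\overline{L}$, and pair these with the different $\Delta_{\overline{E}}=\frac23\overline{P}_1+\frac{1+\lambda}{2}\overline{P}_2+\lambda\overline{Q}$, where $\overline{Q}=\overline{C}|_{\overline{E}}$. The most delicate bookkeeping step, and the one I expect to be the main source of error, is the coefficient $\frac{1+\lambda}{2}$ at $\overline{P}_2$: this is precisely where $L\subset C$ intervenes, since $\overline{L}$ passes through $\overline{P}_2$ and contributes an extra $\frac{\lambda}{2}$ to the different beyond the $\frac12$ coming from the quotient singularity. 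Evaluating $A_{\overline{E},\Delta_{\overline{E}}}(O)/S\big(W^{\overline{E}}_{\bullet,\bullet};O\big)$ at $\overline{P}_1$, $\overline{P}_2$, $\overline{Q}$ and a general point gives $\frac{3}{3-d\lambda}$, $\frac{3(1-\lambda)}{3-d\lambda}$, $\frac{9(1-\lambda)}{3-d\lambda}$, $\frac{9}{3-d\lambda}$ respectively. It then remains to check that each of these is at least $\frac{3}{5}\cdot\frac{5-9\lambda}{3-d\lambda}$ on the whole interval $\lambda\in\big[0,\min\{\frac59,\frac3d\}\big]$ — the binding comparison being at $\overline{P}_2$, which reduces to $4\lambda\ge0$ — so that \eqref{estimation2} forces equality with the upper bound and completes the proof.
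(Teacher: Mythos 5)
Your proposal follows the paper's proof essentially verbatim: the same three point blowups followed by the contraction of $E_1^3,E_2^3$, the same intersection matrix and pullback formulas giving $A_{(\DP^2,\lambda C)}(\overline{E})=5-9\lambda$ and $S_{(\DP^2,\lambda C)}(\overline{E})=\tfrac{5(3-d\lambda)}{3}$, the same bounds $S\big(W^{\overline{E}}_{\bullet,\bullet};O\big)\le\tfrac{3-d\lambda}{9}$ off $\overline{L}$ and $\le\tfrac{3-d\lambda}{6}$ on $\overline{L}$, and the same different $\Delta_{\overline{E}}=\tfrac23\overline{P}_1+\tfrac{1+\lambda}{2}\overline{P}_2+\lambda\overline{Q}$ with the identical list of ratios. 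The final comparison at $\overline{P}_2$ reducing to $4\lambda\ge 0$ is correct, so the argument is sound and matches the paper's.
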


 \section{Four concurrent lines}
   \begin{lemma}
 Let $C\subset \DP^2$ be a plane curve of degree $d$ which has a singularity at point $P\in C$ on $C$ such that it locally looks as  four lines intersecting in one point. Then
 $$\delta_P(\DP^2,\lambda C)=\frac{3-6\lambda}{3-d \lambda }\text{ for }\lambda\in\Big[0,\frac{1}{2}\Big].$$
 \end{lemma}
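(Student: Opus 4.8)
The plan is to reuse the single-blowup template already carried out for the $A_1$ and $D_4$ singularities, since a ``four-line'' point is simply an ordinary quadruple point of multiplicity $m=4$: its strict transform becomes smooth and transverse to the exceptional divisor after one blowup, exactly as for the node ($m=2$) and the ordinary triple point ($m=3$). First I would let $\pi\colon\widetilde{S}\to\DP^2$ be the blowup of $P$ with exceptional divisor $E$, and record $\pi^*(C)=\widetilde{C}+4E$ together with $K_{\widetilde{S}}=\pi^*K_{\DP^2}+E$. This immediately gives the log discrepancy $A_{(\DP^2,\lambda C)}(E)=2-4\lambda$. Because $-\pi^*(K_{\DP^2}+\lambda C)=(3-d\lambda)\pi^*H$ is the pullback of an ample class, the Zariski decomposition of $-\pi^*(K_{\DP^2}+\lambda C)-vE$ is trivial: one has $N(v)=0$ on the whole interval $v\in[0,3-d\lambda]$, with $P(v)^2=(3-d\lambda)^2-v^2$ and $P(v)\cdot E=v$. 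Integrating yields $S_{(\DP^2,\lambda C)}(E)=\frac{2(3-d\lambda)}{3}$, and hence the upper bound $\delta_P(\DP^2,\lambda C)\le \frac{A}{S}=\frac{3-6\lambda}{3-d\lambda}$ by taking $E$ itself in the infimum defining $\delta_P$.

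For the matching lower bound I would apply the refined inequality \eqref{estimation2}. Since $N(v)\equiv 0$, the function $h(v)$ reduces to $\frac{(P(v)\cdot E)^2}{2}=\frac{v^2}{2}$ for every point $O\in E$, so $S\big(W^E_{\bullet,\bullet};O\big)=\frac{3-d\lambda}{3}$ uniformly. The only remaining geometric input is the different $\Delta_E$. As $\widetilde{S}$ is smooth there is no orbifold contribution, and $E$ meets $\widetilde{C}$ transversally in the four distinct points $Q_1,\dots,Q_4$ corresponding to the four branches of the singularity, so $\Delta_E=\lambda\sum_{i=1}^4 Q_i$. Consequently $A_{E,\Delta_E}(O)/S\big(W^E_{\bullet,\bullet};O\big)$ equals $\frac{3(1-\lambda)}{3-d\lambda}$ at each $Q_i$ and $\frac{3}{3-d\lambda}$ elsewhere on $E$.

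Plugging these into \eqref{estimation2} gives
$$\delta_P(\DP^2,\lambda C)\ge\min\Bigl\{\tfrac{3-6\lambda}{3-d\lambda},\ \tfrac{3(1-\lambda)}{3-d\lambda},\ \tfrac{3}{3-d\lambda}\Bigr\}.$$
The comparisons $3-6\lambda\le 3-3\lambda$ and $3-6\lambda\le 3$ both hold for all $\lambda\ge 0$, so the global term $\frac{3-6\lambda}{3-d\lambda}$ realizes the minimum; combined with the upper bound this forces equality. The admissible range $\lambda\in[0,\frac12]$ is precisely where $A=2-4\lambda\ge 0$, which coincides with the log canonical threshold $\frac12$ of an ordinary quadruple point.

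I do not anticipate a genuine obstacle, as the geometry is identical to the $D_4$ computation with the single change $m=3\mapsto m=4$ in the multiplicity (and correspondingly one extra intersection point $Q_4$ in the different). The only point requiring care is confirming that both local ratios, at the $Q_i$ and at the remaining points of $E$, dominate $\frac{3-6\lambda}{3-d\lambda}$ throughout $[0,\frac12]$; as noted this reduces to the trivial inequality $\lambda\ge 0$, so the global contribution of $E$ is what determines $\delta_P$.
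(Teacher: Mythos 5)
Your proposal is correct and follows essentially the same route as the paper: a single blowup of $P$, the computation $A_{(\DP^2,\lambda C)}(E)=2-4\lambda$, $S_{(\DP^2,\lambda C)}(E)=\tfrac{2(3-d\lambda)}{3}$ with trivial Zariski decomposition, and the Abban--Zhuang estimate with $\Delta_E=\lambda(Q_1+Q_2+Q_3+Q_4)$ yielding the same local ratios. The paper's argument is identical in all substantive steps, including the observation that the minimum is attained by $E$ itself.
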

 \begin{proof}
Let $\pi:\widetilde{S}\to S$ be the~blow up of the~point $P$,
with the exceptional divisor $E$. We denote the strict transform of $C$  on $\widetilde{S}$ by $\widetilde{C}$.
\begin{center}
 \includegraphics[width=8cm]{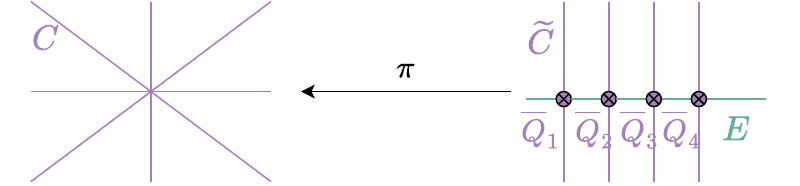}
 \end{center}
 We have $\pi^*(C)=\widetilde{C}+4E$, $\pi^*(K_{\DP^2})=K_{\widetilde{S}}-E$. Thus, $A_{(\DP^2,\lambda C)}(E)=2-4\lambda$.
\noindent The Zariski decomposition of the divisor  $-\pi^*(K_{\DP^2}+\lambda C)-vE$ is given by:
\begin{align*}
P(v)=-\pi^*(K_{\DP^2}+\lambda C)-vE \text{ and }N(v)=0\text{ if }v\in[0,3-d \lambda ].
\end{align*}
Then
$$
P(v)^2=\big(v-(3-d \lambda )\big)\big(v+(3-d \lambda )\big)\text{ and }P(v)\cdot E= v\text{ if }v\in[0,3-d \lambda ].
$$
Thus
$$S_{(\DP^2, \lambda C)}(E)=\frac{1}{(3-d \lambda )^2}\Big(\int_0^{3-d \lambda } \big(v-(3-d \lambda )\big)\big(v+(3-d \lambda )\big) dv\Big)=\frac{2(3-d \lambda )}{3}$$
so that $\delta_P(\DP^2,\lambda C)\le \frac{3}{2}\cdot \frac{2-4\lambda}{3-d \lambda } =  \frac{3-6\lambda}{3-d \lambda }$. For every $O\in E$ we get:
$$h(v) = \frac{v^2}{2}\text{ if }v\in[0,3-d \lambda ].
$$
So that
$$S\big(W^{E}_{\bullet,\bullet};O\big)= \frac{2}{(3-d \lambda )^2}\Big(\int_0^{3-d \lambda } \frac{v^2}{2} dv\Big)=\frac{3-d \lambda }{3}\le \frac{2}{3}\cdot\frac{3-d \lambda }{2-3\lambda}
$$
We have
$$
\delta_P(\DP^2,\lambda C)\geqslant\mathrm{min}\Bigg\{  \frac{3-6\lambda}{3-d \lambda },\inf_{O\in E}\frac{A_{E,\Delta_{E}}(O)}{S\big(W^{E}_{\bullet,\bullet};O\big)}\Bigg\},
$$
where $\Delta_{E}=\lambda Q_1+\lambda Q_2+\lambda Q_3+\lambda Q_4$ where $Q_1+Q_2+Q_3+Q_4=\widetilde{C}|_E$. 
So that
$$
\frac{A_{\overline{E},\Delta_{\overline{E}}}(O)}{S(W_{\bullet,\bullet}^{\overline{E}};O)}=
\left\{\aligned
&\frac{3(1-\lambda)}{3-d \lambda }\ \mathrm{if}\ O\in\{Q_1,Q_2,Q_3,Q_4\},\\
&\frac{3}{3-d \lambda }\ \mathrm{otherwise}.
\endaligned
\right.
$$
Thus $\delta_P(\DP^2,\lambda C)=\frac{3-6\lambda}{3-d \lambda }$ for $\lambda\in\big[0,\frac{1}{2}\big]$.
 \end{proof}
   \noindent This proves the following theorem:
 \begin{theorem}
   Suppose $C_4\subset \DP^2$ is a quartic curve with  a "four line" singularity. Then  we have:
$$\delta(\DP^2,\lambda C_4)=\frac{3-6\lambda}{3-d \lambda }\text{ for }\lambda\in \Big[0, \frac{1}{2}\Big].$$
 \end{theorem}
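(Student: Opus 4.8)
The plan is to deduce the global statement directly from the preceding lemma, by first pinning down the geometry of the curve and then minimizing the local $\delta$-invariant over all of $\DP^2$. The key normalization is that a reduced plane quartic with a point $P$ of multiplicity $4$ must coincide with its tangent cone there, since the multiplicity equals the degree: choosing coordinates with $P=[0:0:1]$, the lowest-degree part of the defining equation has degree $4=\deg C_4$, so the equation is homogeneous of degree $4$ in the two remaining variables and factors into four linear forms, each cutting out a line through $P$. The hypothesis that the singularity is a genuine ``four line'' singularity forces these four lines to be distinct. Since four concurrent distinct lines meet only at their common point, $P$ is the \emph{unique} singular point of $C_4$; away from $P$ the curve is smooth and each local branch is one of the line components. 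Here $d=4$, so $3-d\lambda=3-4\lambda$ throughout.

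With the geometry fixed, I would write $\delta(\DP^2,\lambda C_4)=\inf_{Q\in\DP^2}\delta_Q(\DP^2,\lambda C_4)$ and show the infimum is attained at $P$. The four-line lemma (with $d=4$) already gives $\delta_P(\DP^2,\lambda C_4)=\frac{3-6\lambda}{3-4\lambda}$, so it remains to check $\delta_Q\ge\frac{3-6\lambda}{3-4\lambda}$ for every $Q\ne P$. I would split this into two cases. If $Q$ is a smooth point of $C_4$, then $Q$ lies on exactly one line component $L$, and Lemma~\ref{line} yields $\delta_Q=\frac{3(1-\lambda)}{3-4\lambda}$; because $3-3\lambda\ge 3-6\lambda$ for $\lambda\ge 0$, this is $\ge\delta_P$. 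If $Q\notin C_4$, I would blow up $Q$ to get an exceptional divisor $E$ with $A_{(\DP^2,\lambda C_4)}(E)=2$ and $S_{(\DP^2,\lambda C_4)}(E)=\frac{2(3-4\lambda)}{3}$, while the different $\Delta_E$ vanishes since $C_4$ avoids $Q$; estimate~\eqref{estimation2} then gives $\delta_Q\ge\frac{3}{3-4\lambda}\ge\delta_P$. Combining the two cases shows the infimum over all $Q$ is realized at $P$, whence $\delta(\DP^2,\lambda C_4)=\frac{3-6\lambda}{3-4\lambda}$, and the stated range $\lambda\in[0,\tfrac12]$ is precisely $[0,\mathrm{lct}(\DP^2,C_4)]$, since an ordinary quadruple point has log canonical threshold $\tfrac12<\tfrac34=\tfrac{3}{d}$.

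I expect essentially all of the content to lie in the geometric normalization of the first paragraph—recognizing that the curve is forced to be four concurrent lines, so that $P$ is the only singular point and every other point of $C_4$ is a smooth point on a line component. Once that is in place, the comparison of local invariants is a routine application of Lemma~\ref{line} together with the general-point estimate coming from \eqref{estimation2}, and the corresponding ``four-line'' clause of the Main Theorem follows at once.
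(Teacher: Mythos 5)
Your proposal is correct and follows essentially the same route as the paper: the theorem is derived from the local lemma at the quadruple point, with the comparison at all other points handled by Lemma \ref{line} (smooth points on a line component give $\tfrac{3(1-\lambda)}{3-4\lambda}\ge\tfrac{3-6\lambda}{3-4\lambda}$) and the trivial bound $\tfrac{3}{3-4\lambda}$ off the curve. The paper leaves this global minimization implicit, so your explicit case analysis and the normalization that a quartic with a multiplicity-$4$ point must be four distinct concurrent lines are exactly the missing routine details, not a different method.
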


\section{Double line}
    \begin{lemma} \label{doubleline-deg4}
  Let $C\subset \DP^2$ be a double line. Then
 $$\delta_P(\DP^2,\lambda C)=\frac{3(1-2\lambda)}{3- 2 \lambda }\text{ for }\lambda\in\Big[0,\frac{1}{2}\Big].$$
 \end{lemma}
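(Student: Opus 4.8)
The plan is to mirror the single-blow-up argument used for Lemma~\ref{line} and Lemma~\ref{A1-points}, the only essential change being that the line now occurs in $C$ with multiplicity $2$. Write $C=2L$, so $d=2$ and $-K_{\DP^2}-\lambda C\sim_{\DQ}(3-2\lambda)H$, where $H$ is the line class. First I would obtain the upper bound by testing the divisor $L$ itself: since $C=2L$ we have $A_{(\DP^2,\lambda C)}(L)=1-2\lambda$, and because $-K_{\DP^2}-\lambda C-vL\sim_{\DQ}(3-2\lambda-v)H$ is nef exactly for $v\in[0,3-2\lambda]$, a direct integration gives $S_{(\DP^2,\lambda C)}(L)=\frac{3-2\lambda}{3}$. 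Hence $\delta_P(\DP^2,\lambda C)\le\frac{A(L)}{S(L)}=\frac{3(1-2\lambda)}{3-2\lambda}$.

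For the matching lower bound I would blow up $P$ once, $\pi\colon\widetilde{S}\to\DP^2$, with exceptional divisor $E$. From $\pi^*(C)=2\widetilde{L}+2E$ and $\pi^*K_{\DP^2}=K_{\widetilde{S}}-E$ one reads off $A_{(\DP^2,\lambda C)}(E)=2-2\lambda$. The pull-back $-\pi^*(K_{\DP^2}+\lambda C)=(3-2\lambda)\pi^*H$ stays nef until $v=3-2\lambda$, the first value at which its intersection with the strict transform $\widetilde{L}=\pi^*H-E$ drops to $0$, so the Zariski decomposition is trivial: $P(v)=(3-2\lambda)\pi^*H-vE$ and $N(v)=0$ on $[0,3-2\lambda]$, with $P(v)^2=(3-2\lambda)^2-v^2$ and $P(v)\cdot E=v$. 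This yields $S_{(\DP^2,\lambda C)}(E)=\frac{2(3-2\lambda)}{3}$, whence $\frac{A(E)}{S(E)}=\frac{3(1-\lambda)}{3-2\lambda}$, which is strictly larger than the target, so $E$ itself is not the obstructing divisor.

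Then I would apply the refined estimate~\eqref{estimation2}. Because $N(v)\equiv 0$ the local term drops out, $h(v)=\tfrac12(P(v)\cdot E)^2=\tfrac{v^2}{2}$, and $S\big(W^E_{\bullet,\bullet};O\big)=\frac{3-2\lambda}{3}$ for every $O\in E$. The crucial point is the different: since $C=2L$, the strict transform $\widetilde{C}=2\widetilde{L}$ meets $E$ in the single point $Q=\widetilde{L}\cap E$ with multiplicity $2$, so $\Delta_E=\lambda\,\widetilde{C}|_E=2\lambda Q$, giving $A_{E,\Delta_E}(Q)=1-2\lambda$ and $A_{E,\Delta_E}(O)=1$ otherwise. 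Consequently $\frac{A_{E,\Delta_E}(O)}{S(W^E;O)}$ equals $\frac{3(1-2\lambda)}{3-2\lambda}$ at $Q$ and $\frac{3}{3-2\lambda}$ elsewhere; combining these with $\frac{A(E)}{S(E)}=\frac{3(1-\lambda)}{3-2\lambda}$ gives $\delta_P(\DP^2,\lambda C)\ge\frac{3(1-2\lambda)}{3-2\lambda}$, matching the upper bound and forcing equality.

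The only genuinely delicate step is the computation of the different $\Delta_E$: one must keep track of the non-reduced structure of $C=2L$, which forces the coefficient $2\lambda$ (rather than the $\lambda$ appearing in the reduced case of Lemma~\ref{line}) at the single point $Q$. This factor of $2$ is exactly what replaces the numerator $1-\lambda$ by $1-2\lambda$ and pins the value of $\delta_P$ to $\frac{3(1-2\lambda)}{3-2\lambda}$; the range $\lambda\in[0,\tfrac12]$ is precisely the locus where this coefficient stays $\le 1$, so that $(E,\Delta_E)$ remains klt (log canonical at the endpoint $\lambda=\tfrac12$) and the estimate is valid.
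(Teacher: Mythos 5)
Your proposal is correct and follows essentially the same route as the paper: the upper bound from the line component $L$ with $A(L)=1-2\lambda$, a single blow-up of $P$ with trivial Zariski decomposition giving $S(E)=\tfrac{2(3-2\lambda)}{3}$ and $S(W^E_{\bullet,\bullet};O)=\tfrac{3-2\lambda}{3}$, and the key observation that the different is $\Delta_E=2\lambda Q$ because of the non-reduced structure, which pins the minimum to $\tfrac{3(1-2\lambda)}{3-2\lambda}$. All intermediate values match the paper's computation.
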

 \begin{proof}
Note that $\delta_P(\DP^2,\lambda C)\le \frac{A_{(\DP^2,\lambda C)}(\widetilde{L})}{S_{(\DP^2,\lambda C)}(\widetilde{L})}=\frac{3(1-2\lambda)}{3- 2 \lambda }$ where $\widetilde{L}$ is a strict transform of a line component of $C$. Let $\pi:\widetilde{S}\to S$ be the~blow up of the~point $P$,
with the exceptional divisor $E$.
\begin{center}
 \includegraphics[width=8cm]{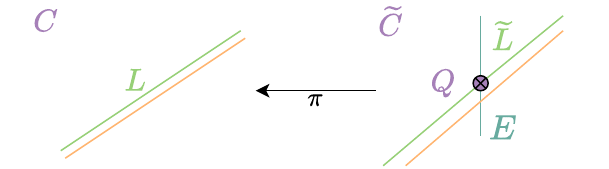}
 \end{center}
We have $\sigma^*(L)=\widetilde{L}+E$, $\pi^*(C)=\widetilde{C}+2E$, $\pi^*(K_{\DP^2})=K_{\widetilde{S}}-E$. Thus, $A_{(\DP^2,\lambda C)}(E)=2-2\lambda$.  The Zariski decomposition of the divisor  $-\pi^*(K_{\DP^2}+\lambda C)-vE$ is given by:
\begin{align*}
P(v)=-\pi^*(K_{\DP^2}+\lambda C)-vE \text{ and }N(v)=0\text{ if }v\in[0,3- 2 \lambda ].
\end{align*}
Then
$$
P(v)^2=(2\lambda - 3- v )(v-(3- 2\lambda))\text{ and }P(v)\cdot E=  v \text{ if }v\in[0,3- 2 \lambda ].
$$
Thus
$$S_{(\DP^2,\lambda C)}(E)=\frac{1}{(3- 2 \lambda )^2}\Big(\int_0^{3- 2 \lambda } (2\lambda - 3- v )(v-(3- 2\lambda)) dv\Big)=\frac{2(3- 2\lambda)}{3}$$
so that $\delta_P(\DP^2,\lambda C)\le \frac{3}{2}\cdot \frac{2-2\lambda}{3- 2 \lambda } = \frac{3-3\lambda}{3- 2 \lambda }$. 
For every $O\in E$ we get:
$$h(v) = \frac{v^2}{2}\text{ if }v\in[0,3- 2\lambda ].
$$
So that
$$S\big(W^{E}_{\bullet,\bullet};O\big)= \frac{2}{(3- 2\lambda )^2}\Big(\int_0^{3- 2\lambda } \frac{v^2}{2} dv\Big)=\frac{3- 2 \lambda}{3}
$$
We have
$$
\delta_P(\DP^2,\lambda C)\geqslant\mathrm{min}\Bigg\{ \frac{3-3\lambda}{3- 2 \lambda } ,\inf_{O\in E}\frac{A_{E,\Delta_{E}}(O)}{S\big(W^{E}_{\bullet,\bullet};O\big)}\Bigg\},
$$
where $\Delta_{E}=2\lambda Q$ where $Q=\widetilde{L}|_E$. 
So that
$$
\frac{A_{\overline{E},\Delta_{\overline{E}}}(O)}{S(W_{\bullet,\bullet}^{\overline{E}};O)}=
\left\{\aligned
&\frac{3(1-2\lambda)}{3- 2 \lambda }\ \mathrm{if}\ O=Q,\\
&\frac{3}{3- 2 \lambda }\ \mathrm{otherwise}.
\endaligned
\right.
$$
Thus $\delta_P(\DP^2,\lambda C)=\frac{3(1-2\lambda)}{3- 2 \lambda }$ for $\lambda\in\big[0,\frac{1}{2}\big]$.     
 \end{proof}
 \noindent This proves the following theorem:
 \begin{theorem}
   Suppose $C_2\subset \DP^2$ is a double line. Then  we have:
$$\delta(\DP^2,\lambda C_2)= \frac{3(1-2\lambda)}{3- 2 \lambda }\text{ for }\lambda\in \Big[0, \frac{1}{3}\Big].$$
 \end{theorem}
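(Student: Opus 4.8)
The plan is to deduce the global statement from the local computation in Lemma~\ref{doubleline-deg4} by a case analysis on the position of the point $P$, showing that the points lying on the line always realize the worst value. Write $C_2=2L$ for a line $L\subset\DP^2$, so that $-K_{\DP^2}-\lambda C_2=(3-2\lambda)H$ with $H$ the hyperplane class and $(-K_{\DP^2}-\lambda C_2)^2=(3-2\lambda)^2$. Since $\delta(\DP^2,\lambda C_2)=\inf_{P\in\DP^2}\delta_P(\DP^2,\lambda C_2)$, I would split the infimum into the locus $P\in L$ and the locus $P\notin L$ and treat them separately.

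For $P\in L$ the configuration is homogeneous along $L$ and every such point is a smooth point of $L_{\mathrm{red}}$, so Lemma~\ref{doubleline-deg4} applies directly and gives $\delta_P(\DP^2,\lambda C_2)=\tfrac{3(1-2\lambda)}{3-2\lambda}$, the extremal divisor being the strict transform $\widetilde{L}$. For $P\notin L$ the pair is log smooth at $P$, and I would bound $\delta_P$ from below using the estimate \eqref{estimation1} with $\mathcal{C}$ a general line through $P$. Because $C_2$ avoids $P$, the divisor $(3-2\lambda)H-v\mathcal{C}$ is nef with trivial negative part for $v\in[0,3-2\lambda]$, so that $P(v)\cdot\mathcal{C}=3-2\lambda-v$ and one computes $S_{(\DP^2,\lambda C_2)}(\mathcal{C})=\tfrac{3-2\lambda}{3}$ as well as $S\big(W^{\mathcal{C}}_{\bullet,\bullet};P\big)=\tfrac{3-2\lambda}{3}$; hence $\delta_P(\DP^2,\lambda C_2)\ge\tfrac{3}{3-2\lambda}$.

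Comparing the two regimes, for $\lambda\ge 0$ one has $\tfrac{3(1-2\lambda)}{3-2\lambda}\le\tfrac{3}{3-2\lambda}$, so the infimum is attained along $L$ and equals $\tfrac{3(1-2\lambda)}{3-2\lambda}$; the admissible range is inherited from the lemma together with the klt (log Fano) requirement $\lambda<\mathrm{lct}(\DP^2,2L)=\tfrac12$, and I would verify the stated endpoint explicitly. There is no serious obstacle here: the lemma does the real work, and the only genuine check is the off-line estimate — confirming that no divisor extracted over a point $P\notin L$ can beat $\tfrac{3(1-2\lambda)}{3-2\lambda}$. Since all such divisors have log discrepancy at least $1$ and the curve is absent near $P$, this reduces to the routine $\tfrac{3}{3-2\lambda}$ bound above, so the comparison closes the argument.
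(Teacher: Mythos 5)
Your argument is correct and follows essentially the same route as the paper: the global value is deduced from Lemma~\ref{doubleline-deg4}, which computes $\delta_P$ at the points of $L$ and exhibits $\widetilde{L}$ as the extremal divisor, and this is where all the real work lies. The only material you add is the explicit verification, via \eqref{estimation1} applied to a general line $\mathcal{C}$ through a point $P\notin L$, that off-line points satisfy $\delta_P\geq \tfrac{3}{3-2\lambda}\geq \tfrac{3(1-2\lambda)}{3-2\lambda}$ — a step the paper leaves implicit — and your computations there ($A(\mathcal{C})=1$, $S_{(\DP^2,\lambda C_2)}(\mathcal{C})=S\big(W^{\mathcal{C}}_{\bullet,\bullet};P\big)=\tfrac{3-2\lambda}{3}$) are correct.
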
 
 \section{Double line and another line}
  \begin{lemma}
 Let $C\subset \DP^2$ be a double line and another line, $P\in C$ be a smooth point. Then
  $$\delta_P(\DP^2,\lambda C)=1\text{ for }\lambda\in\big[0, 1\Big].$$
 \end{lemma}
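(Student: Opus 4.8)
The plan is to reduce to the computation already carried out in Lemma~\ref{line}, observing that the relevant degree is $d=3$ and that the local structure at $P$ is that of a single reduced line. Write $C=2L_1+L_2$, where $2L_1$ is the double line and $L_2$ is the residual line. Since $P$ is a smooth point of $C$, it must lie on $L_2\setminus L_1$: a point of $L_1\setminus L_2$ would meet the multiplicity-two structure and the node $L_1\cap L_2$ is singular. Hence near $P$ the curve $C$ coincides with the reduced line $L_2$, so $\mathrm{mult}_P C=1$, and $-K_{\DP^2}-\lambda C\sim 3(1-\lambda)H$. All the intersection-theoretic quantities below are then obtained from those of Lemma~\ref{line} by setting $d=3$.

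For the upper bound I would test the invariant against the prime divisor $L_2$ itself: since $A_{(\DP^2,\lambda C)}(L_2)=1-\lambda$ and $S_{(\DP^2,\lambda C)}(L_2)=\tfrac{3-3\lambda}{3}=1-\lambda$, one gets immediately
$$\delta_P(\DP^2,\lambda C)\le \frac{A_{(\DP^2,\lambda C)}(L_2)}{S_{(\DP^2,\lambda C)}(L_2)}=1.$$
The coincidence $3-d\lambda=3(1-\lambda)$ at $d=3$ is exactly what makes this ratio equal to $1$.

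For the lower bound I would apply the plt-blowup estimate~\eqref{estimation2} to the single blowup $\pi\colon\widetilde{S}\to\DP^2$ of $P$, precisely as in Lemma~\ref{line}. Because $\mathrm{mult}_P C=1$, the computation is a verbatim specialization of Lemma~\ref{line} to $d=3$: one finds $A_{(\DP^2,\lambda C)}(E)=2-\lambda$ and $S_{(\DP^2,\lambda C)}(E)=\tfrac{2(3-3\lambda)}{3}$, so that $A(E)/S(E)=\tfrac{2-\lambda}{2(1-\lambda)}>1$ on $(0,1)$; moreover the Zariski decomposition of $-\pi^*(K_{\DP^2}+\lambda C)-vE$ has no negative part for $v\in[0,3-3\lambda]$, giving $h(v)=v^2/2$ and $S\big(W^{E}_{\bullet,\bullet};O\big)=\tfrac{3-3\lambda}{3}=1-\lambda$ for every $O\in E$. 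Taking the different $\Delta_E=\lambda Q$ with $Q=\widetilde{L}_2\cap E$, the refined ratio $A_{E,\Delta_E}(O)/S(W^E_{\bullet,\bullet};O)$ equals $1$ at $O=Q$ and $\tfrac{1}{1-\lambda}>1$ elsewhere. Hence~\eqref{estimation2} yields $\delta_P(\DP^2,\lambda C)\ge\min\{\tfrac{2-\lambda}{2(1-\lambda)},1\}=1$.

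Combining the two bounds gives $\delta_P(\DP^2,\lambda C)=1$ for $\lambda\in[0,1]$. The step I expect to carry the actual content — rather than being a routine transcription of Lemma~\ref{line} — is the justification that a smooth point of $C$ necessarily lies off the double line, so that $\mathrm{mult}_P C=1$ and the residual line $L_2$ (not the double line $L_1$) is the divisor computing the invariant. It is precisely this that forces the sharp value $1$, rather than the smaller value $\tfrac{1-2\lambda}{1-\lambda}$ attained at points of $L_1$.
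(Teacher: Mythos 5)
Your proof is correct and is essentially the paper's argument: the paper simply cites Lemma~\ref{line} (whose proof is exactly your computation with general $d$, here $d=3$ so that $\tfrac{3(1-\lambda)}{3-d\lambda}=1$), and your observation that a smooth point of $C$ must lie on the reduced line $L_2$ away from the double line is the implicit reduction that makes that citation legitimate. If anything, spelling this out is slightly more careful than the paper, since Lemma~\ref{line} is formally stated only for reduced curves.
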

 \begin{proof}
     Follows from Lemma \ref{line}.
 \end{proof}
  
 \begin{lemma}
 Let $C\subset \DP^2$ be a double line and another line, $P\in C$ be a smooth point  on a double line $L$. Then
  $$\delta_P(\DP^2,\lambda C)=\frac{1-2\lambda}{1-  \lambda }\text{ for }\lambda\in\Big[0,\frac{1}{2}\Big].$$
 \end{lemma}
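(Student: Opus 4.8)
The plan is to bound $\delta_P(\DP^2,\lambda C)$ from above by a single well-chosen divisor and then match this bound from below by a one-step blowup, exactly in the spirit of the double-line case of Lemma~\ref{doubleline-deg4}. Write $C=2L+M$, so that $d=3$ and $-K_{\DP^2}-\lambda C\sim(3-3\lambda)H$, where $H$ is the hyperplane class. Since $P$ is a general point of the doubled component $L$, the line $L$ is itself a prime divisor over $\DP^2$ with $P\in C_X(L)$, so it is admissible in the infimum defining $\delta_P$. First I would record $A_{(\DP^2,\lambda C)}(L)=1-2\lambda$, the coefficient of $L$ in $\lambda C$ being $2\lambda$, and, using that $-K_{\DP^2}-\lambda C-vL=(3-3\lambda-v)H$ stays nef up to $v=3-3\lambda$, compute $S_{(\DP^2,\lambda C)}(L)=1-\lambda$. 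This already yields $\delta_P(\DP^2,\lambda C)\le\frac{1-2\lambda}{1-\lambda}$.

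For the reverse inequality I would blow up $P$, say $\pi:\widetilde{S}\to S=\DP^2$, with exceptional divisor $E$. Because $M$ avoids $P$ while $L$ is doubled, $\mathrm{mult}_P C=2$ and $\pi^*C=\widetilde{C}+2E$ with $\widetilde{C}=2\widetilde{L}+\widetilde{M}$, giving $A_{(\DP^2,\lambda C)}(E)=2-2\lambda$. The divisor $-\pi^*(K_{\DP^2}+\lambda C)-vE=(3-3\lambda)H-vE$ is nef for $v\in[0,3-3\lambda]$, so the Zariski decomposition is trivial: $P(v)=(3-3\lambda)H-vE$ and $N(v)=0$ throughout, with $P(v)^2=(3-3\lambda)^2-v^2$ and $P(v)\cdot E=v$. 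Integrating gives $S_{(\DP^2,\lambda C)}(E)=2(1-\lambda)$, hence $A_{(\DP^2,\lambda C)}(E)/S_{(\DP^2,\lambda C)}(E)=1$.

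The crux is the refined term along $E$. Since $N(v)\equiv 0$, the local contribution is $h(v)=\tfrac12\big(P(v)\cdot E\big)^2=\tfrac{v^2}{2}$ for every $O\in E$, so $S\big(W^{E}_{\bullet,\bullet};O\big)=1-\lambda$ independently of $O$. The different is $\Delta_E=\lambda\,\widetilde{C}|_E$; here $\widetilde{M}$ does not meet $E$ and $\widetilde{L}$ meets $E$ transversally in a single point $Q$, but with multiplicity two in $\widetilde{C}=2\widetilde{L}+\widetilde{M}$, so $\Delta_E=2\lambda\,Q$. Therefore $A_{E,\Delta_E}(Q)=1-2\lambda$ while $A_{E,\Delta_E}(O)=1$ otherwise, and
$$
\frac{A_{E,\Delta_{E}}(O)}{S\big(W^{E}_{\bullet,\bullet};O\big)}=
\begin{cases}
\dfrac{1-2\lambda}{1-\lambda}&O=Q,\\[2mm]
\dfrac{1}{1-\lambda}&O\ne Q.
\end{cases}
$$
Feeding this into \eqref{estimation2} yields $\delta_P(\DP^2,\lambda C)\ge\min\big\{1,\tfrac{1-2\lambda}{1-\lambda}\big\}=\tfrac{1-2\lambda}{1-\lambda}$ for $\lambda\in[0,\tfrac12]$, matching the upper bound and giving equality.

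The only genuinely delicate point is the computation of the different: the doubled line forces the coefficient $2\lambda$ at $Q$ rather than $\lambda$, and it is exactly this factor that drops the infimum below $1$ and reproduces the value coming from $L$. Everything else is the routine intersection-theoretic bookkeeping already carried out in Lemmas~\ref{doubleline-deg4} and~\ref{A1-points}; I would also verify at the end that $\tfrac{1-2\lambda}{1-\lambda}\le 1$ on $[0,\tfrac12]$, so that the minimum in \eqref{estimation2} is indeed attained at $Q$ and not at the exceptional divisor $E$.
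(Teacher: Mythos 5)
Your proposal is correct and follows essentially the same route as the paper: the upper bound from the valuation of the doubled line $L$ (the paper phrases it via its strict transform $\widetilde{L}$, which is the same divisor over $\DP^2$), and the lower bound from the ordinary blowup of $P$ with $A(E)/S(E)=1$, $S(W^E_{\bullet,\bullet};O)=1-\lambda$, and the different $\Delta_E=2\lambda Q$ forcing the infimum down to $\frac{1-2\lambda}{1-\lambda}$ at $Q=\widetilde{L}\cap E$. All the numerical values match the paper's computation.
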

 \begin{proof}
Note that $\delta_P(\DP^2,\lambda C)\le \frac{A_{(\DP^2,\lambda C)}(\widetilde{L})}{S_{(\DP^2,\lambda C)}(\widetilde{L})}=\frac{1-2\lambda}{1-  \lambda }$ where $\widetilde{L}$ is a strict transform of a line component of $C$. Let $\pi:\widetilde{S}\to S$ be the~blow up of the~point $P$,
with the exceptional divisor $E$.
\begin{center}
 \includegraphics[width=8cm]{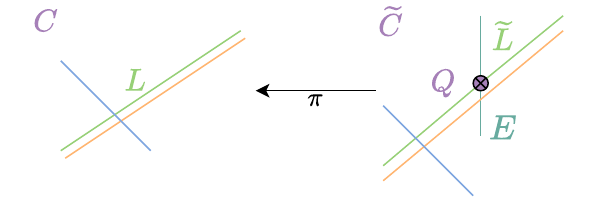}
 \end{center}
We have $\sigma^*(L)=\widetilde{L}+E$, $\pi^*(C)=\widetilde{C}+2E$, $\pi^*(K_{\DP^2})=K_{\widetilde{S}}-E$. Thus, $A_{(\DP^2,\lambda C)}(E)=2-2\lambda$.  The Zariski decomposition of the divisor  $-\pi^*(K_{\DP^2}+\lambda C)-vE$ is given by:
\begin{align*}
P(v)=-\pi^*(K_{\DP^2}+\lambda C)-vE \text{ and }N(v)=0\text{ if }v\in[0,3- 3 \lambda ].
\end{align*}
Then
$$
P(v)^2=(3\lambda - 3- v )(v-(3- 3\lambda))\text{ and }P(v)\cdot E=  v \text{ if }v\in[0,3- 3 \lambda ].
$$
Thus
$$S_{(\DP^2,\lambda C)}(E)=\frac{1}{(3- 3 \lambda )^2}\Big(\int_0^{3- 3 \lambda } (3\lambda - 3- v )(v-(3- 3\lambda)) dv\Big)=\frac{2(3- 3\lambda)}{3}$$
so that $\delta_P(\DP^2,\lambda C)\le \frac{3}{2}\cdot \frac{2-2\lambda}{3- 3 \lambda } = 1$. 
For every $O\in E$ we get:
$$h(v) = \frac{v^2}{2}\text{ if }v\in[0,3- 3\lambda ].
$$
So that
$$S\big(W^{E}_{\bullet,\bullet};O\big)= \frac{2}{(3- 3\lambda )^2}\Big(\int_0^{3- 3\lambda } \frac{v^2}{2} dv\Big)=\frac{3- 3 \lambda}{3}
$$
We have
$$
\delta_P(\DP^2,\lambda C)\geqslant\mathrm{min}\Bigg\{ 1 ,\inf_{O\in E}\frac{A_{E,\Delta_{E}}(O)}{S\big(W^{E}_{\bullet,\bullet};O\big)}\Bigg\},
$$
where $\Delta_{E}=2\lambda Q$ where $Q=\widetilde{L}|_E$. 
So that
$$
\frac{A_{\overline{E},\Delta_{\overline{E}}}(O)}{S(W_{\bullet,\bullet}^{\overline{E}};O)}=
\left\{\aligned
&\frac{3(1-2\lambda)}{3- 3 \lambda }\ \mathrm{if}\ O=Q,\\
&\frac{3}{3- 3 \lambda }\ \mathrm{otherwise}.
\endaligned
\right.
$$
Thus $\delta_P(\DP^2,\lambda C)=\frac{1-2\lambda}{1-  \lambda }$ for $\lambda\in\big[0,\frac{1}{2}\big]$.     
 \end{proof}
 \begin{lemma}
 Let $C\subset \DP^2$ be a double line and another line, $P\in C$ be a singular point  on a double line $L$. Then
  $$\delta_P(\DP^2,\lambda C)=\frac{1-2\lambda}{1-  \lambda }\text{ for }\lambda\in\Big[0,\frac{1}{2}\Big].$$
 \end{lemma}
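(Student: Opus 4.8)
The plan is to reproduce the single–blowup argument already used for the $A_1$, $D_4$ and four–line singularities, the one new ingredient being that the strict transform of the double line meets the exceptional curve with a \emph{doubled} weight in the different. Write $C=2L+M$ with $L$ the double line and $M$ the residual line; the singular point of $C$ lying on $L$ is the transverse intersection $P=L\cap M$, so that $d=3$ and $\mathrm{mult}_P C=3$. For the upper bound I would simply test against $L$ itself: since $L$ occurs in $C$ with coefficient $2$ we have $A_{(\DP^2,\lambda C)}(L)=1-2\lambda$, and because $-K_{\DP^2}-\lambda C-vL=(3-3\lambda-v)H$ stays nef for $v\in[0,3-3\lambda]$ one gets $S_{(\DP^2,\lambda C)}(L)=1-\lambda$; as $P\in L$ this already gives $\delta_P(\DP^2,\lambda C)\le\frac{1-2\lambda}{1-\lambda}$.

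For the reverse inequality I would apply estimation (\ref{estimation2}) to the blowup $\pi\colon\widetilde S\to\DP^2$ of $P$, with exceptional curve $E\cong\DP^1$ (smooth, so $E$ carries no quotient singularity). Here $\pi^*C=\widetilde C+3E$ and $A_{(\DP^2,\lambda C)}(E)=2-3\lambda$. The divisor $-\pi^*(K_{\DP^2}+\lambda C)-vE$ is nef with $N(v)=0$ throughout $[0,3-3\lambda]$, so $P(v)^2=(3-3\lambda)^2-v^2$ and $P(v)\cdot E=v$, which yields $S_{(\DP^2,\lambda C)}(E)=\tfrac{2}{3}(3-3\lambda)$ and $\frac{A(E)}{S(E)}=\frac{6-9\lambda}{6-6\lambda}$.

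The decisive step is the flag computation on $E$. Because $N(v)=0$, the function $h(v)=\tfrac{v^2}{2}$ is independent of the point $O$, giving $S\big(W^E_{\bullet,\bullet};O\big)=1-\lambda$ for every $O\in E$. The key point is the different: writing $\widetilde C=2\widetilde L+\widetilde M$, the two strict transforms meet $E$ in distinct points $Q_L=\widetilde L\cap E$ and $Q_M=\widetilde M\cap E$ (distinct since $L$ and $M$ are transverse), and $\widetilde L$ enters with multiplicity $2$; hence $\Delta_E=2\lambda Q_L+\lambda Q_M$. Therefore $A_{E,\Delta_E}(Q_L)=1-2\lambda$, $A_{E,\Delta_E}(Q_M)=1-\lambda$ and $A_{E,\Delta_E}(O)=1$ otherwise, and dividing by $1-\lambda$ shows the infimum over $O\in E$ equals $\frac{1-2\lambda}{1-\lambda}$, attained at $Q_L$.

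Finally, since $\frac{1-2\lambda}{1-\lambda}=\frac{6-12\lambda}{6-6\lambda}\le\frac{6-9\lambda}{6-6\lambda}=\frac{A(E)}{S(E)}$ for all $\lambda\in[0,\tfrac12]$, estimation (\ref{estimation2}) gives $\delta_P(\DP^2,\lambda C)\ge\frac{1-2\lambda}{1-\lambda}$, which together with the upper bound closes the argument. I expect the only genuine subtlety to be the bookkeeping of the different $\Delta_E$: one must remember that the double line contributes the weight $2\lambda$ (not $\lambda$) at $Q_L$, exactly as in Lemma~\ref{doubleline-deg4}, since it is precisely this doubling that pushes the binding ratio from the naive value $\frac{6-9\lambda}{6-6\lambda}$ (the answer in the $D_4$ case) down to $\frac{1-2\lambda}{1-\lambda}$.
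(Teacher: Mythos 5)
Your proposal is correct and follows essentially the same route as the paper: upper bound by testing against the double line $L$ itself (giving $\tfrac{A(L)}{S(L)}=\tfrac{1-2\lambda}{1-\lambda}$), lower bound via the ordinary blowup of $P=L\cap M$ with $\pi^*C=\widetilde C+3E$, $A(E)=2-3\lambda$, trivial Zariski decomposition, $S(W^E_{\bullet,\bullet};O)=1-\lambda$, and the different $\Delta_E=2\lambda Q_L+\lambda Q_M$ whose doubled weight at $Q_L$ produces the binding ratio $\tfrac{1-2\lambda}{1-\lambda}$. The paper's $Q_2$ and $Q_1$ are exactly your $Q_L$ and $Q_M$, and your final comparison of the two terms in the min matches the paper's conclusion.
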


  \begin{proof}
Note that $\delta_P(\DP^2,\lambda C)\le \frac{A_{(\DP^2,\lambda C)}(\widetilde{L})}{S_{(\DP^2,\lambda C)}(\widetilde{L})}=\frac{1-2\lambda}{1- \lambda }$ where $\widetilde{L}$ is a strict transform of a double line component of $C$.  Let $\pi:\widetilde{S}\to S$ be the~blow up of the~point $P$,
with the exceptional divisor $E$. We denote the strict transform of $C$  on $\widetilde{S}$ by $\widetilde{C}$.
\begin{center}
 \includegraphics[width=9cm]{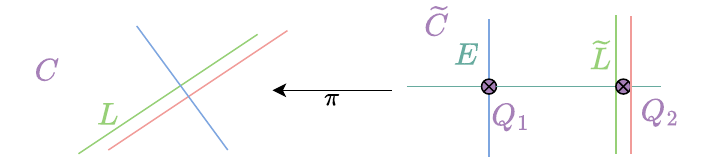}
 \end{center}
We have $\pi^*(C)=\widetilde{C}+3E$, $\pi^*(K_{\DP^2})=K_{\widetilde{S}}-E$. Thus, $A_{(\DP^2,\lambda C)}(E)=2-3\lambda$.
\noindent The Zariski decomposition of the divisor  $-\pi^*(K_{\DP^2}+\lambda C)-vE$ is given by:
\begin{align*}
P(v)=-\pi^*(K_{\DP^2}+\lambda C)-vE \text{ and }N(v)=0\text{ if }v\in[0,3- 3\lambda ].
\end{align*}
Then
$$
P(v)^2=\big(v-(3- 3\lambda )\big)\big(v+(3- 3\lambda )\big)\text{ and }P(v)\cdot E= v\text{ if }v\in[0,3- 3\lambda ].
$$
Thus
$$S_{(\DP^2, \lambda C)}(E)=\frac{1}{(3- 3\lambda )^2}\Big(\int_0^{3- 3\lambda } \big(v-(3- 3\lambda )\big)\big(v+(3- 3\lambda )\big) dv\Big)=\frac{2(3- 3\lambda )}{3}$$
so that $\delta_P(\DP^2,\lambda C)\le \frac{3}{2}\cdot \frac{2-3\lambda}{3- 3\lambda } $. For every $O\in E$ we get:
$$h(v) = \frac{v^2}{2}\text{ if }v\in[0,3- 3\lambda ].
$$
So that
$$S\big(W^{E}_{\bullet,\bullet};O\big)= \frac{2}{(3- 3\lambda )^2}\Big(\int_0^{3- 3\lambda } \frac{v^2}{2} dv\Big)=\frac{3- 3\lambda }{3}$$
We have
$$
\delta_P(\DP^2,\lambda C)\geqslant\mathrm{min}\Bigg\{ \frac{3}{2}\cdot \frac{2-3\lambda}{3- 3\lambda } ,\inf_{O\in E}\frac{A_{E,\Delta_{E}}(O)}{S\big(W^{E}_{\bullet,\bullet};O\big)}\Bigg\},
$$
where $\Delta_{E}=\lambda Q_1+2\lambda Q_2$ where $ Q_1+ 2Q_2=\widetilde{C}|_E$. 
So that
$$
\frac{A_{\overline{E},\Delta_{\overline{E}}}(O)}{S(W_{\bullet,\bullet}^{\overline{E}};O)}=
\left\{\aligned
&\frac{3(1-\lambda)}{3- 3\lambda }\ \mathrm{if}\ O=Q_1,\\
&\frac{3(1-2\lambda)}{3- 3\lambda }\ \mathrm{if}\ O=Q_2,\\
&\frac{3}{3- 3\lambda }\ \mathrm{otherwise}.
\endaligned
\right.
$$
Thus $\delta_P(\DP^2,\lambda C)= \frac{3(1-2\lambda)}{3- 3\lambda }$.
 \end{proof}
\noindent This proves the following theorem:
 \begin{theorem}
   Suppose $C_3\subset \DP^2$ is a cubic curve which is  a union of a double line  and another line. Then  we have:
$$\delta(\DP^2,\lambda C_3)= \frac{1-2\lambda}{1- \lambda }\text{ for }\lambda\in \Big[0, \frac{1}{2}\Big].$$
 \end{theorem}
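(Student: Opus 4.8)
The plan is to compute the global invariant $\delta(\DP^2,\lambda C_3)$ by means of the defining formula $\delta(\DP^2,\lambda C_3)=\inf_{P\in C_3}\delta_P(\DP^2,\lambda C_3)$, reducing everything to the three local computations carried out in the preceding lemmas of this section. Writing $C_3=2L+L'$ for the double line $L$ and the residual line $L'$, I would first stratify the support of $C_3$ into the three types of points that can occur: a smooth point of $C_3$ lying on $L'$ only, a smooth point of $C_3$ lying on $L$ only, and the unique point $Q=L\cap L'$, where $C_3$ has multiplicity $3$. Points off $C_3$ cannot lower the infimum, so these three strata suffice.

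Next I would read off the local values supplied by the lemmas. For a smooth point of $C_3$ on $L'$, the first lemma of the section (which follows from Lemma \ref{line} with $d=3$) gives $\delta_P(\DP^2,\lambda C_3)=\tfrac{3(1-\lambda)}{3-3\lambda}=1$. For a smooth point on the double line $L$, the second lemma gives $\delta_P(\DP^2,\lambda C_3)=\tfrac{1-2\lambda}{1-\lambda}$, and at the triple point $Q$ the third lemma gives the same value $\tfrac{1-2\lambda}{1-\lambda}$, all valid for $\lambda\in[0,\tfrac12]$. It then remains only to compare these numbers: since $1-2\lambda<1-\lambda$ for every $\lambda>0$, we have $\tfrac{1-2\lambda}{1-\lambda}<1$ on $(0,\tfrac12]$, so the infimum over all points is attained along the double line and equals $\tfrac{1-2\lambda}{1-\lambda}$.

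Finally I would confirm the range of the parameter. A double line component forces $\mathrm{lct}(\DP^2,C_3)=\tfrac12$, and since $\min\{3/d,\mathrm{lct}\}=\min\{1,\tfrac12\}=\tfrac12$, the admissible interval is exactly $\lambda\in[0,\tfrac12]$, matching the domains of the two double-line lemmas. This yields $\delta(\DP^2,\lambda C_3)=\tfrac{1-2\lambda}{1-\lambda}$ on $[0,\tfrac12]$.

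I do not expect a serious obstacle: the heavy lifting (the blowup and Zariski-decomposition computations and the point-by-point estimates via the Fujita inequalities \eqref{estimation1} and \eqref{estimation2}) is already done inside the three lemmas, each of which computes an \emph{exact} local invariant rather than a mere bound. The only points requiring care are the bookkeeping that the stratification of $C_3$ is exhaustive, and the observation that no additional valuation centred at $Q$ can drop below $\tfrac{1-2\lambda}{1-\lambda}$ — which is guaranteed because the third lemma already computes $\delta_Q$ exactly — together with the verification that the common interval of validity is the claimed $[0,\tfrac12]$.
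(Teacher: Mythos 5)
Your proposal is correct and follows the same route as the paper: the theorem is obtained by stratifying the support of $C_3$ into smooth points of the reduced line, smooth points of the double line, and the intersection point, and then taking the infimum of the exact local values $1$, $\tfrac{1-2\lambda}{1-\lambda}$, $\tfrac{1-2\lambda}{1-\lambda}$ computed in the three preceding lemmas. The comparison $\tfrac{1-2\lambda}{1-\lambda}<1$ and the restriction to $\lambda\in[0,\tfrac12]$ are exactly as in the paper.
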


 \section{Triple line}
 
   \begin{lemma} 
  Let $C\subset \DP^2$  be a triple line, $P\in C$ be a  point on $C$. Then
 $$\delta_P(\DP^2,\lambda C)=\frac{1-3\lambda}{1- \lambda }\text{ for }\lambda\in\Big[0,\frac{1}{3}\Big].$$
 \end{lemma}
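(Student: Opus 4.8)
The plan is to reproduce, almost verbatim, the single-blowup argument used for the double line in Lemma~\ref{doubleline-deg4}, since a triple line $C=3L$ has degree $d=3$ and its unique line component $L$, now of multiplicity $3$, plays exactly the role the double line played there. Because $\Aut(\DP^2)$ acts transitively on the points of $L=C_{\mathrm{red}}$, it suffices to treat an arbitrary $P\in L$; the whole computation then reduces to one ordinary blowup of $P$ together with the estimate \eqref{estimation2}.

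For the upper bound I would first test $\delta_P$ against the line component $L$ itself, viewed as a prime divisor on $\DP^2$. Since $-K_{\DP^2}-\lambda C\sim_{\DQ}(3-3\lambda)H$ and $L\sim H$, I get $A_{(\DP^2,\lambda C)}(L)=1-3\lambda$ and $S_{(\DP^2,\lambda C)}(L)=\frac{1}{(3-3\lambda)^2}\int_0^{3-3\lambda}(3-3\lambda-v)^2\,dv=1-\lambda$, whence $\delta_P(\DP^2,\lambda C)\le \frac{A_{(\DP^2,\lambda C)}(L)}{S_{(\DP^2,\lambda C)}(L)}=\frac{1-3\lambda}{1-\lambda}$, the claimed value.

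For the matching lower bound I would blow up $P$ with $\pi\colon\widetilde{S}\to\DP^2$ and exceptional divisor $E$. Here $\pi^*(C)=\widetilde{C}+3E$ and $\pi^*(K_{\DP^2})=K_{\widetilde{S}}-E$, so $A_{(\DP^2,\lambda C)}(E)=2-3\lambda$. The divisor $-\pi^*(K_{\DP^2}+\lambda C)-vE$ has self-intersection $(3-3\lambda)^2-v^2$ and meets $\widetilde{L}$ in $(3-3\lambda)-v$, both non-negative on $[0,3-3\lambda]$, so its Zariski decomposition is trivial with $N(v)=0$; this yields $S_{(\DP^2,\lambda C)}(E)=\frac{2(3-3\lambda)}{3}$ and $\frac{A_{(\DP^2,\lambda C)}(E)}{S_{(\DP^2,\lambda C)}(E)}=\frac{2-3\lambda}{2(1-\lambda)}$. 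Applying \eqref{estimation2}, the different is $\Delta_E=3\lambda\,\overline{Q}$ with $\overline{Q}=\widetilde{L}|_E$, and since $N(v)=0$ one has $h(v)=\frac{v^2}{2}$ and $S\big(W^{E}_{\bullet,\bullet};O\big)=\frac{3-3\lambda}{3}$ for every $O\in E$. Hence $\frac{A_{E,\Delta_E}(O)}{S(W^{E}_{\bullet,\bullet};O)}$ equals $\frac{1-3\lambda}{1-\lambda}$ at $O=\overline{Q}$ and $\frac{1}{1-\lambda}$ otherwise; combined with $\frac{A_{(\DP^2,\lambda C)}(E)}{S_{(\DP^2,\lambda C)}(E)}=\frac{2-3\lambda}{2(1-\lambda)}\ge\frac{1-3\lambda}{1-\lambda}$, the minimum in \eqref{estimation2} is exactly $\frac{1-3\lambda}{1-\lambda}$, matching the upper bound.

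The computation is essentially routine, so the only point demanding genuine care is confirming that no Zariski correction ever appears, i.e. that $-\pi^*(K_{\DP^2}+\lambda C)-vE$ remains nef all the way up to the pseudo-effective threshold $\tau=3-3\lambda$; this is forced by $\widetilde{L}^2=0$ and $\widetilde{L}\cdot E=1$, and it is what distinguishes the triple line from the $A_n$ or $D_n$ cases where a nontrivial $N(v)$ enters. Once that is checked, the range $\lambda\in\big[0,\tfrac{1}{3}\big]$ emerges naturally as the interval on which $A_{(\DP^2,\lambda C)}(L)=1-3\lambda$ and the different coefficient $3\lambda\le 1$ both stay admissible, i.e. the log canonical range of $(\DP^2,\lambda\cdot 3L)$.
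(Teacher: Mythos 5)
Your proposal is correct and follows essentially the same route as the paper: the upper bound from the ratio $A_{(\DP^2,\lambda C)}(L)/S_{(\DP^2,\lambda C)}(L)=(1-3\lambda)/(1-\lambda)$, and the matching lower bound from a single blowup of $P$ with $A_{(\DP^2,\lambda C)}(E)=2-3\lambda$, $S_{(\DP^2,\lambda C)}(E)=\tfrac{2(3-3\lambda)}{3}$, trivial Zariski decomposition, and the different $\Delta_E=3\lambda\,\overline{Q}$ giving the infimum $\tfrac{1-3\lambda}{1-\lambda}$ at $O=\overline{Q}$. Your explicit verification that $P(v)$ stays nef up to $\tau=3-3\lambda$ and the check $\tfrac{2-3\lambda}{2(1-\lambda)}\ge\tfrac{1-3\lambda}{1-\lambda}$ are exactly the points the paper's computation implicitly relies on.
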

 \begin{proof}
Note that $\delta_P(\DP^2,\lambda C)\le \frac{A_{(\DP^2,\lambda C)}(\widetilde{L})}{S_{(\DP^2,\lambda C)}(\widetilde{L})}=\frac{1-3\lambda}{1- \lambda }$ where $\widetilde{L}$ is a strict transform of a triple line component of $C$. Let $\pi:\widetilde{S}\to S$ be the~blow up of the~point $P$,
with the exceptional divisor $E$.
\begin{center}
 \includegraphics[width=8cm]{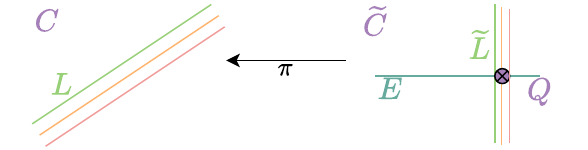}
 \end{center}
We have $\sigma^*(L)=\widetilde{L}+E$, $\pi^*(C)=\widetilde{C}+3E$, $\pi^*(K_{\DP^2})=K_{\widetilde{S}}-E$. Thus, $A_{(\DP^2,\lambda C)}(E)=2-3\lambda$.  The Zariski decomposition of the divisor  $-\pi^*(K_{\DP^2}+\lambda C)-vE$ is given by:
\begin{align*}
P(v)=-\pi^*(K_{\DP^2}+\lambda C)-vE \text{ and }N(v)=0\text{ if }v\in[0,3- 3 \lambda ].
\end{align*}
Then
$$
P(v)^2=(3\lambda - 3- v )(v-(3- 3\lambda))\text{ and }P(v)\cdot E=  v \text{ if }v\in[0,3- 3 \lambda ].
$$
Thus
$$S_{(\DP^2,\lambda C)}(E)=\frac{1}{(3- 3 \lambda )^2}\Big(\int_0^{3- 3 \lambda } (3\lambda - 3- v )(v-(3- 3\lambda)) dv\Big)=\frac{2(3- 3\lambda)}{3}$$
so that $\delta_P(\DP^2,\lambda C)\le \frac{3}{2}\cdot \frac{2-3\lambda}{3- 3 \lambda }$. 
For every $O\in E$ we get:
$$h(v) = \frac{v^2}{2}\text{ if }v\in[0,3- 3\lambda ].
$$
So that
$$S\big(W^{E}_{\bullet,\bullet};O\big)= \frac{2}{(3- 3\lambda )^2}\Big(\int_0^{3- 3\lambda } \frac{v^2}{2} dv\Big)=\frac{3- 3 \lambda}{3}
$$
We have
$$
\delta_P(\DP^2,\lambda C)\geqslant\mathrm{min}\Bigg\{ \frac{3}{2}\cdot \frac{2-3\lambda}{3- 3 \lambda } ,\inf_{O\in E}\frac{A_{E,\Delta_{E}}(O)}{S\big(W^{E}_{\bullet,\bullet};O\big)}\Bigg\},
$$
where $\Delta_{E}=3\lambda Q$ where $Q=\widetilde{L}|_E$. 
So that
$$
\frac{A_{\overline{E},\Delta_{\overline{E}}}(O)}{S(W_{\bullet,\bullet}^{\overline{E}};O)}=
\left\{\aligned
&\frac{3(1-3\lambda)}{3- 3 \lambda }\ \mathrm{if}\ O=Q,\\
&\frac{3}{3- 3 \lambda }\ \mathrm{otherwise}.
\endaligned
\right.
$$
Thus $\delta_P(\DP^2,\lambda C)=\frac{1-3\lambda}{1- \lambda }$ for $\lambda\in\big[0,\frac{1}{3}\big]$.     
 \end{proof}
 \noindent This proves the following theorem:
 \begin{theorem}
   Suppose $C_3\subset \DP^2$ is a cubic curve which is   a triple line. Then  we have:
$$\delta(\DP^2,\lambda C_3)= \frac{1-3\lambda}{1- \lambda }\text{ for }\lambda\in \Big[0, \frac{1}{3}\Big].$$
 \end{theorem}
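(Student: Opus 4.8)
The plan is to follow the same single-blowup strategy used for the double line (Lemma~\ref{doubleline-deg4}) and for the $A_1$ and $D_4$ cases, since a triple line $C=3L$ has $\mathrm{mult}_PC=3$ at every point $P\in L$ while $L$ itself is smooth, so a single blowup already resolves everything and no iterated or weighted blowup is needed. First I would record the upper bound coming from the strict transform $\widetilde L$ of the line: computing $A_{(\DP^2,\lambda C)}(\widetilde L)$ and $S_{(\DP^2,\lambda C)}(\widetilde L)$ directly gives
$$\delta_P(\DP^2,\lambda C)\le \frac{A_{(\DP^2,\lambda C)}(\widetilde L)}{S_{(\DP^2,\lambda C)}(\widetilde L)}=\frac{1-3\lambda}{1-\lambda},$$
so only the reverse inequality remains.

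For the lower bound I would blow up $P$ via $\pi\colon\widetilde S\to S$ with exceptional divisor $E$. Since $\mathrm{mult}_PC=3$ we get $\pi^*(C)=\widetilde C+3E$ and $\pi^*(K_{\DP^2})=K_{\widetilde S}-E$, hence $A_{(\DP^2,\lambda C)}(E)=2-3\lambda$. Because $-K_{\DP^2}-\lambda C$ is ample of the form $(3-3\lambda)H$ with $H$ a line, the pullback minus $vE$ remains nef on $[0,3-3\lambda]$, so the Zariski decomposition is trivial: $P(v)=-\pi^*(K_{\DP^2}+\lambda C)-vE$ and $N(v)=0$ for $v\in[0,3-3\lambda]$, with $P(v)^2=(3-3\lambda)^2-v^2$ and $P(v)\cdot E=v$. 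Integrating yields $S_{(\DP^2,\lambda C)}(E)=\tfrac{2(3-3\lambda)}{3}$, so $A_{(\DP^2,\lambda C)}(E)/S_{(\DP^2,\lambda C)}(E)=\tfrac{3}{2}\cdot\tfrac{2-3\lambda}{3-3\lambda}$, while $h(v)=\tfrac{v^2}{2}$ gives $S\big(W^E_{\bullet,\bullet};O\big)=\tfrac{3-3\lambda}{3}$ for every $O\in E$.

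The one place where the triple-line nature genuinely enters—and the step I would treat most carefully—is the computation of the different $\Delta_E$ in the estimate~\eqref{estimation2}. Writing $Q=\widetilde L|_E$, the strict transform $\widetilde C=3\widetilde L$ meets $E$ at the single point $Q$ with multiplicity $3$, so $\Delta_E=3\lambda Q$ and $A_{E,\Delta_E}(Q)=1-3\lambda$, whereas $A_{E,\Delta_E}(O)=1$ for $O\neq Q$. Substituting into~\eqref{estimation2},
$$\frac{A_{E,\Delta_E}(O)}{S\big(W^E_{\bullet,\bullet};O\big)}=\begin{cases}\dfrac{3(1-3\lambda)}{3-3\lambda}&\text{if }O=Q,\\[4pt]\dfrac{3}{3-3\lambda}&\text{otherwise},\end{cases}$$
so the infimum over $O\in E$ equals $\tfrac{1-3\lambda}{1-\lambda}$. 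Finally I would check the elementary inequality $\tfrac{1-3\lambda}{1-\lambda}\le\tfrac{3}{2}\cdot\tfrac{2-3\lambda}{3-3\lambda}$ for $\lambda\in[0,\tfrac13]$, which reduces to $2-6\lambda\le 2-3\lambda$, i.e. $\lambda\ge 0$; this shows the minimum in~\eqref{estimation2} is attained by the point term at $Q$ and equals exactly the upper bound. The main obstacle is therefore not any hard geometry but bookkeeping the multiplicity $3$ correctly in the different: once $\Delta_E=3\lambda Q$ is established, the two bounds coincide and the lemma follows for $\lambda\in[0,\tfrac13]$.
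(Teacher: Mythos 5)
Your proposal is correct and follows essentially the same route as the paper: the upper bound from the strict transform $\widetilde L$, a single blowup of $P$ with trivial Zariski decomposition giving $A_{(\DP^2,\lambda C)}(E)=2-3\lambda$ and $S_{(\DP^2,\lambda C)}(E)=\tfrac{2(3-3\lambda)}{3}$, and the different $\Delta_E=3\lambda Q$ producing the minimizing ratio $\tfrac{3(1-3\lambda)}{3-3\lambda}$ at $Q=\widetilde L|_E$. The only (harmless) difference is that you make explicit the comparison $\tfrac{1-3\lambda}{1-\lambda}\le\tfrac{3}{2}\cdot\tfrac{2-3\lambda}{3-3\lambda}$, which the paper leaves implicit.
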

 
 \section{Double conic}
  \begin{lemma}
 Let $C\subset \DP^2$ be a  double conic, $P\in C$ be a smooth point on $C$, and $L$ be the tangent line $L$ at this point. Then
 $$\delta_P(\DP^2,\lambda C)=1\text{ for }\lambda\in\Big[0,\frac{3}{8}\Big].$$
 \end{lemma}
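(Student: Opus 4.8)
The plan is to run the same weighted-blowup / Abban--Zhuang estimate used for the smooth conic, taking full account of the fact that $C=2\mathcal{Q}$ for a smooth conic $\mathcal{Q}$, so that $d=4$ and the tangent line $L$ (being tangent to $\mathcal{Q}$) is \emph{not} a component of $C$. First I would reproduce the birational model of the smooth-conic lemma: blow up $P$ to get $E_1^1$, then blow up the point $E_1^1\cap\mathcal{Q}^1\cap L^1$ to get $E$, contract $E_1^2$, and let $\sigma$ be the contraction of the resulting divisor $\overline{E}=\theta(E)$, which carries a single $\tfrac12(1,1)$ point $\overline{P}$. The underlying reduced geometry is identical to the conic case, so the intersection table on $\overline{S}$ is again $\overline{E}^2=-\tfrac12$, $\overline{E}\cdot\overline{L}=1$, $\overline{L}^2=-1$, together with $\sigma^*(L)=\overline{L}+2\overline{E}$ and $\sigma^*(K_{\DP^2})=K_{\overline{S}}-2\overline{E}$. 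The one genuinely new input is $\sigma^*(C)=\overline{C}+4\overline{E}$, where $\overline{C}=2\overline{\mathcal{Q}}$ is the strict transform of the double conic; this yields $A_{(\DP^2,\lambda C)}(\overline{E})=1+2-4\lambda=3-4\lambda$.

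Next I would compute the Zariski decomposition of $-\sigma^*(K_{\DP^2}+\lambda C)-v\overline{E}$. Using $\overline{E}\cdot\overline{L}=1$ and $\big({-\sigma^*(K_{\DP^2}+\lambda C)}\big)\cdot\overline{L}=3-4\lambda$ (from $-K_{\DP^2}\cdot L=3$ and $C\cdot L=2(\mathcal{Q}\cdot L)=4$), the negative part first picks up $\overline{L}$ at $v=3-4\lambda$ and the pseudo-effective threshold is $\tau=2(3-4\lambda)$. Exactly as in the conic computation one obtains $P(v)^2=(3-4\lambda)^2-\tfrac{v^2}{2}$ on $[0,3-4\lambda]$ and $P(v)^2=\tfrac{(v-6+8\lambda)^2}{2}$ on $[3-4\lambda,2(3-4\lambda)]$, with $P(v)\cdot\overline{E}$ equal to $\tfrac v2$ and then $3-4\lambda-\tfrac v2$. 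Integrating gives $S_{(\DP^2,\lambda C)}(\overline{E})=3-4\lambda$, so $\delta_P\le(3-4\lambda)/(3-4\lambda)=1$. The same $h(v)$ estimates as in the conic case then bound $S\big(W^{\overline{E}}_{\bullet,\bullet};O\big)$ by $\tfrac{3-4\lambda}{6}$ for $O\in\overline{E}\setminus\overline{L}$ and by $\tfrac{3-4\lambda}{3}$ for $O\in\overline{E}\cap\overline{L}$.

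The decisive step is the different. Because $C=2\mathcal{Q}$, the strict transform meets $\overline{E}$ with multiplicity $2$ at the point $\overline{Q}=\overline{E}\cap\overline{C}$, so the restriction of $\lambda\overline{C}$ contributes coefficient $2\lambda$ rather than $\lambda$; hence $\Delta_{\overline{E}}=\tfrac12\overline{P}+2\lambda\,\overline{Q}$. Plugging $A_{\overline{E},\Delta_{\overline{E}}}(O)=1-\mathrm{ord}_{\Delta_{\overline{E}}}(O)$ into the estimate \eqref{estimation2} gives $\tfrac{A}{S}=\tfrac{3}{3-4\lambda}$ at $\overline{P}$ and at $\overline{E}\cap\overline{L}$, $\tfrac{6}{3-4\lambda}$ at a general point of $\overline{E}$, and $\tfrac{6(1-2\lambda)}{3-4\lambda}$ at $\overline{Q}$. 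On the range $\lambda<\tfrac34$ all of these are $\ge1$ except the last, and $\tfrac{6(1-2\lambda)}{3-4\lambda}\ge1\iff 6-12\lambda\ge3-4\lambda\iff\lambda\le\tfrac38$.

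I expect the main obstacle to be precisely this bookkeeping of the non-reduced coefficient at $\overline{Q}$: it is the only place where the doubling of the conic enters the estimate, and it is what collapses the admissible interval from $[0,\tfrac34]$ (smooth conic) down to $[0,\tfrac38]$. Once the inequality $\lambda\le\tfrac38$ is in hand, combining the upper bound $\delta_P\le1$ with $\min\big\{1,\inf_O A_{\overline{E},\Delta_{\overline{E}}}(O)/S(W^{\overline{E}}_{\bullet,\bullet};O)\big\}=1$ on $[0,\tfrac38]$ yields $\delta_P(\DP^2,\lambda C)=1$ for $\lambda\in[0,\tfrac38]$, as claimed; note that $[0,\tfrac38]\subset[0,\tfrac12)=[0,\mathrm{lct}(\DP^2,2\mathcal{Q}))$, so the pair is genuinely log Fano throughout the interval.
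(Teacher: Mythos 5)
Your proposal is correct and follows essentially the same route as the paper: the same two-step blowup and contraction producing $\overline{E}$ with a $\tfrac12(1,1)$ point, the same Zariski decomposition giving $S_{(\DP^2,\lambda C)}(\overline{E})=3-4\lambda$, and the same different $\Delta_{\overline{E}}=\tfrac12\overline{P}+2\lambda\overline{Q}$ whose coefficient $2\lambda$ at $\overline{Q}$ forces the restriction $\lambda\le\tfrac38$ via $\tfrac{6(1-2\lambda)}{3-4\lambda}\ge1$. Your bookkeeping $\sigma^*(K_{\DP^2})=K_{\overline{S}}-2\overline{E}$ is in fact the version consistent with $A_{(\DP^2,\lambda C)}(\overline{E})=3-4\lambda$ (the paper's displayed $-4\overline{E}$ is a typo).
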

 \begin{proof}
Let $\pi_1:S^1\to S$ be the~blow up of the~point $P$,
with the exceptional divisor $E_1^1$ and $L^1$, $C^1$ are strict transforms of $L$ and $C$ respectively; $\pi_2: S^2\to S^1$ be the~blow up of the~point $E_1^1\cap C^1\cap L^1$,
with the exceptional divisor $E$ and $L^2$, $C^2$ are strict transforms of $L^1$ and $C^1$ respectively; let $\theta:S^2\to\overline{S}$ be  the~contraction of  the~curve $E_1^2$,
and $\sigma$ is the~birational contraction of $\overline{E}=\theta(E)$. We denote the strict transforms of $C$ and $L$ on $\overline{S}$ by $\overline{C}$ and $\overline{L}$. Let $\overline{E}$ be the exceptional divisor of $\sigma$. Note that $\overline{E}$ contains one singular point $\overline{P}$  which is $\frac{1}{2}(1,1)$  singular point.
\begin{center}
 \includegraphics[width=15cm]{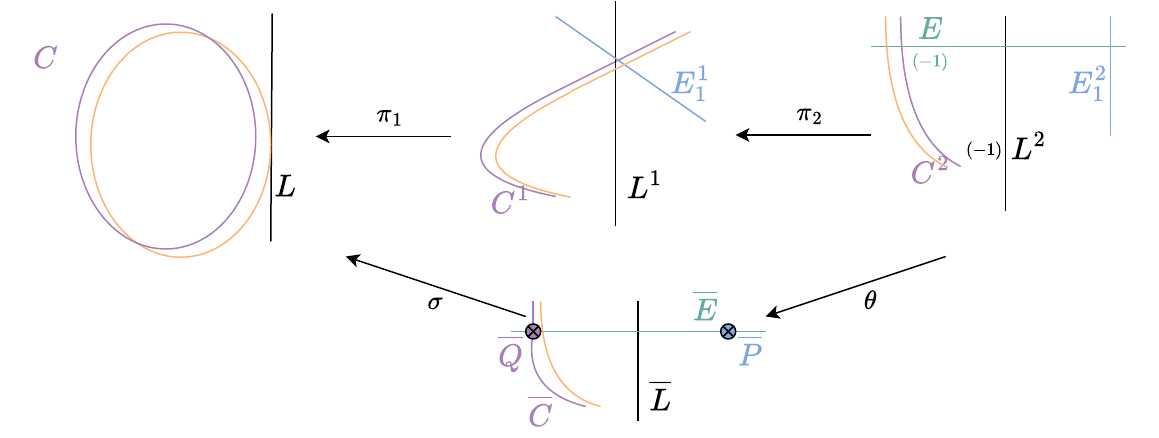}
 \end{center}
The intersections on $\overline{S}$ are given by:
\begin{center}
    \renewcommand{\arraystretch}{1.4}
    \begin{tabular}{|c|c|c|}
    \hline
         & $\overline{E}$ & $\overline{L}$ \\
    \hline
       $\overline{E}$  & $-\frac{1}{2}$ & $1$ \\
    \hline
       $\overline{L}$  & $1$ & $-1$ \\
    \hline
    \end{tabular}
\end{center}
 We have $\sigma^*(L)=\overline{L}+2\overline{E}$, $\sigma^*(C)=\overline{C}+4\overline{E}$, $\sigma^*(K_{\DP^2})=K_{\overline{S}}-4\overline{E}$. Thus, $A_{(\DP^2,\lambda C)}(\overline{E})=3-4\lambda$.
\noindent The Zariski decomposition of the divisor  $-\sigma^*(K_{\DP^2}+\lambda C)-v\overline{E}$ is given by:
\begin{align*}
&&P(v)=
\begin{cases}
-\sigma^*(K_{\DP^2}+\lambda C)-v\overline{E}\text{ if }v\in[0,3-4\lambda],\\
-\sigma^*(K_{\DP^2}+\lambda C)-v\overline{E}-\big(v-(3-4\lambda)\big)\overline{L}\text{ if }v\in[3-4\lambda, 2(3-4\lambda)],\\
\end{cases}\\&&
N(v)=
\begin{cases}
0\text{ if }v\in[0,3-4\lambda],\\
\big(v-(3-4\lambda)\big)\overline{L}\text{ if }v\in[3-4\lambda,2(3-4\lambda)].
\end{cases}
\end{align*}
Then
$$
P(v)^2=
\begin{cases}
(3-4\lambda)^2 -\frac{v^2}{2}\text{ if }v\in[0,3-4\lambda],\\
\frac{(v- 6 + 8\lambda )^2}{2}\text{ if }v\in[3-4\lambda,2(3-4\lambda)],
\end{cases}
\text{ and }
P(v)\cdot \overline{E}=
\begin{cases}
\frac{v}{2}\text{ if }v\in[0,3-4\lambda],\\
3-4\lambda -\frac{v}{2}\text{ if }v\in[3-4\lambda,2(3-4\lambda)],
\end{cases}
$$
Thus
$$S_S(\overline{E})=\frac{1}{(3-4\lambda)^2}\Big(\int_0^{3-4\lambda} (3-4\lambda)^2 -\frac{v^2}{2} dv+\int_{3-4\lambda}^{2(3-4\lambda)} \frac{(v- 6 + 8\lambda)^2}{2} dv\Big)=3-4\lambda$$
so that $\delta_P(\DP^2,\lambda C)\le \frac{3-4\lambda}{3-4\lambda}=1$. For every $O\in \overline{E}$, we get if $O\in \overline{E}\backslash \overline{L}$ or if $O\in \overline{E}\cap \overline{L}$:
$$h(v)\le 
\begin{cases}
\frac{v^2}{8}\text{ if }v\in[0,3-4\lambda],\\
\frac{(v- 6 + 8\lambda)^2}{8}\text{ if }v\in[3-4\lambda, 2(3-4\lambda)],
\end{cases}
\text{or }
h(v)\le \begin{cases}
\frac{v^2}{8}\text{ if }v\in[0,3-4\lambda],\\
\frac{ (v- 6 + 8\lambda) (6 - 8\lambda - 3v)}{8}\text{ if }v\in[3-4\lambda, 2(3-4\lambda)].
\end{cases}
$$
So that
$$S\big(W^{\overline{E}}_{\bullet,\bullet};O\big)\le \frac{2}{(3-4\lambda)^2}\Big(\int_0^{3-4\lambda} \frac{v^2}{8} dv+\int_{3-4\lambda}^{2(3-4\lambda)} \frac{(v- 6 + 8\lambda)^2}{8} dv\Big)=\frac{3-4\lambda}{6}
$$
or
$$S\big(W^{\overline{E}}_{\bullet,\bullet};O\big)\le \frac{2}{(3-4\lambda)^2}\Big(\int_0^{3-4\lambda} \frac{v^2}{8} dv+\int_{3-4\lambda}^{2(3-4\lambda)} \frac{ (v- 6 + 8\lambda) (6 - 8\lambda - 3v)}{8} dv\Big)=\frac{3-4\lambda}{3}
$$
We have
$$
\delta_P(\DP^2,\lambda C)\geqslant\mathrm{min}\Bigg\{1,\inf_{O\in\overline{E}}\frac{A_{\overline{E},\Delta_{\overline{E}}}(O)}{S\big(W^{\overline{E}}_{\bullet,\bullet};O\big)}\Bigg\},
$$
where $\Delta_{\overline{E}}=\frac{1}{2}\overline{P}+2\lambda\overline{Q}$, where $\overline{Q}=\overline{E}\cap \overline{C}$. 
So that
$$
\frac{A_{\overline{E},\Delta_{\overline{E}}}(O)}{S(W_{\bullet,\bullet}^{\overline{E}};O)}=
\left\{\aligned
&\frac{3}{3-4\lambda}\ \mathrm{if}\ O=\overline{E}\cap\overline{L} \text{ or if } O=\overline{P},\\
&\frac{6(1-2\lambda)}{3-4\lambda}\ \mathrm{if}\ O=\overline{Q},\\
&\frac{6}{3-4\lambda}\ \mathrm{otherwise}.
\endaligned
\right.
$$
Thus $\delta_P(\DP^2,\lambda C)=1$ for $\lambda\in \big[0,\frac{3}{8}\big]$.
 \end{proof}

  \section{Conic and a double chord}
  \begin{lemma}
 Let $C\subset \DP^2$ be a conic and a double chord, $P\in C$ be a smooth point  on a conic component. Then
  $$\delta_P(\DP^2,\lambda C)=\frac{3-2\lambda}{3-4 \lambda}\text{ for }\lambda\in\Big[0,\frac{3}{4}\Big].$$
 \end{lemma}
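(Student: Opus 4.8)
The plan is to reduce the whole computation to the smooth-quartic case already settled in Lemma~\ref{qc-smooth-2}, by checking that the geometry in a neighbourhood of $P$ is literally the same. Write $C=\mathcal{Q}+2L$, where $\mathcal{Q}$ is the conic and $L$ the double chord. Since $P$ is a smooth point of the conic component, it cannot lie on $L$ (a point of $\mathcal{Q}\cap L$ would be a singular point of $C$); hence $\mathrm{mult}_P C=\mathrm{mult}_P\mathcal{Q}=1$ and $C$ is smooth at $P$ with tangent line $T$ equal to the tangent line of $\mathcal{Q}$ at $P$. First I would record the two facts that drive the reduction: $T$ is not a component of $C$ (it is neither the conic nor the chord), and $(T\cdot C)_P=(T\cdot\mathcal{Q})_P+2(T\cdot L)_P=2+0=2$, so $T$ is a tangent line of multiplicity $2$ in the sense of Lemma~\ref{qc-smooth-2}.

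Next I would run the same resolution as in that lemma: blow up $P$, then blow up the common point of the strict transforms of $\mathcal{Q}$ and $T$ on the first exceptional curve, contract the intermediate curve, and extract the divisor $\overline{E}$ with $\overline{E}^2=-\tfrac12$ carrying a single $\tfrac12(1,1)$ point $\overline{P}$. Because $L$ avoids $P$, we have $\mathrm{ord}_{\overline{E}}(\sigma^*L)=0$, so $\sigma^*(C)=\overline{C}+2\overline{E}$ and $A_{(\DP^2,\lambda C)}(\overline{E})=3-2\lambda$, exactly as before; and since $\deg C=4$, the polarisation is $-K_{\DP^2}-\lambda C=(3-4\lambda)H$, so the pseudo-effective thresholds, the Zariski decomposition of $-\sigma^*(K_{\DP^2}+\lambda C)-v\overline{E}$, and all intersection numbers coincide verbatim with those of Lemma~\ref{qc-smooth-2} after setting $d=4$. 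This reproduces $S_{(\DP^2,\lambda C)}(\overline{E})=3-4\lambda$, the different $\Delta_{\overline{E}}=\tfrac12\overline{P}+\lambda\overline{Q}$ with $\overline{Q}=\overline{E}\cap\overline{C}$, and hence $\delta_P(\DP^2,\lambda C)=\frac{3-2\lambda}{3-4\lambda}$ for $\lambda\in[0,\tfrac34]$.

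The only genuine point to verify — and the main, if modest, obstacle — is that the double chord $L$, though it sits elsewhere in $\DP^2$, does not perturb the global volume computations feeding $S_{(\DP^2,\lambda C)}(\overline{E})$ and $S(W^{\overline{E}}_{\bullet,\bullet};O)$. This holds because the strict transform $\overline{L}$ of the chord is disjoint from $\overline{E}$, so it can never contribute to the negative part of $-\sigma^*(K_{\DP^2}+\lambda C)-v\overline{E}$ supported over $P$; indeed $\big(-\sigma^*(K_{\DP^2}+\lambda C)-v\overline{E}\big)\cdot\overline{L}=(3-4\lambda)>0$ for all $\lambda\in[0,\tfrac34)$ and all relevant $v$, so $\overline{L}$ stays in the positive part and the entire computation localises near $P$ exactly as in the smooth case. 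With this checked, the statement follows from Lemma~\ref{qc-smooth-2}.
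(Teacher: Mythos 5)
Your proposal is correct and follows exactly the route the paper takes: the paper's entire proof is ``Follows from Lemma~\ref{qc-smooth-2},'' and you have simply made the reduction explicit by checking that $P\notin L$, that the tangent line at $P$ meets $C$ with local multiplicity exactly $2$ (which is automatic for a conic, having no inflection points), and that the chord contributes nothing to the data near $\overline{E}$. The verification that $\overline{L}$ stays disjoint from $\overline{E}$ and does not enter the negative part is a worthwhile detail the paper leaves implicit.
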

 \begin{proof}
Follows from Lemma \ref{qc-smooth-2}.
 \end{proof}
  \begin{lemma} \label{doubleline-deg4}
  Let $C\subset \DP^2$ be a union of a conic and a double chord, $P\in C$ be a smooth point  on a chord component $L$. Then
 $$\delta_P(\DP^2,\lambda C)=\frac{3(1-2\lambda)}{3- 4 \lambda }\text{ for }\lambda\in\Big[0,\frac{1}{2}\Big].$$
 \end{lemma}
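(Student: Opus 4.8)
The plan is to transcribe the proof of the double line lemma, the only change being that here $C$ has degree $d=4$, so $-K_{\DP^2}-\lambda C\sim(3-4\lambda)H$ and the pseudo-effective threshold is $3-4\lambda$ instead of $3-2\lambda$. The point is that $P$ is a smooth point of the chord $L$ lying away from the conic, so locally $C$ is exactly the double line $2L$; the conic never enters the local picture at $P$ except through the global intersection number $(-K_{\DP^2}-\lambda C)^2=(3-4\lambda)^2$.

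First I would record the upper bound. Let $\widetilde L$ denote the chord component, which appears in $C$ with multiplicity $2$, so that $A_{(\DP^2,\lambda C)}(\widetilde L)=1-2\lambda$. Since $-K_{\DP^2}-\lambda C-v\widetilde L\sim(3-4\lambda-v)H$ has volume $(3-4\lambda-v)^2$ on $[0,3-4\lambda]$, one gets $S_{(\DP^2,\lambda C)}(\widetilde L)=\frac{3-4\lambda}{3}$ and hence $\delta_P(\DP^2,\lambda C)\le\frac{3(1-2\lambda)}{3-4\lambda}$.

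For the matching lower bound I would apply estimate \eqref{estimation2} to the blow-up $\pi:\widetilde S\to S$ of $P$ with exceptional divisor $E$. From $\pi^*(C)=\widetilde C+2E$ and $\pi^*(K_{\DP^2})=K_{\widetilde S}-E$ we read off $A_{(\DP^2,\lambda C)}(E)=2-2\lambda$. The one thing to verify is that the Zariski decomposition of $-\pi^*(K_{\DP^2}+\lambda C)-vE$ stays trivial on the whole interval $[0,3-4\lambda]$: this holds because $P(v)\cdot E=v\ge 0$ and $P(v)\cdot\widetilde L=(3-4\lambda)-v\ge 0$ there, while the conic's transform $\pi^*(\text{conic})$ meets $P(v)$ in $2(3-4\lambda)\ge 0$. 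Consequently $P(v)^2=(3-4\lambda)^2-v^2$, giving $S_{(\DP^2,\lambda C)}(E)=\frac{2(3-4\lambda)}{3}$ and $\frac{A_{(\DP^2,\lambda C)}(E)}{S_{(\DP^2,\lambda C)}(E)}=\frac{3(1-\lambda)}{3-4\lambda}$. With $h(v)=\frac{v^2}{2}$ one finds $S(W^E_{\bullet,\bullet};O)=\frac{3-4\lambda}{3}$ for every $O\in E$, and the different is $\Delta_E=2\lambda Q$ with $Q=\widetilde L|_E$, so $\frac{A_{E,\Delta_E}(O)}{S(W^E_{\bullet,\bullet};O)}$ equals $\frac{3(1-2\lambda)}{3-4\lambda}$ at $O=Q$ and $\frac{3}{3-4\lambda}$ otherwise. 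Taking the minimum of these quantities against $\frac{3(1-\lambda)}{3-4\lambda}$ yields $\delta_P\ge\frac{3(1-2\lambda)}{3-4\lambda}$, and with the upper bound this gives equality for $\lambda\in[0,\frac12]$, the range being forced by $A(\widetilde L)=1-2\lambda\ge 0$.

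I do not expect a genuine obstacle: the computation is a line-by-line repetition of the double line case. The only point demanding care — and the step I would double-check — is that enlarging $C$ by the disjoint conic component changes nothing locally at $P$ beyond shifting the threshold to $3-4\lambda$; concretely, that the conic produces neither a negative part in the Zariski decomposition nor an additional weighted point in the different $\Delta_E$, which is clear since it is disjoint from $P$.
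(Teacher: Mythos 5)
Your proposal is correct and follows essentially the same route as the paper: the upper bound from the doubled chord component $\widetilde{L}$ with $A_{(\DP^2,\lambda C)}(\widetilde L)=1-2\lambda$, and the lower bound from the ordinary blow-up at $P$ with $A_{(\DP^2,\lambda C)}(E)=2-2\lambda$, $S_{(\DP^2,\lambda C)}(E)=\tfrac{2(3-4\lambda)}{3}$, $S(W^E_{\bullet,\bullet};O)=\tfrac{3-4\lambda}{3}$ and different $\Delta_E=2\lambda Q$. Your explicit check that the Zariski decomposition stays trivial (the conic being disjoint from $P$) is a detail the paper leaves implicit, but the argument is otherwise identical.
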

 \begin{proof}
Note that $\delta_P(\DP^2,\lambda C)\le \frac{A_{(\DP^2,\lambda C)}(\widetilde{L})}{S_{(\DP^2,\lambda C)}(\widetilde{L})}=\frac{3(1-2\lambda)}{3- 4 \lambda }$ where $\widetilde{L}$ is a strict transform of a line component of $C$. Let $\pi:\widetilde{S}\to S$ be the~blow up of the~point $P$,
with the exceptional divisor $E$.
\begin{center}
 \includegraphics[width=8cm]{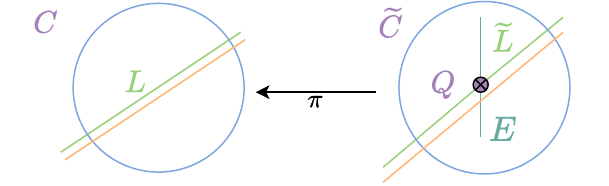}
 \end{center}
We have $\sigma^*(L)=\widetilde{L}+E$, $\pi^*(C)=\widetilde{C}+2E$, $\pi^*(K_{\DP^2})=K_{\widetilde{S}}-E$. Thus, $A_{(\DP^2,\lambda C)}(E)=2-2\lambda$.  The Zariski decomposition of the divisor  $-\pi^*(K_{\DP^2}+\lambda C)-vE$ is given by:
\begin{align*}
P(v)=-\pi^*(K_{\DP^2}+\lambda C)-vE \text{ and }N(v)=0\text{ if }v\in[0,3- 4 \lambda ].
\end{align*}
Then
$$
P(v)^2=(4\lambda - 3- v )(v-(3- 4\lambda))\text{ and }P(v)\cdot E=  v \text{ if }v\in[0,3- 4 \lambda ].
$$
Thus
$$S_{(\DP^2,\lambda C)}(E)=\frac{1}{(3- 4 \lambda )^2}\Big(\int_0^{3-4 \lambda } (4\lambda - 3- v )(v-(3- 4\lambda)) dv\Big)=\frac{2(3-4\lambda)}{3}$$
so that $\delta_P(\DP^2,\lambda C)\le \frac{3}{2}\cdot \frac{2-2\lambda}{3-4 \lambda } = \frac{3-3\lambda}{3-4 \lambda }$. 
For every $O\in E$ we get:
$$h(v) = \frac{v^2}{2}\text{ if }v\in[0,3-4\lambda ].
$$
So that
$$S\big(W^{E}_{\bullet,\bullet};O\big)= \frac{2}{(3-4\lambda )^2}\Big(\int_0^{3-4\lambda } \frac{v^2}{2} dv\Big)=\frac{3-4 \lambda}{3}
$$
We have
$$
\delta_P(\DP^2,\lambda C)\geqslant\mathrm{min}\Bigg\{ \frac{3-3\lambda}{3-4 \lambda } ,\inf_{O\in E}\frac{A_{E,\Delta_{E}}(O)}{S\big(W^{E}_{\bullet,\bullet};O\big)}\Bigg\},
$$
where $\Delta_{E}=2\lambda Q$ where $Q=\widetilde{L}|_E$. 
So that
$$
\frac{A_{\overline{E},\Delta_{\overline{E}}}(O)}{S(W_{\bullet,\bullet}^{\overline{E}};O)}=
\left\{\aligned
&\frac{3(1-2\lambda)}{3-4 \lambda }\ \mathrm{if}\ O=Q,\\
&\frac{3}{3-4 \lambda }\ \mathrm{otherwise}.
\endaligned
\right.
$$
Thus $\delta_P(\DP^2,\lambda C)=\frac{3(1-2\lambda)}{3- 4 \lambda }$ for $\lambda\in\big[0,\frac{1}{2}\big]$.     
 \end{proof}

  \begin{lemma}
  Let $C\subset \DP^2$ be a union of a conic and a double chord, $P\in C$ be a singular point on $C$. Then
 $$\delta_P(\DP^2,\lambda C)=\frac{3(1-2\lambda)}{3- 4 \lambda }\text{ for }\lambda\in\Big[0,\frac{1}{2}\Big].$$
 \end{lemma}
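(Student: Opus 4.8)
The plan is to follow the single–blowup strategy already used for the $A_1$ and $D_4$ lemmas, since here the double chord $L$ meets the conic $\mathcal{C}$ transversally at $P$, so that one blowup separates the strict transforms. Write $C=\mathcal{C}+2L$, where $\mathcal{C}$ is the conic and $L$ is the chord, and take $P$ to be one of the two points of $L\cap\mathcal{C}$ (the points of $L$ away from $\mathcal{C}$ being already covered by the preceding lemma). As $\mathcal{C}$ and $L$ are both smooth at $P$ with distinct tangent directions, $\mathrm{mult}_P(C)=1+2=3$.

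First I would record the upper bound. The reduced line $L$ is a prime divisor on $\DP^2$ passing through $P$, occurring in $C$ with coefficient $2$, so $A_{(\DP^2,\lambda C)}(L)=1-2\lambda$, while from $-K_{\DP^2}-\lambda C=(3-4\lambda)H$ one computes $S_{(\DP^2,\lambda C)}(L)=\frac{3-4\lambda}{3}$. This gives $\delta_P(\DP^2,\lambda C)\le\frac{A_{(\DP^2,\lambda C)}(L)}{S_{(\DP^2,\lambda C)}(L)}=\frac{3(1-2\lambda)}{3-4\lambda}$, the claimed value, and the task is to match it from below.

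For the lower bound, let $\pi\colon\widetilde{S}\to\DP^2$ be the blowup of $P$ with exceptional divisor $E$. From $\pi^*(C)=\widetilde{C}+3E$ and $\pi^*(K_{\DP^2})=K_{\widetilde{S}}-E$ one gets $A_{(\DP^2,\lambda C)}(E)=2-3\lambda$. The key point is that the Zariski decomposition of $-\pi^*(K_{\DP^2}+\lambda C)-vE=(3-4\lambda)\pi^*H-vE$ is trivial, i.e. $N(v)=0$ on all of $[0,3-4\lambda]$: on $\widetilde{S}$ the pseudoeffective cone is generated by $E$ and $\widetilde{L}=\pi^*H-E$, and both $P(v)\cdot E=v$ and $P(v)\cdot\widetilde{L}=(3-4\lambda)-v$ stay nonnegative up to the pseudoeffective threshold $\tau=3-4\lambda$. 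Hence $P(v)^2=(3-4\lambda)^2-v^2$, so $S_{(\DP^2,\lambda C)}(E)=\frac{2(3-4\lambda)}{3}$ and $\frac{A_{(\DP^2,\lambda C)}(E)}{S_{(\DP^2,\lambda C)}(E)}=\frac{3(2-3\lambda)}{2(3-4\lambda)}$. Now I would apply \eqref{estimation2}. Since $N(v)=0$, one has $h(v)=\frac{(P(v)\cdot E)^2}{2}=\frac{v^2}{2}$ independently of $O$, whence $S\big(W^{E}_{\bullet,\bullet};O\big)=\frac{3-4\lambda}{3}$ for every $O\in E$. The different is $\Delta_E=\lambda\overline{Q}_{\mathcal C}+2\lambda\overline{Q}_L$, where $\overline{Q}_{\mathcal C}=\widetilde{\mathcal C}\cap E$ and $\overline{Q}_L=\widetilde{L}\cap E$ are distinct points ($E$ being a smooth $\DP^1$, with no quotient singularity in this one–blowup configuration). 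Evaluating $A_{E,\Delta_E}(O)/S\big(W^{E}_{\bullet,\bullet};O\big)$ yields $\frac{3(1-\lambda)}{3-4\lambda}$ at $\overline{Q}_{\mathcal C}$, $\frac{3}{3-4\lambda}$ at a general point, and the minimum $\frac{3(1-2\lambda)}{3-4\lambda}$ at $\overline{Q}_L$ — exactly because $L$ sits in $C$ with multiplicity $2$. Comparing $\frac{3(2-3\lambda)}{2(3-4\lambda)}=\frac{6-9\lambda}{2(3-4\lambda)}$ with $\frac{3(1-2\lambda)}{3-4\lambda}=\frac{6-12\lambda}{2(3-4\lambda)}$ shows the latter is the smaller for $\lambda\ge 0$, so \eqref{estimation2} gives $\delta_P(\DP^2,\lambda C)\ge\frac{3(1-2\lambda)}{3-4\lambda}$ on $\big[0,\tfrac12\big]$, matching the upper bound.

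The main obstacle is the pair of verifications underpinning the calculation: confirming the triviality of the Zariski decomposition (that $N(v)=0$ all the way to $v=3-4\lambda$), which relies on the chord being transversal to the conic so that $\widetilde{L}$ is not contracted before the threshold; and getting the coefficient $2\lambda$ (rather than $\lambda$) in the different at $\overline{Q}_L$ correct, since that multiplicity–$2$ contribution is precisely what drops the answer from the $D_4$–type value $\frac{3(2-3\lambda)}{2(3-4\lambda)}$ to the sharp $\frac{3(1-2\lambda)}{3-4\lambda}$. The constraint $\lambda\in\big[0,\tfrac12\big]$ is then forced by log canonicity of the double line $2L$, i.e. $\mathrm{lct}(\DP^2,C)=\tfrac12$.
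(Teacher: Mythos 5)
Your argument is correct and follows essentially the same route as the paper: the upper bound from the double chord $L$ (with $A_{(\DP^2,\lambda C)}(L)=1-2\lambda$), and the lower bound via a single blowup of $P$ with trivial Zariski decomposition, $A_{(\DP^2,\lambda C)}(E)=2-3\lambda$, $S\big(W^{E}_{\bullet,\bullet};O\big)=\tfrac{3-4\lambda}{3}$, and the different $\Delta_E=\lambda Q_1+2\lambda Q_2$ whose coefficient $2\lambda$ at the chord direction produces the matching value $\tfrac{3(1-2\lambda)}{3-4\lambda}$. All the key quantities agree with the paper's computation, so no issues.
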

 \begin{proof}
 Note that $\delta_P(\DP^2,\lambda C)\le \frac{A_{(\DP^2,\lambda C)}(\widetilde{L})}{S_{(\DP^2,\lambda C)}(\widetilde{L})}=\frac{3(1-2\lambda)}{3- 4 \lambda }$ where $\widetilde{L}$ is a strict transform of a line component of $C$. Let $\pi:\widetilde{S}\to S$ be the~blow up of the~point $P$,
with the exceptional divisor $E$.
\begin{center}
 \includegraphics[width=10cm]{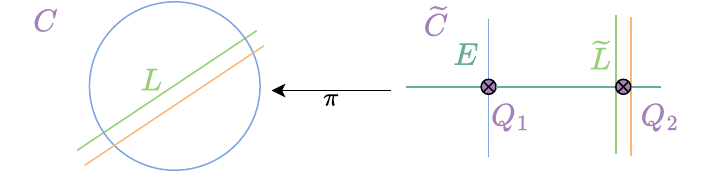}
 \end{center}
We have $\sigma^*(L)=\widetilde{L}+E$, $\pi^*(C)=\widetilde{C}+3E$, $\pi^*(K_{\DP^2})=K_{\widetilde{S}}-E$. Thus, $A_{(\DP^2,\lambda C)}(E)=2-3\lambda$.  The Zariski decomposition of the divisor  $-\pi^*(K_{\DP^2}+\lambda C)-vE$ is given by:
\begin{align*}
P(v)=-\pi^*(K_{\DP^2}+\lambda C)-vE \text{ and }N(v)=0\text{ if }v\in[0,3- 4 \lambda ].
\end{align*}
Then
$$
P(v)^2=(4\lambda - 3- v )(v-(3- 4\lambda))\text{ and }P(v)\cdot E=  v \text{ if }v\in[0,3- 4 \lambda ].
$$
Thus
$$S_{(\DP^2,\lambda C)}(E)=\frac{1}{(3- 4 \lambda )^2}\Big(\int_0^{3-4 \lambda } (4\lambda - 3- v )(v-(3- 4\lambda)) dv\Big)=\frac{2(3-4\lambda)}{3}$$
so that $\delta_P(\DP^2,\lambda C)\le \frac{3}{2}\cdot \frac{2-3\lambda}{3-4 \lambda }$. 
For every $O\in E$ we get:
$$h(v) = \frac{v^2}{2}\text{ if }v\in[0,3-4\lambda ].
$$
So that
$$S\big(W^{E}_{\bullet,\bullet};O\big)= \frac{2}{(3-4\lambda )^2}\Big(\int_0^{3-4\lambda } \frac{v^2}{2} dv\Big)=\frac{3-4 \lambda}{3}
$$
We have
$$
\delta_P(\DP^2,\lambda C)\geqslant\mathrm{min}\Bigg\{\frac{3}{2}\cdot \frac{2-3\lambda}{3-4 \lambda } ,\inf_{O\in E}\frac{A_{E,\Delta_{E}}(O)}{S\big(W^{E}_{\bullet,\bullet};O\big)}\Bigg\},
$$
where $\Delta_{E}=\lambda Q_1+2\lambda Q_2$ where $Q_1+2Q_2=\widetilde{C}|_E$. 
So that
$$
\frac{A_{\overline{E},\Delta_{\overline{E}}}(O)}{S(W_{\bullet,\bullet}^{\overline{E}};O)}=
\left\{\aligned
&\frac{3(1-\lambda)}{3-4 \lambda }\ \mathrm{if}\ O=Q_1,\\
&\frac{3(1-2\lambda)}{3-4 \lambda }\ \mathrm{if}\ O=Q_2,\\
&\frac{3}{3-4 \lambda }\ \mathrm{otherwise}.
\endaligned
\right.
$$
Thus $\delta_P(\DP^2,\lambda C)= \frac{3(1-2\lambda)}{3- 4 \lambda }$ for $\lambda\in\big[0,\frac{1}{2}\big]$.     
 \end{proof}
 \noindent This proves the following theorem:
 \begin{theorem}
   Suppose $C_4\subset \DP^2$ is a quartic curve which is  a union of a conic and a double chord. Then  we have:
$$\delta(\DP^2,\lambda C_4)=\frac{3(1-2\lambda)}{3- 4 \lambda }\text{ for }\lambda\in \Big[0, \frac{1}{2}\Big].$$
 \end{theorem}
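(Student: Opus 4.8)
The plan is to compute the global invariant from its local pieces via
$$\delta(\DP^2,\lambda C_4)=\inf_{P\in\DP^2}\delta_P(\DP^2,\lambda C_4),$$
so the argument reduces to stratifying $\DP^2$ by the position of $P$ relative to $C_4=Q+2L$, with $Q$ the conic and $L$ the chord, and then minimizing the finitely many local values that occur. First I would list the strata: (i) smooth points of $C_4$ on $Q$ and off $L$; (ii) smooth points on $L$ and off $Q$; (iii) the singular points, namely the two points of $Q\cap L$; and (iv) points off $\mathrm{Supp}(C_4)$.

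On strata (i)--(iii) I would invoke the three lemmas established immediately above, which give $\delta_P=\frac{3-2\lambda}{3-4\lambda}$ on (i) and $\delta_P=\frac{3(1-2\lambda)}{3-4\lambda}$ on both (ii) and (iii). For stratum (iv) I would blow up $P\notin\mathrm{Supp}(C_4)$ with exceptional divisor $E$: as $C_4$ avoids $P$, the polarization $-K_{\DP^2}-\lambda C_4=(3-4\lambda)H$ produces the same Zariski decomposition, and hence the same value $S_{(\DP^2,\lambda C_4)}(E)=\frac{2(3-4\lambda)}{3}$, as in the $A_1$ computation, while now $A_{(\DP^2,\lambda C_4)}(E)=2$; combined with the flag estimate applied to a general line through $P$ this yields $\delta_P\ge\frac{3}{3-4\lambda}$.

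It then remains to minimize over the four strata for $\lambda\in[0,\tfrac12]$. Since $3-4\lambda>0$ there, the comparison reduces to the numerators $3-6\lambda$, $3-2\lambda$ and $3$, and one has $3-6\lambda\le 3-2\lambda\le 3$ for $\lambda\ge 0$; thus the infimum is attained on strata (ii)--(iii) and equals $\frac{3(1-2\lambda)}{3-4\lambda}$. The right endpoint $\lambda=\tfrac12$ is exactly where this quantity vanishes, i.e. where the coefficient $2\lambda$ of the double line reaches $1$ and the pair ceases to be klt, so $\tfrac12=\mathrm{lct}(\DP^2,C_4)$ is the natural end of the interval.

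The main obstacle is not any individual computation but confirming that the stratification is exhaustive, i.e. that no coincidental smooth point of $Q$ yields a local value below the generic one recorded in Lemma~\ref{qc-smooth-2}. I would verify that a point where the tangent to $Q$ has higher contact with $C_4$, or where that tangent coincides with the chord $L$, is still governed by a local model already analyzed in the earlier $A_n$ sections, so that its local $\delta$ is no smaller than $\frac{3(1-2\lambda)}{3-4\lambda}$; granting this, the three lemmas together with the off-curve bound account for every point of $\DP^2$ and the stated formula follows.
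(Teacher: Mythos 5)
Your proposal is correct and follows essentially the same route as the paper: the theorem is obtained by taking the infimum of the local $\delta$-invariants computed in the three preceding lemmas (smooth points of the conic, smooth points of the chord, and the two intersection points), and the minimum of the numerators $3-2\lambda$, $3-6\lambda$, $3$ over the positive denominator $3-4\lambda$ is indeed $\frac{3(1-2\lambda)}{3-4\lambda}$ on $[0,\tfrac12]$. Your explicit treatment of points off $\mathrm{Supp}(C_4)$ and the observation that a smooth conic has no flexes (so Lemma~\ref{qc-smooth-2} applies at every smooth point of the conic component) fill in steps the paper leaves implicit, but do not change the argument.
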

  \section{Conic and a double tangent line}
  \begin{lemma}
 Let $C\subset \DP^2$ be a conic and a double tangent line, $P\in C$ be a smooth point  on a conic component. Then
  $$\delta_P(\DP^2,\lambda C)=\frac{3-2\lambda}{3-4 \lambda}\text{ for }\lambda\in\Big[0,\frac{3}{4}\Big].$$
 \end{lemma}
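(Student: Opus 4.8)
The plan is to reduce this statement directly to Lemma~\ref{qc-smooth-2}, exactly as in the analogous ``smooth point on the conic component'' case treated in the previous section. Write $C=\mathcal{C}+2L$, where $\mathcal{C}$ is the conic and $L$ is the line tangent to $\mathcal{C}$, so that $\deg C=4$. First I would observe that, since $P$ is a smooth point of the conic component lying away from the tangency point $\mathcal{C}\cap L$, the curve $C$ has near $P$ a single smooth branch, namely the local branch of $\mathcal{C}$. The tangent line to $C$ at $P$ is therefore the tangent line $\ell$ to the smooth conic $\mathcal{C}$ at $P$; being tangent to a smooth conic it has contact of order exactly $2$ with $C$ at $P$, and since $\ell\neq L$ it is not a component of $C$.

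Next I would note that the local invariant $\delta_P(\DP^2,\lambda C)$ is determined entirely by the degree $d=\deg C=4$ — which fixes the pseudo-effective threshold $3-d\lambda=3-4\lambda$ and the volume $(-K_{\DP^2}-\lambda C)^2=(3-4\lambda)^2$ — together with the analytic germ of $C$ at $P$ and the contact order of the tangent line. These data coincide precisely with those of Lemma~\ref{qc-smooth-2}: a smooth point of a degree-$4$ curve at which the tangent line has multiplicity (contact) $2$, with $\ell$ here playing the role of the line $L$ in that lemma. Consequently the entire birational construction there applies verbatim, namely the two point blow-ups producing the exceptional divisor $\overline{E}$ with a single $\frac{1}{2}(1,1)$ point $\overline{P}$, the intersection table on $\overline{S}$, the Zariski decomposition of $-\sigma^*(K_{\DP^2}+\lambda C)-v\overline{E}$, and the resulting values $S_S(\overline{E})=3-4\lambda$ and $A_{(\DP^2,\lambda C)}(\overline{E})=3-2\lambda$.

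I would then quote the conclusion of Lemma~\ref{qc-smooth-2}: the refined estimate~\eqref{estimation2}, applied with the different $\Delta_{\overline{E}}=\frac{1}{2}\overline{P}+\lambda\overline{Q}$, gives $\frac{A_{\overline{E},\Delta_{\overline{E}}}(O)}{S(W^{\overline{E}}_{\bullet,\bullet};O)}\geq 1\cdot\frac{3-2\lambda}{3-4\lambda}$ at every $O\in\overline{E}$, and the divisorial ratio $\frac{A_{(\DP^2,\lambda C)}(\overline{E})}{S_S(\overline{E})}=\frac{3-2\lambda}{3-4\lambda}$ is attained, whence $\delta_P(\DP^2,\lambda C)=\frac{3-2\lambda}{3-4\lambda}$ for $\lambda\in[0,\frac{3}{4}]$.

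The main obstacle — indeed the only point requiring genuine care — is justifying that the nonreduced structure of $C$ away from $P$, i.e.\ the double line $2L$, leaves the local computation untouched. I would address this by verifying that $L$ does not pass through $P$ and that the unique curve acquiring a negative part in the Zariski decomposition of $-\sigma^*(K_{\DP^2}+\lambda C)-v\overline{E}$ over the relevant range $v\in[3-4\lambda,\,2(3-4\lambda)]$ is the strict transform $\overline{L}$ of the local tangent line $\ell$. The remaining intersection $\ell\cap 2L$ (of multiplicity $2$) lies away from $P$, so it neither enters the local blow-up tree nor alters the self-intersection numbers that drive Lemma~\ref{qc-smooth-2}; this is what makes the reduction legitimate despite $C$ being reducible and nonreduced.
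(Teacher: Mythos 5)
Your proposal is correct and matches the paper's own argument, which disposes of this case with the single line ``Follows from Lemma~\ref{qc-smooth-2}''; your reduction to that lemma, via the observation that near $P$ the curve is a single smooth branch whose tangent line has local contact exactly $2$ and is not a component of $C$, is precisely the justification left implicit there. The extra care you take with the nonreduced component $2L$ (checking it avoids $P$ and does not enter the local blow-up tree or the Zariski decomposition) is a welcome elaboration but not a departure from the paper's route.
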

 \begin{proof}
Follows from Lemma \ref{qc-smooth-2}.
 \end{proof}
  \begin{lemma}
 Let $C\subset \DP^2$ be a conic and a double tangent line, $P\in C$ be a smooth point  on a double line $L$. Then
  $$\delta_P(\DP^2,\lambda C)=\frac{3(1-2\lambda)}{3-4 \lambda}\text{ for }\lambda\in\Big[0,\frac{1}{2}\Big].$$
 \end{lemma}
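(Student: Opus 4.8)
The plan is to observe that, at a smooth point $P$ of the doubled line $L$, the local geometry is indistinguishable from the ``conic and a double chord'' case already treated: the conic component does not pass through $P$, since the points where $L$ meets the conic are singular points of $C$ and hence different from $P$, \emph{whether $L$ is a chord or a tangent line to the conic}. The tangency only changes the global picture at the tangency point, which is a separate (singular) point of $C$. Consequently the argument is word-for-word the same as in the preceding lemma on a conic and a double chord at a smooth point of the chord, and I would carry it out identically.

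First I would record the upper bound. Writing $\widetilde{L}$ for the strict transform of $L$ under the blowup of $P$, and using that $L$ enters $C$ with coefficient $2$ while $\deg C=4$, the quotient $A_{(\DP^2,\lambda C)}(\widetilde{L})/S_{(\DP^2,\lambda C)}(\widetilde{L})=\frac{3(1-2\lambda)}{3-4\lambda}$ gives $\delta_P(\DP^2,\lambda C)\le \frac{3(1-2\lambda)}{3-4\lambda}$. The interval $\lambda\in[0,\frac12]$ is exactly the range in which $2\lambda\le 1$, i.e. where the pair is log canonical along the doubled line.

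Next I would blow up $P$ once, $\pi\colon\widetilde{S}\to\DP^2$, with exceptional divisor $E$. Since $\mathrm{mult}_PC=2$ we get $\pi^*(C)=\widetilde{C}+2E$ and $A_{(\DP^2,\lambda C)}(E)=2-2\lambda$. Because $-K_{\DP^2}-\lambda C\sim_{\mathbb{Q}}(3-4\lambda)H$, the divisor $\pi^*(-K_{\DP^2}-\lambda C)-vE$ is already nef for $v\in[0,3-4\lambda]$, so the Zariski decomposition is trivial with $N(v)=0$, $P(v)^2=(3-4\lambda)^2-v^2$ and $P(v)\cdot E=v$. Integrating yields $S_{(\DP^2,\lambda C)}(E)=\frac{2(3-4\lambda)}{3}$, hence $\frac{A(E)}{S(E)}=\frac{3(1-\lambda)}{3-4\lambda}$.

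Finally I would apply \eqref{estimation2}. Here $h(v)=\frac{v^2}{2}$ and $S\big(W^{E}_{\bullet,\bullet};O\big)=\frac{3-4\lambda}{3}$ for every $O\in E$. The different is $\Delta_E=2\lambda Q$, where $Q=\widetilde{L}|_E$ is the single point in which the doubled line meets $E$, the coefficient $2\lambda$ reflecting the multiplicity $2$ of $L$ in $C$. Then $\frac{A_{E,\Delta_E}(O)}{S(W^{E}_{\bullet,\bullet};O)}$ equals $\frac{3(1-2\lambda)}{3-4\lambda}$ at $O=Q$ and $\frac{3}{3-4\lambda}$ otherwise, so \eqref{estimation2} gives $\delta_P(\DP^2,\lambda C)\ge\min\{\frac{3(1-\lambda)}{3-4\lambda},\frac{3(1-2\lambda)}{3-4\lambda}\}=\frac{3(1-2\lambda)}{3-4\lambda}$, matching the upper bound. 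The one point genuinely requiring verification — and it is the only ``obstacle'' in an otherwise routine computation — is that the conic contributes nothing to $\Delta_E$, i.e. that the strict transform $\overline{C}$ is disjoint from $E$; this is precisely the assertion that the conic avoids $P$, which holds because $P$ is a smooth point of $C$ lying on the line.
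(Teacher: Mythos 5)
Your proposal is correct and follows essentially the same route as the paper, which simply reduces this case to the preceding lemma on a conic and a double chord (smooth point on the chord); you reproduce that lemma's computation and, usefully, make explicit the one point the reduction rests on, namely that the conic misses $P$ so it contributes nothing to the different $\Delta_E=2\lambda Q$. All the numerical data ($A(E)=2-2\lambda$, $S(E)=\tfrac{2(3-4\lambda)}{3}$, $S(W^{E}_{\bullet,\bullet};O)=\tfrac{3-4\lambda}{3}$, and the upper bound via $\widetilde{L}$) match the paper's.
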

 \begin{proof}
Follows from Lemma \ref{doubleline-deg4}.
 \end{proof}
  \begin{lemma}
  Let $C\subset \DP^2$ be a union of a conic and a double tangent line, $P\in C$ be a singular point. Then
 $$\delta_P(\DP^2,\lambda C)=\frac{3(1-2\lambda)}{3- 4 \lambda }\text{ for }\lambda\in\Big[0,\frac{1}{2}\Big].$$
 \end{lemma}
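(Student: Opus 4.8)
The plan is to pin $\delta_P$ between a matching upper and lower bound, reusing the surface resolution from the $A_3$ case (Option~2): the reduced curve $\mathcal{C}\cup L$ has an $A_3$ singularity at $P$, so the blow-up geometry is identical and only the multiplicities change. Write $C=\mathcal{C}+2L$ with $\mathcal{C}$ the conic and $L$ its tangent line through $P$, so $\deg C=4$ and $-K_{\DP^2}-\lambda C\equiv(3-4\lambda)H$. The upper bound comes from the line component: since $L$ sits in $C$ with multiplicity $2$ one has $A_{(\DP^2,\lambda C)}(L)=1-2\lambda$ and $S_{(\DP^2,\lambda C)}(L)=\tfrac{3-4\lambda}{3}$ (the standard degree-$4$ computation), so $\delta_P(\DP^2,\lambda C)\le\tfrac{3(1-2\lambda)}{3-4\lambda}$.

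For the lower bound I would take $\pi_1$ the blow-up of $P$ with exceptional $E_1^1$, then $\pi_2$ the blow-up of the common point $E_1^1\cap C^1\cap L^1$ (common because $\mathcal{C}$ and $L$ are tangent) with exceptional $E$, then contract $E_1^2$ to obtain $\overline{S}$, on which $\overline{E}=\theta(E)$ carries one $\tfrac12(1,1)$ point $\overline{P}$. The intersection table of $\overline{E},\overline{L}$ is exactly the one in the $A_3$ (Option~2) lemma, and $\sigma^*L=\overline{L}+2\overline{E}$, $\sigma^*K_{\DP^2}=K_{\overline{S}}-2\overline{E}$. The one change is the coefficient of $L$: $\sigma^*C=\overline{\mathcal{C}}+2\overline{L}+6\overline{E}$, so $A_{(\DP^2,\lambda C)}(\overline{E})=3-6\lambda$. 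Because $S_{(\DP^2,\lambda C)}(\overline{E})$ is computed from $(3-4\lambda)\sigma^*H$ and the fixed birational geometry alone — it does not see the boundary — it equals its $A_3$ value at $d=4$, namely $3-4\lambda$; hence $\tfrac{A_{(\DP^2,\lambda C)}(\overline{E})}{S_{(\DP^2,\lambda C)}(\overline{E})}=\tfrac{3(1-2\lambda)}{3-4\lambda}$.

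Next I would compute the different. Adjunction along $\overline{E}$ gives $\Delta_{\overline{E}}=\tfrac12\overline{P}+2\lambda\,(\overline{E}\cap\overline{L})+\lambda\overline{Q}$, where $\overline{Q}=\overline{E}\cap\overline{\mathcal{C}}$ and, crucially, the coefficient at $\overline{E}\cap\overline{L}$ is $2\lambda$ (the double line); for $\lambda\le\tfrac12$ this is $\le 1$ and $(\overline{S},\lambda(\overline{\mathcal{C}}+2\overline{L})+\overline{E})$ is plt along $\overline{E}$. The local volumes $S(W^{\overline{E}}_{\bullet,\bullet};O)$ transfer verbatim from the $A_3$ computation at $d=4$: they equal $\tfrac{3-4\lambda}{3}$ at $\overline{E}\cap\overline{L}$, where $N(v)$ concentrates, and $\tfrac{3-4\lambda}{6}$ at every other point. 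The four ratios $\tfrac{A_{\overline{E},\Delta_{\overline{E}}}(O)}{S(W^{\overline{E}}_{\bullet,\bullet};O)}$ are therefore $\tfrac{3}{3-4\lambda}$ at $\overline{P}$, $\tfrac{3(1-2\lambda)}{3-4\lambda}$ at $\overline{E}\cap\overline{L}$, $\tfrac{6(1-\lambda)}{3-4\lambda}$ at $\overline{Q}$, and $\tfrac{6}{3-4\lambda}$ generically; for $\lambda\in[0,\tfrac12]$ the smallest is $\tfrac{3(1-2\lambda)}{3-4\lambda}$, realised at $\overline{E}\cap\overline{L}$. Feeding this into \eqref{estimation2} yields $\delta_P\ge\tfrac{3(1-2\lambda)}{3-4\lambda}$, which together with the upper bound proves the claim.

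The main obstacle — really the only place this proof departs from the $A_3$ (Option~2) computation — is the correct accounting of the multiplicity-$2$ line. It enters twice: it pushes $A_{(\DP^2,\lambda C)}(\overline{E})$ down to $3-6\lambda$, and, more decisively, it makes the different coefficient at $\overline{E}\cap\overline{L}$ equal to $2\lambda$ rather than $\lambda$. It is exactly this doubled coefficient that moves the binding constraint to $\overline{E}\cap\overline{L}$ and produces $\tfrac{3(1-2\lambda)}{3-4\lambda}$ rather than the reduced-curve value $1$. Everything else — the Zariski decomposition of $(3-4\lambda)\sigma^*H-v\overline{E}$, the value of $S_{(\DP^2,\lambda C)}(\overline{E})$, and the local volumes — is unchanged from the $d=4$ instance of the $A_3$ case and needs only to be transcribed.
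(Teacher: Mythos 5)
Your proof is correct and follows the paper's own argument essentially verbatim: the same two blow-ups and contraction producing $\overline{E}$ with one $\tfrac12(1,1)$ point, the same Zariski decomposition and local volumes, the upper bound from the double line component, and the binding constraint at $\overline{E}\cap\overline{L}$ where the different carries coefficient $2\lambda$. The only divergence is your $A_{(\DP^2,\lambda C)}(\overline{E})=3-6\lambda$ where the paper writes $3-4\lambda$; your value is the correct one (the doubled line contributes $4$, not $2$, to $\mathrm{ord}_{\overline{E}}(C)$), and since $\tfrac{3-6\lambda}{3-4\lambda}$ coincides with the claimed $\delta_P$ while the paper's larger value is not the minimum anyway, the conclusion is unaffected either way.
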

  \begin{proof}
Note that $\delta_P(\DP^2,\lambda C)\le \frac{A_{(\DP^2,\lambda C)}(\widetilde{L})}{S_{(\DP^2,\lambda C)}(\widetilde{L})}=\frac{3(1-2\lambda)}{3- 4 \lambda }$ where $\widetilde{L}$ is a strict transform of a double line component of $C$. Let $\pi_1:S^1\to S$ be the~blow up of the~point $P$,
with the exceptional divisor $E_1^1$ and $L^1$, $C^1$ are strict transforms of $L$ and $C$ respectively; $\pi_2: S^2\to S^1$ be the~blow up of the~point $E_1^1\cap C^1\cap L^1$,
with the exceptional divisor $E$ and $L^2$, $C^2$ are strict transforms of $L^1$ and $C^1$ respectively; let $\theta:S^2\to\overline{S}$ be  the~contraction of  the~curve $E_1^2$,
and $\sigma$ is the~birational contraction of $\overline{E}=\theta(E)$. We denote the strict transforms of $C$ and $L$ on $\overline{S}$ by $\overline{C}$ and $\overline{L}$. Let $\overline{E}$ be the exceptional divisor of $\sigma$. Note that $\overline{E}$ contains one singular point $\overline{P}$  which is $\frac{1}{2}(1,1)$  singular point.
\begin{center}
 \includegraphics[width=15cm]{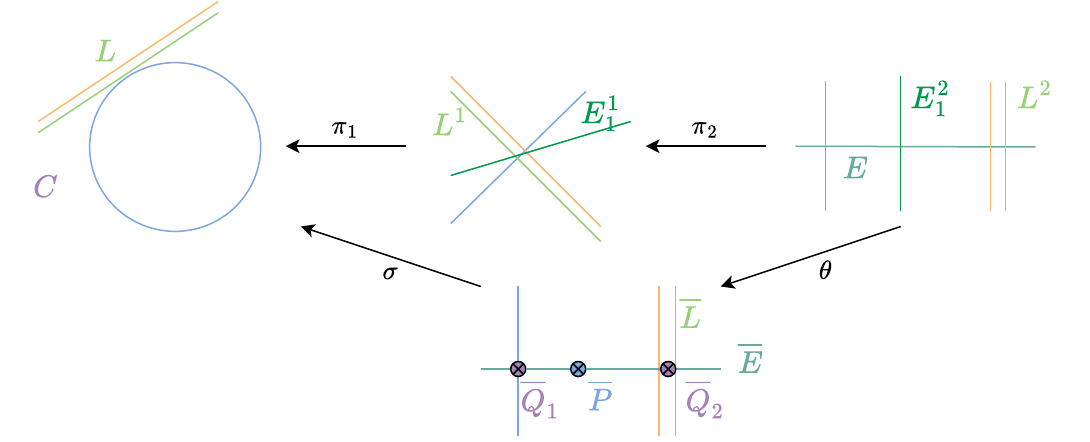}
 \end{center}
The intersections on $\overline{S}$ are given by:
\begin{center}
    \renewcommand{\arraystretch}{1.4}
    \begin{tabular}{|c|c|c|}
    \hline
         & $\overline{E}$ & $\overline{L}$ \\
    \hline
       $\overline{E}$  & $-\frac{1}{2}$ & $1$ \\
    \hline
       $\overline{L}$  & $1$ & $-1$ \\
    \hline
    \end{tabular}
\end{center}
 We have $\sigma^*(L)=\overline{L}+2\overline{E}$, $\sigma^*(C)=\overline{C}+4\overline{E}$, $\sigma^*(K_{\DP^2})=K_{\overline{S}}-2\overline{E}$. Thus, $A_{(\DP^2,\lambda C)}(\overline{E})=3-4\lambda$.
\noindent The Zariski decomposition of the divisor  $-\sigma^*(K_{\DP^2}+\lambda C)-v\overline{E}$ is given by:
\begin{align*}
&&P(v)=
\begin{cases}
-\sigma^*(K_{\DP^2}+\lambda C)-v\overline{E}\text{ if }v\in[0,3-4\lambda],\\
-\sigma^*(K_{\DP^2}+\lambda C)-v\overline{E}-\big(v-(3-4\lambda)\big)\overline{L}\text{ if }v\in[3-4\lambda, 2(3-4\lambda)],\\
\end{cases}\\&&
N(v)=
\begin{cases}
0\text{ if }v\in[0,3-4\lambda],\\
\big(v-(3-4\lambda)\big)\overline{L}\text{ if }v\in[3-4\lambda,2(3-4\lambda)].
\end{cases}
\end{align*}
Then
$$
P(v)^2=
\begin{cases}
(3-4\lambda)^2 -\frac{v^2}{2}\text{ if }v\in[0,3-4\lambda],\\
\frac{(v- 6 + 8\lambda )^2}{2}\text{ if }v\in[3-4\lambda,2(3-4\lambda)],
\end{cases}
\text{ and }
P(v)\cdot \overline{E}=
\begin{cases}
\frac{v}{2}\text{ if }v\in[0,3-4\lambda],\\
3-4\lambda -\frac{v}{2}\text{ if }v\in[3-4\lambda,2(3-4\lambda)],
\end{cases}
$$
Thus
$$S_S(\overline{E})=\frac{1}{(3-4\lambda)^2}\Big(\int_0^{3-4\lambda} (3-4\lambda)^2 -\frac{v^2}{2} dv+\int_{3-4\lambda}^{2(3-4\lambda)} \frac{(v- 6 + 8\lambda)^2}{2} dv\Big)=3-4\lambda$$
so that $\delta_P(\DP^2,\lambda C)\le 1$. For every $O\in \overline{E}$, we get if $O\in \overline{E}\backslash \overline{L}$ or if $O\in \overline{E}\cap \overline{L}$:
$$h(v)\le 
\begin{cases}
\frac{v^2}{8}\text{ if }v\in[0,3-4\lambda],\\
\frac{(v- 6 + 8\lambda)^2}{8}\text{ if }v\in[3-4\lambda, 2(3-4\lambda)],
\end{cases}
\text{ or }
h(v)\le \begin{cases}
\frac{v^2}{8}\text{ if }v\in[0,3-4\lambda],\\
\frac{ (v- 6 + 6\lambda) (6 - 8\lambda - 3v)}{8}\text{ if }v\in[3-4\lambda, 2(3-4\lambda)].
\end{cases}
$$
So that
$$S\big(W^{\overline{E}}_{\bullet,\bullet};O\big)\le \frac{2}{(3-4\lambda)^2}\Big(\int_0^{3-4\lambda} \frac{v^2}{8} dv+\int_{3-4\lambda}^{2(3-4\lambda)} \frac{(v- 6 + 6\lambda)^2}{8} dv\Big)=\frac{3-4\lambda}{6}
$$
or
$$S\big(W^{\overline{E}}_{\bullet,\bullet};O\big)\le \frac{2}{(3-4\lambda)^2}\Big(\int_0^{3-4\lambda} \frac{v^2}{8} dv+\int_{3-4\lambda}^{2(3-4\lambda)} \frac{ (v- 6 + 6\lambda) (6 - 8\lambda - 3v)}{8} dv\Big)=\frac{3-4\lambda}{3}
$$
We have
$$
\delta_P(\DP^2,\lambda C)\geqslant\mathrm{min}\Bigg\{1,\inf_{O\in\overline{E}}\frac{A_{\overline{E},\Delta_{\overline{E}}}(O)}{S\big(W^{\overline{E}}_{\bullet,\bullet};O\big)}\Bigg\},
$$
where $\Delta_{\overline{E}}=\frac{1}{2}\overline{P}+\lambda Q_1+2\lambda Q_2$ where $Q_1+2Q_2=\widetilde{C}|_E$.  
So that
$$
\frac{A_{\overline{E},\Delta_{\overline{E}}}(O)}{S(W_{\bullet,\bullet}^{\overline{E}};O)}=
\left\{\aligned
&\frac{3}{3-4\lambda}\ \mathrm{if}\ \text{ if } O=\overline{P},\\
&\frac{6(1-\lambda)}{3-4\lambda}\ \mathrm{if}\ O=\overline{Q}_1,\\
&\frac{3(1-2\lambda)}{3-4\lambda}\ \mathrm{if}\ O=\overline{Q}_2,\\
&\frac{6}{3-4\lambda}\ \mathrm{otherwise}.
\endaligned
\right.
$$
Thus $\delta_P(\DP^2,\lambda C)= \frac{3(1-2\lambda)}{3- 4 \lambda } $ for $\lambda\in\big[0,\frac{1}{2}\big]$.
 \end{proof}
 \noindent This proves the following theorem:
 \begin{theorem}
   Suppose $C_4\subset \DP^2$ is a quartic curve which is   a union of a conic and a double tangent line. Then  we have:
$$\delta(\DP^2,\lambda C_4)= \frac{3(1-2\lambda)}{3- 4 \lambda }\text{ for }\lambda\in \Big[0, \frac{1}{2}\Big].$$
 \end{theorem}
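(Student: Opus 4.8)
The plan is to derive the global statement from the three preceding lemmas by unwinding the definition $\delta(\DP^2,\lambda C_4)=\inf_{P\in\DP^2}\delta_P(\DP^2,\lambda C_4)$ and pinning down which stratum of $\DP^2$ realises the infimum. First I would fix the geometry: writing $C_4=\mathcal{C}+2L$ with $\mathcal{C}$ a smooth conic and $L$ a line tangent to $\mathcal{C}$, Bézout gives $L\cdot\mathcal{C}=2$ concentrated at the point of tangency, so the only singular point of the reduced curve $\mathcal{C}\cup L$ is that tangency point $P_0$. This stratifies $\DP^2$ into four pieces on each of which the local $\delta$-invariant is constant by homogeneity of the local model: the complement $\DP^2\setminus C_4$, the smooth locus of the conic, the smooth locus of the double line, and the single point $P_0$.

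Next I would quote the three lemmas to read off the values on the curve. A smooth point of the conic gives $\delta_P=\frac{3-2\lambda}{3-4\lambda}$, a smooth point of the double line gives $\delta_P=\frac{3(1-2\lambda)}{3-4\lambda}$, and the singular point gives $\delta_{P_0}=\frac{3(1-2\lambda)}{3-4\lambda}$, each valid on $\lambda\in\bigl[0,\tfrac12\bigr]$; the double-line constraint $1-2\lambda\ge 0$ fixes the right endpoint, consistent with $\mathrm{lct}(\DP^2,C_4)=\tfrac12$ coming from the generic point of $L$. A one-line comparison then shows the double-line value is the smallest of the three on the whole interval: since $3-6\lambda\le 3-2\lambda$ for $\lambda\ge 0$, we have $\frac{3(1-2\lambda)}{3-4\lambda}\le\frac{3-2\lambda}{3-4\lambda}$, with equality only at $\lambda=0$.

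It then remains to verify that points off the curve do not undercut this value. For $P\notin C_4$ I would blow up $P$ with exceptional divisor $E$; because $C_4$ avoids $P$ one has $A_{(\DP^2,\lambda C)}(E)=2$, and since the pseudo-effective threshold and the volume of $-K_{\DP^2}-\lambda C$ are unaffected, $S_{(\DP^2,\lambda C)}(E)=\frac{2(3-4\lambda)}{3}$. The different on $E$ is trivial, so $h(v)=\frac{v^2}{2}$ and $S\big(W^{E}_{\bullet,\bullet};O\big)=\frac{3-4\lambda}{3}$ for every $O\in E$; the estimate \eqref{estimation2} therefore yields $\delta_P\ge\frac{3}{3-4\lambda}$. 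As $\frac{3}{3-4\lambda}\ge\frac{3(1-2\lambda)}{3-4\lambda}$ for $\lambda\ge 0$, these points are harmless. Assembling the four strata gives $\inf_{P}\delta_P=\frac{3(1-2\lambda)}{3-4\lambda}$, attained along the double line, which is the claim.

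I expect the only point requiring genuine care to be the off-curve stratum, since a priori one might fear that some cleverly chosen divisor over a point $P\notin C_4$ — for instance pulled back from a line through $P$ meeting $C_4$ with high contact — could drop the ratio below the double-line minimum. This is controlled by the structure of \eqref{estimation2}: any line $\mathcal{L}\ne L$ through $P$ satisfies $A_{(\DP^2,\lambda C)}(\mathcal{L})=1$ while $-K_{\DP^2}-\lambda C-v\mathcal{L}=(3-4\lambda-v)H$ stays nef, forcing $S_{(\DP^2,\lambda C)}(\mathcal{L})=\frac{3-4\lambda}{3}$ independently of incidence, so every such quotient is exactly $\frac{3}{3-4\lambda}$. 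Making precise that no divisor over an off-curve point can beat this bound — the routine but essential last check — is the main (and entirely elementary) obstacle.
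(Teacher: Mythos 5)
Your proposal is correct and follows essentially the same route as the paper, which likewise obtains the theorem by combining the three preceding local lemmas (smooth point on the conic, smooth point on the double line, and the tangency point) and taking the minimum of the resulting local $\delta$-invariants. The only addition is your explicit check that points off the curve satisfy $\delta_P\geq\frac{3}{3-4\lambda}$, a step the paper leaves implicit; your computation of it is accurate and consistent with the paper's conventions.
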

 \section{Double line and two other lines in general position}
  \begin{lemma}
 Let $C\subset \DP^2$ be a double line and two other lines in general position, $P\in C$ be a smooth point. Then
  $$\delta_P(\DP^2,\lambda C)=\frac{3(1-\lambda)}{3- 4 \lambda }\text{ for }\lambda\in\Big[0,\frac{3}{4}\Big].$$
 \end{lemma}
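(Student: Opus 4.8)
The plan is to reduce the statement to Lemma~\ref{line}. Since $C$ is the union of a double line $2L_0$ and two lines $L_1,L_2$ in general position, its total degree is $d=4$, so $-K_{\DP^2}-\lambda C\sim_{\DQ}(3-4\lambda)H$, where $H$ is the hyperplane class. A smooth point $P$ of $C$ cannot lie on $L_0$ (there $C$ has multiplicity $2$), nor at any intersection of two components; hence $P$ lies on exactly one of the reduced lines, say $L_1$, which appears in $C$ with multiplicity $1$. Thus, locally around $P$ the pair $(\DP^2,\lambda C)$ looks precisely like a smooth point on a multiplicity-one line component, which is the situation treated in Lemma~\ref{line} with $d=4$.

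Concretely, I would run the same computation as in Lemma~\ref{line}. Blowing up $P$ with exceptional divisor $E$ and setting $P(v)=-\pi^*(K_{\DP^2}+\lambda C)-vE$, one checks that the Zariski decomposition is trivial, i.e. $N(v)=0$, for $v\in[0,3-4\lambda]$: the strict transform $\widetilde{L}_1=\pi^*L_1-E$ satisfies $P(v)\cdot\widetilde{L}_1=(3-4\lambda)-v\ge 0$ throughout, and no other curve obstructs positivity since $P$ meets no further component of $C$. This gives $\tau=3-4\lambda$, $S_{(\DP^2,\lambda C)}(E)=\tfrac{2(3-4\lambda)}{3}$, and hence $A_{(\DP^2,\lambda C)}(E)/S_{(\DP^2,\lambda C)}(E)=\tfrac{3}{2}\cdot\tfrac{2-\lambda}{3-4\lambda}$. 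The different on $E$ is $\Delta_E=\lambda Q$ with $Q=\widetilde{L}_1|_E$ the single point where the line meets $E$, so $A_{E,\Delta_E}(O)/S(W^E_{\bullet,\bullet};O)$ equals $\tfrac{3(1-\lambda)}{3-4\lambda}$ at $O=Q$ and $\tfrac{3}{3-4\lambda}$ otherwise. Taking the minimum in estimate~\eqref{estimation2} yields $\delta_P(\DP^2,\lambda C)\ge\tfrac{3(1-\lambda)}{3-4\lambda}$, while the reverse inequality is the upper bound $\delta_P(\DP^2,\lambda C)\le A_{(\DP^2,\lambda C)}(L_1)/S_{(\DP^2,\lambda C)}(L_1)=\tfrac{3(1-\lambda)}{3-4\lambda}$.

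The only point needing care, and the reason Lemma~\ref{line} cannot be cited verbatim, is that $C$ is not reduced. However, the sole global input of that lemma is the total degree $d$, entering through $-K_{\DP^2}-\lambda C\sim_{\DQ}(3-d\lambda)H$, together with the local fact that $P$ is a smooth point on a single line component of multiplicity one. Since $L_0$ does not pass through $P$, it contributes nothing to the local analysis on $E$ and only to the degree, which is already recorded by $d=4$; in particular $\Delta_E$ receives no contribution from $L_0$ or from $L_2$. Hence the argument transfers without change, and the statement follows for $\lambda\in\big[0,\tfrac{3}{4}\big]$.
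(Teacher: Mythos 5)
Your proposal is correct and follows essentially the same route as the paper, which disposes of this case by simply writing ``Follows from Lemma~\ref{line}.'' Your extra care in checking that the reducedness hypothesis of Lemma~\ref{line} is harmless here (since the double line misses $P$ and enters only through the total degree $d=4$) is a legitimate refinement of a point the paper leaves implicit, and all your computations ($S_{(\DP^2,\lambda C)}(E)=\tfrac{2(3-4\lambda)}{3}$, $A_{(\DP^2,\lambda C)}(E)=2-\lambda$, the values of $A_{E,\Delta_E}(O)/S(W^E_{\bullet,\bullet};O)$, and the matching upper bound from $L_1$) agree with the paper's.
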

 \begin{proof}
     Follows from Lemma \ref{line}.
 \end{proof}
  \begin{lemma}
 Let $C\subset \DP^2$ be a double line and two other lines in general position, $P\in C$ be a $A_1$ point. Then
  $$\delta_P(\DP^2,\lambda C)=\frac{3(1-\lambda)}{3-4 \lambda }\text{ for }\lambda\in\Big[0,\frac{3}{4}\Big].$$
 \end{lemma}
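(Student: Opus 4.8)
The plan is to reduce this statement to the $A_1$ computation already carried out in Lemma \ref{A1-points}, specialised to $d=4$. First I would pin down the point $P$. Writing $C=2L_{0}+L_{1}+L_{2}$, where $L_{0}$ is the double line and $L_{1},L_{2}$ are the two lines in general position, the only genuine $A_{1}$ point of the pair $(\DP^{2},\lambda C)$ is the node $P=L_{1}\cap L_{2}$: at the points $L_{0}\cap L_{i}$ one branch carries coefficient $2\lambda$, so those are not ordinary nodes of the pair. The general-position hypothesis guarantees $P\notin L_{0}$, so in a neighbourhood of $P$ the divisor $C$ coincides with the reduced transverse pair $L_{1}+L_{2}$, each branch having coefficient $1$; in particular $\mathrm{mult}_{P}(C)=2$, and the two analytic branches are smooth and meet transversally.

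Next I would observe that every ingredient of the estimate \eqref{estimation2} used in Lemma \ref{A1-points} depends only on this local picture together with the global degree, both of which agree with the $d=4$ case of that lemma. Blowing up $P$ with exceptional divisor $E$ gives $\pi^{*}(C)=\widetilde{C}+2E$ and hence $A_{(\DP^{2},\lambda C)}(E)=2-2\lambda$, exactly as there, since the double line $L_{0}$ misses $P$ and contributes nothing to $\mathrm{mult}_{P}$. The pseudo-effective threshold is $\tau=3-d\lambda=3-4\lambda$, governed by $-K_{\DP^{2}}-\lambda C\sim_{\mathbb{Q}}(3-4\lambda)H$ with $H$ the hyperplane class, i.e. purely by the global degree. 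Finally the different is $\Delta_{E}=\lambda\overline{Q}_{1}+\lambda\overline{Q}_{2}$ with $\overline{Q}_{1}+\overline{Q}_{2}=\widetilde{C}|_{E}$, the two transverse branches each contributing coefficient $\lambda$, while the strict transform of $L_{0}$ stays disjoint from $E$ and plays no role.

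Since all input data match those in the proof of Lemma \ref{A1-points}, the identical Zariski-decomposition and $h(v)$ computations apply verbatim and give
$$\delta_{P}(\DP^{2},\lambda C)=\frac{3-3\lambda}{3-4\lambda}=\frac{3(1-\lambda)}{3-4\lambda}\qquad\text{for }\lambda\in\Big[0,\tfrac{3}{4}\Big],$$
the endpoint $\min\{1,3/d\}=3/4$ coming from $d=4$. The only step requiring a word of care is the use of general position to ensure $P\notin L_{0}$, so that the non-reduced component never enters the local analysis; granting this, I expect no genuine obstacle, and the lemma follows immediately from Lemma \ref{A1-points}.
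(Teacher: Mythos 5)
Your proposal is correct and takes exactly the paper's route: the paper's entire proof is the single line ``Follows from Lemma~\ref{A1-points}'', i.e.\ the $d=4$ specialisation of the general $A_1$ lemma, which is precisely your reduction. The extra care you take in identifying the $A_1$ point as $L_1\cap L_2$ and checking that the general-position hypothesis keeps the double line away from $P$ (so that $\mathrm{mult}_P C=2$ and the different is $\lambda\overline{Q}_1+\lambda\overline{Q}_2$) is a justification the paper leaves implicit, but it matches the intended argument.
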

 \begin{proof}
     Follows from Lemma \ref{A1-points}.
 \end{proof}
 \begin{lemma}
 Let $C\subset \DP^2$ be a double line and two other lines in general position, $P\in C$ be a smooth point  on a double line $L$. Then
  $$\delta_P(\DP^2,\lambda C)=\frac{3(1-2\lambda)}{3-4 \lambda}\text{ for }\lambda\in\Big[0,\frac{1}{2}\Big].$$
 \end{lemma}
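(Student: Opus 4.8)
The plan is to notice that, because $P$ is a smooth point of $C$ lying on the doubled line $L$ and away from the two residual lines, the curve near $P$ is locally a double line $2L$, while globally $\deg C=4$ forces $-K_{\DP^2}-\lambda C\sim(3-4\lambda)H$. The whole computation will therefore be formally identical to the double line case treated in Lemma~\ref{doubleline-deg4}, with the pseudo-effective threshold $3-4\lambda$ playing the role of $3-2\lambda$.

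First I would record the upper bound by testing against $\widetilde L$, the strict transform of the doubled line. Since $L$ enters $C$ with multiplicity $2$ we have $A_{(\DP^2,\lambda C)}(\widetilde L)=1-2\lambda$; and as $-K_{\DP^2}-\lambda C-v\widetilde L\sim(3-4\lambda-v)H$ remains nef up to $v=3-4\lambda$, integrating its volume gives $S_{(\DP^2,\lambda C)}(\widetilde L)=\tfrac{3-4\lambda}{3}$, whence $\delta_P(\DP^2,\lambda C)\le\frac{3(1-2\lambda)}{3-4\lambda}$.

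For the matching lower bound I would use the single blow up $\pi\colon\widetilde S\to\DP^2$ of $P$ with exceptional curve $E$. As $\mathrm{mult}_P C=2$ we get $A_{(\DP^2,\lambda C)}(E)=2-2\lambda$, the negative part of the Zariski decomposition of $-\pi^*(K_{\DP^2}+\lambda C)-vE$ vanishes on all of $[0,3-4\lambda]$, and the routine integrals produce $S_{(\DP^2,\lambda C)}(E)=\tfrac{2(3-4\lambda)}{3}$ together with $S\big(W^{E}_{\bullet,\bullet};O\big)=\tfrac{3-4\lambda}{3}$ for every $O\in E$. Feeding these into estimate~\eqref{estimation2} yields
$$
\delta_P(\DP^2,\lambda C)\geqslant\min\Bigg\{\frac{3(1-\lambda)}{3-4\lambda},\ \inf_{O\in E}\frac{A_{E,\Delta_E}(O)}{S\big(W^{E}_{\bullet,\bullet};O\big)}\Bigg\}.
$$

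The one step where the doubling genuinely enters is the computation of the different $\Delta_E$. Because the curve is locally $2L$ and $\widetilde L$ meets $E$ transversally in the single point $Q=\widetilde L|_E$, the different carries coefficient $2\lambda$ rather than $\lambda$, i.e. $\Delta_E=2\lambda Q$; thus $A_{E,\Delta_E}(Q)=1-2\lambda$ and $A_{E,\Delta_E}(O)=1$ for $O\neq Q$, so the quotient $A_{E,\Delta_E}(O)/S\big(W^{E}_{\bullet,\bullet};O\big)$ equals $\frac{3(1-2\lambda)}{3-4\lambda}$ at $Q$ and $\frac{3}{3-4\lambda}$ elsewhere. Since $1-2\lambda\le1-\lambda\le1$ for $\lambda\in[0,\tfrac12]$, the minimum of all these quantities is $\frac{3(1-2\lambda)}{3-4\lambda}$, which matches the upper bound and forces equality; the restriction $\lambda\le\tfrac12$ is precisely the condition $A_{(\DP^2,\lambda C)}(\widetilde L)=1-2\lambda\ge0$ keeping $(\DP^2,\lambda C)$ log canonical along $L$. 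I expect the only real obstacle to be the correct bookkeeping of the multiplicity $2$ in the different; the remainder is the standard one-blow-up template already used throughout this section.
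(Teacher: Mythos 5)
Your argument is correct and is essentially the paper's own proof: the paper disposes of this case by citing its lemma on a conic plus a double chord at a smooth point of the chord, whose proof is exactly your computation — upper bound from $\widetilde L$ with $A=1-2\lambda$, $S=\tfrac{3-4\lambda}{3}$, a single blow-up of $P$ with $A(E)=2-2\lambda$, trivial negative part on $[0,3-4\lambda]$, $S(W^E_{\bullet,\bullet};O)=\tfrac{3-4\lambda}{3}$, and the different $\Delta_E=2\lambda Q$ recording the multiplicity $2$ of $L$. All the numerical values and the final minimization match.
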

 \begin{proof}
Follows from Lemma \ref{doubleline-deg4}.
 \end{proof}
 \begin{lemma}
 Let $C\subset \DP^2$ be a double line and two other lines in general position, $P\in C$ be a singular point  on a double line $L$. Then
  $$\delta_P(\DP^2,\lambda C)=\frac{3(1-2\lambda)}{3-4 \lambda}\text{ for }\lambda\in\Big[0,\frac{1}{2}\Big].$$
 \end{lemma}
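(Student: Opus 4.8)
The plan is to follow the single-blow-up strategy used throughout this section, since the local picture at $P$ is the same one already treated for a doubled line component. Write $C=2L+L_1+L_2$, where $L$ is the double line and $L_1,L_2$ are the two lines in general position, and let $\widetilde{L}$ denote the strict transform of $L$. First I would record the upper bound. As a component of $C$ the line $L$ carries multiplicity $2$, so $A_{(\DP^2,\lambda C)}(\widetilde{L})=1-2\lambda$; and since $-K_{\DP^2}-\lambda C\equiv(3-4\lambda)H$ with $L\equiv H$, the volume integral gives $S_{(\DP^2,\lambda C)}(\widetilde{L})=\frac{1}{(3-4\lambda)^2}\int_0^{3-4\lambda}(3-4\lambda-v)^2\,dv=\frac{3-4\lambda}{3}$. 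Hence $\delta_P(\DP^2,\lambda C)\le \frac{A(\widetilde{L})}{S(\widetilde{L})}=\frac{3(1-2\lambda)}{3-4\lambda}$, which already equals the asserted value; all the work lies in the reverse inequality.

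The key observation is that, by general position, only $L$ (with multiplicity $2$) and one transverse line, say $L_1$ (with multiplicity $1$), pass through $P$, while $L_2$ avoids $P$. Thus $\mathrm{mult}_P C=3$ and the germ of $C$ at $P$ is a double line plus one transverse smooth branch — exactly the local configuration analyzed for the singular point of a conic-plus-double-chord quartic. I would therefore take $\pi\colon\widetilde{S}\to\DP^2$ to be the blow up of $P$ with exceptional divisor $E$, giving $\pi^*C=\widetilde{C}+3E$ and $A_{(\DP^2,\lambda C)}(E)=2-3\lambda$.

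Next I would compute the Zariski decomposition of $-\pi^*(K_{\DP^2}+\lambda C)-vE=(3-4\lambda)\pi^*H-vE$. Pairing against $\widetilde{L}=\pi^*H-E$ shows this divisor is nef for $v\in[0,3-4\lambda]$ and meets the pseudo-effective threshold there, so $N(v)=0$ throughout and $\tau=3-4\lambda$. From $P(v)^2=(3-4\lambda)^2-v^2$ and $P(v)\cdot E=v$ I obtain $S_{(\DP^2,\lambda C)}(E)=\frac{2(3-4\lambda)}{3}$, and from $h(v)=\frac{v^2}{2}$ the flag invariant $S\big(W^{E}_{\bullet,\bullet};O\big)=\frac{3-4\lambda}{3}$ for every $O\in E$.

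Finally I would apply estimation \eqref{estimation2}. The different is $\Delta_E=\lambda Q_1+2\lambda Q_2$, where $Q_1=\widetilde{L}_1\cap E$ and $Q_2=\widetilde{L}\cap E$ are the two distinct points cut out by $\widetilde{C}|_E=Q_1+2Q_2$, the coefficient $2$ recording that $L$ is doubled. Evaluating $A_{E,\Delta_E}(O)/S\big(W^{E}_{\bullet,\bullet};O\big)$ yields $\frac{3(1-\lambda)}{3-4\lambda}$ at $Q_1$, $\frac{3(1-2\lambda)}{3-4\lambda}$ at $Q_2$, and $\frac{3}{3-4\lambda}$ elsewhere, while $A(E)/S(E)=\frac{3}{2}\cdot\frac{2-3\lambda}{3-4\lambda}$. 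For $\lambda\in[0,\tfrac12]$ the smallest of these is the value at $Q_2$, namely $\frac{3(1-2\lambda)}{3-4\lambda}$, matching the upper bound and forcing equality. The argument is essentially a transcription of the conic-and-double-chord singular-point computation, so I expect no genuine obstacle; the only points needing care are confirming that $L_2$ does not pass through $P$ and that $\widetilde{L},\widetilde{L}_1$ meet $E$ at distinct points, so that the different has the stated shape and the minimizing point $Q_2$ carries the doubled coefficient $2\lambda$.
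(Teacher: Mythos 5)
Your proposal is correct and follows essentially the same route as the paper's proof: the upper bound from the doubled line component $L$, a single blow-up of $P$ with $\pi^*C=\widetilde{C}+3E$ and $A_{(\DP^2,\lambda C)}(E)=2-3\lambda$, the trivial Zariski decomposition giving $S_{(\DP^2,\lambda C)}(E)=\tfrac{2(3-4\lambda)}{3}$ and $S\big(W^{E}_{\bullet,\bullet};O\big)=\tfrac{3-4\lambda}{3}$, and the different $\Delta_E=\lambda Q_1+2\lambda Q_2$ whose minimizing point $Q_2$ returns $\tfrac{3(1-2\lambda)}{3-4\lambda}$. The computations and the identification of the minimizer all agree with the paper.
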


  \begin{proof}
Let $\pi:\widetilde{S}\to S$ be the~blow up of the~point $P$,
with the exceptional divisor $E$. We denote the strict transform of $C$  on $\widetilde{S}$ by $\widetilde{C}$.
\begin{center}
 \includegraphics[width=9cm]{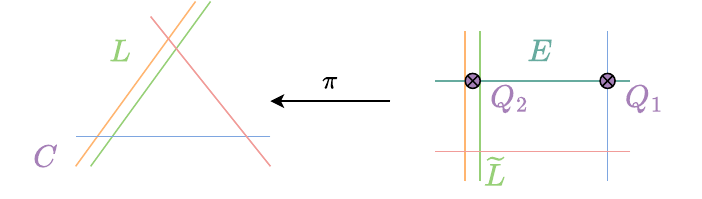}
 \end{center}
Note that $\delta_P(\DP^2,\lambda C)\le \frac{A_{(\DP^2,\lambda C)}(\widetilde{L})}{S_{(\DP^2,\lambda C)}(\widetilde{L})}=\frac{3(1-2\lambda)}{3- 4 \lambda }$ where $\widetilde{L}$ is a strict transform of a double line component of $C$. We have $\pi^*(C)=\widetilde{C}+3E$, $\pi^*(K_{\DP^2})=K_{\widetilde{S}}-E$. Thus, $A_{(\DP^2,\lambda C)}(E)=2-3\lambda$.
\noindent The Zariski decomposition of the divisor  $-\pi^*(K_{\DP^2}+\lambda C)-vE$ is given by:
\begin{align*}
P(v)=-\pi^*(K_{\DP^2}+\lambda C)-vE \text{ and }N(v)=0\text{ if }v\in[0,3- 4\lambda ].
\end{align*}
Then
$$
P(v)^2=\big(v-(3- 4\lambda )\big)\big(v+(3- 4\lambda )\big)\text{ and }P(v)\cdot E= v\text{ if }v\in[0,3- 4\lambda ].
$$
Thus
$$S_{(\DP^2, \lambda C)}(E)=\frac{1}{(3- 4\lambda )^2}\Big(\int_0^{3- 4\lambda } \big(v-(3- 4\lambda )\big)\big(v+(3- 4\lambda )\big) dv\Big)=\frac{2(3- 4\lambda )}{3}$$
so that $\delta_P(\DP^2,\lambda C)\le \frac{3}{2}\cdot \frac{2-3\lambda}{3- 4\lambda } $. For every $O\in E$ we get:
$$h(v) = \frac{v^2}{2}\text{ if }v\in[0,3- 4\lambda ].
$$
So that
$$S\big(W^{E}_{\bullet,\bullet};O\big)= \frac{2}{(3- 4\lambda )^2}\Big(\int_0^{3- 4\lambda } \frac{v^2}{2} dv\Big)=\frac{3- 4\lambda }{3}$$
We have
$$
\delta_P(\DP^2,\lambda C)\geqslant\mathrm{min}\Bigg\{ \frac{3}{2}\cdot \frac{2-3\lambda}{3- 4\lambda } ,\inf_{O\in E}\frac{A_{E,\Delta_{E}}(O)}{S\big(W^{E}_{\bullet,\bullet};O\big)}\Bigg\},
$$
where $\Delta_{E}=\lambda Q_1+2\lambda Q_2$ where $ Q_1+ 2Q_2=\widetilde{C}|_E$. 
So that
$$
\frac{A_{\overline{E},\Delta_{\overline{E}}}(O)}{S(W_{\bullet,\bullet}^{\overline{E}};O)}=
\left\{\aligned
&\frac{3(1-\lambda)}{3- 4\lambda }\ \mathrm{if}\ O=Q_1,\\
&\frac{3(1-2\lambda)}{3- 4\lambda }\ \mathrm{if}\ O=Q_2,\\
&\frac{3}{3- 4\lambda }\ \mathrm{otherwise}.
\endaligned
\right.
$$
Thus $\delta_P(\DP^2,\lambda C)= \frac{3(1-2\lambda)}{3- 4\lambda }$.
 \end{proof}
\noindent This proves the following theorem:
 \begin{theorem}
   Suppose $C_4\subset \DP^2$ is a quartic curve which is  a union of a double line and two other lines in general position. Then  we have:
$$\delta(\DP^2,\lambda C_4)= \frac{3(1-2\lambda)}{3- 4 \lambda }\text{ for }\lambda\in \Big[0, \frac{1}{2}\Big].$$
 \end{theorem}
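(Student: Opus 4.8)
The plan is to compute $\delta(\DP^2,\lambda C_4)=\inf_{P\in\DP^2}\delta_P(\DP^2,\lambda C_4)$ by stratifying $\DP^2$ according to the position of $P$ relative to the configuration and reading off the local invariant on each stratum from the lemmas already proved. Write $C_4=2L+L_1+L_2$, where $L$ is the double line and $L_1,L_2$ are the two simple lines. Since $L,L_1,L_2$ are in general position, these three distinct lines bound a triangle whose vertices are the nodes $L\cap L_1$, $L\cap L_2$ and $L_1\cap L_2$, and there are no further coincidences.

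First I would record the local value on each stratum. For $P\notin C_4$ a single blowup (as in the computations throughout the paper, now with $\mathrm{mult}_PC_4=0$) gives $A_{(\DP^2,\lambda C_4)}(E)=2$ and $S_{(\DP^2,\lambda C_4)}(E)=\frac{2(3-4\lambda)}{3}$, hence $\delta_P=\frac{3}{3-4\lambda}$. For $P$ a smooth point of $L_1$ or $L_2$, Lemma~\ref{line} with $d=4$ gives $\delta_P=\frac{3(1-\lambda)}{3-4\lambda}$, and the node $L_1\cap L_2$ is an $A_1$ point, so Lemma~\ref{A1-points} gives the same $\frac{3(1-\lambda)}{3-4\lambda}$. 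For $P$ a smooth point of the double line $L$, the double-line lemma of this section gives $\delta_P=\frac{3(1-2\lambda)}{3-4\lambda}$, and the two vertices $L\cap L_1$, $L\cap L_2$ are precisely the ``singular point on a double line'' case of the final lemma above, again with value $\frac{3(1-2\lambda)}{3-4\lambda}$.

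It then remains to minimise these finitely many expressions. For $\lambda\in(0,\frac12]$ one has $1-2\lambda<1-\lambda<1$, whence
$$\frac{3(1-2\lambda)}{3-4\lambda}<\frac{3(1-\lambda)}{3-4\lambda}<\frac{3}{3-4\lambda},$$
so the infimum is attained along $L$ and equals $\frac{3(1-2\lambda)}{3-4\lambda}$; the upper endpoint $\frac12$ is forced by the multiplicity-two component, since $\mathrm{lct}(\DP^2,C_4)=\frac12$ and $\min\{3/4,1/2\}=1/2$. I expect the one genuinely non-mechanical step to be verifying completeness of the stratification: I must confirm that general position excludes any stratum of lower log discrepancy --- in particular a triple point of the three lines, or any unexpected tangency --- so that no unlisted point can drop $\delta_P$ below $\frac{3(1-2\lambda)}{3-4\lambda}$. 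Granting this, the minimum is genuinely $\frac{3(1-2\lambda)}{3-4\lambda}$ and the theorem follows.
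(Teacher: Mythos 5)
Your proposal matches the paper's argument: the paper proves this theorem by exactly the same stratification, establishing the local $\delta$-invariant on each stratum (smooth points off the double line via Lemma~\ref{line}, the node $L_1\cap L_2$ via Lemma~\ref{A1-points}, smooth points of the double line via Lemma~\ref{doubleline-deg4}, and the two points $L\cap L_i$ via a dedicated blowup computation giving $\frac{3(1-2\lambda)}{3-4\lambda}$) and then taking the infimum. Your additional remarks on points off $C_4$ and on completeness of the stratification are correct and consistent with what the paper leaves implicit.
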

 \section{Double line and two different lines such that all of them are concurrent}
  \begin{lemma}
 Let $C\subset \DP^2$ be a double line and two different lines such that all of them are concurrent, $P\in C$ be a smooth point. Then
  $$\delta_P(\DP^2,\lambda C)=\frac{3(1-\lambda)}{3- 4 \lambda }\text{ for }\lambda\in\Big[0,\frac{3}{4}\Big].$$
 \end{lemma}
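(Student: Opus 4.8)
The plan is to reduce this statement directly to Lemma~\ref{line}, in the same way as the analogous ``general position'' case treated above. The key point is that $\delta_P(\DP^2,\lambda C)$ is by definition a local invariant of the pair at $P$: the infimum defining it runs only over prime divisors $E$ over $\DP^2$ with $P\in C_X(E)$, so it depends only on the geometry of $(\DP^2,\lambda C)$ in a neighbourhood of $P$. Since $P$ is a smooth point of $C$, it lies on a single reduced line component $L$ of $C$ with local multiplicity one, while neither the double line nor the remaining simple line passes through $P$. Locally at $P$ the pair therefore looks exactly like the configuration of Lemma~\ref{line}.

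First I would observe that the total degree enters the computation only through the class $-K_{\DP^2}-\lambda C=(3-4\lambda)H$, where $H$ is a line and $4=\deg C$ counts the double line with multiplicity $2$. Consequently, for the blow-up $\pi\colon\widetilde S\to S$ of $P$ with exceptional divisor $E$, the pseudo-effective threshold is $\tau=3-4\lambda$, the negative part of the Zariski decomposition vanishes on all of $[0,3-4\lambda]$, and the quantities $P(v)^2=(3-4\lambda)^2-v^2$, $P(v)\cdot E=v$, and $S_{(\DP^2,\lambda C)}(E)=\tfrac{2(3-4\lambda)}{3}$ agree with the formulas of Lemma~\ref{line} upon setting $d=4$. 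In particular $A_{(\DP^2,\lambda C)}(E)=2-\lambda$ and $\tfrac{A(E)}{S(E)}=\tfrac{3}{2}\cdot\tfrac{2-\lambda}{3-4\lambda}$.

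Next I would check that the local data on $E$ also coincide with Lemma~\ref{line}: the strict transform $\widetilde L$ meets $E$ in a single point $Q$, so the different is $\Delta_E=\lambda Q$, the coefficient $\lambda$ being the local multiplicity of $\lambda C$ along $L$. Feeding this into the estimate \eqref{estimation2} and computing $S(W^E_{\bullet,\bullet};O)=\tfrac{3-4\lambda}{3}$ gives $\tfrac{A_{E,\Delta_E}(O)}{S(W^E_{\bullet,\bullet};O)}=\tfrac{3(1-\lambda)}{3-4\lambda}$ at $O=Q$ and $\tfrac{3}{3-4\lambda}$ otherwise. Since $1-\lambda\le 1-\tfrac{\lambda}{2}$ for $\lambda\ge 0$, the overall minimum of these ratios and of $\tfrac{A(E)}{S(E)}$ is $\tfrac{3(1-\lambda)}{3-4\lambda}$, which matches the upper bound $\delta_P\le\tfrac{A_{(\DP^2,\lambda C)}(L)}{S_{(\DP^2,\lambda C)}(L)}=\tfrac{3(1-\lambda)}{3-4\lambda}$ and forces equality.

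The one point requiring care, and the step I expect to be the main (though minor) obstacle, is justifying that Lemma~\ref{line}, stated for \emph{reduced} curves, applies verbatim here even though $C$ carries a non-reduced double-line component. This is harmless precisely because $P$ avoids that component: the blow-up at $P$, its Zariski decomposition, and the different $\Delta_E$ all depend only on the local structure at $P$ and on the global class $(3-4\lambda)H$, none of which is affected by non-reducedness away from $P$. I would therefore simply invoke Lemma~\ref{line} with $d=4$ to conclude $\delta_P(\DP^2,\lambda C)=\tfrac{3(1-\lambda)}{3-4\lambda}$ for $\lambda\in\big[0,\tfrac{3}{4}\big]$.
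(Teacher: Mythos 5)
Your proposal is correct and takes essentially the same route as the paper: the paper's own proof of this lemma is literally a one-line reduction to Lemma~\ref{line} with $d=4$. Your additional verification that the non-reduced double-line component, lying away from $P$, affects neither the Zariski decomposition (which depends only on the class $(3-4\lambda)H$) nor the different on $E$, is exactly the implicit justification the paper relies on.
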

 \begin{proof}
     Follows from Lemma \ref{line}.
 \end{proof}
 
 \begin{lemma}
 Let $C\subset \DP^2$ be a double line and two different lines such that all of them are concurrent, $P\in C$ be a smooth point  on a double line $L$. Then
  $$\delta_P(\DP^2,\lambda C)=\frac{3(1-2\lambda)}{3-4 \lambda}\text{ for }\lambda\in\Big[0,\frac{1}{2}\Big].$$
 \end{lemma}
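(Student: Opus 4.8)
The plan is to deduce the statement directly from Lemma~\ref{doubleline-deg4}, in the same way the analogous point on a double line with two lines in general position was handled. The guiding principle is that $\delta_P(\DP^2,\lambda C)$ is a purely local invariant: by definition it is the infimum of $A_{(\DP^2,\lambda C)}(E)/S_{(\DP^2,\lambda C)}(E)$ over prime divisors $E$ over $\DP^2$ satisfying $P\in C_X(E)$, and every such $E$ arises from a tower of blowups at infinitely near points over $P$. Consequently the numerators $A_{(\DP^2,\lambda C)}(E)$ and all the restriction data feeding the Fujita-type formulas are controlled by the germ of $C$ at $P$ alone.

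First I would observe that, because $P$ is a smooth point of $C$ sitting only on the double-line component $L$, the two remaining concurrent lines miss $P$ entirely; hence near $P$ the curve is indistinguishable from a double line of multiplicity $2$. This is exactly the local geometry treated in Lemma~\ref{doubleline-deg4}, so after blowing up $P$ one obtains the identical configuration, the same log discrepancy $A_{(\DP^2,\lambda C)}(E)=2-2\lambda$, and the same different $\Delta_E=2\lambda\,Q$ supported at the single point $Q=\widetilde{L}|_E$.

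The only genuinely global ingredients are $(-K_{\DP^2}-\lambda C)^2=(3-4\lambda)^2$ and the pseudo-effective threshold of $-\pi^*(K_{\DP^2}+\lambda C)-vE$, both of which depend solely on $\deg C=4$ via $-K_{\DP^2}-\lambda C\sim_{\DQ}(3-4\lambda)H$ and are blind to the placement of the extra components. Since these match the conic-plus-double-chord setting of Lemma~\ref{doubleline-deg4}, every term in that computation carries over unchanged, yielding $\delta_P(\DP^2,\lambda C)=\frac{3(1-2\lambda)}{3-4\lambda}$ on $\big[0,\tfrac12\big]$. The only point requiring care --- and the step I expect to be the sole (minor) obstacle --- is confirming that the two concurrent lines contribute nothing to the local infimum at $P$, i.e.\ that no divisor $E$ over $P$ can see them; this is immediate once one notes that they avoid $P$.
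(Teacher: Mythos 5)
Your proposal is correct and takes essentially the same route as the paper, which disposes of this case with the single line ``Follows from Lemma \ref{doubleline-deg4}'' (the degree-$4$ computation for a smooth point on the double chord of a conic). Your justification --- that the germ of $C$ at $P$ is just a double line because the other two concurrent lines miss $P$, while the only global inputs (the class $(3-4\lambda)H$, hence the Zariski decomposition after one blowup and the bound from the strict transform of $L$) depend only on $\deg C=4$ --- is precisely why that citation is legitimate.
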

 \begin{proof}
Follows from Lemma \ref{doubleline-deg4}.
 \end{proof}
 \begin{lemma}
 Let $C\subset \DP^2$ be a double line and two different lines such that all of them are concurrent, $P\in C$ be a singular point on $C$. Then
  $$\delta_P(\DP^2,\lambda C)=\frac{3(1-2\lambda)}{3-4 \lambda}\text{ for }\lambda\in\Big[0,\frac{1}{2}\Big].$$
 \end{lemma}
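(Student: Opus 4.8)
The plan is to run the single--blow--up form of the Abban--Zhuang inductive estimate \eqref{estimation2}, exactly as in the ``four concurrent lines'' lemma and the general--position double--line computation. Write $C = 2L + L_2 + L_3$, where $L$ is the double line and $L_2,L_3$ are the two simple lines, all passing through $P$. The key simplification is that every local branch of $C$ at $P$ is a smooth line, so a single blow--up $\pi\colon\widetilde{S}\to\DP^2$ with exceptional divisor $E$ already separates the branches: the strict transforms $\widetilde{L},\widetilde{L}_2,\widetilde{L}_3$ meet $E$ transversally at three distinct points. Thus, unlike the $A_n$, $D_n$, $E_n$ cases, no iterated weighted blow--up and contraction is needed, and the plt blow--up coincides with the ordinary one.

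First I would record the discrepancy and volume data. Since $\mathrm{mult}_P C = 2+1+1 = 4$, we get $\pi^*(C) = \widetilde{C} + 4E$ and $\pi^*(K_{\DP^2}) = K_{\widetilde{S}} - E$, hence $A_{(\DP^2,\lambda C)}(E) = 2 - 4\lambda$. Writing $-\pi^*(K_{\DP^2}+\lambda C)-vE = (3-4\lambda)\pi^*H - vE = (3-4\lambda-v)\pi^*H + v(\pi^*H-E)$ with $H$ the hyperplane class, this is a nonnegative combination of the nef classes $\pi^*H$ and $\pi^*H-E$ on $\widetilde{S}\cong\mathbb{F}_1$, so it is nef for $v\in[0,3-4\lambda]$; therefore the Zariski decomposition is $P(v)=-\pi^*(K_{\DP^2}+\lambda C)-vE$, $N(v)=0$, with threshold $\tau = 3-4\lambda$. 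Then $P(v)^2 = (3-4\lambda)^2 - v^2$ and $P(v)\cdot E = v$, giving $S_{(\DP^2,\lambda C)}(E) = \tfrac{2(3-4\lambda)}{3}$ and the upper bound $\delta_P(\DP^2,\lambda C)\le A(E)/S(E) = \tfrac{3(1-2\lambda)}{3-4\lambda}$ (the same value also drops out of $\widetilde{L}$, as in Lemma~\ref{doubleline-deg4}).

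For the matching lower bound I would carry out the local analysis on $E$. Because $N(v)\equiv 0$, the function $h(v)$ collapses to $\tfrac{v^2}{2}$ for every $O\in E$, so $S\big(W^E_{\bullet,\bullet};O\big) = \tfrac{3-4\lambda}{3}$ uniformly. The decisive computation is the different: writing $\widetilde{C}|_E = 2Q_1 + Q_2 + Q_3$ with $Q_1 = \widetilde{L}|_E$ coming from the double line, the different is $\Delta_E = 2\lambda Q_1 + \lambda Q_2 + \lambda Q_3$, so $A_{E,\Delta_E}(O)$ equals $1-2\lambda$ at $Q_1$, $1-\lambda$ at $Q_2,Q_3$, and $1$ elsewhere. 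Dividing by $S\big(W^E_{\bullet,\bullet};O\big)$ and minimizing over $O\in E$ selects the point $Q_1$ and yields $\inf_{O\in E} A_{E,\Delta_E}(O)/S\big(W^E_{\bullet,\bullet};O\big) = \tfrac{3(1-2\lambda)}{3-4\lambda}$. Feeding both numbers into \eqref{estimation2} gives $\delta_P(\DP^2,\lambda C)\ge \tfrac{3(1-2\lambda)}{3-4\lambda}$, which meets the upper bound; the requirement $A_{E,\Delta_E}(Q_1) = 1-2\lambda\ge 0$ is precisely what pins the interval to $\lambda\in[0,\tfrac12]$.

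The arithmetic here is entirely routine and parallels the single--blow--up template of the preceding sections; the one point I would verify most carefully is that the double--line component contributes coefficient $2\lambda$ (not $\lambda$) to $\Delta_E$ at $Q_1$. This is the whole mechanism of the result: it pushes the critical ratio down from $\tfrac{3(1-\lambda)}{3-4\lambda}$ to $\tfrac{3(1-2\lambda)}{3-4\lambda}$ and makes the minimizing valuation the one centered over the double line rather than over a transverse simple branch. Everything else follows by direct substitution.
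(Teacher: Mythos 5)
Your proposal is correct and follows essentially the same route as the paper's own proof: a single blow-up at $P$ with $A_{(\DP^2,\lambda C)}(E)=2-4\lambda$, trivial Zariski decomposition giving $S_{(\DP^2,\lambda C)}(E)=\tfrac{2(3-4\lambda)}{3}$ and $S\big(W^{E}_{\bullet,\bullet};O\big)=\tfrac{3-4\lambda}{3}$, and the different $\Delta_E$ carrying coefficient $2\lambda$ at the point of $E$ over the double line, which is exactly where the infimum $\tfrac{3(1-2\lambda)}{3-4\lambda}$ is attained. The only differences are cosmetic (labeling of the points $Q_i$ and a more explicit nefness check), so there is nothing to add.
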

 \begin{proof}
Let $\pi:\widetilde{S}\to S$ be the~blow up of the~point $P$,
with the exceptional divisor $E$. We denote the strict transform of $C$  on $\widetilde{S}$ by $\widetilde{C}$.
\begin{center}
 \includegraphics[width=9cm]{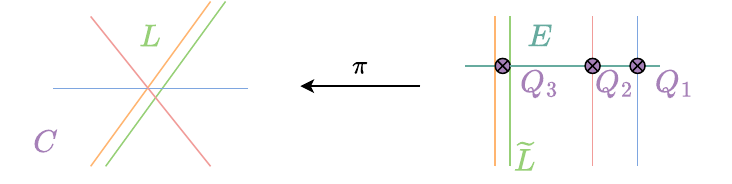}
 \end{center}
 We have $\pi^*(C)=\widetilde{C}+4E$, $\pi^*(K_{\DP^2})=K_{\widetilde{S}}-E$. Thus, $A_{(\DP^2,\lambda C)}(E)=2-4\lambda$.
\noindent The Zariski decomposition of the divisor  $-\pi^*(K_{\DP^2}+\lambda C)-vE$ is given by:
\begin{align*}
P(v)=-\pi^*(K_{\DP^2}+\lambda C)-vE \text{ and }N(v)=0\text{ if }v\in[0,3-4 \lambda ].
\end{align*}
Then
$$
P(v)^2=\big(v-(3-4 \lambda )\big)\big(v+(3-4 \lambda )\big)\text{ and }P(v)\cdot E= v\text{ if }v\in[0,3-4 \lambda ].
$$
Thus
$$S_{(\DP^2, \lambda C)}(E)=\frac{1}{(3-4 \lambda )^2}\Big(\int_0^{3-4 \lambda } \big(v-(3-4 \lambda )\big)\big(v+(3-4 \lambda )\big) dv\Big)=\frac{2(3-4 \lambda )}{3}$$
so that $\delta_P(\DP^2,\lambda C)\le \frac{3}{2}\cdot\frac{2-4\lambda}{3-4 \lambda }=\frac{3(1-2\lambda)}{3-4 \lambda }$. For every $O\in E$ we get:
$$h(v) = \frac{v^2}{2}\text{ if }v\in[0,3-4 \lambda ].
$$
So that
$$S\big(W^{E}_{\bullet,\bullet};O\big)= \frac{2}{(3-4 \lambda )^2}\Big(\int_0^{3-4 \lambda } \frac{v^2}{2} dv\Big)=\frac{3-4 \lambda }{3}\le \frac{2}{3}\cdot\frac{3-4 \lambda }{2-3\lambda}
$$
We have
$$
\delta_P(\DP^2,\lambda C)\geqslant\mathrm{min}\Bigg\{  \frac{3(1-2\lambda)}{3-4 \lambda },\inf_{O\in E}\frac{A_{E,\Delta_{E}}(O)}{S\big(W^{E}_{\bullet,\bullet};O\big)}\Bigg\},
$$
where $\Delta_{E}=\lambda Q_1+\lambda Q_2+2\lambda Q_3$ where $Q_1+Q_2+2Q_3=\widetilde{C}|_E$. 
So that
$$
\frac{A_{\overline{E},\Delta_{\overline{E}}}(O)}{S(W_{\bullet,\bullet}^{\overline{E}};O)}=
\left\{\aligned
&\frac{3(1-\lambda)}{3-4 \lambda }\ \mathrm{if}\ O\in\{Q_1,Q_2\},\\
&\frac{3(1-2\lambda)}{3-4 \lambda }\ \mathrm{if}\ O=Q_3,\\
&\frac{3}{3-4 \lambda }\ \mathrm{otherwise}.
\endaligned
\right.
$$
Thus $\delta_P(\DP^2,\lambda C)=\frac{3(1-2\lambda)}{3-4 \lambda }$ for $\lambda\in\big[0,\frac{1}{2}\big]$.
 \end{proof}
 \noindent This proves the following theorem:
  \begin{theorem}
   Suppose $C_4\subset \DP^2$ is a quartic curve which is  a union ofa double line and two concurrent lines. Then  we have:
$$\delta(\DP^2,\lambda C_4)= \frac{3(1-2\lambda)}{3- 4 \lambda }\text{ for }\lambda\in \Big[0, \frac{1}{2}\Big].$$
 \end{theorem}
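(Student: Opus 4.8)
The plan is to deduce the global value $\delta(\DP^2,\lambda C_4)=\inf_{P\in\DP^2}\delta_P(\DP^2,\lambda C_4)$ from the three local lemmas just proved, by stratifying $\DP^2$ according to the position of $P$ relative to the components of $C_4=2L+M_1+M_2$, where $L$ is the double line and $M_1,M_2$ are the two remaining lines, all passing through a common point $O$. Because $L$, $M_1$, $M_2$ are concurrent, the only point at which distinct components of $C_4$ meet is $O$, so every point of $\DP^2$ lies in exactly one of four strata: (i) $P\notin C_4$; (ii) $P$ a smooth point of $C_4$ on $M_1$ or $M_2$, i.e. $P\in(M_1\cup M_2)\setminus\{O\}$; (iii) a smooth point of the double line, $P\in L\setminus\{O\}$; and (iv) $P=O$. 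The preceding three lemmas supply the local values immediately: on stratum (ii) one gets $\delta_P=\frac{3(1-\lambda)}{3-4\lambda}$, while on strata (iii) and (iv) one gets $\delta_P=\frac{3(1-2\lambda)}{3-4\lambda}$.

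Next I would dispose of stratum (i). For $P\notin C_4$ let $\pi\colon\widetilde{S}\to\DP^2$ be the blow-up at $P$ with exceptional divisor $E$; since $-K_{\DP^2}-\lambda C_4\sim_{\DQ}(3-4\lambda)H$, the divisor $\pi^*\big(-K_{\DP^2}-\lambda C_4\big)-vE$ is nef for $v\in[0,3-4\lambda]$ with $N(v)=0$, giving $A_{(\DP^2,\lambda C_4)}(E)=2$ and $S_{(\DP^2,\lambda C_4)}(E)=\tfrac{2(3-4\lambda)}{3}$. As the strict transform of $C_4$ does not meet $E$, the different $\Delta_E$ vanishes, so $A_{E,\Delta_E}(O)=1$ and $h(v)=\tfrac{v^2}{2}$ for every $O\in E$, whence $S\big(W^{E}_{\bullet,\bullet};O\big)=\tfrac{3-4\lambda}{3}$. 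Estimate \eqref{estimation2} then yields $\delta_P\ge\frac{3}{3-4\lambda}$, which for $\lambda\in[0,\tfrac12]$ is at least as large as both on-curve values, so stratum (i) never realizes the infimum.

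Finally I would compare the two surviving values. For $\lambda\in[0,\tfrac12]$ the denominator $3-4\lambda$ is positive and $1-2\lambda\le 1-\lambda$, so $\frac{3(1-2\lambda)}{3-4\lambda}\le\frac{3(1-\lambda)}{3-4\lambda}\le\frac{3}{3-4\lambda}$; hence the infimum over all four strata equals $\frac{3(1-2\lambda)}{3-4\lambda}$, attained along the double line, which is the asserted global value.

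The computation is essentially routine once the stratification is fixed, and the three lemmas already settle the local invariants completely, so the only point genuinely requiring care is verifying that the case list is exhaustive. In particular, the main thing to check is that concurrency forces every pairwise intersection of components to collapse onto the single point $O$, so that no additional singularity type (for instance an ordinary node where $M_1$ and $M_2$ would otherwise cross away from $L$) appears and demands a separate local lemma; this is what guarantees that the four strata above exhaust $\DP^2$.
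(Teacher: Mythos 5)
Your proposal is correct and follows essentially the same route as the paper: the theorem is obtained by combining the three local lemmas of this section (smooth point off the double line, smooth point of the double line, and the concurrency point) and taking the minimum $\tfrac{3(1-2\lambda)}{3-4\lambda}\le\tfrac{3(1-\lambda)}{3-4\lambda}$. The only addition is your explicit blow-up computation showing $\delta_P\ge\tfrac{3}{3-4\lambda}$ for $P\notin C_4$, a point the paper leaves implicit; your verification of it is accurate and harmless.
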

  \section{Two double lines}

 \begin{lemma}
 Let $C\subset \DP^2$ be two  double lines, $P\in C$ be a smooth point  on a double line $L$. Then
  $$\delta_P(\DP^2,\lambda C)=\frac{3(1-2\lambda)}{3-4 \lambda}\text{ for }\lambda\in\Big[0,\frac{1}{2}\Big].$$
 \end{lemma}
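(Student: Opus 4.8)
The plan is to reduce this statement to the computation already carried out for a smooth point lying on a double line in a quartic, namely Lemma \ref{doubleline-deg4}. The key observation is that since $P$ is a smooth point of $C$ lying on the double line $L$, it cannot also lie on the second double line; hence the other double line is disjoint from a neighbourhood of $P$, and near $P$ the curve $C$ looks exactly like the double line $2L$. Because the local invariant $\delta_P(\DP^2,\lambda C)$ depends only on the local geometry of $(\DP^2,\lambda C)$ at $P$ together with the global intersection data, and the latter enters only through $-K_{\DP^2}-\lambda C\sim_{\DQ}(3-4\lambda)H$ (as $\deg C=4$), the whole argument coincides with that case.

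Concretely, I would first record the upper bound by testing against the strict transform $\widetilde L$ of the double line component: one computes $A_{(\DP^2,\lambda C)}(\widetilde L)=1-2\lambda$ and $S_{(\DP^2,\lambda C)}(\widetilde L)=\frac{3-4\lambda}{3}$, which gives $\delta_P(\DP^2,\lambda C)\le\frac{3(1-2\lambda)}{3-4\lambda}$. For the matching lower bound I would blow up $P$ with exceptional divisor $E$; since $\mathrm{mult}_P C=2$ we have $\pi^*(C)=\widetilde C+2E$ and $A_{(\DP^2,\lambda C)}(E)=2-2\lambda$. The divisor $-\pi^*(K_{\DP^2}+\lambda C)-vE$ remains nef for $v\in[0,3-4\lambda]$ with $N(v)=0$, so $P(v)^2=(3-4\lambda)^2-v^2$ and $P(v)\cdot E=v$, yielding $S_{(\DP^2,\lambda C)}(E)=\frac{2(3-4\lambda)}{3}$; moreover $h(v)=\frac{v^2}{2}$ gives $S\big(W^E_{\bullet,\bullet};O\big)=\frac{3-4\lambda}{3}$.

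The different is $\Delta_E=2\lambda Q$, where $Q=\widetilde L|_E$ is the single point in which the double line meets $E$ (the other component not meeting $E$ at all). Feeding these values into estimate \eqref{estimation2} produces
$\delta_P(\DP^2,\lambda C)\ge\min\{\frac{3(1-\lambda)}{3-4\lambda},\frac{3(1-2\lambda)}{3-4\lambda},\frac{3}{3-4\lambda}\}=\frac{3(1-2\lambda)}{3-4\lambda}$ for $\lambda\in[0,\tfrac12]$, which meets the upper bound and closes the argument.

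There is no genuine obstacle here; the only point requiring care — and the step I would flag — is verifying that $P\notin L_2$, i.e. that the second double line is disjoint from a neighbourhood of $P$, so that $\mathrm{mult}_P C=2$ exactly and the blowup picture, the different, and the location of the single point $Q$ on $E$ agree precisely with those of Lemma \ref{doubleline-deg4}. Once this local identification is made the result follows immediately from that lemma.
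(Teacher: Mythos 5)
Your argument is correct and is exactly the route the paper takes: the paper's proof of this lemma is simply ``Follows from Lemma~\ref{doubleline-deg4}'', and your computation reproduces the content of that lemma (upper bound via $\widetilde{L}$, blowup of $P$ with $\pi^*(C)=\widetilde{C}+2E$, $S_{(\DP^2,\lambda C)}(E)=\tfrac{2(3-4\lambda)}{3}$, different $\Delta_E=2\lambda Q$) together with the observation that the second double line avoids a neighbourhood of $P$, which is precisely what justifies the reduction.
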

 \begin{proof}
Follows from Lemma \ref{doubleline-deg4}.
 \end{proof}

 \begin{lemma}
 Let $C\subset \DP^2$ be two double lines, $P\in C$ be a singular point $C$ . Then
  $$\delta_P(\DP^2,\lambda C)=\frac{3(1-2\lambda)}{3-4 \lambda}\text{ for }\lambda\in\Big[0,\frac{1}{2}\Big].$$
 \end{lemma}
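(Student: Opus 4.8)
The plan is to proceed exactly as in the singular-point case of a double line together with two concurrent lines, because here too the point $P$ is the unique point of multiplicity $4$ on $C$: writing $C=2L_1+2L_2$, both lines pass through $P=L_1\cap L_2$, so $\mathrm{mult}_P C=2+2=4$. First I would record the upper bound coming from the double line component itself, namely $\delta_P(\DP^2,\lambda C)\le \frac{A_{(\DP^2,\lambda C)}(\widetilde{L})}{S_{(\DP^2,\lambda C)}(\widetilde{L})}=\frac{3(1-2\lambda)}{3-4\lambda}$, where $\widetilde{L}$ is the strict transform of $L_1$ (or $L_2$); the matching lower bound then comes from the generalized Fujita estimate \eqref{estimation2} applied to a single blow up, just as in the concurrent case.

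Next I would let $\pi\colon\widetilde{S}\to \DP^2$ be the blow up of $P$ with exceptional divisor $E$. Since $\mathrm{mult}_P C=4$ we have $\pi^*(C)=\widetilde{C}+4E$ and $\pi^*(K_{\DP^2})=K_{\widetilde{S}}-E$, so that $A_{(\DP^2,\lambda C)}(E)=2-4\lambda$. Writing $H$ for the line class on $\DP^2$, the class $-\pi^*(K_{\DP^2}+\lambda C)-vE$ equals $(3-4\lambda-v)\pi^*H+v(\pi^*H-E)$, a nonnegative combination of the two nef generators $\pi^*H$ and $\pi^*H-E$ of the one-point blow up of $\DP^2$ precisely for $v\in[0,3-4\lambda]$; hence it is nef on this range, the Zariski decomposition has $P(v)=-\pi^*(K_{\DP^2}+\lambda C)-vE$ and $N(v)=0$, and $\tau=3-4\lambda$. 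A direct computation then gives $P(v)^2=(3-4\lambda)^2-v^2$ and $P(v)\cdot E=v$, whence $S_{(\DP^2,\lambda C)}(E)=\frac{2(3-4\lambda)}{3}$ and $\frac{A_{(\DP^2,\lambda C)}(E)}{S_{(\DP^2,\lambda C)}(E)}=\frac{3}{2}\cdot\frac{2-4\lambda}{3-4\lambda}=\frac{3(1-2\lambda)}{3-4\lambda}$, again recovering the upper bound. Since $N(v)\equiv 0$, the local term is simply $h(v)=\frac{v^2}{2}$, giving $S\big(W^{E}_{\bullet,\bullet};O\big)=\frac{3-4\lambda}{3}$ for every $O\in E$.

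The one place where this case genuinely differs from the concurrent-lines case is the different $\Delta_E$. Here $\widetilde{C}=2\widetilde{L_1}+2\widetilde{L_2}$ meets $E$ in the two \emph{distinct} points $Q_1=\widetilde{L_1}\cap E$ and $Q_2=\widetilde{L_2}\cap E$, each transversally, so $\widetilde{C}\vert_E=2Q_1+2Q_2$ and hence $\Delta_E=2\lambda Q_1+2\lambda Q_2$; in particular $E\cong\DP^1$ carries no quotient singularity. Thus $A_{E,\Delta_E}(O)=1-2\lambda$ at $O\in\{Q_1,Q_2\}$ and $A_{E,\Delta_E}(O)=1$ otherwise, so that $\frac{A_{E,\Delta_E}(O)}{S(W^{E}_{\bullet,\bullet};O)}$ equals $\frac{3(1-2\lambda)}{3-4\lambda}$ at $Q_1,Q_2$ and $\frac{3}{3-4\lambda}$ elsewhere. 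Feeding this into \eqref{estimation2} yields $\delta_P(\DP^2,\lambda C)\ge\min\big\{\tfrac{3(1-2\lambda)}{3-4\lambda},\tfrac{3}{3-4\lambda}\big\}$, and since $1-2\lambda\le 1$ on $[0,\tfrac{1}{2}]$ the minimum is $\frac{3(1-2\lambda)}{3-4\lambda}$; combined with the upper bound this gives the claimed equality. No step presents a real obstacle: the geometry is resolved by a single blow up, and the only substantive difference from the concurrent-lines computation is the symmetric splitting $\widetilde{C}\vert_E=2Q_1+2Q_2$ with two coefficient-$2\lambda$ points rather than a mixed configuration, which is exactly what produces the two equal extremal ratios above.
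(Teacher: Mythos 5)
Your proposal is correct and follows essentially the same route as the paper: a single blow up of $P$, the identities $\pi^*(C)=\widetilde{C}+4E$ and $A_{(\DP^2,\lambda C)}(E)=2-4\lambda$, the trivial Zariski decomposition with $S_{(\DP^2,\lambda C)}(E)=\tfrac{2(3-4\lambda)}{3}$ and $S\big(W^{E}_{\bullet,\bullet};O\big)=\tfrac{3-4\lambda}{3}$, and the different $\Delta_E=2\lambda Q_1+2\lambda Q_2$ giving the two extremal ratios $\tfrac{3(1-2\lambda)}{3-4\lambda}$. The only cosmetic differences are that you additionally record the upper bound via the strict transform of a line component and spell out the nefness argument, neither of which changes the substance.
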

 \begin{proof}
Let $\pi:\widetilde{S}\to S$ be the~blow up of the~point $P$,
with the exceptional divisor $E$. We denote the strict transform of $C$  on $\widetilde{S}$ by $\widetilde{C}$.
\begin{center}
 \includegraphics[width=9cm]{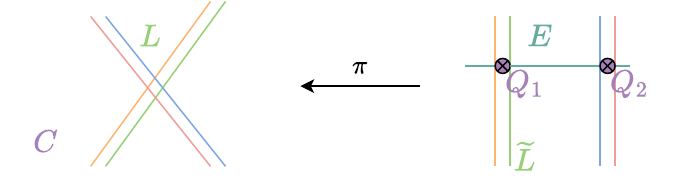}
 \end{center}
 We have $\pi^*(C)=\widetilde{C}+4E$, $\pi^*(K_{\DP^2})=K_{\widetilde{S}}-E$. Thus, $A_{(\DP^2,\lambda C)}(E)=2-4\lambda$.
\noindent The Zariski decomposition of the divisor  $-\pi^*(K_{\DP^2}+\lambda C)-vE$ is given by:
\begin{align*}
P(v)=-\pi^*(K_{\DP^2}+\lambda C)-vE \text{ and }N(v)=0\text{ if }v\in[0,3- 4 \lambda ].
\end{align*}
Then
$$
P(v)^2=\big(v-(3- 4 \lambda )\big)\big(v+(3- 4 \lambda )\big)\text{ and }P(v)\cdot E= v\text{ if }v\in[0,3- 4 \lambda ].
$$
Thus
$$S_{(\DP^2, \lambda C)}(E)=\frac{1}{(3- 4 \lambda )^2}\Big(\int_0^{3- 4 \lambda } \big(v-(3- 4 \lambda )\big)\big(v+(3- 4 \lambda )\big) dv\Big)=\frac{2(3- 4 \lambda )}{3}$$
so that $\delta_P(\DP^2,\lambda C)\le \frac{3}{2}\cdot \frac{2-4\lambda}{3- 4 \lambda }= \frac{3(1-2\lambda)}{3- 4 \lambda } $. For every $O\in E$ we get:
$$h(v) = \frac{v^2}{2}\text{ if }v\in[0,3- 4 \lambda ].
$$
So that
$$S\big(W^{E}_{\bullet,\bullet};O\big)= \frac{2}{(3- 4 \lambda )^2}\Big(\int_0^{33- 4 \lambda } \frac{v^2}{2} dv\Big)=\frac{3- 4 \lambda }{3}$$
We have
$$
\delta_P(\DP^2,\lambda C)\geqslant\mathrm{min}\Bigg\{ \frac{3(1-2\lambda)}{3- 4 \lambda } ,\inf_{O\in E}\frac{A_{E,\Delta_{E}}(O)}{S\big(W^{E}_{\bullet,\bullet};O\big)}\Bigg\},
$$
where $\Delta_{E}=\lambda 2Q_1+2\lambda Q_2$ where $ 2Q_1+ 2Q_2=\widetilde{C}|_E$. 
So that
$$
\frac{A_{\overline{E},\Delta_{\overline{E}}}(O)}{S(W_{\bullet,\bullet}^{\overline{E}};O)}=
\left\{\aligned
&\frac{3(1-2\lambda)}{3- 4 \lambda }\ \mathrm{if}\ O\in\{Q_1,Q_2\},\\
&\frac{3}{3- 4 \lambda }\ \mathrm{otherwise}.
\endaligned
\right.
$$
Thus $\delta_P(\DP^2,\lambda C)= \frac{3(1-2\lambda)}{3- 4 \lambda }$.
 \end{proof}
 \noindent This proves the following theorem:
  \begin{theorem}
   Suppose $C_4\subset \DP^2$ is a quartic curve which is  a union of two double lines. Then  we have:
$$\delta(\DP^2,\lambda C_4)= \frac{3(1-2\lambda)}{3- 4 \lambda }\text{ for }\lambda\in \Big[0, \frac{1}{2}\Big].$$
 \end{theorem}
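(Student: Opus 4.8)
The plan is to obtain the global invariant $\delta(\DP^2,\lambda C_4)=\inf_{P\in\DP^2}\delta_P(\DP^2,\lambda C_4)$ by minimizing the local invariants over all points $P$, most of which have already been computed in the two lemmas of this section. Writing $C_4=2L_1+2L_2$ with $L_1\ne L_2$, I would first stratify $\DP^2$ into three loci: the points lying off $C_4$, the smooth points of $C_4$ lying on exactly one of the double lines (away from $L_1\cap L_2$), and the unique singular point $Q=L_1\cap L_2$.

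On the curve itself the computation is already finished: for a smooth point $P$ of a double line, Lemma \ref{doubleline-deg4} gives $\delta_P(\DP^2,\lambda C_4)=\frac{3(1-2\lambda)}{3-4\lambda}$ for $\lambda\in[0,\tfrac12]$, and for $P=Q$ the second lemma of the section yields the identical value over the same range. Thus $\delta_P$ is constant equal to $\frac{3(1-2\lambda)}{3-4\lambda}$ for every $P\in C_4$, and it remains only to rule out points off the curve as candidates for the infimum.

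For $P\notin C_4$ I would blow up $P$ with exceptional divisor $E$. Since $C_4$ avoids $P$, its strict transform is disjoint from $E$, so $A_{(\DP^2,\lambda C_4)}(E)=2$, the different $\Delta_E$ vanishes, and the single-blow-up computation (identical to those in Lemma \ref{A1-points}) gives $S_{(\DP^2,\lambda C_4)}(E)=\frac{2(3-4\lambda)}{3}$ together with $S\big(W^{E}_{\bullet,\bullet};O\big)=\frac{3-4\lambda}{3}$ for every $O\in E$. Feeding these into \eqref{estimation2} yields $\delta_P(\DP^2,\lambda C_4)\ge\frac{3}{3-4\lambda}$, and since $\frac{3}{3-4\lambda}\ge\frac{3(1-2\lambda)}{3-4\lambda}$ on $[0,\tfrac12]$ (with equality only at $\lambda=0$), the off-curve points never realize the infimum.

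Taking the infimum over the three loci then gives $\delta(\DP^2,\lambda C_4)=\frac{3(1-2\lambda)}{3-4\lambda}$ for $\lambda\in[0,\tfrac12]$. The only step carrying any content beyond the cited lemmas is the off-curve bound, and even that follows immediately from the standard single blow-up; accordingly I expect the sole obstacle to be the bookkeeping of verifying that no stratum produces a smaller ratio, rather than any substantive new estimate.
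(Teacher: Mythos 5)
Your proposal is correct and follows essentially the same route as the paper: the paper deduces the theorem directly from the two local lemmas of the section (smooth point on a double line via Lemma \ref{doubleline-deg4}, and the singular point $L_1\cap L_2$ via the single blow-up computation), leaving the points off $C_4$ implicit. Your explicit check that $\delta_P\ge\frac{3}{3-4\lambda}\ge\frac{3(1-2\lambda)}{3-4\lambda}$ for $P\notin C_4$ is a correct and welcome addition, but it does not change the substance of the argument.
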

  \section{Triple line and another line}
  \begin{lemma}
 Let $C\subset \DP^2$ be a triple line and another line, $P\in C$ be a smooth point. Then
  $$\delta_P(\DP^2,\lambda C)=\frac{3(1-\lambda)}{3- 4 \lambda }\text{ for }\lambda\in\Big[0,\frac{3}{4}\Big].$$
 \end{lemma}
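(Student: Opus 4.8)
The plan is to reduce the computation to Lemma~\ref{line}, after first settling the local geometry of $P$. Write $C=3L_1+L_2$, where $L_1$ is the triple line and $L_2$ the simple line. Following the convention of the preceding sections (where ``smooth point on a double/triple line'' is treated as a separate case), a \emph{smooth point} of $C$ is a point lying on the multiplicity-one component $L_2$ and off the triple line $L_1$; a point of $L_1$ would have local multiplicity $3$ in $C$ and is handled elsewhere. Thus I would first record that $P\in L_2\setminus L_1$, so that $\mathrm{mult}_P C=1$ and, in a neighbourhood of $P$, the divisor $\lambda C$ agrees with the single smooth branch $\lambda L_2$ of coefficient $\lambda$; the triple line plays no local role.

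Next I would note that the global intersection theory entering Fujita's formulas depends on $C$ only through its total degree, since $-K_{\DP^2}-\lambda C\equiv(3-4\lambda)H$ with $H$ the hyperplane class. In particular the upper bound comes from the simple component $L_2$, for which $A_{(\DP^2,\lambda C)}(L_2)=1-\lambda$ (its coefficient in $C$ is $1$) and $S_{(\DP^2,\lambda C)}(L_2)=\tfrac{3-4\lambda}{3}$, whence
$$\delta_P(\DP^2,\lambda C)\le\frac{A_{(\DP^2,\lambda C)}(L_2)}{S_{(\DP^2,\lambda C)}(L_2)}=\frac{3(1-\lambda)}{3-4\lambda}.$$

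For the lower bound I would run the blow-up argument of Lemma~\ref{line} verbatim with $d=4$. Let $\pi\colon\widetilde{S}\to\DP^2$ be the blow-up of $P$ with exceptional divisor $E$. Since $\mathrm{mult}_P C=1$, the Zariski decomposition of $-\pi^*(K_{\DP^2}+\lambda C)-vE$ has trivial negative part on $[0,3-4\lambda]$, with $P(v)^2=(3-4\lambda)^2-v^2$ and $P(v)\cdot E=v$; hence $S_{(\DP^2,\lambda C)}(E)=\tfrac{2(3-4\lambda)}{3}$ and $A_{(\DP^2,\lambda C)}(E)=2-\lambda$. The flag function is $h(v)=v^2/2$, giving $S\big(W^E_{\bullet,\bullet};O\big)=\tfrac{3-4\lambda}{3}$ for every $O\in E$, while the different is $\Delta_E=\lambda Q$ with $Q=\widetilde{L_2}|_E$ the single point where the strict transform of $L_2$ meets $E$ (the strict transform of $L_1$ avoids $E$ entirely, as $P\notin L_1$). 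Feeding these into estimate~\eqref{estimation2} yields
$$\delta_P(\DP^2,\lambda C)\ge\min\Bigg\{\frac{3(2-\lambda)}{2(3-4\lambda)},\ \frac{3(1-\lambda)}{3-4\lambda},\ \frac{3}{3-4\lambda}\Bigg\}=\frac{3(1-\lambda)}{3-4\lambda},$$
where the middle term is the value at $O=Q$ and is the smallest for $\lambda\in[0,\tfrac34]$.

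Combining the two bounds gives $\delta_P(\DP^2,\lambda C)=\frac{3(1-\lambda)}{3-4\lambda}$ on $[0,\tfrac34]$. The only step requiring genuine care is the first one: confirming that a smooth point lies on the simple component and that the tripled line contributes neither to $\mathrm{mult}_P C$ nor to the different $\Delta_E$. Once this is verified, every remaining computation is word-for-word that of Lemma~\ref{line} with $d=4$, and indeed the statement may simply be seen to follow from Lemma~\ref{line} applied with total degree $d=4$ at a multiplicity-one branch.
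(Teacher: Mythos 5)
Your argument is correct and matches the paper's, which simply invokes Lemma~\ref{line} with $d=4$; your explicit verification that the triple line is disjoint from $P$ (so that $\mathrm{mult}_PC=1$, the different is $\lambda Q$, and only the total degree enters the global computations) is exactly the reason that reduction works, and is worth spelling out since Lemma~\ref{line} is formally stated only for reduced curves. All the numerical values ($A(E)=2-\lambda$, $S(E)=\tfrac{2(3-4\lambda)}{3}$, $S(W^E_{\bullet,\bullet};O)=\tfrac{3-4\lambda}{3}$, and the minimum being attained at $O=Q$) agree with the paper.
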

 \begin{proof}
     Follows from Lemma \ref{line}.
 \end{proof}
   \begin{lemma} 
  Let $C\subset \DP^2$  be a triple line and another line, $P\in C$ be a triple line. Then
 $$\delta_P(\DP^2,\lambda C)=\frac{3(1-3\lambda)}{3- 4 \lambda }\text{ for }\lambda\in\Big[0,\frac{1}{3}\Big].$$
 \end{lemma}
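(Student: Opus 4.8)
The plan is to reproduce, with $d=4$, the template used for the degree-three triple line and for Lemma~\ref{line}: first pin down the value from above using the triple line component itself, and then obtain a matching lower bound by a single blowup of $P$ together with Fujita's inequality \eqref{estimation2}. Throughout, write $C=3L+M$, where $L$ is the triple line through $P$ and $M$ is the remaining line; since $P$ is a general point of $L$ we have $M\not\ni P$ and $\mathrm{mult}_P C=3$, and $-K_{\DP^2}-\lambda C\sim(3-4\lambda)H$ for a line $H$.

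For the upper bound I would take the divisor $L$ itself (it already lies on $\DP^2$, so no birational model is needed): one has $A_{(\DP^2,\lambda C)}(L)=1-3\lambda$ from $\mathrm{mult}_L C=3$, while a direct volume integral gives $S_{(\DP^2,\lambda C)}(L)=\tfrac{3-4\lambda}{3}$. Hence $\delta_P(\DP^2,\lambda C)\le\frac{3(1-3\lambda)}{3-4\lambda}$. For the lower bound, let $\pi\colon\widetilde S\to\DP^2$ be the blowup of $P$ with exceptional divisor $E$. Because $\mathrm{mult}_P C=3$ we get $\pi^*(C)=\widetilde C+3E$ and $A_{(\DP^2,\lambda C)}(E)=2-3\lambda$. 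The simplifying point is that the negative part of the Zariski decomposition of $-\pi^*(K_{\DP^2}+\lambda C)-vE=(3-4\lambda)\pi^*H-vE$ vanishes on all of $[0,3-4\lambda]$: the strict transform $\widetilde L=\pi^*H-E$ satisfies $\widetilde L^2=0$ and $\big((3-4\lambda)\pi^*H-vE\big)\cdot\widetilde L=3-4\lambda-v\ge0$, so the divisor stays nef up to the pseudo-effective threshold $\tau=3-4\lambda$. Then $P(v)^2=(3-4\lambda)^2-v^2$ and $P(v)\cdot E=v$, so $S_{(\DP^2,\lambda C)}(E)=\tfrac{2(3-4\lambda)}{3}$, while $N(v)=0$ forces $\widehat h(v)=\tfrac{v^2}{2}$ and therefore $S\big(W^{E}_{\bullet,\bullet};O\big)=\tfrac{3-4\lambda}{3}$ for every $O\in E$.

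It then remains to read off the different. From $K_E+\Delta_E=(K_{\widetilde S}+\lambda\widetilde C+E)|_E$, and because only the triple line meets $E$ near $P$, one finds $\Delta_E=3\lambda Q$ with $Q=\widetilde L\cap E$, so that $A_{E,\Delta_E}(Q)=1-3\lambda$ and $A_{E,\Delta_E}(O)=1$ elsewhere. Feeding this into \eqref{estimation2} yields
\[
\delta_P(\DP^2,\lambda C)\ge\min\left\{\frac{3(2-3\lambda)}{2(3-4\lambda)},\ \frac{3(1-3\lambda)}{3-4\lambda},\ \frac{3}{3-4\lambda}\right\}.
\]
The only point to verify — and the sole place the range $\lambda\in[0,\tfrac13]$ enters — is that the middle term is the minimum: comparing it with the first reduces to $2(1-3\lambda)\le 2-3\lambda$, which holds for all $\lambda\ge0$, and it is plainly at most $\frac{3}{3-4\lambda}$. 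Thus the lower bound matches the upper bound and $\delta_P(\DP^2,\lambda C)=\frac{3(1-3\lambda)}{3-4\lambda}$.

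I do not expect any genuine obstacle here: the argument is entirely parallel to the established single-blowup cases, so the only care required is bookkeeping of the multiplicity $3$ — which must be tracked consistently into both $A_{(\DP^2,\lambda C)}(E)$ and the coefficient of $\Delta_E$ — together with the (easy) verification that the auxiliary curve $\widetilde L$ contributes no negative part.
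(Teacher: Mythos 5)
Your proposal is correct and follows essentially the same route as the paper: the same upper bound via the triple-line component $L$ (the paper phrases it through the strict transform $\widetilde L$, which gives the identical ratio $\frac{3(1-3\lambda)}{3-4\lambda}$), the same single blowup of $P$ with trivial negative part, the same values $A_{(\DP^2,\lambda C)}(E)=2-3\lambda$, $S_{(\DP^2,\lambda C)}(E)=\frac{2(3-4\lambda)}{3}$, $S\big(W^{E}_{\bullet,\bullet};O\big)=\frac{3-4\lambda}{3}$, and the same different $\Delta_E=3\lambda Q$. All the numerical checks you flag are carried out correctly, so nothing further is needed.
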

 \begin{proof}
Note that $\delta_P(\DP^2,\lambda C)\le \frac{A_{(\DP^2,\lambda C)}(\widetilde{L})}{S_{(\DP^2,\lambda C)}(\widetilde{L})}=\frac{3(1-3\lambda)}{3- 4 \lambda }$ where $\widetilde{L}$ is a strict transform of a triple line component of $C$. Let $\pi:\widetilde{S}\to S$ be the~blow up of the~point $P$,
with the exceptional divisor $E$.
\begin{center}
 \includegraphics[width=8cm]{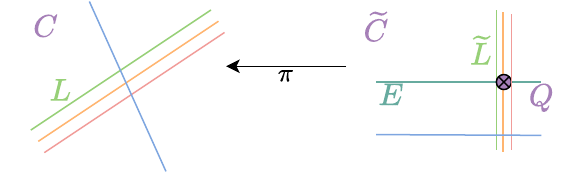}
 \end{center}
We have $\sigma^*(L)=\widetilde{L}+E$, $\pi^*(C)=\widetilde{C}+3E$, $\pi^*(K_{\DP^2})=K_{\widetilde{S}}-E$. Thus, $A_{(\DP^2,\lambda C)}(E)=2-3\lambda$.  The Zariski decomposition of the divisor  $-\pi^*(K_{\DP^2}+\lambda C)-vE$ is given by:
\begin{align*}
P(v)=-\pi^*(K_{\DP^2}+\lambda C)-vE \text{ and }N(v)=0\text{ if }v\in[0,3- 4 \lambda ].
\end{align*}
Then
$$
P(v)^2=(4\lambda - 3- v )(v-(3- 4\lambda))\text{ and }P(v)\cdot E=  v \text{ if }v\in[0,3- 4 \lambda ].
$$
Thus
$$S_{(\DP^2,\lambda C)}(E)=\frac{1}{(3- 4 \lambda )^2}\Big(\int_0^{3-4 \lambda } (4\lambda - 3- v )(v-(3- 4\lambda)) dv\Big)=\frac{2(3-4\lambda)}{3}$$
so that $\delta_P(\DP^2,\lambda C)\le \frac{3}{2}\cdot \frac{2-3\lambda}{3-4 \lambda }$. 
For every $O\in E$ we get:
$$h(v) = \frac{v^2}{2}\text{ if }v\in[0,3-4\lambda ].
$$
So that
$$S\big(W^{E}_{\bullet,\bullet};O\big)= \frac{2}{(3-4\lambda )^2}\Big(\int_0^{3-4\lambda } \frac{v^2}{2} dv\Big)=\frac{3-4 \lambda}{3}
$$
We have
$$
\delta_P(\DP^2,\lambda C)\geqslant\mathrm{min}\Bigg\{ \frac{3}{2}\cdot \frac{2-3\lambda}{3-4 \lambda } ,\inf_{O\in E}\frac{A_{E,\Delta_{E}}(O)}{S\big(W^{E}_{\bullet,\bullet};O\big)}\Bigg\},
$$
where $\Delta_{E}=3\lambda Q$ where $Q=\widetilde{L}|_E$. 
So that
$$
\frac{A_{\overline{E},\Delta_{\overline{E}}}(O)}{S(W_{\bullet,\bullet}^{\overline{E}};O)}=
\left\{\aligned
&\frac{3(1-3\lambda)}{3-4 \lambda }\ \mathrm{if}\ O=Q,\\
&\frac{3}{3-4 \lambda }\ \mathrm{otherwise}.
\endaligned
\right.
$$
Thus $\delta_P(\DP^2,\lambda C)=\frac{3(1-3\lambda)}{3- 4 \lambda }$ for $\lambda\in\big[0,\frac{1}{3}\big]$.     
 \end{proof}

 \begin{lemma}
 Let $C\subset \DP^2$ be a triple line and another line, $P\in C$ be a singular point  on a triple line. Then
  $$\delta_P(\DP^2,\lambda C)=\frac{3(1-3\lambda)}{3-4 \lambda}\text{ for }\lambda\in\Big[0,\frac{1}{3}\Big].$$
 \end{lemma}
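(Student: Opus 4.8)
The plan is to treat this exactly as a single ordinary blowup, in the spirit of Lemma~\ref{line} and Lemma~\ref{A1-points}, since here $P$ is the intersection of the triple line $L$ with the remaining line $L'$, so that $\mathrm{mult}_P C = 4$ and one blowup already exposes all the relevant data. First I would record the upper bound. Writing $\widetilde{L}$ for the strict transform of the triple line component (which defines the same divisorial valuation over $\DP^2$ as $L$ itself), one has $A_{(\DP^2,\lambda C)}(\widetilde{L}) = 1 - 3\lambda$ because $L$ enters $C$ with multiplicity $3$, while $S_{(\DP^2,\lambda C)}(\widetilde{L}) = \frac{3-4\lambda}{3}$ depends only on the numerical class of a line tested against $-K_{\DP^2}-\lambda C \sim_{\mathbb{Q}} (3-4\lambda)H$. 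Hence $\delta_P(\DP^2,\lambda C) \le \frac{A_{(\DP^2,\lambda C)}(\widetilde{L})}{S_{(\DP^2,\lambda C)}(\widetilde{L})} = \frac{3(1-3\lambda)}{3-4\lambda}$, which is precisely the asserted value, so it remains only to prove the matching lower bound.

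For the lower bound I would let $\pi\colon\widetilde{S}\to S$ be the blowup of $P$ with exceptional divisor $E$. Since $\mathrm{mult}_P C = 4$ we get $\pi^*(C) = \widetilde{C} + 4E$ and $\pi^*(K_{\DP^2}) = K_{\widetilde{S}} - E$, so $A_{(\DP^2,\lambda C)}(E) = 2 - 4\lambda$. The divisor $-\pi^*(K_{\DP^2}+\lambda C) - vE = (3-4\lambda)\pi^*H - vE$ has trivial negative part for all $v\in[0,3-4\lambda]$, because the strict transform of any line through $P$ has self-intersection $0$ and nothing is contracted before the pseudo-effective threshold $3-4\lambda$. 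Thus $P(v)^2 = (3-4\lambda)^2 - v^2$ and $P(v)\cdot E = v$, which gives $S_{(\DP^2,\lambda C)}(E) = \frac{2(3-4\lambda)}{3}$ and $\frac{A_{(\DP^2,\lambda C)}(E)}{S_{(\DP^2,\lambda C)}(E)} = \frac{3(1-2\lambda)}{3-4\lambda}$.

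The decisive step is the refined estimate \eqref{estimation2} along $E$. Because $N(v) = 0$, the function $h(v) = \frac{(P(v)\cdot E)^2}{2} = \frac{v^2}{2}$ is independent of the chosen point $O\in E$, so $S\big(W^{E}_{\bullet,\bullet};O\big) = \frac{3-4\lambda}{3}$ for every $O$. The different is $\Delta_E = \lambda\,\widetilde{C}|_E = 3\lambda Q_L + \lambda Q_{L'}$, where $Q_L = \widetilde{L}\cap E$ and $Q_{L'} = \widetilde{L'}\cap E$; the coefficient $3\lambda$ at $Q_L$ is exactly what records that $L$ occurs in $C$ with multiplicity $3$. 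Consequently $\frac{A_{E,\Delta_E}(O)}{S(W^{E}_{\bullet,\bullet};O)}$ equals $\frac{3(1-3\lambda)}{3-4\lambda}$ at $Q_L$, equals $\frac{3(1-\lambda)}{3-4\lambda}$ at $Q_{L'}$, and equals $\frac{3}{3-4\lambda}$ at every other point of $E$.

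Finally I would assemble these. For $\lambda\in[0,\tfrac{1}{3}]$ the value $\frac{3(1-3\lambda)}{3-4\lambda}$ is the smallest among $\frac{3(1-2\lambda)}{3-4\lambda}$ and the three ratios above, so \eqref{estimation2} yields $\delta_P(\DP^2,\lambda C) \ge \frac{3(1-3\lambda)}{3-4\lambda}$, which together with the upper bound gives equality. I do not anticipate any genuine difficulty, as the computation is entirely parallel to the earlier single-blowup cases; the one point to watch is assigning the different coefficient $3\lambda$ (rather than $\lambda$) at $Q_L$, since this is exactly what makes the $E$-estimate bind at the claimed value instead of at the larger $\frac{3(1-2\lambda)}{3-4\lambda}$ contributed by $E$ itself.
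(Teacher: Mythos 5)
Your proposal is correct and follows essentially the same route as the paper: the upper bound from the strict transform of the triple line, a single ordinary blowup of $P$ with $A_{(\DP^2,\lambda C)}(E)=2-4\lambda$ and $S_{(\DP^2,\lambda C)}(E)=\tfrac{2(3-4\lambda)}{3}$, and the Abban--Zhuang estimate on $E$ with the different $\Delta_E=3\lambda Q_L+\lambda Q_{L'}$, whose minimum ratio $\tfrac{3(1-3\lambda)}{3-4\lambda}$ is attained at $Q_L$. (The paper's final line contains a typo, printing $\tfrac{3(1-\lambda)}{3-4\lambda}$, but its displayed case analysis agrees with yours and with the lemma statement.)
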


  \begin{proof}
Note that $\delta_P(\DP^2,\lambda C)\le \frac{A_{(\DP^2,\lambda C)}(\widetilde{L})}{S_{(\DP^2,\lambda C)}(\widetilde{L})}=\frac{3(1-3\lambda)}{3- 4 \lambda }$ where $\widetilde{L}$ is a strict transform of a triple line component of $C$. Let $\pi:\widetilde{S}\to S$ be the~blow up of the~point $P$,
with the exceptional divisor $E$. We denote the strict transform of $C$  on $\widetilde{S}$ by $\widetilde{C}$.
\begin{center}
 \includegraphics[width=9cm]{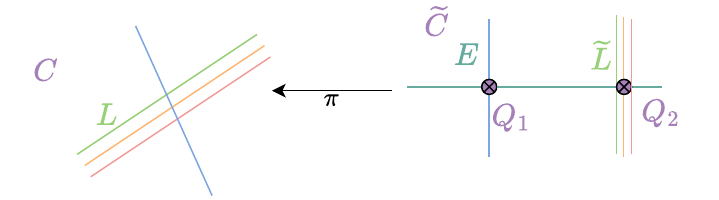}
 \end{center}
 We have $\pi^*(C)=\widetilde{C}+4E$, $\pi^*(K_{\DP^2})=K_{\widetilde{S}}-E$. Thus, $A_{(\DP^2,\lambda C)}(E)=2-4\lambda$.
\noindent The Zariski decomposition of the divisor  $-\pi^*(K_{\DP^2}+\lambda C)-vE$ is given by:
\begin{align*}
P(v)=-\pi^*(K_{\DP^2}+\lambda C)-vE \text{ and }N(v)=0\text{ if }v\in[0,3- 4 \lambda ].
\end{align*}
Then
$$
P(v)^2=\big(v-(3- 4 \lambda )\big)\big(v+(3- 4 \lambda )\big)\text{ and }P(v)\cdot E= v\text{ if }v\in[0,3- 4 \lambda ].
$$
Thus
$$S_{(\DP^2, \lambda C)}(E)=\frac{1}{(3- d \lambda )^2}\Big(\int_0^{3- 4 \lambda } \big(v-(3- 4 \lambda )\big)\big(v+(3- 4 \lambda )\big) dv\Big)=\frac{2(3- 4 \lambda )}{3}$$
so that $\delta_P(\DP^2,\lambda C)\le \frac{3}{2}\cdot \frac{2-4\lambda}{3- 4 \lambda }= \frac{3(1-2\lambda)}{3- 4 \lambda } $. For every $O\in E$ we get:
$$h(v) = \frac{v^2}{2}\text{ if }v\in[0,3- 4 \lambda ].
$$
So that
$$S\big(W^{E}_{\bullet,\bullet};O\big)= \frac{2}{(3- 4 \lambda )^2}\Big(\int_0^{3- 4 \lambda } \frac{v^2}{2} dv\Big)=\frac{3- 4 \lambda }{3}$$
We have
$$
\delta_P(\DP^2,\lambda C)\geqslant\mathrm{min}\Bigg\{ \frac{3(1-2\lambda)}{3- 4 \lambda } ,\inf_{O\in E}\frac{A_{E,\Delta_{E}}(O)}{S\big(W^{E}_{\bullet,\bullet};O\big)}\Bigg\},
$$
where $\Delta_{E}=\lambda Q_1+3 \lambda Q_2$ where $ Q_1+ 3Q_2=\widetilde{C}|_E$. 
So that
$$
\frac{A_{\overline{E},\Delta_{\overline{E}}}(O)}{S(W_{\bullet,\bullet}^{\overline{E}};O)}=
\left\{\aligned
&\frac{3(1-\lambda)}{3- 4 \lambda }\ \mathrm{if}\ O=Q_1,\\
&\frac{3(1-3\lambda)}{3- 4 \lambda }\ \mathrm{if}\ O=Q_2,\\
&\frac{3}{3- 4 \lambda }\ \mathrm{otherwise}.
\endaligned
\right.
$$
Thus $\delta_P(\DP^2,\lambda C)= \frac{3(1-\lambda)}{3- 4 \lambda }$.
 \end{proof}
 \noindent This proves the following theorem:
  \begin{theorem}
   Suppose $C_4\subset \DP^2$ is a quartic curve which is a union of a triple line and another line. Then  we have:
$$\delta(\DP^2,\lambda C_4)= \frac{3(1-3\lambda)}{3- 4 \lambda }\text{ for }\lambda\in \Big[0, \frac{1}{3}\Big].$$
 \end{theorem}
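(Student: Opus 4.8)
The plan is to obtain this global statement by assembling the three local computations of this section and taking their minimum, using $\delta(\DP^2,\lambda C_4)=\inf_{P\in\DP^2}\delta_P(\DP^2,\lambda C_4)$. I would begin by writing $C_4=3L+M$, with $L$ the triple line and $M$ the residual line, and noting that every point of $\DP^2$ belongs to exactly one of four strata: points off $C_4$, smooth points lying on $M$ alone, general points of the triple line $L$, and the intersection point $L\cap M$. The three lemmas above already record the local invariant on the three strata meeting the curve: $\frac{3(1-\lambda)}{3-4\lambda}$ at a smooth point of $M$ (via Lemma~\ref{line}), and $\frac{3(1-3\lambda)}{3-4\lambda}$ both at a general point of $L$ and at the singular point $L\cap M$.

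Next I would verify that points $P\notin C_4$ cannot lower the infimum. For such a $P$, taking a general line $L'$ through $P$ — which is not a component of $C_4$, so that $A_{(\DP^2,\lambda C_4)}(L')=1$ and $S_{(\DP^2,\lambda C_4)}(L')=\frac{3-4\lambda}{3}$ — the estimate \eqref{estimation1} gives $\delta_P\ge\frac{3}{3-4\lambda}$, which on $\big[0,\tfrac13\big]$ strictly exceeds both on-curve values. Hence the global infimum equals the minimum of the two distinct values attained on the curve.

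Finally I would carry out the elementary comparison. Since the denominator $3-4\lambda$ is positive for $\lambda\in\big[0,\tfrac13\big]$, comparing $\frac{3(1-\lambda)}{3-4\lambda}$ with $\frac{3(1-3\lambda)}{3-4\lambda}$ reduces to comparing the numerators $3-3\lambda$ and $3-9\lambda$; as $3-9\lambda\le 3-3\lambda$ for $\lambda\ge 0$ with equality only at $\lambda=0$, the triple line supplies the smaller value, giving $\delta(\DP^2,\lambda C_4)=\frac{3(1-3\lambda)}{3-4\lambda}$. The range $\big[0,\tfrac13\big]$ is exactly $\big[0,\min\{\tfrac{3}{4},\mathrm{lct}(\DP^2,C_4)\}\big]$, since the triple line forces $\mathrm{lct}(\DP^2,C_4)=\tfrac13$, and at $\lambda=\tfrac13$ the formula correctly returns $\delta=0$. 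I do not expect any genuine obstacle: all the geometric content sits in the three preceding local lemmas, and what remains is only the numerator inequality together with the observation that off-curve points are never extremal.
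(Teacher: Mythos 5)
Your proposal is correct and follows essentially the same route as the paper: the theorem is obtained by taking the minimum of the local $\delta$-invariants computed in the three preceding lemmas of that section (smooth point of the residual line, general point of the triple line, and the intersection point), and the triple-line value $\frac{3(1-3\lambda)}{3-4\lambda}$ is the smallest. Your only addition is the explicit check that points off the curve satisfy $\delta_P\ge \frac{3}{3-4\lambda}$ and so never realize the infimum, a step the paper leaves implicit throughout.
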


 \section{Quadruple line}
  
   \begin{lemma} 
  Let $C\subset \DP^2$  be a quadruple line, $P\in C$ be a point on $C$. Then
 $$\delta_P(\DP^2,\lambda C)=\frac{3(1-4\lambda)}{3- 4 \lambda }\text{ for }\lambda\in\Big[0,\frac{1}{4}\Big].$$
 \end{lemma}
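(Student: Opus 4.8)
The plan is to follow the single--blow-up strategy already used for the double line and triple line, since a quadruple line is simply $C=4L$ for a line $L$, so that $\mathrm{mult}_P C=4$ at every $P\in L$. First I would record the upper bound coming from the line component itself: the strict transform $\widetilde{L}$ defines the same valuation over $S=\DP^2$ as $L$, and since $L$ occurs in $C$ with multiplicity $4$ we get $A_{(\DP^2,\lambda C)}(\widetilde{L})=1-4\lambda$, while $-K_{\DP^2}-\lambda C=(3-4\lambda)H$ for a line $H$ gives $S_{(\DP^2,\lambda C)}(\widetilde{L})=\frac{3-4\lambda}{3}$. Hence $\delta_P(\DP^2,\lambda C)\le\frac{3(1-4\lambda)}{3-4\lambda}$, which is already the claimed value; the content of the lemma is the matching lower bound.

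For the lower bound I would apply the estimate \eqref{estimation2} to the blow-up $\pi\colon\widetilde{S}\to S$ of $P$ with exceptional divisor $E$. From $\pi^*(C)=\widetilde{C}+4E$ and $\pi^*(K_{\DP^2})=K_{\widetilde{S}}-E$ one reads off $A_{(\DP^2,\lambda C)}(E)=2-4\lambda$. The Zariski decomposition is trivial on the whole nef range, namely $P(v)=-\pi^*(K_{\DP^2}+\lambda C)-vE$ and $N(v)=0$ for $v\in[0,3-4\lambda]$, with $P(v)^2=(3-4\lambda)^2-v^2$ and $P(v)\cdot E=v$. Integrating gives $S_{(\DP^2,\lambda C)}(E)=\frac{2(3-4\lambda)}{3}$, so that $\frac{A(E)}{S(E)}=\frac{3(1-2\lambda)}{3-4\lambda}$, which is strictly larger than the target and hence not the binding term.

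The decisive step is the refined flag term on $E$. Because $N(v)=0$ throughout, $h(v)=\frac{1}{2}(P(v)\cdot E)^2=\frac{v^2}{2}$ for every $O\in E$, whence $S\big(W^E_{\bullet,\bullet};O\big)=\frac{3-4\lambda}{3}$ independently of $O$. The different is $\Delta_E=4\lambda\,Q$, where $Q=\widetilde{L}|_E$ is the single point in which the strict transform of the quadruple line meets $E$, the coefficient $4\lambda$ recording that $\widetilde{C}=4\widetilde{L}$. Thus $A_{E,\Delta_E}(Q)=1-4\lambda$ and $A_{E,\Delta_E}(O)=1$ otherwise, which yields
\[
\frac{A_{E,\Delta_E}(O)}{S\big(W^E_{\bullet,\bullet};O\big)}=
\begin{cases}
\dfrac{3(1-4\lambda)}{3-4\lambda} & \text{if } O=Q,\\[4pt]
\dfrac{3}{3-4\lambda} & \text{otherwise.}
\end{cases}
\]
Combining this with the $E$-term, $\delta_P(\DP^2,\lambda C)\ge\min\big\{\frac{3(1-2\lambda)}{3-4\lambda},\frac{3(1-4\lambda)}{3-4\lambda},\frac{3}{3-4\lambda}\big\}=\frac{3(1-4\lambda)}{3-4\lambda}$ on $\big[0,\frac14\big]$, matching the upper bound and proving the lemma.

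I do not expect a genuine obstacle, since this is the simplest of the line configurations: nothing needs resolving beyond one blow-up, and the negative part stays zero so every integral is elementary. The only points demanding care are bookkeeping the multiplicity $4$ consistently in both $A_{(\DP^2,\lambda C)}(E)=2-4\lambda$ and the different coefficient $\Delta_E=4\lambda Q$, and checking that the endpoint $\lambda=\frac14$ is precisely where $1-4\lambda=0$, so that on $\big[0,\frac14\big]$ the ratio $\frac{3(1-4\lambda)}{3-4\lambda}$ is indeed the smallest of the three candidates.
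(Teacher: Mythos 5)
Your proposal is correct and follows essentially the same route as the paper: the upper bound from the strict transform of the quadruple line, a single blow-up of $P$ with $A_{(\DP^2,\lambda C)}(E)=2-4\lambda$ and trivial Zariski decomposition giving $S_{(\DP^2,\lambda C)}(E)=\tfrac{2(3-4\lambda)}{3}$, and the refined estimate on $E$ with different $\Delta_E=4\lambda Q$ yielding the binding ratio $\tfrac{3(1-4\lambda)}{3-4\lambda}$ at $O=Q$. All computations match the paper's.
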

 \begin{proof}
Note that $\delta_P(\DP^2,\lambda C)\le \frac{A_{(\DP^2,\lambda C)}(\widetilde{L})}{S_{(\DP^2,\lambda C)}(\widetilde{L})}=\frac{3(1-4\lambda)}{3- 4 \lambda }$ where $\widetilde{L}$ is a strict transform of a quadruple line component of $C$. Let $\pi:\widetilde{S}\to S$ be the~blow up of the~point $P$,
with the exceptional divisor $E$.
\begin{center}
 \includegraphics[width=8cm]{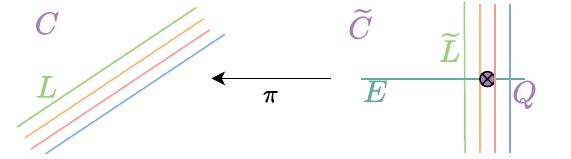}
 \end{center}
We have $\sigma^*(L)=\widetilde{L}+E$, $\pi^*(C)=\widetilde{C}+4E$, $\pi^*(K_{\DP^2})=K_{\widetilde{S}}-E$. Thus, $A_{(\DP^2,\lambda C)}(E)=2-4\lambda$.  The Zariski decomposition of the divisor  $-\pi^*(K_{\DP^2}+\lambda C)-vE$ is given by:
\begin{align*}
P(v)=-\pi^*(K_{\DP^2}+\lambda C)-vE \text{ and }N(v)=0\text{ if }v\in[0,3- 4 \lambda ].
\end{align*}
Then
$$
P(v)^2=(4\lambda - 3- v )(v-(3- 4\lambda))\text{ and }P(v)\cdot E=  v \text{ if }v\in[0,3- 4 \lambda ].
$$
Thus
$$S_{(\DP^2,\lambda C)}(E)=\frac{1}{(3- 4 \lambda )^2}\Big(\int_0^{3-4 \lambda } (4\lambda - 3- v )(v-(3- 4\lambda)) dv\Big)=\frac{2(3-4\lambda)}{3}$$
so that $\delta_P(\DP^2,\lambda C)\le \frac{3}{2}\cdot \frac{2-4\lambda}{3-4 \lambda }=\frac{3(1-2\lambda)}{3-4 \lambda }$. 
For every $O\in E$ we get:
$$h(v) = \frac{v^2}{2}\text{ if }v\in[0,3-4\lambda ].
$$
So that
$$S\big(W^{E}_{\bullet,\bullet};O\big)= \frac{2}{(3-4\lambda )^2}\Big(\int_0^{3-4\lambda } \frac{v^2}{2} dv\Big)=\frac{3-4 \lambda}{3}
$$
We have
$$
\delta_P(\DP^2,\lambda C)\geqslant\mathrm{min}\Bigg\{ \frac{3(1-2\lambda)}{3-4 \lambda } ,\inf_{O\in E}\frac{A_{E,\Delta_{E}}(O)}{S\big(W^{E}_{\bullet,\bullet};O\big)}\Bigg\},
$$
where $\Delta_{E}=4\lambda Q$ where $Q=\widetilde{L}|_E$. 
So that
$$
\frac{A_{\overline{E},\Delta_{\overline{E}}}(O)}{S(W_{\bullet,\bullet}^{\overline{E}};O)}=
\left\{\aligned
&\frac{3(1-4\lambda)}{3-4 \lambda }\ \mathrm{if}\ O=Q,\\
&\frac{3}{3-4 \lambda }\ \mathrm{otherwise}.
\endaligned
\right.
$$
Thus $\delta_P(\DP^2,\lambda C)=\frac{3(1-4\lambda)}{3- 4 \lambda }$ for $\lambda\in\big[0,\frac{1}{4}\big]$.     
 \end{proof}
 \noindent This proves the following theorem:
  \begin{theorem}
   Suppose $C_4\subset \DP^2$ is a quartic curve which is a quadruple line. Then  we have:
$$\delta(\DP^2,\lambda C_4)= \frac{3(1-4\lambda)}{3- 4 \lambda }\text{ for }\lambda\in \Big[0, \frac{1}{4}\Big].$$
 \end{theorem}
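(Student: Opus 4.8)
The plan is to deduce this global statement from the local computation in the preceding Lemma, exactly as every previous theorem in the paper is deduced from its companion lemma. Writing $C_4=4L$ for the underlying reduced line $L$, recall that
$$
\delta(\DP^2,\lambda C_4)=\inf_{P\in\DP^2}\delta_P(\DP^2,\lambda C_4),
$$
so it suffices to evaluate or bound below the local invariant $\delta_P$ at every point and then take the infimum. First I would split $\DP^2$ into the two loci $P\in L=\Supp(C_4)$ and $P\notin L$, and treat them separately.

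For points on the line the work is already done: the preceding Lemma gives $\delta_P(\DP^2,\lambda C_4)=\frac{3(1-4\lambda)}{3-4\lambda}$ for every $P\in L$ and every $\lambda\in[0,\tfrac14]$, and this value does not depend on the chosen point of $L$. Hence the infimum of $\delta_P$ over $P\in L$ equals $\frac{3(1-4\lambda)}{3-4\lambda}$.

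For a point $P\notin L$ no component of $\lambda C_4$ passes through $P$, so the pair is smooth at $P$ and the estimate reduces to that of a general point of $\DP^2$ polarized by $-K_{\DP^2}-\lambda C_4\sim(3-4\lambda)H$. Blowing up $P$ with exceptional divisor $E$ one finds $A_{(\DP^2,\lambda C_4)}(E)=2$, and since the Zariski decomposition of $-\pi^*(K_{\DP^2}+\lambda C_4)-vE$ has trivial negative part for $v\in[0,3-4\lambda]$ the usual integral gives $S_{(\DP^2,\lambda C_4)}(E)=\frac{2(3-4\lambda)}{3}$; because $\Delta_E=0$ here, the refined ratio $\frac{A_{E,\Delta_E}(O)}{S(W^E_{\bullet,\bullet};O)}$ equals $\frac{3}{3-4\lambda}$ for every $O\in E$, just as in the $A_1$ and smooth-line computations. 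By the estimation inequality \eqref{estimation2} this yields $\delta_P(\DP^2,\lambda C_4)=\frac{3}{3-4\lambda}$.

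Finally I would compare the two values: since $1-4\lambda<1$ for $\lambda\in(0,\tfrac14]$ we have $\frac{3(1-4\lambda)}{3-4\lambda}<\frac{3}{3-4\lambda}$, so the global infimum over $\DP^2$ is attained along $L$ and equals $\frac{3(1-4\lambda)}{3-4\lambda}$, which is the asserted value. I do not anticipate a genuine obstacle here; the only step demanding any care is the off-curve locus, but because its contribution is visibly the larger of the two it never influences the infimum, so the theorem is essentially an immediate corollary of the Lemma.
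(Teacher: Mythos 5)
Your proposal is correct and follows the same route as the paper: the theorem is obtained by combining the preceding lemma (which handles every point of the quadruple line) with the standard observation that at a point off the support the pair is log smooth and $\delta_P(\DP^2,\lambda C_4)=\tfrac{3}{3-4\lambda}$, which exceeds $\tfrac{3(1-4\lambda)}{3-4\lambda}$ for $\lambda>0$. The paper leaves the off-curve locus implicit, so your write-up simply makes explicit the step the paper treats as immediate.
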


 \appendix 

\section{Cubic Plane curves with ADE singularities}

In this appendix, we provide a table with cubic plane curves with ADE singularities for convenience:
{\renewcommand{\arraystretch}{1.5} 
  \begin{longtable}[t]{ | c | c | c |  c |  } 
   \hline 
    $\mathrm{Sing}(C)$ & picture  & Equation 
& $\delta(\DP^2,\lambda C)$ \\
  \hline
\endhead 
\hline
  \multirow{2}{*}{$A_1$} & \multirow{2}{*}{\includegraphics[width=1cm]{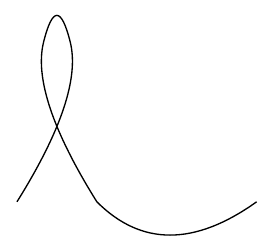}}
  &  $z^3+xy^2+\alpha y^2z+\beta yz^2 +xyz=0$ & \multirow{2}{*}{ $1$} \\
  & & $\alpha\ne 0$, $\beta^2\ne 4\alpha$, $1+\alpha-\beta \ne 0$ &  \\\hline
   \multirow{2}{*}{$2A_1$} & \multirow{2}{*}{\includegraphics[width=1cm]{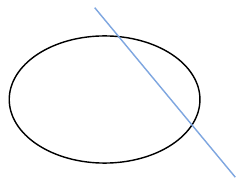}}
  &  $y(x^2+ \alpha yz +xz +xy)=0$ & \multirow{2}{*}{ $1$} \\
  & & $\alpha\ne 0$, $\alpha\ne 1$ &  \\\hline
     \multirow{2}{*}{$3A_1$} & \multirow{2}{*}{\includegraphics[width=1cm]{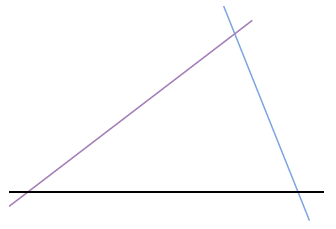}}
  &  \multirow{2}{*}{$yz(x+y+z)=0$} & \multirow{2}{*}{ $1$} \\
  & &  &  \\\hline
  \multirow{2}{*}{$A_2$} & \multirow{2}{*}{\includegraphics[width=1cm]{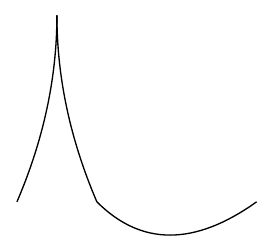}} &
 $z^3+xy^2+y^2z+\alpha yz^2=0$ & \multirow{2}{*}{\large{$\frac{5-6\lambda}{5-5 \lambda }$}}\\
 & & $\alpha\ne 4$  & \\
\hline
 \multirow{2}{*}{$A_3$} &  \multirow{2}{*}{\includegraphics[width=1cm]{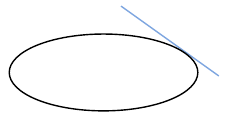}} &
    \multirow{2}{*}{$y(x^2+yz+xz)=0$}
  &  \multirow{2}{*}{$\frac{3-4\lambda}{3-3 \lambda }$} \\
  & &    & \\\hline
   \multirow{2}{*}{$D_4$} & \multirow{2}{*}{\includegraphics[width=1cm]{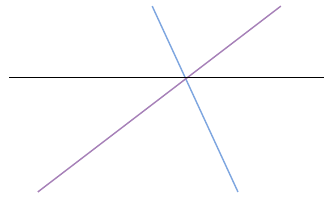}} &
 \multirow{2}{*}{$xyz=0$} & \multirow{2}{*}{\large{$\frac{2-3\lambda}{2-2 \lambda }$}}\\
 & &    & \\
\hline
  \end{longtable}

 \section{Quartic  Plane curves with ADE singularities}
Plane quartic curves we classified in \cite{Hui}.  In this appendix, we provide a table with $\delta$-invariants of quartic plane curves with ADE singularities for convenience:
{\allowdisplaybreaks\renewcommand{\arraystretch}{1.5} 
  \begin{longtable}[t]{ | c | c | c |  c | c | } 
   \hline 
    $\mathrm{Sing}(C)$ & picture  & Equation 
& $\delta(\DP^2,\lambda C)$ & $\lambda=\frac{1}{2}$\\
  \hline
\endhead 
\hline
  \multirow{2}{*}{$A_1$} & \multirow{2}{*}{\includegraphics[width=1cm]{qc_A1.pdf}}
  &  $x^2z^2+yz^3+x^2yz+\alpha y^4+\beta xy^2z+$ & \multirow{2}{*}{ \large{$\frac{3}{2}\cdot\frac{2-2\lambda}{3-4 \lambda }$}} &\multirow{2}{*}{\large{$\frac{3}{2}$}}\\
  & & $+\gamma y^3z+\delta xyz^2+\epsilon y^2z^2=0$ &  &\\
\hline
 \multirow{2}{*}{$2A_1$} &  \multirow{2}{*}{\includegraphics[width=1cm]{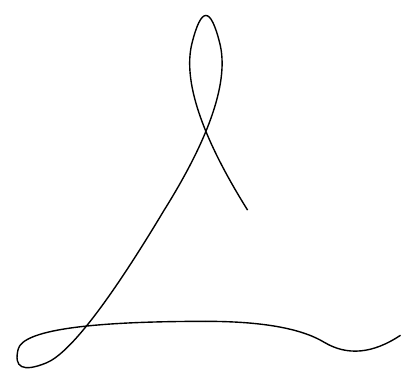}} &
 $x^2y^2+x^2z^2+y^2z^2+\lambda z^4 +\alpha xyz^2+\beta xz^3+\gamma yz^3=0$ & \multirow{2}{*}{ \large{$\frac{3}{2}\cdot\frac{2-2\lambda}{3-4 \lambda }$}} &\multirow{2}{*}{{\large$\frac{3}{2}$}}\\
  & & $(\lambda,\beta,\gamma)\ne (0,0,0)$ & &\\
\hline
\multirow{2}{*}{$3A_1$} & \multirow{2}{*}{\includegraphics[width=1cm]{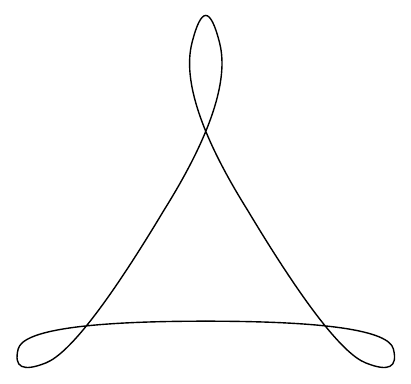}}
  & $x^2y^2+x^2z^2+y^2z^2=xyz(\alpha x+\beta y+\gamma z)$ & \multirow{2}{*}{\large{$\frac{3}{2}\cdot\frac{2-2\lambda}{3-4 \lambda }$}} & \multirow{2}{*}{\large{$\frac{3}{2}$}}\\
  & & $\alpha^2\ne 4$, $\beta^2\ne 4$, $\gamma^2\ne 4$ & & \\
\hline
 \multirow{2}{*}{$3A_1$} &  \multirow{2}{*}{\includegraphics[width=1cm]{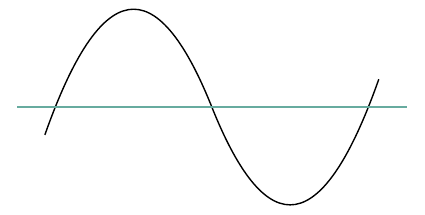}}
  & $x(x^2z+\alpha x z^2 +\beta xyz+y^3+\gamma y^2z +yz^2)=0$
  & \multirow{2}{*}{\large{$\frac{3}{2}\cdot\frac{2-2\lambda}{3-4 \lambda }$}} &   \multirow{2}{*}{\large{$\frac{3}{2}$}}\\
  & & $\gamma^2\ne 4$  & &\\
\hline
\multirow{2}{*}{$4A_1$} & \multirow{2}{*}{\includegraphics[width=1cm]{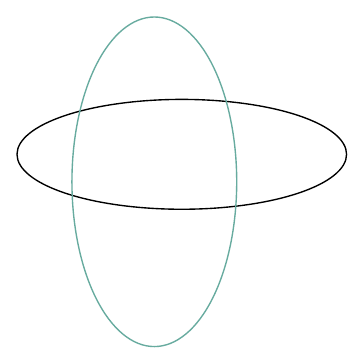}}
  &
  $(\alpha yz +\beta xz+xy)(yz+xz+xy)=0$
  & \multirow{2}{*}{\large{$\frac{3}{2}\cdot\frac{2-2\lambda}{3-4 \lambda }$}} &\multirow{2}{*}{ \large{$\frac{3}{2}$}}\\
   & & $\alpha\beta\ne 0$, $\alpha\ne 1$, $\beta\ne 1$, $\alpha\ne \beta$  & &\\
\hline
\multirow{2}{*}{$4A_1$} & \multirow{2}{*}{\includegraphics[width=1cm]{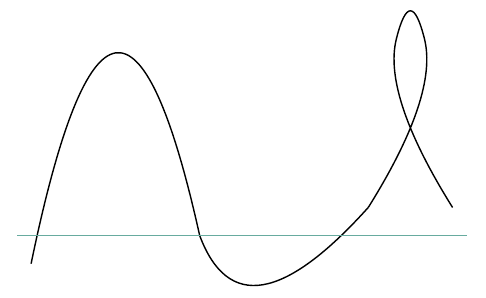}}
  & 
  $x(z^3+xy^2+\alpha y^2z+\beta yz^2 +xyz)=0$
  & \multirow{2}{*}{\large{$\frac{3}{2}\cdot\frac{2-2\lambda}{3-4\lambda }$}} & \multirow{2}{*}{\large{$\frac{3}{2}$}}\\ 
  & & $\alpha\ne 0$, $\beta^2\ne 4\alpha$, $1+\alpha-\beta \ne 0$ & & \\
  \hline
 \multirow{2}{*}{$5A_1$}  & \multirow{2}{*}{\includegraphics[width=1cm]{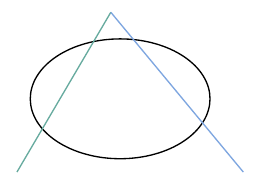}}
  & 
  $yz(x^2+ \alpha yz +xz +xy)=0$
  & \multirow{2}{*}{\large{$\frac{3}{2}\cdot\frac{2-2\lambda}{3-4\lambda }$}} &  \multirow{2}{*}{\large{$\frac{3}{2}$ }}\\
   & & $\alpha\ne 0$, $\alpha\ne 1$  & &\\
\hline
\multirow{2}{*}{$6A_1$}  & \multirow{2}{*}{\includegraphics[width=1cm]{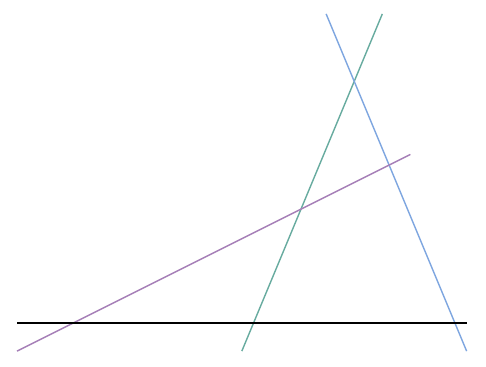}}
  & \multirow{2}{*}{$xyz(x+y+z)=0$} & \multirow{2}{*}{\large{$\frac{3}{2}\cdot\frac{2-2\lambda}{3-4 \lambda }$}} & \multirow{2}{*}{\large{$\frac{3}{2}$}}\\
   & &   & &\\
\hline
 \multirow{2}{*}{$A_2$} & \multirow{2}{*}{\includegraphics[width=1cm]{qc_A2.pdf}}
  & 
  \multirow{2}{*}{$x^2z^2+yz^3+xy^3+\alpha y^2z+ \beta xy^2z +\gamma y^2z^2+\delta xyz^2=0 $}
  & \multirow{2}{*}{\large{$\frac{3}{5}\cdot\frac{5-6\lambda}{3-4 \lambda }$}} & \multirow{2}{*}{\large{$\frac{6}{5}$}} \\
  & &   & &\\
\hline
 \multirow{2}{*}{$A_2+A_1$} & \multirow{2}{*}{\includegraphics[width=1cm]{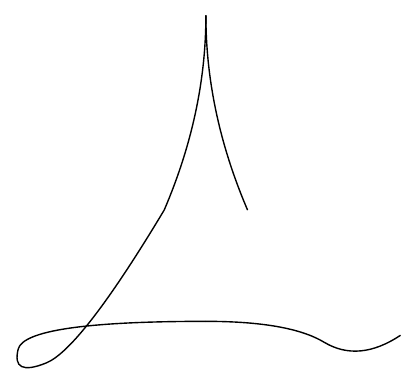}} &
 \multirow{2}{*}{$ x^2y^2+y^2z^2+\alpha xyz^2+xy^3+\gamma yz^2+\delta z^4=0$} & \multirow{2}{*}{\large{$\frac{3}{5}\cdot\frac{5-6\lambda}{3-4 \lambda }$}} &  \multirow{2}{*}{\large{$\frac{6}{5}$}}\\
 & &   & &\\
\hline
 \multirow{2}{*}{$A_2+2A_1$} & \multirow{2}{*}{\includegraphics[width=1cm]{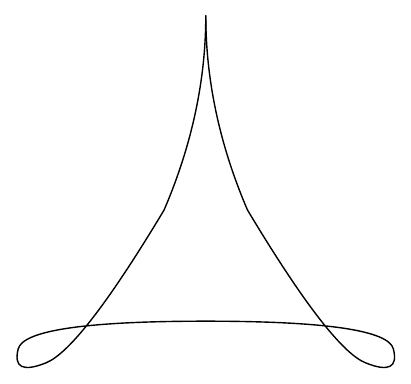}} & 
 $x^2y^2+x^2z^2+y^2z^2=xyz(2 x+\beta y+\gamma z)$ & \multirow{2}{*}{\large{$\frac{3}{5}\cdot\frac{5-6\lambda}{3-4 \lambda }$}} &  \multirow{2}{*}{\large{$\frac{6}{5}$}}\\
  & &  $\beta^2\ne 4$, $\gamma^2\ne 4$, $\beta\pm \gamma\ne 0$ & &\\
\hline
 \multirow{2}{*}{$A_2+3A_1$} & \multirow{2}{*}{\includegraphics[width=1cm]{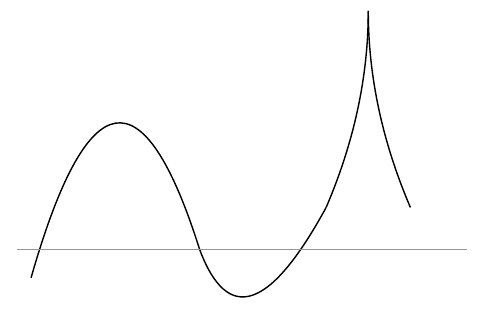}} &
 $x(z^3+xy^2+y^2z+\alpha yz^2)=0$ & \multirow{2}{*}{\large{$\frac{3}{5}\cdot\frac{5-6\lambda}{3-4 \lambda }$}}
 &  \multirow{2}{*}{\large{$\frac{6}{5}$}}\\
 & & $\alpha\ne 4$  & &\\
\hline
 \multirow{2}{*}{$2A_2$} & \multirow{2}{*}{\includegraphics[width=1cm]{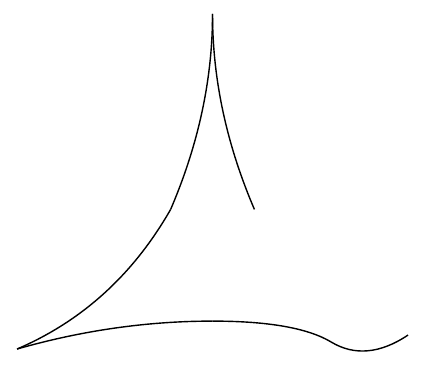}} & \multirow{2}{*}{$x^2y^2+\alpha xyz^2+xz^3+yz^3+\delta z^4=0 $} & \multirow{2}{*}{\large{$\frac{3}{5}\cdot\frac{5-6\lambda}{3-4 \lambda }$}} & \multirow{2}{*}{\large{$\frac{6}{5}$}}\\
 & &   & & \\
\hline
\multirow{2}{*}{$2A_2+A_1$} & \multirow{2}{*}{\includegraphics[width=1cm]{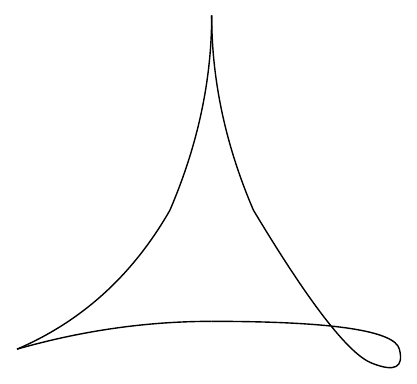}} &
$x^2y^2+x^2z^2+y^2z^2=xyz(2 x+2 y+\gamma z)$& \multirow{2}{*}{\large{$\frac{3}{5}\cdot\frac{5-6\lambda}{3-4 \lambda }$}} &  \multirow{2}{*}{\large{$\frac{6}{5}$}}\\
& &   $\gamma^2\ne 4$ & &\\
\hline
 \multirow{2}{*}{$3A_2$} & \multirow{2}{*}{\includegraphics[width=1cm]{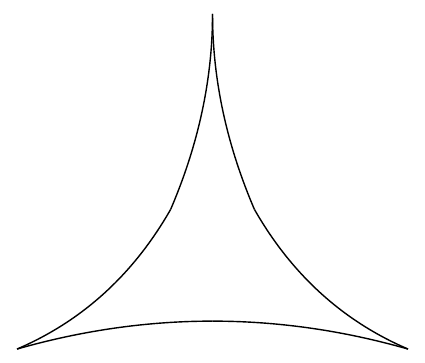}} & \multirow{2}{*}{$x^2y^2+x^2z^2+y^2z^2=xyz(2 x+2 y+2 z)$} & \multirow{2}{*}{\large{$\frac{3}{5}\cdot\frac{5-6\lambda}{3-4 \lambda }$}} &  \multirow{2}{*}{\large{$\frac{6}{5}$}}\\
 & &    & &\\
\hline
  \multirow{2}{*}{$A_3$} & \multirow{2}{*}{\includegraphics[width=1cm]{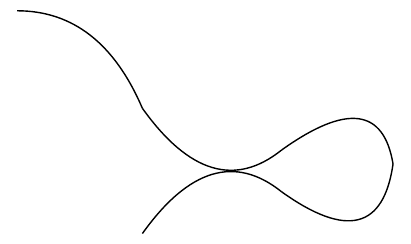}} & 
  $x^2z^2+y^4+x^3y+\alpha xy^2z+\beta x^2y^2+\gamma xy^3=0$ & \multirow{2}{*}{$1$} & \multirow{2}{*}{$1$}\\
  & &   $\alpha^2\ne 4$ & &\\
  \hline
\multirow{2}{*}{$A_3+A_1$} & \multirow{2}{*}{\includegraphics[width=1cm]{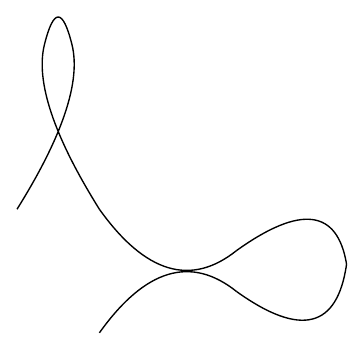}} &  
$y^4+x^2z^2+x^2yz+\alpha xy^2z+\beta xy^3=0$ & \multirow{2}{*}{$1$} & \multirow{2}{*}{$1$}\\
& &   $\alpha^2\ne 4$, $\beta^2-\alpha\beta+1\ne 0$ & & \\
\hline
 \multirow{2}{*}{$A_3+A_1$} & \multirow{2}{*}{\includegraphics[width=1cm]{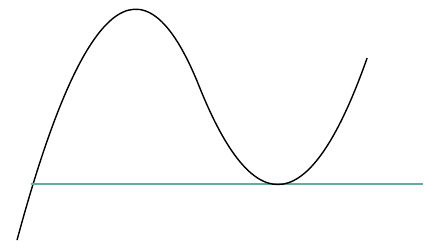}} & 
 $x(x^2z+\alpha xz^2+\beta xyz+y^3+y^2z)=0$
  & \multirow{2}{*}{$1$} & \multirow{2}{*}{$1$}\\
  & &   $\alpha\ne 0$ & &\\
\hline 
 \multirow{2}{*}{$A_3+2A_1$} & \multirow{2}{*}{\includegraphics[width=1cm]{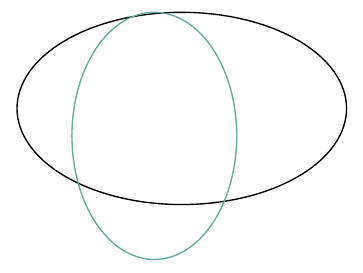}} &
 $(\alpha yz +xz+xy)(yz+xz+xy)=0 $
 & \multirow{2}{*}{$1$} & \multirow{2}{*}{$1$}\\
  & &   $\alpha\ne 0$, $\alpha\ne 1$ & &\\
\hline
 \multirow{2}{*}{$A_3+2A_1$} & \multirow{2}{*}{\includegraphics[width=1cm]{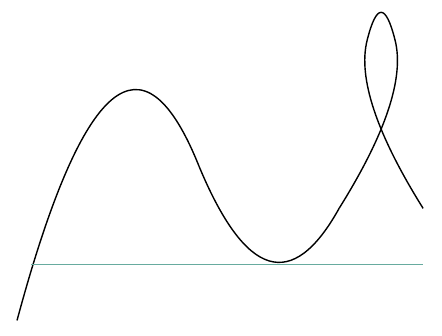}} & 
 $x(z^3+xy^2+ \alpha yz^2 + xyz)=0$
 & \multirow{2}{*}{$1$} & \multirow{2}{*}{$1$}\\
 & &  $\alpha \ne 0$,  $\alpha \ne 1$  & &\\
\hline
  \multirow{2}{*}{$A_3+3A_1$} &  \multirow{2}{*}{\includegraphics[width=1cm]{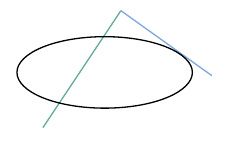}} &
    \multirow{2}{*}{$yz(x^2+yz+xz)=0$}
  &  \multirow{2}{*}{$1$} & \multirow{2}{*}{$1$}\\
  & &    & &\\
\hline
 \multirow{2}{*}{$A_3+A_2$} & \multirow{2}{*}{\includegraphics[width=1cm]{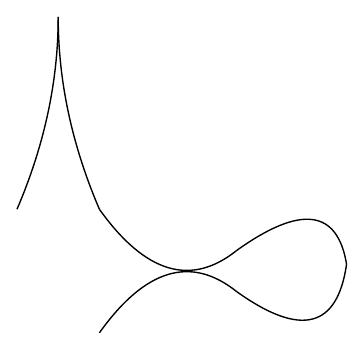}} & 
 $y^4+x^2z^2+\alpha xy^2z+ xy^3=0$ & \multirow{2}{*}{$1$} & \multirow{2}{*}{$1$}\\
 & &   $\alpha^2\ne 4$ & &\\
\hline
  \multirow{2}{*}{$A_3+A_2+A_1$} & \multirow{2}{*}{\includegraphics[width=1cm]{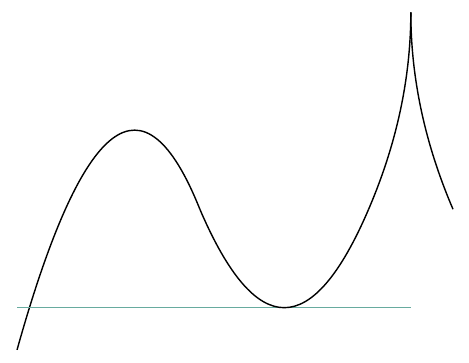}} &
  \multirow{2}{*}{$x(z^3+xy^2+yz^2)=0 $}
  & \multirow{2}{*}{$1$} & \multirow{2}{*}{$1$}\\
  & &    & & \\
\hline
\multirow{2}{*}{$2A_3$} & \multirow{2}{*}{\includegraphics[width=1cm]{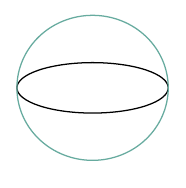}} & 
$(x^2+yz)(x^2+ \alpha yz)=0$
& \multirow{2}{*}{$1$} & \multirow{2}{*}{$1$}\\
 & &  $\alpha\ne 0$, $\alpha\ne 1$  & &\\
\hline
 \multirow{2}{*}{$2A_3+A_1$} & \multirow{2}{*}{\includegraphics[width=1cm]{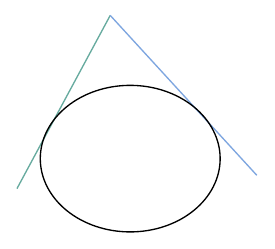}} &
 \multirow{2}{*}{$yz(x^2+yz)=0$}
 & \multirow{2}{*}{$1$} & \multirow{2}{*}{$1$}\\
   & &    & &\\
\hline
\multirow{2}{*}{$A_4$} & \multirow{2}{*}{\includegraphics[width=1cm]{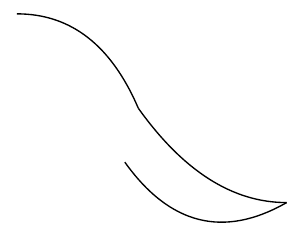}} & 
$ x^2z^2+2xy^2z+y^4+x^3y+\beta x^2y^2+\gamma xy^3=0$ & \multirow{2}{*}{\large{$\frac{6}{13}\cdot\frac{7-10\lambda}{3-4\lambda }$}} & \multirow{2}{*}{\large{$\frac{12}{13}$}}\\
& & $\gamma\ne 0$, $\beta\ne 4\gamma$ & &\\
\hline
 \multirow{2}{*}{$A_4+A_1$} & \multirow{2}{*}{\includegraphics[width=1cm]{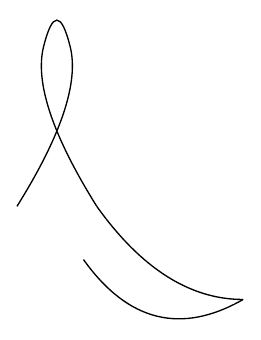}} & $(xz+y^2)^2=\alpha x^2y^2+xy^3 $ & \multirow{2}{*}{\large{$\frac{6}{13}\cdot\frac{7-10\lambda}{3-4 \lambda }$}} & \multirow{2}{*}{\large{$\frac{12}{13}$}}\\
 & &   $\alpha^2\ne 0$ & &\\
\hline
 \multirow{2}{*}{$A_4+A_2$} & \multirow{2}{*}{\includegraphics[width=1cm]{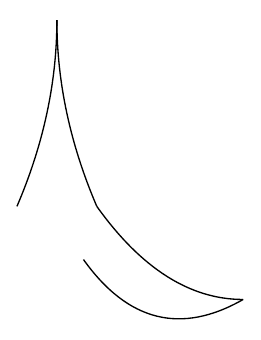}} & 
 \multirow{2}{*}{$(xz+y^2)^2=xy^3 $}& \multirow{2}{*}{\large{$\frac{6}{13}\cdot\frac{7-10\lambda}{3-4 \lambda }$}}  & \multirow{2}{*}{\large{$\frac{12}{13}$}}\\
  & &  & &\\
\hline
 \multirow{2}{*}{$A_5$} & \multirow{2}{*}{\includegraphics[width=1cm]{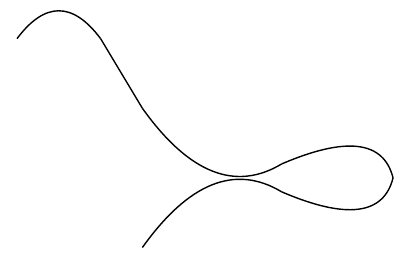}} &
 $x^2z^2+zy^2z+y^4=x^2y^2+\alpha x^3 y$
& \multirow{2}{*}{\large{$\frac{6}{7}\cdot\frac{4-6\lambda}{3-4 \lambda }$}}  &  \multirow{2}{*}{\large{$\frac{6}{7}$}}\\
 & &  $\alpha^2\ne 0$  & &\\
\hline
 \multirow{2}{*}{$A_5$} & \multirow{2}{*}{\includegraphics[width=1cm]{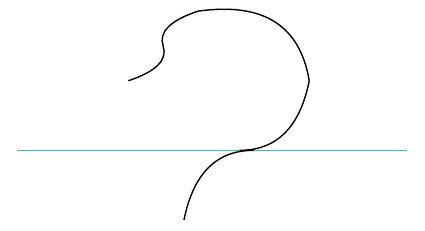}} & 
  $x(x^2z + xz^2+ \beta xyz+y^3)=0$ & \multirow{2}{*}{\large{$\frac{3}{4}\cdot\frac{4-6\lambda}{3-4 \lambda }$}} &  \multirow{2}{*}{\large{$\frac{3}{4}$}}\\
 & &    $\beta^3+27\ne 0$ & &\\
\hline
 \multirow{2}{*}{$A_5+A_1$}  & \multirow{2}{*}{\includegraphics[width=1cm]{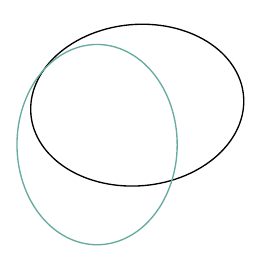}} & 
 \multirow{2}{*}{$(z^2+xy)(z^2+yz+xy)=0$} & \multirow{2}{*}{\large{$\frac{6}{7}\cdot\frac{4-6\lambda}{3-4 \lambda }$}}
 &  \multirow{2}{*}{\large{$\frac{6}{7}$}}\\
  & &    & &  \\
\hline 
  \multirow{2}{*}{$A_5+A_1$}  &  \multirow{2}{*}{\includegraphics[width=1cm]{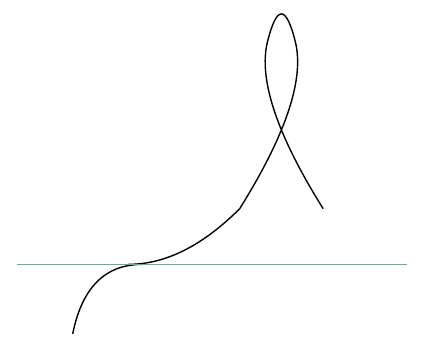}} &
  \multirow{2}{*}{ $x(z^3+xy^2+xyz)=0$}& \multirow{2}{*}{\large{$\frac{3}{4}\cdot\frac{4-6\lambda}{3-4 \lambda }$}}
  &   \multirow{2}{*}{\large{$\frac{3}{4}$}}\\
   & &    & &\\
\hline
 \multirow{2}{*}{$A_5+A_2$}  & \multirow{2}{*}{\includegraphics[width=1cm]{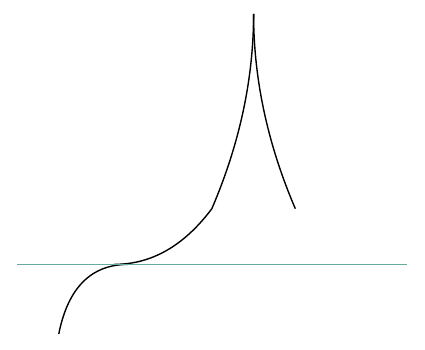}} &
  \multirow{2}{*}{$x(z^3+xy^2)=0$} & \multirow{2}{*}{\large{$\frac{3}{4}\cdot\frac{4-6\lambda}{3-4 \lambda }$}}
 &  \multirow{2}{*}{\large{$\frac{3}{4}$}}\\
  & &     & &\\
\hline
 \multirow{2}{*}{$A_6$} & \multirow{2}{*}{\includegraphics[width=1cm]{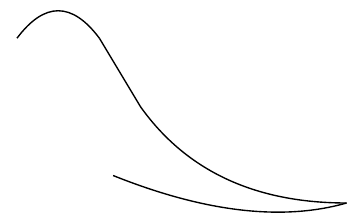}} & \multirow{2}{*}{$(xz+y^2)^2=x^3y$} & \multirow{2}{*}{\large{$\frac{2}{5}\cdot\frac{9-14\lambda}{3-4 \lambda }$}} &  \multirow{2}{*}{\large{$\frac{4}{5}$}}\\
 & &   & &\\
\hline 
 \multirow{2}{*}{$A_7$} & \multirow{2}{*}{\includegraphics[width=1cm]{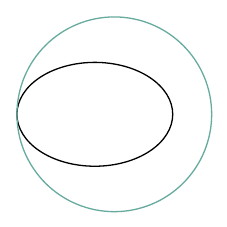}} & 
 \multirow{2}{*}{$(x^2+yz)(x^2 +yz +y^2)=0$} & \multirow{2}{*}{\large{$\frac{3}{4}\cdot\frac{5-8\lambda}{3-4\lambda }$}}
 &  \multirow{2}{*}{\large{$\frac{3}{4}$}}\\
  & &   & &\\
\hline
 \multirow{2}{*}{$D_4$} & \multirow{2}{*}{\includegraphics[width=1.2cm]{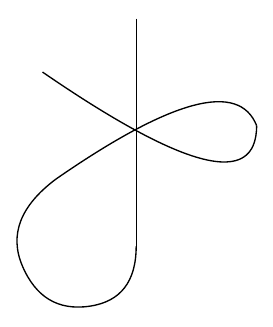}} & $xyz(x+y)=x^4+\alpha x^2y^2+\beta y^4$ & \multirow{2}{*}{\large{$\frac{3}{2}\cdot\frac{2-3\lambda}{3-4 \lambda }$}} &  \multirow{2}{*}{\large{$\frac{3}{4}$}}\\
 & &   $\beta\ne 0$, $\alpha+\beta+1\ne 0$ & &\\
\hline
 \multirow{2}{*}{$D_4+A_1$} & \multirow{2}{*}{\includegraphics[width=1cm]{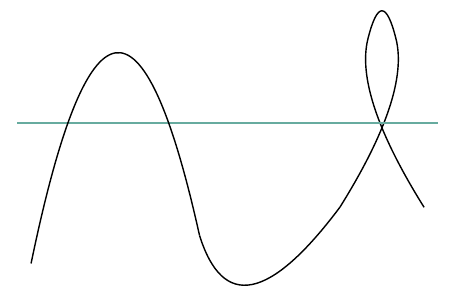}} &
 $x(x^3+\alpha x^2z +yz^2 +xyz) $ & \multirow{2}{*}{\large{$\frac{3}{2}\cdot\frac{2-3\lambda}{3-4 \lambda }$}}
 &  \multirow{2}{*}{\large{$\frac{3}{4}$}}\\
 & & $\alpha \ne 1$  & &\\
\hline
 \multirow{2}{*}{$D_4+2A_1$} & \multirow{2}{*}{\includegraphics[width=1cm]{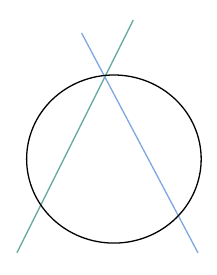}} &
  \multirow{2}{*}{$yz(yz+xz+xy)=0 $} & \multirow{2}{*}{\large{$\frac{3}{2}\cdot\frac{2-3\lambda}{3-4 \lambda }$}}
 &  \multirow{2}{*}{\large{$\frac{3}{4}$}}\\
  & &    & &\\
\hline
 \multirow{2}{*}{$D_4+3A_1$} & \multirow{2}{*}{\includegraphics[width=1cm]{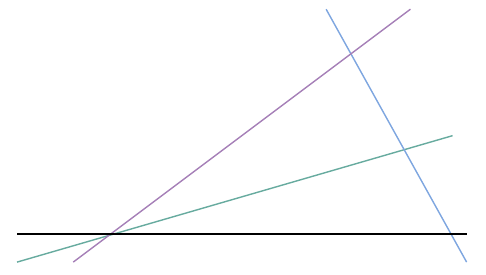}} &
 \multirow{2}{*}{$xyz(x+z)=0$} & \multirow{2}{*}{\large{$\frac{3}{2}\cdot\frac{2-3\lambda}{3-4 \lambda }$}}
 &  \multirow{2}{*}{\large{$\frac{3}{4}$}}\\
 & &    & &\\
\hline
 \multirow{2}{*}{$D_5$} & \multirow{2}{*}{\includegraphics[width=1cm]{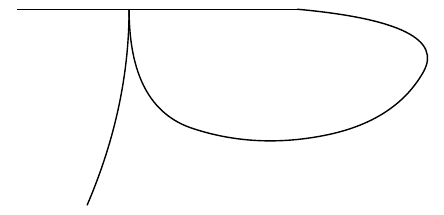}} & \multirow{2}{*}{$x^2yz=x^4+\alpha xy^3 +y^4$} & \multirow{2}{*}{\large{$\frac{3}{5}\cdot\frac{5-8\lambda}{3-4 \lambda }$}} &  \multirow{2}{*}{\large{$\frac{3}{5}$}}\\
  & &   & &\\
\hline
 \multirow{2}{*}{$D_5+A_1$} & \multirow{2}{*}{\includegraphics[width=1cm]{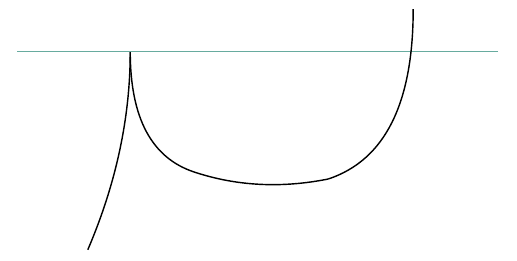}} &
 $x (x^3 +x^2z+yz^2)=0$ or  $x (x^3 +yz^2)=0$ & \multirow{2}{*}{\large{$\frac{3}{5}\cdot\frac{5-8\lambda}{3-4 \lambda }$}}
 & \multirow{2}{*}{ \large{$\frac{3}{5}$}}\\
  & & two orbits  & &\\
\hline
\multirow{2}{*}{ $D_6$} & \multirow{2}{*}{\includegraphics[width=1cm]{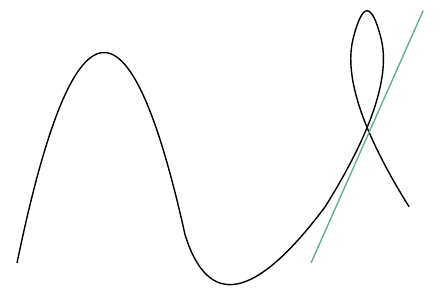}} &
 \multirow{2}{*}{$x(x^3+z^3+xyz)=0$} & \multirow{2}{*}{\large{$\frac{3-5\lambda}{3-4 \lambda }$}}
 &  \multirow{2}{*}{\large{$\frac{1}{2}$}}\\
 & &    & &\\
\hline
  \multirow{2}{*}{$D_6+A_1$} &  \multirow{2}{*}{\includegraphics[width=1cm]{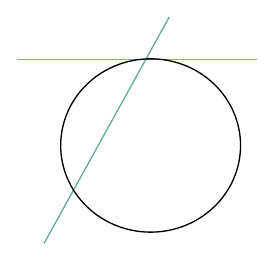}} &
  \multirow{2}{*}{$yz(y^2+xz)=0$} & \multirow{2}{*}{\large{$\frac{3-5\lambda}{3-4 \lambda }$}}
  &   \multirow{2}{*}{\large{$\frac{1}{2}$}}\\
  & &    & &\\
\hline
  \multirow{2}{*}{$E_6$} & \multirow{2}{*}{\includegraphics[width=0.5cm]{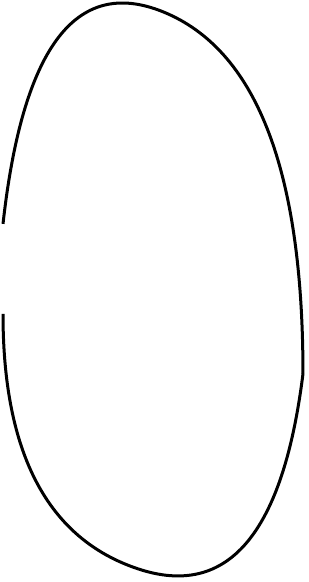}} &
  $x^3z = x^2y^2+y^4$ or $x^3z=y^4$ & \multirow{2}{*}{\large{$\frac{3}{7}\cdot \frac{7-12\lambda}{3-4 \lambda }$}} &  \multirow{2}{*}{\large{$\frac{3}{7}$}}\\
   & &   two  orbits & &\\
\hline
 \multirow{2}{*}{ $E_7$} &  \multirow{2}{*}{\includegraphics[width=1cm]{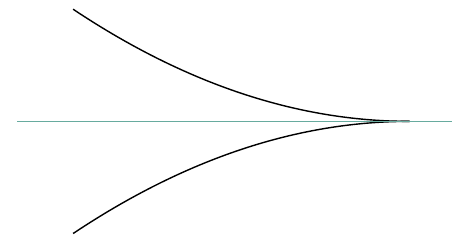}} & 
  \multirow{2}{*}{$x(z^3+x^2y+xz^2)=0$} & \multirow{2}{*}{\large{$\frac{3}{5}\cdot \frac{5-9\lambda}{3-4 \lambda }$}}
 &  \multirow{2}{*}{\large{$\frac{3}{10}$}}\\
  & &   & &\\
\hline
  \end{longtable}
}
\newpage
\section{ Reduced Quartic Curves }
In this appendix we provide the pictures which illustrate reduced quartic curves.


\begin{figure}[H]\label{Irreducible Quartic Curves}
    \centering
 \includegraphics[width=14cm]
 {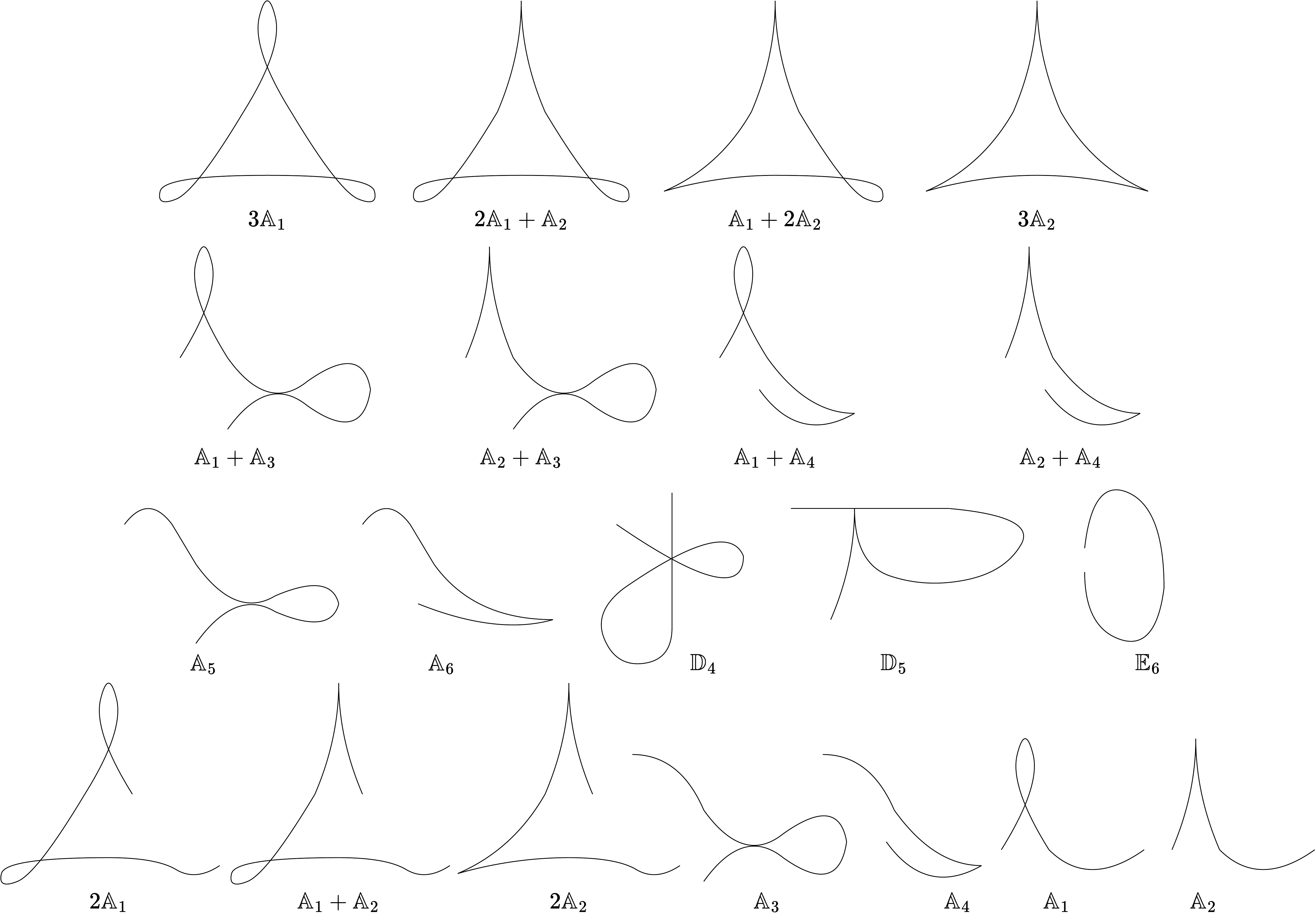}
 \caption{Irreducible Quartic Curves}
 \end{figure}
 

 \begin{figure}[H]\label{Line and Irreducible cubic}
    \centering
 \includegraphics[width=14cm]{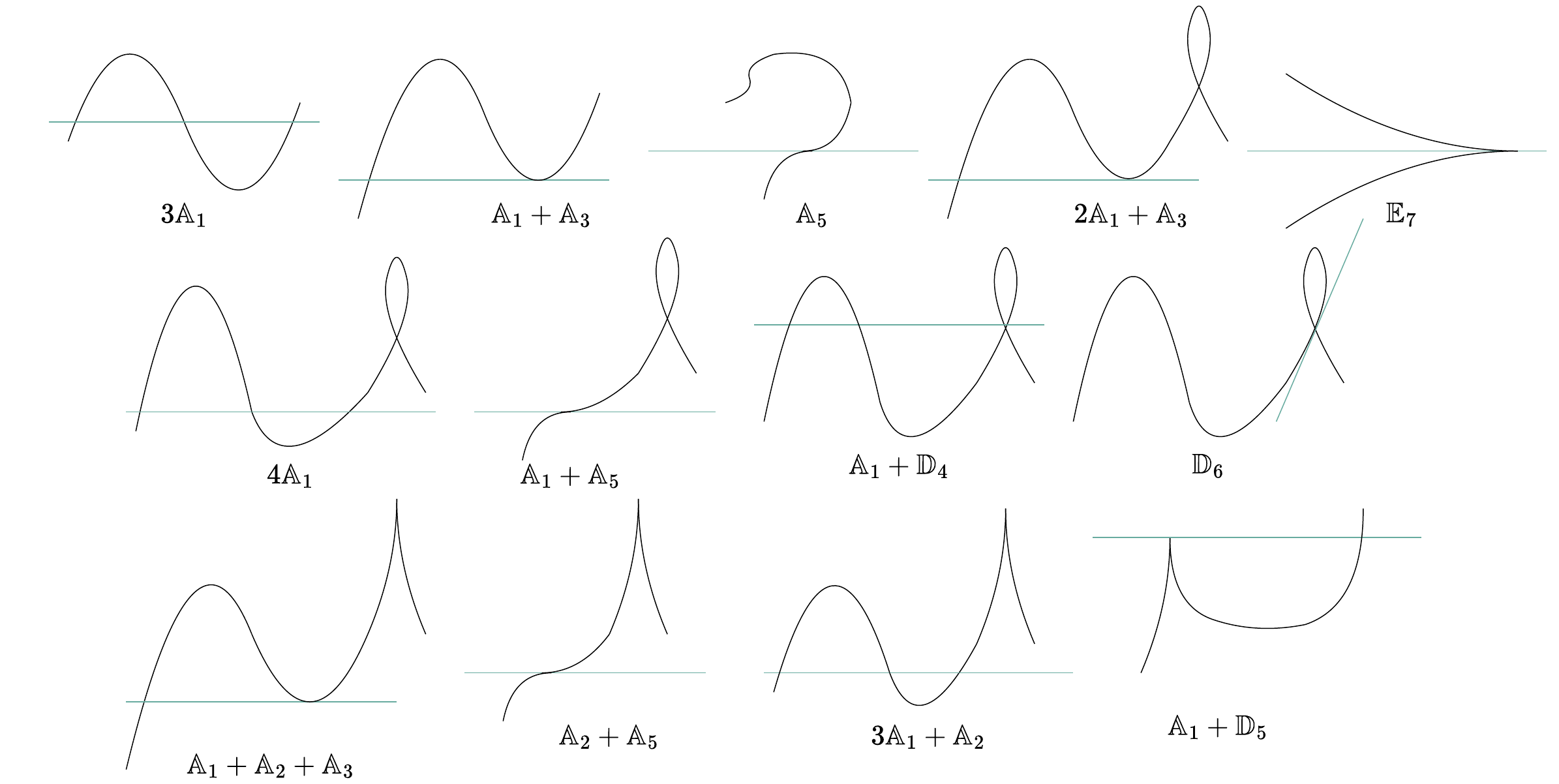}
 \caption{Line and Irreducible cubic}
 \end{figure}
 
  \begin{figure}[H]
  \label{Two Conics}
    \centering
 \includegraphics[width=14cm]{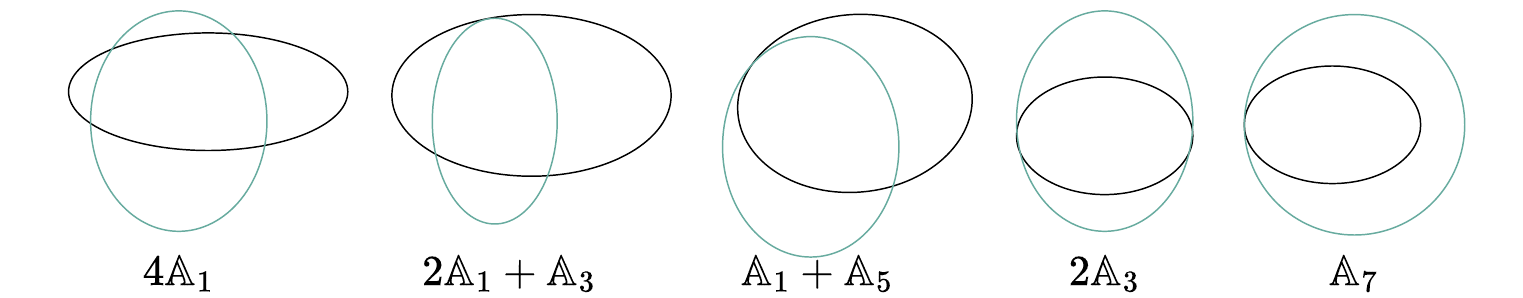}
 \caption{Two Conics}
 \end{figure}
   \begin{figure}[h!]
   \begin{center} \label{Conic and Two Lines}
 \includegraphics[width=14cm]{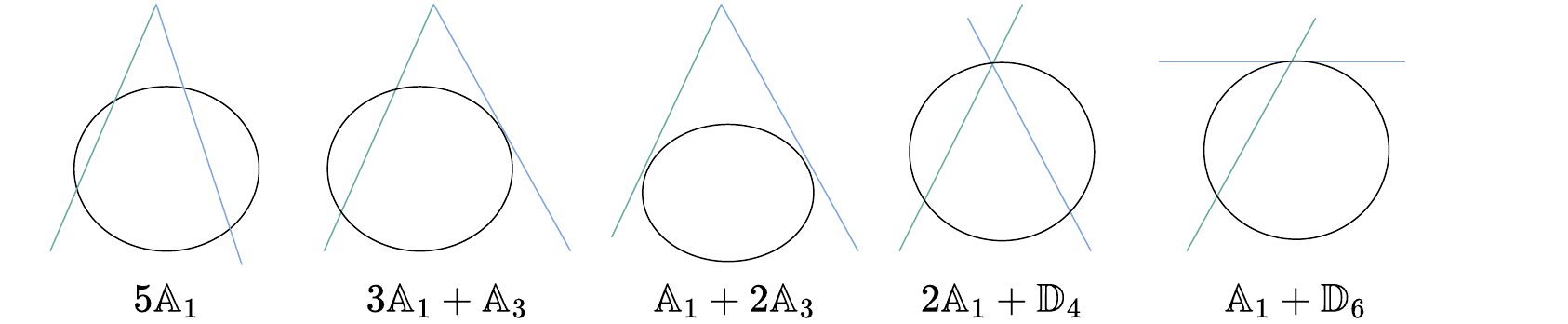}
 \caption{Conic and Two Lines}
 \end{center}
 \end{figure}
 \begin{figure}[H]
 \label{Four Lines}
   \begin{center}
 \includegraphics[width=10cm]{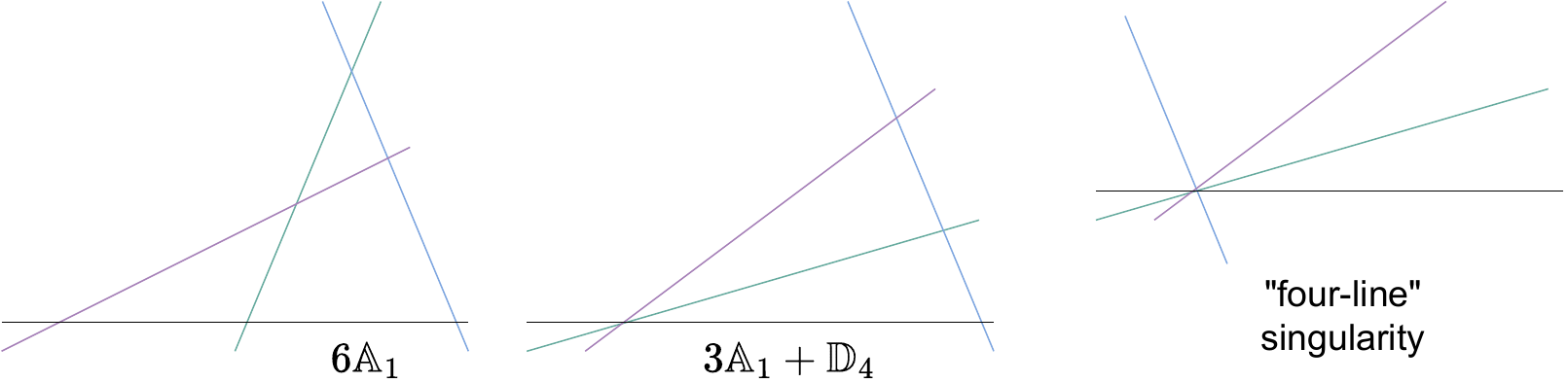}
 \caption{Four Lines}
 \end{center}
 \end{figure}

\section{Non-Reduced  Curves }
In this appendix we provide the pictures which illustrate non-reduced  curves for degree less than four.

\begin{figure}[H]
    \centering
    \begin{minipage}{.4\textwidth}
        \centering
        \includegraphics[width=2cm]{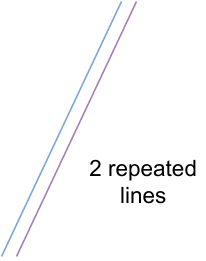}
 \caption{Non-Reduced Conic}
        \label{fig:prob1_6_2}
    \end{minipage}%
    \begin{minipage}{0.5\textwidth}
        \centering
        \includegraphics[width=5cm]{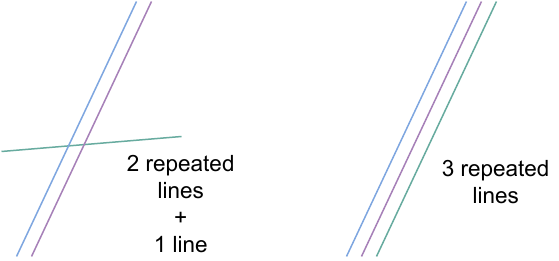}
 \caption{Non-Reduced Cubic Curves}
        \label{fig:prob1_6_1}
    \end{minipage}\\
   \begin{center}
 \includegraphics[width=18cm]{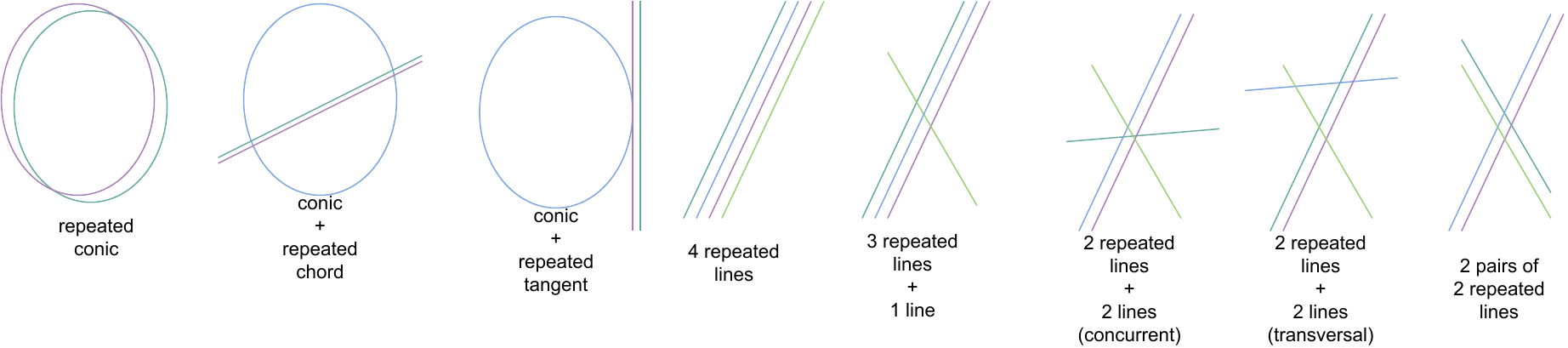}
 \caption{Non-Reduced Quartic Curves}
 \end{center}
 \end{figure}

\tableofcontents


\begin{thebibliography}{9999}
\bibitem{AbbanZhuangSeshadri}
H.~Abban, Z.~Zhuang, \emph{Seshadri constants and K-stability of Fano manifolds},
Duke Mathematical Journal,  to appear.

\bibitem{Allcock03}
D. Allcock. \emph{The moduli space of cubic threefolds},   J. Algebraic Geom., 12 (2003), no. 2, 201–223.

\bibitem{ADVL19}
K. Ascher, K. DeVleming, Y. Liu. \emph{Wall crossing for K-moduli spaces of plane curves.}
2019. arXiv:1909.04576.

\bibitem{ADVL23}
K. Ascher, K. DeVleming, Y. Liu. \emph{ K-stability and birational models of moduli of quartic $K3$ surfaces}. Invent. math. 232, 471–552 (2023). 

\bibitem{Fano21}
C. Araujo, A.-M. Castravet, I. Cheltsov, K. Fujita, A.-S. Kaloghiros, J. Martinez-Garcia, C. Shramov,
H. Suss, and N. Viswanathan. \emph{The Calabi problem for
Fano threefolds.}, 2021

\bibitem{BelousovLoginov}
G.~Belousov, K.~Loginov, \emph{K-stability of Fano threefolds of rank $4$ and degree $24$}, preprint, arXiv:2206.12208 (2022).

\bibitem{Cheltsov09}
I. Cheltsov, \emph{  On singular cubic surfaces}, Asian Journal of Mathematics 13 (2009), 191-214

\bibitem{Cheltsov17}
I. Cheltsov,
\emph{  Worst singularities of plane curves of given degree}
Journal of Geometric Analysis 27 (2017), 2302-2338

\bibitem{Cheltsov14}
I. Cheltsov, D. Kosta, \emph{  Computing alpha-invariants of singular del Pezzo surfaces}, Journal of Geometric Analysis 24 (2014), 798-842

\bibitem{CheltsovDenisovaFujita}
I.~Cheltsov, E.~Denisova, K.~Fujita, \emph{K-stable smooth Fano threefolds of Picard rank two},Forum of Mathematics, Sigma (2024), Vol. 12:e41 1–64 doi:10.1017/fms.2024.5, arXiv:2210.14770

\bibitem{CheltsovFujitaKishimotoOkada}
I.~Cheltsov, K.~Fujita, T.~Kishimoto, T.~Okada, \emph{K-stable divisors in $\mathbb{P}^1\times\mathbb{P}^1\times\mathbb{P}^2$ of degree $(1,1,2)$}, Nagoya Mathematical Journal, to appear.

\bibitem{CheltsovPark}
I.~Cheltsov, J.~Park, \emph{K-stable Fano threefolds of rank $2$ and degree $30$}, European Journal of Mathematics \textbf{8} (2022), 834--852.

\bibitem{Coray1988}
D.~Coray, M.~Tsfasman, \emph{Arithmetic on singular {D}el {P}ezzo surfaces}, Proc. LMS \textbf{57} (1988), 25--87.





\bibitem{Denisova}
E.~Denisova, \emph{On K-stability of $\mathbb{P}^3$ blown up along the~disjoint union of a~twisted cubic curve and~a~line}, Journal of the London Mathematical Society (2024), DOI: 10.1112/jlms.12911, arXiv:2202.04421, 2022.

\bibitem{Denisova24} E.~Denisova, \emph{$K$-stability of Fano 3-folds of Picard rank 3 and degree 20}, Annali dell'Universita di Ferrara (2024) DOI: 10.1007/s11565-024-00516-6

 
\bibitem{Dervan16} R. Dervan, \emph{On K-stability of finite covers}, Bull. London Math. Soc. (2016) 48 (4) pp. 717-728

\bibitem{LTN}
L.~Giovenzana, T.~Duarte Guerreiro, N.~Viswanathan, \emph{On K-stability of $\mathbb{P}^3$ blown up along a $(2,3)$ complete intersection}, preprint arXiv:2304.11420

\bibitem{Hui} C.-M. Hui
\emph{Plane Quartic Curves, } Thesis Ph.D., Liverpool University, (1979)

\bibitem{Fujita} K. Fujita, \emph{A valuative criterion for uniform K-stability of Q-Fano varieties}, J. Reine Angew. Math.
751 (2019), 309–338.

\bibitem{Liu}
Y.~Liu, \emph{K-stability of Fano threefolds of rank $2$ and degree $14$ as double covers}, Mathematische Zeitschrift, to appear.

\bibitem{LiuXu}
 Y. Liu, C.~Xu\emph{K-stability of cubic threefolds}, Duke Mathematical Journal \textbf{168} (2019), 2029--2073.
 
\bibitem{LiuZhao} Y. Liu, J.Zhao \emph{K-moduli of Fano threefolds and genus four curves}, preprint arXiv:2403.16747

\bibitem{Malbon}
J.~Malbon, \emph{K-stable Fano threefolds of rank $2$ and degree $28$}, preprint, 	arXiv:2304.12295

\bibitem{Namba}
M. Namba \emph{Geometry of projective algebraic curves}
Print Book, English, (1984). Publisher: Marcel Dekker, New York

\bibitem{ParkWon10}
J. Park, J. Won, { \emph Log-canonical thresholds on del Pezzo surfaces of degrees $\ge 2$}, Nagoya Math. J. 200, 1-26 (2010).

\bibitem{ParkWon11}
J. Park, J. Won, { \emph Log canonical thresholds on Gorenstein canonical del Pezzo surfaces}, Proc. Edinb. Math. Soc., II. Ser. 54, No. 1, 187-219 (2011).

\bibitem{Shah81}
Shah, J.: \emph{ Degenerations of K3 surfaces of degree 4.} Trans. Am. Math. Soc. 263(2), 271–
308 (1981)

\bibitem{Ti90}
G.~Tian, \emph{On Calabi's conjecture for complex surfaces with positive first Chern class},  Invent. Math. \textbf{101} (1990), 101--172.


\bibitem{TianYau1987}
G.~Tian, S.-T.~Yau, \emph{K\"ahler--Einstein metrics on complex surfaces with $C_1>0$}, Commun. Math. Phys. \textbf{112}, (1987), 175--203.

\bibitem{Yang}
J.-G.Yang \emph{Sextic curves with simple singularities.} Tohoku Math. J. (2) 48 (1996), no. 2, 203--227.


\end{thebibliography}
\end{document}